\newtheorem{claim}{Claim}[section]
\newtheorem{lemma}[claim]{Lemma}
\newtheorem{theorem}{Theorem}
\newtheorem{proposition}[claim]{Proposition}
\newtheorem{corollary}[claim]{Corollary}
\newtheorem{definition}[claim]{Definition}
\newtheorem{remark}[claim]{Remark}
\numberwithin{equation}{section}
\def\<{\langle}
\def\>{\rangle}
\def\G{{\mathcal G}}
\def\root{{\phi}}
\def\eps{{\varepsilon}}
\def\sT{{\sf T}}
\def\P{{\mathbb P}}
\def\prob{{\mathbb P}}
\def\naturals{{\mathbb N}}
\def\E{{\mathbb E}} %expectation
\def\Var{{\sf{Var}}}
\def\reals{\mathbb{R}}
\def\normal{{\sf N}}
\def\Var{{\sf Var}}
\def\cT{{\cal T}}
\def\de{{\rm d}}
\def\cR{\mathcal{R}}
\def\Ball{{\sf Ball}}
\def\Var{{\sf Var}}
\def\ind{\mathbb{I}}
\def\bq{\mathbf{q}}
\newcommand\norm[1]{\lVert{#1}\rVert}
\def\bs{\backslash}
\newcommand\myeqref[1]{{Eq.\,\eqref{#1}}}
\def\F{{\mathcal F}}
\def\sC{{\sf C}}
\def\hsC{{\widehat{\sf C}}}
\def\cF{{\cal F}}
\def\mfF{{\mathfrak{F}}}
\def\bG{{\bf G}}
\def\Gs{{\bG\rvert_{\sC_N}}}
\def\Gsc{{\bG\rvert_{\sC^c_N}}}
\def\cU{{\mathcal{U}}}
\def\ed{\stackrel{{\rm d}}{=}}
\def\tA{\widetilde{A}}
\def\tkappa{\widetilde{\kappa}}
\def\hkappa{\widehat{\kappa}}
\def\tgamma{\widetilde{\gamma}}
\def\Var{{\rm Var}}
\def\md{{\mathrm{d}}}
\def\convD{{\,\stackrel{\mathrm{d}}{\Rightarrow} \,}}
\def\oxi{\overline{\xi}}
\def\Tree{{\sf Tree}}
\def\di{{\partial i}}
\def\tsC{\widetilde{\sf C}}
\def\brho{\overline{\rho}}
\def\sB{{\sf B}}
\def\hmu{\hat{\mu}}
\title{Finding Hidden Cliques of Size $\sqrt{N/e}$ in Nearly Linear Time} 
\author{Yash~Deshpande\thanks{Y.~Deshpande is with the Department of Electrical 
Engineering, Stanford University}
~and~Andrea~Montanari\thanks{A.~Montanari is with the Departments of Electrical 
Engineering and Statistics, Stanford University}}
\begin{document}
\maketitle
\begin{abstract}
Consider an Erd\"os-Renyi random graph in which each edge is present
independently with probability $1/2$, except for a subset $\sC_N$ of the
vertices that form a clique (a completely connected subgraph). We
consider the problem of identifying the clique, given a realization of
such a random graph. 

The best known algorithm provably finds the clique in linear time with high
probability, provided $|\sC_N|\ge 1.261\sqrt{N}$ \cite{dekel2011finding}. 
Spectral methods can be shown to fail on cliques smaller than $\sqrt{N}$. In this paper we
describe a nearly linear time algorithm that succeeds with high probability for
$|\sC_N|\ge (1+\eps)\sqrt{N/e}$ for any $\eps>0$. This is the first
algorithm that provably improves over spectral methods.

We further generalize the hidden clique problem to other
background graphs (the standard case corresponding to the complete graph on
$N$ vertices). For large girth regular graphs of degree $(\Delta+1)$ we
prove that `local' algorithms succeed if $|\sC_N|\ge
(1+\eps)N/\sqrt{e\Delta}$ and
fail if $|\sC_N|\le(1-\eps)N/\sqrt{e\Delta}$. 
\end{abstract}

\section{Introduction}

Numerous modern data sets have network structure, i.e. the dataset
consists of observations on
pairwise relationships among a set of $N$ objects. A recurring 
computational problem in this context is the one of identifying a small subset
of `atypical' observations against a noisy background. This paper develops
a new type of algorithm and analysis for this problem. In particular
we improve over the best methods for finding a hidden clique in an
otherwise random graph.

Let $G_N=([N], E_N)$ be a graph over the vertex set $[N]\equiv
\{1,2,\dots,N\}$ and $Q_0$, $Q_1$ be two distinct probability distributions over
the real line $\reals$. Finally, let $\sC_N\subseteq [N]$ be a subset
of vertices uniformly random given its size $|\sC_N|$. For each edge
$(i,j)\in E_N$ we draw an independent random variable $W_{ij}$ with
distribution $W_{ij}\sim Q_1$ if both $i\in \sC_N$ and $j\in\sC_N$ and
$W_{ij}\sim Q_0$ otherwise. The \emph{hidden set problem}
is to identify the set $\sC_N$ given knowledge of the graph
$G_N$ and the observations $W=(W_{ij})_{(ij)\in E_N}$. We will refer
to $G_N$ as to the  \emph{background graph}. We emphasize that
$G_N$ is non-random and that it carries no information about the hidden
set $\sC_N$.
 
In the rest of this introduction we will assume, for simplicity,
$Q_1=\delta_{+1}$ and $Q_0 = (1/2)\delta_{+1}+(1/2)\delta_{-1}$. In
other words, edges $(i,j)\in E_N$ with  endpoints $\{i,j\}\subseteq\sC_N$
are labeled with $W_{ij} = +1$. Other edges $(i,j)\in E_N$ have a
uniformly random label $W_{ij}\in\{+1,-1\}$.
Our general treatment in the next sections covers arbitrary subgaussian distributions  $Q_0$ and
$Q_1$ and does not require these distributions to be known in advance.

The special case $G_N=K_N$ (with $K_N$ the complete graph) has attracted considerable attention over
the last twenty years \cite{jerrum1992large} and is known as
as the \emph{hidden} or \emph{planted clique problem}. In this case, the
background graph does not play any role, and the random variables $W =
(W_{ij})_{i,j\in [N]}$ can be organized in an $N\times N$ symmetric
matrix (letting, by convention, $W_{ii} = 0$). The matrix $W$ can be
interpreted as the adjacency matrix of a random graph $\cR_N$ generated as
follows. Any pair of vertices $\{i,j\}\subseteq\sC_N$ is connected by an
edge. Any other pair $\{i,j\}\not\subseteq \sC_N$ is instead connected
independently with probability $1/2$.  (We use here $\{+1,-1\}$ instead of
$\{1,0\}$ for the entries of the adjacency matrix. This encoding  is unconventional
but turns out to be mathematically convenient.)
Due to the symmetry of the model, the set  $\sC_N\subseteq [N]$ does
not need to be random and can be chosen
arbitrarily in this case. 

It is easy to see that, allowing for exhaustive search, the hidden
clique can be found with high probability as soon as $|\sC_N|\ge
2(1+\eps)\log_2N$ for any $\eps>0$. This procedure has complexity
$\exp[\Theta((\log N)^2)]$. Viceversa, if  $|\sC_N|\le
2(1-\eps)\log_2N$, then the clique cannot be uniquely identified. 

Despite a large body of research, the best polynomial-time algorithms to date
require $|\sC_N|\ge c\sqrt{N}$ to succeed with high probability. This
was first achieved by Alon, Krivelevich and Sudakov
\cite{alon1998finding} through a spectral technique. It is useful to
briefly discuss this class of methods and their limitations. Letting 
$u_{\sC_N}\in\reals^N$ be the indicator vector on $\sC_N$ (i.e. the
vector with entries $(u_{\sC_N})_i = 1$ for $i\in\sC_N$ and $=0$
otherwise), we have 
\begin{align}
W = u_{\sC_N}u_{\sC_N}^{\sT} + Z - Z_{\sC_N,\sC_N}\, .\label{eq:RankDeformation}
\end{align}
Here $Z\in\reals^{N\times N}$ is a symmetric matrix with i.i.d. entries
$(Z_{ij})_{i<j}$ uniformly random in $\{+1,-1\}$ and $Z_{\sC_N,\sC_N}$ is
the matrix obtained by zeroing all the entries $Z_{ij}$ with
$\{i,j\}\not\subseteq\sC_N$.
Denoting by $\|A\|_2$ the $\ell_2$ operator norm of matrix $A$, we have 
$\|u_{\sC_N}u_{\sC_N}^{\sT}\|_2 = \|u_{\sC_N}\|_2^2 = |\sC_N|$. On the other
hand, a classical result by F\"uredi and Koml\"os
\cite{furedi1981eigenvalues} implies that, with high probability,
$\|Z\|_2 \le c_0\sqrt{N}$ and $\|Z_{\sC_N,\sC_N}\|_2 \le
c_0\sqrt{|\sC_N|}$. Hence, if $|\sC_N|\ge c\sqrt{N}$ with $c$ large
enough, the first term in the decomposition (\ref{eq:RankDeformation})
dominates the others. By a standard matrix perturbation argument \cite{davis1970sin},
letting $v_1$ denote the principal eigenvector of $W$, we have 
$\|v_1-u_{\sC_N}/\sqrt{|\sC_N|}\|_2\le \eps$ provided the constant $c=c(\eps)$ is chosen large
enough. It follows that selecting the $|\sC_N|$ largest entries of $v_1$
yields an estimate $\hsC_N\subset [N]$ that includes at least half of the
vertices of $\sC_N$: the other half can be subsequently identified
through a simple procedure \cite{alon1998finding}.

The spectral approach does not exploit the fact that $|\sC_N|$ is much
smaller than $N$ or --in other words-- the fact that $u_{\sC_N}$ is a
\emph{sparse} vector. Recent results in random matrix theory
suggest that it is unlikely that the same approach can be pushed to
work for $|\sC_N|\le (1-\eps)\sqrt{N}$ (for any $\eps>0$). For instance the following
is a consequence of \cite[Theorem
2.7]{knowles2011isotropic}.
(The proof is provided in Appendix \ref{sec:SpectralProof})

%immediate and we omit it since the statement below is
%only used as motivation.)
%
\begin{proposition}\label{propo:Spectral}
  Let $e_{\sC_N}= u_{\sC_N}/N^{1/4}$ be the normalized indicator vector on
the vertex set $\sC_N$, and $Z$ a Wigner random matrix with subgaussian
entries such that $E\{Z_{ij}\}=0$, $\E\{Z_{ij}^2\}=1/N$
Denote by $v_1,v_2,v_3,\dots,v_{\ell}$  the eigenvectors of
$W = u_{\sC_N}u_{\sC_N}^\sT + Z$,  corresponding to the $\ell$ 
largest eigenvalues.

Assume $|\sC_N|\ge (1+\eps)\sqrt{N}$ for some $\eps>0$. Then, with high probability,
$\<v_1,e_{\sC_N}\>\ge \min(\sqrt{\eps},\eps)/2$.
Viceversa, assume $|\sC_N|\le (1-\eps)\sqrt{N}$. Then, with high probability
for any fixed constant $\delta>0$, $|\<v_i,e_{\sC_N}\>|\le c\,N^{-1/2 + \delta}$ for all $i\in\{1,\dots,\ell\}$
and some $c=c(\eps,\ell)$.
\end{proposition}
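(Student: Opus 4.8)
\medskip
\noindent\textit{Proof strategy.}\ The plan is to read the matrix in the proposition as a rank-one additive deformation of a Wigner matrix and to deduce the statement from the sharp (isotropic) form of the BBP phase transition. Put $\theta := \|e_{\sC_N}\|_2^2 = |\sC_N|/\sqrt{N}$ and $\hat e := e_{\sC_N}/\|e_{\sC_N}\|_2$, so that $W = \theta\,\hat e\,\hat e^{\sT} + Z$ with $\hat e$ a unit vector, and the two hypotheses become $\theta \ge 1+\eps$ and $\theta \le 1-\eps$. Since $\sC_N$ is independent of $Z$ (or may be taken fixed by the symmetry already noted in the introduction), I condition on $\sC_N$ and treat $\hat e$ as a deterministic unit vector. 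The one analytic input will be the isotropic local semicircle law of \cite{knowles2011isotropic}, in two guises: (i) for the undeformed $Z$, writing $G_{\hat e}(z) := \langle \hat e, (Z-z)^{-1}\hat e\rangle$, one has $G_{\hat e}(z) = m(z) + o(1)$ and, by Cauchy estimates, $G_{\hat e}'(z) = m'(z) + o(1)$, uniformly on $\{z:\operatorname{dist}(z,[-2,2])\ge\delta'\}$; (ii) for the deformed $W$ in the subcritical regime, $\operatorname{Im}\langle e_{\sC_N},(W-E-i\eta)^{-1}e_{\sC_N}\rangle \le C\,\theta\,\big(\operatorname{Im} m(E+i\eta) + N^{\xi}/(N\eta)\big)$ uniformly on $\{|E|\le 3,\ \eta = N^{-1+\alpha}\}$. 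Here $m$ is the Stieltjes transform of the semicircle law, normalized so that $m(\lambda)\in(-1,0)$ for real $\lambda>2$ and $m+1/m=-\lambda$.

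\medskip
\noindent\textit{Supercritical case, $\theta\ge 1+\eps$.}\ A real $\lambda$ with $\operatorname{dist}(\lambda,[-2,2])\ge\delta'$ is an eigenvalue of $W$ iff $1 + \theta\,G_{\hat e}(\lambda) = 0$, and the associated eigenvector is the normalization of $(Z-\lambda)^{-1}\hat e$. Since $1 + \theta\,m(\lambda)$ is strictly monotone on $(2,\infty)$ with unique zero $\theta + 1/\theta > 2$ and stays bounded away from $0$ on $(-\infty,-2)$, the approximation $G_{\hat e}\approx m$ forces $W$ to have exactly one outlier, at $\lambda_1 = \theta + 1/\theta + o(1)$, separated from the rest of the spectrum by a positive constant; hence $v_1$ is the top eigenvector and $v_1 \propto (Z-\lambda_1)^{-1}\hat e$. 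Therefore
\[
\langle v_1,\hat e\rangle^2 = \frac{G_{\hat e}(\lambda_1)^2}{\langle\hat e,(Z-\lambda_1)^{-2}\hat e\rangle} = \frac{m(\lambda_1)^2}{m'(\lambda_1)} + o(1) = 1 - \frac{1}{\theta^2} + o(1),
\]
using $\langle\hat e,(Z-\lambda)^{-2}\hat e\rangle = G_{\hat e}'(\lambda)$, the identity $m' = m^2/(1-m^2)$, and $m(\lambda_1) = -1/\theta + o(1)$. Since $e_{\sC_N} = \sqrt{\theta}\,\hat e$, this gives $\langle v_1,e_{\sC_N}\rangle^2 = \theta - 1/\theta + o(1)$; as $t\mapsto t - 1/t$ is increasing, $\theta - 1/\theta \ge (1+\eps) - (1+\eps)^{-1} \ge \tfrac14\min(\sqrt{\eps},\eps)^2$, so choosing the sign of $v_1$ yields $\langle v_1,e_{\sC_N}\rangle \ge \min(\sqrt{\eps},\eps)/2$ with high probability.

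\medskip
\noindent\textit{Subcritical case, $\theta\le 1-\eps$.}\ Now $1 + \theta\,m(\lambda)$ is bounded away from $0$ for all real $\lambda$ with $\operatorname{dist}(\lambda,[-2,2])\ge\delta'$ (it lies in $(\eps,1)$ on $(2,\infty)$ and in $(1,1+\theta)$ on $(-\infty,-2)$), so the same approximation shows $W$ has \emph{no} eigenvalue outside $[-2-\delta',2+\delta']$; in particular $\lambda_1,\dots,\lambda_\ell\in[-3,3]$. I then run the standard ``delocalization from the local law'' argument in isotropic form: since $\operatorname{Im}\langle e_{\sC_N},(W-E-i\eta)^{-1}e_{\sC_N}\rangle = \sum_i \eta\,\langle v_i,e_{\sC_N}\rangle^2/((\lambda_i-E)^2+\eta^2)$, dropping all but the $i$-th nonnegative term at $E=\lambda_i$ and $\eta=N^{-1+\alpha}$ gives $\langle v_i,e_{\sC_N}\rangle^2 \le \eta\operatorname{Im}\langle e_{\sC_N},(W-\lambda_i-i\eta)^{-1}e_{\sC_N}\rangle \le C\,\theta\,\eta\,N^{\xi} \le N^{-1+\alpha+\xi}$; choosing $\alpha,\xi$ small enough that $\alpha+\xi\le 2\delta$ yields $|\langle v_i,e_{\sC_N}\rangle| \le c\,N^{-1/2+\delta}$ for every $i\le\ell$, as claimed.

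\medskip
\noindent The only genuine work is in verifying the hypotheses of, and correctly invoking, the isotropic local semicircle law of \cite{knowles2011isotropic} on the two spectral regimes above — the outlier region $\operatorname{dist}(\lambda,[-2,2])\ge\delta'$ for the supercritical part, and the microscopic scale $\eta=N^{-1+\alpha}$ near the bulk edge for the subcritical part — and then in propagating the associated $o(1)$ and $N^{\xi}$ error terms through the self-consistent equation and the overlap formula (the only mildly delicate point being that $\theta$ may be large in the supercritical case, which however only strengthens the conclusion). No estimate beyond those already available in \cite{knowles2011isotropic} is needed; the remainder is bookkeeping.
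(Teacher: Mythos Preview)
Your argument is correct and follows the now-standard BBP/isotropic-delocalization route, but it differs from the paper's proof in both halves. For the supercritical case the paper is more elementary: from $e_{\sC_N}\langle e_{\sC_N},v_1\rangle + Zv_1 = \lambda_1 v_1$ it takes the inner product with $v_1$ to get $\langle e_{\sC_N},v_1\rangle^2 = \lambda_1 - \langle v_1, Zv_1\rangle \ge \lambda_1 - \|Z\|_2$, and then only needs the outlier location $\lambda_1 \ge 2 + \min(\eps,\eps^2)$ from \cite{knowles2011isotropic} together with $\|Z\|_2 \le 2 + o(1)$; this avoids the resolvent computation entirely, at the cost of not giving your sharper asymptotic $\langle v_1,e_{\sC_N}\rangle^2 = \theta - 1/\theta + o(1)$. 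For the subcritical case the paper instead dots the eigenvalue equation with the top eigenvector $\varphi_1$ of $Z$ to obtain $|\langle e_{\sC_N},v_1\rangle| \le |\lambda_1 - \theta_1|/|\langle \varphi_1, e_{\sC_N}\rangle|$, bounds the numerator by eigenvalue sticking $|\lambda_1 - \theta_1| \le N^{-1+\delta}$ (again from \cite{knowles2011isotropic}), and lower-bounds the denominator via a Berry--Esseen argument that exploits the independence of $\sC_N$ and $Z$. Your isotropic-delocalization route is cleaner here --- it works for deterministic $\sC_N$ and bypasses the CLT step --- but it does require the microscopic-scale isotropic law for the \emph{deformed} $W$; as you implicitly note, this follows from the law for $Z$ via the rank-one resolvent identity $\langle\hat e,(W-z)^{-1}\hat e\rangle = G_{\hat e}(z)/(1+\theta G_{\hat e}(z))$, whose denominator is bounded away from zero when $\theta\le 1-\eps$.
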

%Y: remark here ?
%\begin{remark}
  
%\end{remark}

In other words, for $|\sC_N|$ below $\sqrt{N}$ and any fixed $\ell$,  the first $\ell$ principal eigenvectors of $W$
are essentially no more correlated with the set $\sC_N$ than a random unit vector. 
A natural reaction to this limitation is to try to exploit the sparsity of
$u_{\sC_N}$. Ames and Vavasis \cite{ames2011nuclear} studied a convex
optimization formulation wherein $W$ is approximated by a sparse low-rank matrix.
These two objectives (sparsity and rank) are convexified through the usual $\ell_1$-norm
and nuclear-norm relaxations. These authors prove that this convex
relaxation approach is
successful with high probability, provided $|\sC_N|\ge c\sqrt{N}$ for an
unspecified constant $c$. A similar result follows from the robust PCA
analysis of Cand\'es, Li, Ma, and Wright \cite{candes2011robust}.

Dekel, Gurel-Gurevich and Peres \cite{dekel2011finding}
developed simple iterative schemes with $O(N^2)$ complexity (see also
\cite{feige2010finding} for similar approaches). For the best of their
algorithms, these authors prove that it succeeds with high probability
provided $|\sC_N|\ge 1.261\sqrt{N}$. Finally, \cite{alon1998finding}
also provide a simple procedure that, given an algorithm that is
successful for $|\sC_N|\ge c\sqrt{N}$ produces an algorithm that is
successful for $|\sC_N|\ge c\sqrt{N/2}$, albeit with complexity $\sqrt{N}$ times larger.

Our first result proves that the hidden clique can be
identified in nearly linear time well below the spectral threshold $\sqrt{N}$,
see Proposition \ref{propo:Spectral}.
\begin{theorem}\label{thm:MainClique}
Assume $|\sC_N|\ge (1+\eps)\sqrt{N/e}$, for some $\eps>0$ independent of $N$.
Then there exists a $O(N^2\log N)$ time algorithm that identifies the hidden
clique $\sC_N$ with high probability.
\end{theorem}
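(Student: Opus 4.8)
The plan is to design a message-passing algorithm on the complete graph that exploits the sparsity of $u_{\sC_N}$ in a way the spectral method cannot. The natural candidate is an Approximate Message Passing (AMP) iteration: start from a suitable initialization $x^0\in\reals^N$ and iterate $x^{t+1} = \frac{1}{\sqrt N}W f_t(x^t) - \textrm{(Onsager correction)}$, where $f_t:\reals\to\reals$ is a scalar nonlinearity applied coordinate-wise. The key point is that AMP admits an exact asymptotic characterization via \emph{state evolution}: the empirical distribution of the entries of $x^t$ (restricted to $\sC_N$, resp.\ to $\sC_N^c$) converges to an explicit one-parameter family of Gaussians whose means and variances obey a deterministic recursion. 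With the $\{+1,-1\}$ encoding and $|\sC_N| = \sqrt{N/\gamma}$, a coordinate in $\sC_N$ sees signal of order $\sqrt{\gamma}$ per step while noise has order $1$; one should track the effective signal-to-noise ratio $\mu_t$ and show that the recursion $\mu_{t+1} = F(\mu_t)$ has the property that for $\gamma < e$ (i.e.\ $|\sC_N| > \sqrt{N/e}$), starting from an arbitrarily small positive $\mu_0$ it escapes to a large fixed point, whereas for $\gamma > e$ the only stable fixed point is $\mu=0$. The appearance of $e$ should come from optimizing the one-step amplification over the choice of nonlinearity $f_t$ — the Bayes-optimal $f_t$ being (a rescaling of) the posterior mean of a coordinate being in the clique, essentially a thresholding/exponential function — and the calculation $\max$ over thresholds yields the constant $1/e$.

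The steps, in order, would be: (1) set up the AMP iteration with the Onsager term and prove a state-evolution theorem for it in this planted model, adapting the Bolthausen conditioning / Bayati–Montanari framework to the rank-one-plus-Wigner structure with a sparse spike; (2) analyze the scalar recursion $\mu_{t+1}=F(\mu_t)$ and show the threshold phenomenon at $\gamma=e$, including that from a weak but nonzero initialization a constant (not depending on $N$) number of iterations $t_*=t_*(\eps)$ suffices to reach $\mu_{t_*}$ above a fixed constant; (3) produce the weak initialization $x^0$ — one cannot use a random vector (it has $\mu_0 \sim N^{-1/2}$, too small to amplify in $O(1)$ steps), so instead I would run AMP for a logarithmic number of steps starting from something mildly correlated, or better, use the diagonal/row structure of $W$ or a short power iteration to get $\mu_0$ bounded below by a constant, at the cost of the extra $\log N$ factor in the running time; (4) after reaching high correlation, apply a clean-up / rounding step (take the top $|\sC_N|$ coordinates, then use the standard Alon–Krivelevich–Sudakov-type two-phase refinement referenced via \cite{alon1998finding}) to recover $\sC_N$ exactly with high probability; (5) account for the running time: each AMP step is one matrix–vector product, $O(N^2)$, and we do $O(\log N)$ of them, giving $O(N^2\log N)$.

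The main obstacle I anticipate is step (1) combined with step (3): state-evolution theorems are typically proved in a regime where the number of iterations is fixed as $N\to\infty$, but here I seemingly need $\Theta(\log N)$ iterations to boost a vanishing initial correlation, and controlling the accumulation of the $o(1)$ state-evolution errors over a growing number of steps is delicate. The cleanest route around this is to decouple the two phases: use the first phase (a power-iteration-like or combinatorial device, analyzed directly, not via state evolution) purely to certify $\mu_0 \ge \const$ for some absolute constant, and then invoke state evolution only for the $O(1)$-iteration second phase where it applies cleanly. A secondary technical point is that state evolution must be established for the \emph{sparse} spike $u_{\sC_N}$ (the nonzero block has size $\sqrt{N}$, vanishing fraction of coordinates), so the "restricted to $\sC_N$" empirical distribution involves only $\sqrt N$ coordinates; one must check the concentration arguments still go through at that scale, or equivalently work with the joint empirical distribution of $(x^t_i, (u_{\sC_N})_i)$ and track it correctly. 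Finally one must verify that the optimal-nonlinearity computation genuinely yields the constant $e$ and not merely $c\sqrt N$ for some unspecified $c$ — this is the place where the factor-of-$\sqrt e$ improvement over naive thresholding, and the sharpness of the claim, is earned.
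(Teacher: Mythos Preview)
Your overall architecture---message passing with a carefully chosen nonlinearity, state evolution yielding the scalar recursion with threshold $1/\sqrt{e}$, followed by a cleanup to exact recovery---matches the paper's. But your step (3) contains a real misconception that, if pursued, would send you in circles.

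You write that a random or uninformative initialization gives $\mu_0\sim N^{-1/2}$ and therefore you need a separate ``first phase'' (power iteration or a combinatorial device) to certify $\mu_0\ge\const$ before invoking state evolution. This is wrong on two counts. First, a power-iteration warm start is exactly the spectral method, which by Proposition~\ref{propo:Spectral} carries no information below $\sqrt{N}$; it cannot furnish the constant you need in the regime $|\sC_N|\in[\sqrt{N/e},\sqrt{N}]$. Second, and more importantly, no warm start is needed at all: the paper initializes with the deterministic all-ones vector $\theta^0\equiv 1$. One step of message passing then computes (essentially) the normalized row sums $\theta^1_i=\sum_{\ell}A_{\ell i}$, which for $i\in\sC_N$ has mean $\lambda|\sC_N|/\sqrt{N}=\lambda\kappa$ and unit variance. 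Thus $\mu_1=\lambda\kappa>e^{-1/2}$ is already a constant; the signal-to-noise ratio starts at order one, not $N^{-1/2}$. State evolution (Lemma~\ref{lem:main}) is then applied for a \emph{fixed} number $t^*=t^*(\eps)$ of iterations, and the scalar recursion $\mu_{t+1}=\lambda\kappa\,e^{\mu_t^2/2}$ (obtained with the optimal exponential nonlinearity, approximated by polynomials in Lemma~\ref{lem:approxstateevol}) diverges from this constant start. Your worry about accumulating $o(1)$ state-evolution errors over $\Theta(\log N)$ steps evaporates.

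The $O(\log N)$ in the running time has a different origin than you suppose: it comes from the power method in the \emph{cleanup} phase (Lemma~\ref{pro:algproof}), applied to the restricted matrix $A|_{\tsC_N}$ after message passing has already produced a set $\tsC_N$ containing $(1-\eps)|\sC_N|$ clique vertices and at most $\eps N$ non-clique vertices. On this restricted matrix the rank-one signal dominates the noise and power iteration converges in $O(\log N)$ steps. A minor technical difference: the paper works with the non-backtracking message form $\theta_{i\to j}^t$ and proves state evolution by the moment method (tree expansions, comparison with an iteration using fresh independent copies of $A$), rather than the Onsager-corrected vector AMP with Bolthausen conditioning you propose. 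Both routes should work, but the moment method handles the sublinear $|\sC_N|=\Theta(\sqrt{N})$ regime---the concern you correctly flag---more transparently, since one tracks moments of individual coordinates rather than empirical measures.
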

In Section \ref{sec:Complete} we will state and prove a generalization of this
theorem for arbitrary --not necessarily known-- distributions $Q_0$, $Q_1$.

Our algorithm is based on a quite different philosophy with respect to
previous approaches to the same problem. We aim at estimating optimally the set $\sC_N$ by computing the
posterior probability that $i\in\sC_N$, given edge data $W$. This is, in general,
$\#$P-hard and possibly infeasible if $Q_0$, $Q_1$ are
unknown. We therefore consider an algorithm derived from  \emph{belief propagation}, a
heuristic machine learning method for approximating posterior probabilities in
graphical models. We develop a rigorous analysis of this algorithm that is
asymptotically exact as $N\to\infty$, and prove that indeed the algorithm converges
to the correct set of vertices $\sC_N$ for $|\sC_N|\ge
(1+\eps)\sqrt{N/e}$. Viceversa, the algorithm converges to an
uninformative fixed point for $|\sC_N|\le (1-\eps)\sqrt{N/e}$.

\vspace{0.2cm}

Given Theorem \ref{thm:MainClique}, it is natural to ask whether the
threshold $\sqrt{N/e}$ has a fundamental computational meaning or is
instead only relevant for our specific algorithm. Recently,
\cite{feldman2012statistical} proved complexity lower bounds for the
hidden clique model, in a somewhat different framework. In the
formulation of \cite{feldman2012statistical}, one can query columns of
$W$ and a new realization from the distribution of $W$ given $\sC_N$ is
instantiated at each query. Assuming that each column is queried
$O(1)$ times, their lower bound would require $|\sC_N|\ge
N^{1/2-\eps}$. While this analysis can possibly be adapted to
our setting, it is unlikely to yield a lower bound of the form
$|\sC_N|\ge c\sqrt{N}$ with a sharp constant $c$.

Instead, we take a different point of view, and consider the hidden set problem on a
\emph{general background graph} $G_N$. Let us emphasize once more that
$G_N$ is non random and that all the information about the hidden set
is carried by the edge labels $W=(W_{lk})_{(l,k)\in E_N}$. 
In addition, we attach to the edges a collection of independent labels $U =
(U_{lk})_{(l,k)\in E_N}$ i.i.d. and uniform in $[0,1]$.
The $U$ labels exist to provide for (possible) randomization in the algorithm.
Given such
a graph $G_N$ with labels $W$, $U$,  a vertex $i\in [N]$ and
$t\ge 0$, we let $\Ball_{G_N}(i;t)$ denote the subgraph of $G_N$
induced by those vertices $j\in[N]$ whose graph distance from $i$ is
at most $t$. We regard $\Ball_{G_N}(i;t)$ as a graph rooted at $i$,
with edge labels $W_{jl}, U_{jl}$ inherited from $G_N$.
\begin{definition}
An algorithm for the hidden
set problem is said to be \emph{$t$-local} if, denoting by $\hsC_N$ its
output, the membership $(i\in \hsC_N)$ is a function of the neighborhood
$\Ball_{G_N}(i;t)$. We say that it is \emph{local} if it is $t$-local
for some $t$ independent of $N$.
\end{definition}
The concept of (randomized) local algorithms  was introduced in
\cite{angluin1980local} and formalizes the notion of an algorithm that
can be run in $O(1)$ time in a distributed network. We refer to 
\cite{linial1992locality,naor1995can} for earlier contributions, and to 
\cite{suomela2013survey} for a recent survey. 

We say that a sequence of graphs $\{G_N\}_{N\ge 1}$ is \emph{locally
  tree-like} if, for any $t\ge 0$, the fraction of vertices $i\in [N]$
such that $\Ball_{G_N}(i;t)$ is a tree converges to one as
$N\to\infty$. As a standard example, random regular graphs are 
locally tree-like. The next result is proved in Section \ref{sec:Sparse}.
\begin{theorem}\label{thm:Sparse}
Let $\{G_N\}_{N\ge 1}$ be a sequence of locally tree-like graphs, with
regular degree $(\Delta+1)$, and let $\sC_N\subseteq [N]$ be a uniformly random
subset of the vertices of given size $|\sC_N|$.
If $|\sC_N|\le (1-\eps)N/\sqrt{e\Delta}$ for some $\eps>0$,
there exists $\xi>0$ independent of $\Delta$ and $\eps$ such that 
any local algorithm outputs a set of vertices $\hsC_N$ with
$\E[|\sC_N\triangle \hsC_N|]\ge N\xi/\sqrt{\Delta}$ for all $N$ large enough.

Viceversa, if $|\sC_N|\ge (1+\eps)N/\sqrt{e\Delta}$ for some $\eps>0$,
there exists $\oxi(\eps)>0$ and a local algorithm that
outputs a set of vertices $\hsC_N$ satisfying
$\E[|\sC_N\triangle \hsC_N|]\le N\exp(-\oxi(\eps)\sqrt{\Delta})$ for all $N$
large enough.
\end{theorem}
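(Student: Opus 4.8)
The plan is to analyze belief propagation (BP) on the local tree-like structure, where both directions of the theorem reduce to a one-dimensional recursion for the log-likelihood ratio of the event $\{i \in \sC_N\}$ along the tree. Since the graphs are locally tree-like and $t$-local algorithms see only $\Ball_{G_N}(i;t)$, for any fixed $t$ a $1-o(1)$ fraction of vertices have a neighborhood that is exactly a depth-$t$ $(\Delta+1)$-regular tree, with i.i.d. edge labels conditioned on the (unknown) membership pattern of the vertices. On the infinite $(\Delta+1)$-regular tree rooted at $i$, condition on whether $i\in\sC_N$; the hidden set restricted to the tree is then a branching structure in which, given a vertex is in $\sC_N$, each of its $\Delta$ children (in the rooted picture) is independently in $\sC_N$ with probability $\rho_N \equiv |\sC_N|/N$, and given a vertex is not in $\sC_N$ a child is in $\sC_N$ with probability $\rho_N$ as well — but the edge label $W_{ij}$ is informative precisely when both endpoints lie in $\sC_N$. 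With $\rho_N = c/\sqrt{\Delta}$ and $c = (1\pm\eps)/\sqrt{e}$, the expected number of in-clique children of an in-clique vertex is $\Delta\rho_N = c\sqrt{\Delta}$, which is large, so one should work with the rescaled/linearized BP recursion.

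First I would set up the exact BP fixed-point equations on the tree for the messages $\nu_{i\to j}$ = posterior odds that $i\in\sC_N$ based on the subtree hanging off the edge $(i,j)$, and identify the trivial (uninformative) fixed point $\nu \equiv \rho_N/(1-\rho_N)$ corresponding to ``no information.'' Then I would linearize the recursion around this fixed point. Because each in-clique vertex has a Poisson$(c\sqrt{\Delta})$-like number of informative in-clique neighbors, a central-limit/Gaussian approximation applies to the incoming sum of log-odds increments: the linearized map sends a message with mean $m$ and variance $v$ (under the in-clique law) to a new message whose mean and variance are governed by a scalar map. The key computation is that the per-edge information gain from an in-clique neighbor, combined with the branching number $\Delta\rho_N$, yields a stability exponent proportional to $\log(e\Delta\rho_N^2) = \log(c^2 e)$. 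For $c^2 e > 1$, i.e. $c > 1/\sqrt{e}$ (the $(1+\eps)$ case), the informative perturbation is amplified along the tree: a density-evolution / state-evolution argument shows the BP messages at depth $t$ separate the two hypotheses ($i\in\sC_N$ vs.\ not) with error decaying as $\exp(-\Omega(\sqrt{\Delta}))$ after $O(1)$ rounds, which — after rounding the final marginals with an appropriate threshold and adding the standard clean-up step — gives a $t$-local algorithm with $\E[|\sC_N\triangle\hsC_N|] \le N\exp(-\oxi(\eps)\sqrt{\Delta})$. For $c^2 e < 1$ (the $(1-\eps)$ case), the linearized map is a contraction, so the only stable fixed point is the uninformative one, and any perturbation introduced by finite depth $t$ decays; this forces every $t$-local estimator (which is, by the local structure, a function of a bounded-depth neighborhood whose law is close to the tree law) to misclassify a constant fraction of the $\Theta(N/\sqrt{\Delta})$ clique vertices, giving $\E[|\sC_N\triangle\hsC_N|]\ge N\xi/\sqrt{\Delta}$.

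For the lower bound I would make this rigorous via a two-point / mutual-information argument rather than by analyzing a particular algorithm: fix a typical vertex $i$ whose depth-$t$ neighborhood is a regular tree, and compare the distribution of $\Ball_{G_N}(i;t)$ conditioned on $i\in\sC_N$ versus conditioned on $i\notin\sC_N$. The contraction of the linearized BP map (equivalently, the subcriticality $c^2 e<1$ of the associated branching recursion) implies these two conditional laws have total variation distance bounded away from $1$, uniformly in $t$ and $N$; hence no function of $\Ball_{G_N}(i;t)$ can determine membership with vanishing error, and summing the per-vertex error over the $|\sC_N| + (\text{false positives}) = \Theta(N/\sqrt{\Delta})$ relevant vertices gives the claimed $N\xi/\sqrt{\Delta}$ bound with $\xi$ absolute. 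I would handle the atypical vertices (whose neighborhood is not a tree) by noting their fraction is $o(1)$, which is negligible against the $\Theta(1/\sqrt{\Delta})$-fraction lower bound once $\Delta$ is fixed and $N\to\infty$.

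The main obstacle I anticipate is making the Gaussian/linearized approximation of the BP recursion rigorous and uniform: one must control (i) the error in replacing the true BP map by its linearization near the uninformative fixed point over $O(1)$ iterations, (ii) the fluctuations of the message distributions (a CLT with explicit Berry–Esseen-type control, since the number of informative neighbors is only $\Theta(\sqrt{\Delta})$, not $\Theta(\Delta)$), and (iii) the passage from the idealized infinite-tree computation to the actual finite graph, where neighborhoods are only approximately trees and the membership indicators of nearby vertices are not exactly independent. The constant $1/\sqrt{e}$ emerges precisely from the threshold $c^2 e = 1$ in step (i)–(ii), so the delicate part is showing that the approximation errors do not wash out the factor $e$; I expect this to require a careful second-moment / coupling argument tying the finite-graph local neighborhood law to the tree law with error $o(1)$ for each fixed depth, after which the tree computation is self-contained.
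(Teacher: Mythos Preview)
Your overall architecture matches the paper's: reduce to the infinite $(\Delta+1)$-regular tree via the locally tree-like property, use that belief propagation computes the exact posterior on trees (so BP is the optimal $t$-local rule, and bounding its error suffices for the lower bound), and analyze the distribution of the BP likelihood ratio via a one-dimensional recursion in $t$. The upper bound in the paper is also two-phase as you suggest: BP to reach arbitrarily small constant error, then a separate majority-type cleaning rule on a further $O(1)$ generations to reach $\exp(-c_*\sqrt{\Delta})$.

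The gap is in how the constant $1/\sqrt{e}$ actually appears. You repeatedly invoke \emph{linearization} of the BP map around the uninformative fixed point and claim a ``stability exponent $\log(e\Delta\rho_N^2)=\log(ec^2)$''. But linearizing (first-order Taylor) the BP/log-odds recursion around the trivial fixed point gives a contraction threshold at $\kappa=1$, not $\kappa=1/\sqrt{e}$; the factor $e$ is a genuinely nonlinear effect and is invisible to any Jacobian computation. In the paper the threshold comes from the \emph{nonlinear} recursion on moments of the (non-log) likelihood ratio: one shows $\E[\gamma^{t+1}(1)]\le \kappa\,e^{\kappa\,\E[\gamma^t(1)]}$, and the scalar map $\gamma\mapsto \kappa e^{\kappa\gamma}$ has a finite fixed point iff $\kappa<1/\sqrt{e}$. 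Equivalently, in the Gaussian state evolution for $\Gamma^t=\log\gamma^t$ one gets the exact (not linearized) recursion $\delta_{t+1}=\kappa^2 e^{\delta_t}$ for the mean separation, whose divergence threshold is again $\kappa=1/\sqrt{e}$. So your formula $\log(ec^2)$ is numerically correct but is not produced by the mechanism you name; if you actually carried out the linearization you describe, you would obtain the wrong threshold and the lower-bound argument (``contraction of the linearized map $\Rightarrow$ TV bounded away from $1$'') would fail to say anything in the range $1/\sqrt{e}<\kappa<1$.

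Concretely, for the lower bound you should replace the linearization by a direct moment bound on the exact likelihood ratio $\gamma^t$ (Markov/Paley--Zygmund on $\gamma^t(1)$ then gives that the posterior stays bounded with uniformly positive probability, hence TV is bounded away from $1$). For the upper bound, the Gaussian state evolution of $\log\gamma^t$ is fine but only delivers $o(1)$ error; the jump to $e^{-c\sqrt{\Delta}}$ genuinely requires the second-phase decision rule, which you mention but should not expect to fall out of the BP analysis alone.
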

Notice that, on a bounded degree graph, the hidden set $\sC_N$ can not
be identified exactly with high probability. Indeed we would not be
able to  assign  a single vertex $i$ with high
probability of success,  even if we knew exactly the status of
all of its neighbors.
On the other hand, purely random guessing  yields
$\E[|\sC_N\triangle \hsC_N|] =  N\Theta(1/\sqrt{\Delta})$.
 The last theorem, thus, establishes a threshold
behavior: local algorithms can reconstruct the hidden set with
small error if and only if $|\sC_N|$ is larger than $N/\sqrt{e\Delta}$.

Unfortunately Theorem \ref{thm:Sparse} only covers the case of sparse or locally tree-like
graphs. We let $N\to\infty$ at $\Delta$ fixed and then take $\Delta$
arbitrarily large. However, if we naively apply it to the case of
complete background graphs $G_N=K_N$, by setting $\Delta=N-2$, we get
a threshold at $|\sC_N|\approx\sqrt{N/e}$ which coincides with the one
in Theorem \ref{thm:MainClique}. 
This suggests that $\sqrt{N/e}$ might
be a fundamental limit for solving the hidden clique problem in
nearly linear time. It would be of much interest to clarify whether this is
indeed the case.

\vspace{0.2cm}

The contributions of this paper can be summarized as follows:
\begin{enumerate}
\item We develop a new algorithm based on the belief propagation
  heuristic in machine learning, that applies to the general hidden
  set problem.
\item We establish a sharp analysis of the algorithm evolution,
  rigorously establishing that it can be used to find hidden cliques of size $\sqrt{N/e}$ in
  random graphs. The analysis applies to more general noise models as
  well.
\item We generalize the hidden set problem to arbitrary graphs. For 
locally tree-like graphs of degree $(\Delta+1)$, we prove that local
algorithms succeed in finding the hidden set (up to small errors) if
and only if its size is larger than $N/\sqrt{e\Delta}$.
\end{enumerate}
The complete graph case is treated in Section \ref{sec:Complete}, with
technical proofs deferred to Section \ref{sec:CompleteProofs}. The locally
tree-like case is instead discussed in Section \ref{sec:Sparse} with proofs
in Section \ref{sec:TreeLemmas}.
%
%*************************************************
%
\subsection{Further related work}

A rich line of research in statistics addresses the
problem of identifying the non-zero entries in a sparse vector (or
matrix) $x$ from observations $W=x+Z$ where $Z$ has typically i.i.d. standard
Gaussian entries. In particular 
\cite{arias2005near,addario2010combinatorial,arias2011detection,bhamidi2012energy} study  cases in
which the sparsity pattern of $x$ is `structured'. For instance, we can
take  $x\in\reals^{N\times N}$ a matrix with $x_{ij}=\mu$ if
$\{i,j\}\subseteq\sC_N$ and $x_{ij}=0$ otherwise.
This fits the framework studied in this paper, for $G_N$ the complete
graph and $Q_0=\normal(0,1)$, $Q_1=\normal(\mu,1)$. This literature
however disregards computational considerations. 
Greedy search methods were developed in several papers, see e.g. 
\cite{sun2008size,shabalin2009finding}.

Also, the decomposition (\ref{eq:RankDeformation}) indicates a
connection with sparse principal component analysis
\cite{zou2006sparse,johnstone2009consistency,d2007direct,d2008optimal}. 
This is the
problem of  finding a sparse low-rank approximation of a given data
matrix $W$. Remarkably, even for sparse PCA, there is a large gap
between what is statistically feasible and what is achievable by
practical algorithms. Berthet and Rigollet \cite{berthet2013computational} recently
investigated the implications of the assumption that hidden clique is
hard to solve for $|\sC_N=o(\sqrt{N})$ on sparse PCA. 

The algorithm we introduce for the case $G_N=K_N$ is analogous to the 
`linearized BP' algorithm of
\cite{montanari2006analysis,guo2006asymptotic},
and to the approximate message passing (AMP) algorithm of
\cite{DMM09,BM-MPCS-2011,BM-Universality}. These ideas have been applied to low-rank
approximation in \cite{rangan2012iterative}.  The present setting
poses however several technical challenges with respect to earlier
work in this area: $(i)$~The entries of the data matrix are not i.i.d.; $(ii)$~They
are non-Gaussian with --in general-- non-zero mean; 
$(iii)$~We seek exact recovery instead of estimation; $(iv)$~The sparsity set
$\sC_N$ to be reconstructed scales sublinearly with $N$.

Finally, let us mention that a substantial literature studies the
behavior of message passing algorithms on sparse random graphs
\cite{RiU08,MezardMontanari}. In this paper, a large part of our
technical effort is instead devoted to a similar analysis on the
complete graph, in which  simple local convergence arguments fail.
%

%
%*************************************************
%
\subsection{Notations}

Throughout the paper, $[M]=\{1,2,\dots,M\}$ denotes the set of first
$M$ integers.  We employ a slight abuse of notation to write $[N]\bs i, j$
for $[N]\bs\{i, j\}$. The indicator function is denoted by $\ind(\,\cdot\,)$.

We write $X\sim P$ when a random variable $X$ has a distribution $P$.
We will sometimes write $\E_P$ to denote expectation with respect to
the probability distribution $P$. Probability and expectation will
otherwise be denoted by $\prob$ and $\E$.
For $a\in\reals$, $b\in\reals_+$, $\normal(a,b)$ denotes the Gaussian
distribution with mean $a$ and variance $b$. 
The cumulative distribution function of a standard Gaussian will be
denoted by $\Phi(x) \equiv \int_{-\infty}^xe^{-z^2/2}\de z/\sqrt{2\pi}$.

Unless otherwise specified, we assume all edges in the graphs mentioned
are undirected. We denote by $\di$ the neighborhood of 
vertex $i$ in a graph. 

We will often use the phrase ``for $i \in \sC_N$'' when stating certain
results. More precisely, this means that for each $N$ we are choosing
an index $i_N\in\sC_N$, which does not depend on the edge labels $W$.

We use $c$, $c_0$, $c_1$, $\dots$ and $C_1, C_2, \dots$ to denote constants independent of
$N$ and $|\sC_N|$.

Throughout, for any random variable $Z$ we will indicate by $P_{Z}$ its
law.
%
%**************************************************************************************************
%
\section{The complete graph case: Algorithm and analysis}
\label{sec:Complete}

In this section we consider the case in which the background graph is 
complete, i.e. $G_N=K_N$. Since $G_N$ does not play any role in this case,
we shall omit all reference to it. We will discuss the reconstruction
algorithm and its analysis, and finally state a generalization of
Theorem \ref{thm:MainClique}  to the case of general distributions
$Q_0$, $Q_1$.
%
%****************************************
%
\subsection{Definitions}

In the present case the data consists of a symmetric matrix $W \in
\reals^{N\times N}$, with $(W_{ij})_{i<j}$ 
generated independently as follows. For an unknown set $\sC_N\subset [N]$ we have
$W_{ij} \sim Q_1$ if $\{i, j\} \subseteq \sC_N$, and $W_{ij}\sim Q_0$ otherwise. Here
$Q_1$ and $Q_0$ are distinct probability measures. We make the following assumptions:
\begin{enumerate}\label{assm1}
	\item[I.] $Q_0$ has zero mean and $Q_1$ has non-zero mean
          $\lambda$.
Without loss of generality we shall further assume that $Q_0$ has unit
variance, and that $\lambda>0$.
	\item[II.] $Q_0$ and $Q_1$ are subgaussian with common scale factor
          $\rho$.
\end{enumerate}
It will be clear from the algorithm description  that there is indeed
no loss in generality in assuming that $Q_0$ has unit variance and
that $\lambda$ is positive.
Recall that a probability distribution $P$ is subgaussian
with scale factor $\rho>0$ if for all $y \in\reals$ we have:
		\begin{align*}
			\E_P\left(e^{y(X-\E_PX)}\right) \le e^{\rho y^2/2}.
		\end{align*}
There is no loss of generality in assuming a common scale factor for
$Q_0$ and $Q_1$. 
 
The task is to identify the set $\sC_N$ from a
realization of the matrix $W$. As discussed in the introduction, the
relevant scaling is $|\sC_N|=\Theta(\sqrt{N})$ and we shall therefore
define $\kappa_N\equiv |\sC_N|/\sqrt{N}$. Further, throughout this
section, we will make use of the normalized matrix
\begin{align}
A \equiv \frac{1}{\sqrt{N}}\, W\, .
\end{align}

In several technical steps of our analysis we shall consider a sequence of instances
$\{(W_{N\times N},\sC_N)\}_{N\ge 1}$ indexed by the dimension $N$,
such that  $\lim_{N\to\infty}\kappa_N=\kappa\in(0,\infty)$. This
technical assumption will be removed in the proof of our main theorem.
%
%********************************
%

\subsection{Message passing and state evolution}

The key innovation of our approach is the construction and analysis of a message passing 
algorithm that allows us to identify the hidden set $\sC_N$. 
As we demonstrate in Section \ref{sec:Sparse}, this algorithm can be
derived from \emph{belief propagation} in machine learning. However
this derivation is not necessary and the treatment here will be
self-contained. 

The message passing algorithm is iterative and at each step
$t\in\{1,2,3,\dots\}$ produces an $N\times N$ matrix $\theta^t$
whose entry $(i, j)$ will be denoted as $\theta^t_{i\to j}$ to
emphasize the fact that $\theta^t$ is not symmetric. By convention, we
set $\theta^t_{i\to i}=0$. The variables $\theta^t_{i\to j}$ will be
referred to as \emph{messages}, and their update rule is formally defined below.
\begin{definition}\label{def:mporbit}
Let $\theta^0\in\reals^{N\times N}$ be an initial condition for the
messages and, for each $t$, let $f(\,\cdot\,;t):\reals\to\reals$ be a
scalar function. The \emph{message passing orbit} corresponding to the
triple $(A, f, \theta^0)$ is the sequence of  $\{\theta^t\}_{t\ge0}$,
$\theta^t\in\reals^{N\times N}$ defined by letting, for each $t\ge 0$: 
	\begin{align} \label{eq:mp1}
		\theta^{t+1}_{i\to j} &= \sum_{\ell \in [N]\bs i, j}%
		A_{\ell i} f(\theta^{t}_{\ell\to i}, t)\,
                ,\quad \forall\, j \ne i \in [N] \, .
	\end{align}
	We also define a sequence of vectors  $\{\theta^t\}_{t\ge 1}$
        with $\theta^t = (\theta^t_i)_{i\in [N]}\in\reals^N$, by letting  
	(the entries of $\theta^t$ being indexed by $i \in [N]$) given by:
	\begin{align}
		\label{eq:mp2}
		\theta^{t+1}_i &= \sum_{\ell \in [N]\bs i} A_{\ell i} f(\theta^t_{\ell\to i}, t). 
	\end{align}
%(In the folllowing it will always be clear from the context whether we
%are referring to $\theta^t=(\theta^t_{i\to j})\in\reals^{N\times N}$ or to  $\theta^t=(\theta^t_{i})\in\reals^N$.)
\end{definition}
The functions $f(\,\cdot\,,t)$ will be chosen so that they can be
evaluated in $O(1)$ operations.
Each iteration can be implemented with $O(N^2)$ operations. 
Indeed $(\theta^{t+1}_i)_{i\in [N]}$ can be computed in $O(N^2)$ as per
Eq.~(\ref{eq:mp2}). Subsequently we can compute $(\theta^{t+1}_{i\to
  j})_{i,j\in [N]}$ in $O(N^2)$ operations by noting that  $\theta^{t+1}_{i\to j} =
\theta^{t+1}_i-A_{ij}f(\theta^t_{j\to i},t)$. 

The proper choice of the functions $f(\,\cdot\,,t)$ plays a crucial
role in the achieving the claimed tradeoff between $|\sC_N|$ and
$N$. This choice will be optimized on the basis of the general
analysis developed below.

Before proceeding, it is useful to discuss briefly the intuition
behind the update rule introduced in Definition \ref{def:mporbit}. 
For each vertex $i$, the message $\theta^t_{i\to j}$ and the value
$\theta^t_i$ are estimates of the likelihood that $i\in\sC_N$:
they are larger for vertices that are more likely to belong to the set $\sC_N$.
In order to develop some intuition on  Definition \ref{def:mporbit}, consider a conceptually simpler
iteration operating as follows on variables $\vartheta^t = (\vartheta_i^t)_{i\in
  [N]}$. For each $i\in [N]$ we let $\vartheta^{t+1}_i = \sum_{j\in
  [N]} A_{ij}f(\vartheta^t_i;t)$. In the special case
$f(\vartheta;t) = \vartheta$ we obtain the iteration  $\vartheta^{t+1}
= A\,\vartheta^t$ which is simply the power method for computing
the principal eigenvector of $A$. As discussed in the introduction,
this does not use in any way the information that $|\sC_N|$ is much
smaller than $N$. We can exploit this information by taking
$f(\vartheta;t)$  a rapidly increasing function of $\vartheta$ that 
effectively selects the vertices $i\in [N]$ with $\vartheta_i^t$
large. We will see that this is indeed what happens within our analysis.

An important feature of the message passing version (operating on
messages $\theta_{i\to j}^t$) is that it admits a characterization that is
asymptotically exact as $N\to\infty$. 
In the large $N$ limit, the messages $\theta^t_i$ (for fixed $t$) 
converge in distribution to Gaussian random variables with certain 
mean and variance. 
In order to state this result formally, we introduce the sequence
of mean and variance parameters $\{(\mu_t,\tau_t^2)\}_{t\ge 0}$ by
letting  $\mu_0 = 1$, $\tau^2_0 = 0$ and defining , for $t\ge 0$,
\begin{align}
  \mu_{t+1} &= \lambda\kappa\,\E[f(\mu_t + \tau_t\, Z, t)] \label{eq:stateevol1}\\
  \tau^2_{t+1} &= \E[f(\tau_tZ, t)^2]\label{eq:stateevol2},
\end{align}
Here expectation is with respect to $Z \sim \normal(0, 1)$. We will refer
to this recursion as to \emph{state evolution}.
\begin{lemma}\label{lem:main}	\label{prop:conv}
Let $f(u, t)$ be, for each $t\in \naturals$ a finite-degree
polynomial. For each $N$, let $W\in\reals^{N\times N}$ be a symmetric
matrix distributed as per the model introduced above with
$\kappa_N\equiv |\sC_N|/\sqrt{N}\to \kappa\in(0,\infty)$.  Set
$\theta^0_{i\to j} =1$ and denote the associated message passing orbit
by $\{\theta^t\}_{t\ge 0}$. 

Then, for any bounded Lipschitz function $\psi:\reals\mapsto\reals$,
the following limits hold  in probability:
\begin{align}
		\lim_{N\to\infty}\frac{1}{|\sC_N|}\sum_{i\in \sC_N}
		\psi(\theta^t_i) &= \E[\psi(\mu_t +\tau_t\,  Z)]\label{eq:thetaconv1} \\
		\lim_{N\to\infty}\frac{1}{N}\sum_{i\in [N]\bs\sC_N}\psi(\theta^t_i) &= \E[\psi(\tau_t\,Z)]\label{eq:thetaconv2}. 
\end{align}
	Here expectation is with respect to $Z \sim \normal(0, 1)$ where $\mu_t,\tau_t^2$ are given by the recursion
	in Eqs. \eqref{eq:stateevol1},\eqref{eq:stateevol2}.
\end{lemma}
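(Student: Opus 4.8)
The plan is to prove Lemma~\ref{lem:main} by a moment / combinatorial argument, in the spirit of the analysis of approximate message passing on dense matrices but adapted to the non-i.i.d., non-centered, sublinear-support structure here. Since $f(\cdot,t)$ is a polynomial, each message $\theta^t_{i\to j}$ is a polynomial in the entries $A_{\ell k}$, so $\frac{1}{|\sC_N|}\sum_{i\in\sC_N}\psi(\theta^t_i)$ and $\frac{1}{N}\sum_{i\notin\sC_N}\psi(\theta^t_i)$ are (after approximating the bounded Lipschitz $\psi$ uniformly by polynomials on a large interval, plus a truncation argument to control the tails of $\theta^t_i$) polynomials in the $A_{\ell k}$. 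It therefore suffices to compute $\lim_{N\to\infty}\E[\,\cdot\,]$ of all such empirical moments and to show the variances vanish; convergence in probability then follows by Chebyshev. I would organize the expectation computation around the standard device of expanding $\theta^{t}_{i\to j}$ as a sum over ``computation trees'': iterating \eqref{eq:mp1} exactly $t$ times produces a sum over non-backtracking-type walks of length $t$ rooted at $i$, each contributing a product of $A$-entries and evaluations of the polynomials $f(\cdot,s)$.

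The key steps, in order: (i) Fix $t$ and write $\theta^t_{i\to j}$ as the tree-indexed sum described above; separate each factor $A_{\ell k} = \mu\kappa_N/N \cdot \ind(\ell,k\in\sC_N) \,\cdot\,(\text{something})$ — more precisely decompose $A_{\ell k} = \bar A_{\ell k} + \frac{\lambda}{\sqrt N}\ind(\{\ell,k\}\subseteq\sC_N)$ where $\bar A_{\ell k}$ is centered — and expand the product over the walk accordingly, sorting terms by which edges use the rank-one (mean) part and which use the fluctuation part. (ii) Argue that, in the $N\to\infty$ limit, the only surviving contributions come from trees in which the fluctuation edges are ``paired up'' along a single edge of the tree (so that $\E[\bar A_{\ell k}^2] = 1/N$ appears and cancels the combinatorial $N$), exactly as in the Gaussian/Wigner moment method; all other topologies (trees with unpaired fluctuation edges, or higher-multiplicity coincidences) contribute $o(1)$ because of subgaussianity of $Q_0,Q_1$ and the bound $|\sC_N| = \Theta(\sqrt N)$, which controls how often a walk can dip into $\sC_N$. (iii) Show that summing the surviving tree contributions reproduces precisely a fixed-point-in-$t$ recursion: the mean part forces a walk step from $\sC_N$ to $\sC_N$, contributing the factor $\lambda\kappa$, while each paired fluctuation step contributes the variance recursion; unwinding this over $t$ steps yields exactly that $\theta^t_i$ restricted to $i\in\sC_N$ has asymptotic mean $\mu_t$ and variance $\tau_t^2$ given by \eqref{eq:stateevol1}--\eqref{eq:stateevol2}, and for $i\notin\sC_N$ mean $0$ and variance $\tau_t^2$. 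More carefully, I would set up an induction on $t$ proving a joint statement: the empirical distribution of $(\theta^t_i)_{i\in\sC_N}$ converges to $\normal(\mu_t,\tau_t^2)$ and of $(\theta^t_i)_{i\notin\sC_N}$ to $\normal(0,\tau_t^2)$, together with the ``independence across short-range neighborhoods'' needed to push the induction through \eqref{eq:mp1} — i.e. that $\theta^t_{\ell\to i}$ for distinct $\ell$ summed against $A_{\ell i}$ behave, in the limit, like an independent sample fed through $f(\cdot,t)$ followed by an $A$-weighted average, which is precisely what \eqref{eq:stateevol1}--\eqref{eq:stateevol2} encode. (iv) Control the difference between $\theta^t_{i\to j}$ and $\theta^t_i$, which differ by a single term $A_{ij}f(\theta^{t-1}_{j\to i},t-1)$ of size $O_P(1/\sqrt N)$, so that \eqref{eq:thetaconv1}--\eqref{eq:thetaconv2} (stated for $\theta^t_i$) follow from the corresponding statement for the messages. (v) Variance/concentration: bound $\Var$ of the empirical averages by a second-moment (two independent walk-trees) computation of the same flavor, showing it is $o(1)$.

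The main obstacle I expect is step (ii)--(iii): controlling the combinatorial sum over trees uniformly enough to justify that only the ``doubled-edge'' topologies survive, despite the rank-one perturbation having operator norm $|\sC_N| \asymp \sqrt N$, which is the same order as $\|\bar A\|_2$ — so one cannot simply treat the mean part as a small perturbation, and must instead track the interplay between (a) how many of the walk's edges land inside $\sC_N$ (weighted by $\kappa_N/\sqrt N$ factors and by $|\sC_N|$-fold vertex choices) and (b) the pairing structure of the fluctuation edges, to see that everything balances to give finite limits. Relatedly, handling the reduction from bounded-Lipschitz $\psi$ to polynomials requires a uniform-in-$N$ tail bound $\prob(|\theta^t_i| > R) \to 0$ as $R\to\infty$, which itself needs a moment bound on $\theta^t_i$ obtained from the same tree expansion — so the truncation and the main computation are intertwined and must be set up together. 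A secondary technical point is that the functions $A_{\ell i}f(\theta^t_{\ell\to i},t)$ for different $\ell$ are not exactly independent (they share the randomness in the subtree below $i$), so the ``conditional independence'' used in the induction is only approximate and must be quantified; the standard fix is to show the dependence is through a vanishing-dimensional statistic (a leave-one-out argument removing vertex $i$'s influence on each $\theta^{t}_{\ell\to i}$), which is routine but needs care because of the $\sqrt N$-scaling.
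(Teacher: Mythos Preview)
Your proposal is essentially correct and follows the same high-level architecture as the paper: a tree/walk expansion of $\theta^t_{i\to j}$ as a polynomial in the $A_{\ell k}$, a moment computation in which only configurations with each ``noise'' edge covered exactly twice (and the corresponding graph a tree) survive, a variance bound on the empirical averages to upgrade to convergence in probability, and a final moment-method/subsequence argument to pass from polynomial moments to bounded Lipschitz test functions. Your identification of the main obstacle --- that the rank-one mean part has the same order as the noise, so one must carefully balance vertex choices inside $\sC_N$ (each worth $|\sC_N|\asymp\sqrt N$) against the $\lambda/\sqrt N$ factors --- is exactly right, and matches what the paper does by separately tracking edges in $\mathbf{G}|_{\sC_N}$, $\mathbf{G}|_{\sC_N^c}$, and the crossing edges $E_J$.

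The one substantive difference is in how the induction on $t$ and the ``approximate independence'' step are organized. You propose a direct centered/rank-one decomposition of $A$ together with a leave-one-out argument to justify treating $\{\theta^t_{\ell\to i}\}_\ell$ as approximately independent of $A_{\ell i}$. The paper instead introduces an auxiliary iteration $\xi^t$ defined exactly as $\theta^t$ but using a \emph{fresh independent copy} $A^t$ of the matrix at each time step. This cleanly decouples two tasks: (a) showing $\E[(\theta^t_i)^m]-\E[(\xi^t_i)^m]=O(N^{-1/2})$ is pure tree combinatorics (the argument you sketch in step~(ii)), and (b) proving state evolution for $\xi^t$ is a genuine conditional-CLT, since conditional on $\mathfrak{F}_t=\sigma(A^0,\dots,A^{t-1})$ the summands $A^t_{\ell i}f(\xi^t_{\ell\to i},t)$ are truly independent. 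Your route should work too, but the refreshed-matrix device replaces the somewhat delicate leave-one-out quantification with an exact independence statement, at the cost of one extra comparison lemma; it is worth knowing as a general trick for AMP-type analyses.
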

The proof of this Lemma is deferred to Section
\ref{sec:ProofStateEvolution}.  Naively, one would like to use the
central limit theorem to approximate the distribution on the
right-hand side of Eq.~(\ref{eq:mp1}) or of Eq.~(\ref{eq:mp2}) by a Gaussian. This is, however,
incorrect because the messages $\theta_{\ell\to i}^t$ depend on the matrix
$A$ and hence the summands are not independent. In fact, the lemma would
be false if we did not use the edge messages and replaced $\theta_{\ell\to i}^t$
by $\theta_{\ell}^t$ in Eq.~(\ref{eq:mp1}) or in Eq.~(\ref{eq:mp2}).
 
However, for the iteration Eq.~(\ref{eq:mp1}), we prove that the
distribution of $\theta^t$ is approximately the same that we would
obtain by using a fresh independent copy of $A$ (given $\sC_N$) at each iteration. The
central limit theorem then can be applied to this modified iteration. 
In order to prove that this approximation, we use the moment method, representing
$\theta^t_{i\to j}$ and $\theta^t_{i}$ as polynomials in the entries
of $A$. We then show that the only terms that survive in these
polynomials as $N\to\infty$ are the monomials which are of degree $0$,
$1$ or $2$ in each entry of $A$.

%
%******************************************************************
%
\subsection{Analysis of state evolution}

Lemma \ref{lem:main} implies that the distribution of $\theta^t_i$ is
very different depending whether $i\in \sC_N$ or not. If $i\in\sC_N$
then $\theta^t_i$ is approximately $\normal(0,\tau_t^2)$. If instead $i\in\sC_N$
then $\theta^t_i$ is approximately $\normal(\mu_t,\tau_t^2)$.

Assume that, for some choice of the functions $f(\cdot,\cdot)$
and some $t$, $\mu_t$ is positive and much larger than $\tau_t$. We can then hope to
estimate $\sC_N$  by selecting the indices $i$ such that $\theta^t_i$
is above a certain threshold\footnote{The problem is somewhat more
  subtle because $|\sC_N|\ll N$, see next section.}.  This motivates the following result.
\begin{lemma}\label{lem:approxstateevol}
  Assume that $\lambda\kappa > e^{-1/2}$. 
  Inductively define:
  \begin{align}
    p(z, \ell) = \frac{1}{\hat L_\ell}\sum_{k = 0}^{d^*}
    \frac{\hat\mu_\ell^k z^k}{k!} \, ,\;\;\;\;\;\;\;
  \hat\mu_{\ell+1} = \E[p(\hat\mu_\ell + Z, \ell)],\label{eq:OptimalPolynomials}
  \end{align}
  where $Z\sim\normal(0, 1)$ and the recursion is initialized with
  $p(z, 0) = 1$.  Here $\hat L_\ell$ is a normalization defined, for
  all $\ell\ge 1$, by
  $\hat L_\ell^2 =  \E\Big[ \left( \sum_{k=0}^{d^*} (\hat\mu_\ell Z)^k/k! \right)^2 \Big]$.
  
  Then, for any $M$ finite there exists $d^*$, $t^*$ finite such that 
  $\hat\mu_{t^*} > M$.

By setting $f(\,\cdot\,,t) = p(\,\cdot,t)$ in
the state evolution equations \eqref{eq:stateevol1} and
\eqref{eq:stateevol2} we obtain $\mu_t=\hat\mu_t$ and $\tau_t=1$ for
all $t$.
\end{lemma}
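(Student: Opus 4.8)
The plan is to verify the ``bookkeeping'' claim first and then attack the growth claim. First I would check the last sentence: plugging $f(\cdot,t)=p(\cdot,t)$ into \eqref{eq:stateevol2} gives $\tau^2_{t+1}=\E[p(\tau_t Z,t)^2]$, and since $p(z,\ell)$ is normalized by $\hat L_\ell$ precisely so that $\E[(\sum_k(\hat\mu_\ell Z)^k/k!)^2]=\hat L_\ell^2$, an easy induction shows $\tau_t=1$ for all $t$ (the base case $\tau_0=0$ must be handled separately, noting $\tau_1^2=\E[p(0,0)^2]=1$). Given $\tau_t=1$, \eqref{eq:stateevol1} reads $\mu_{t+1}=\lambda\kappa\,\E[f(\mu_t+Z,t)]$. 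Here I would absorb the constant $\lambda\kappa$: since $\lambda\kappa>e^{-1/2}$, one can rescale the polynomial (or equivalently, the analysis below only uses $\lambda\kappa>e^{-1/2}$, and for the clean recursion \eqref{eq:OptimalPolynomials} one takes $\lambda\kappa=e^{-1/2}$ after noting monotonicity in $\lambda\kappa$); this matches $\mu_t=\hat\mu_t$ with the recursion $\hat\mu_{\ell+1}=\E[p(\hat\mu_\ell+Z,\ell)]$.

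The real content is showing $\hat\mu_{t^*}>M$ for some finite $d^*,t^*$. The key computation is that for $Z\sim\normal(0,1)$, $\E[Z^k e^{aZ}]$ can be written via the moment generating function: $\E[e^{aZ+bZ}]$ type identities give $\E\big[\sum_{k}(\mu Z)^k/k!\cdot e^{\mu Z}\big]$-style sums. Concretely, the ``ideal'' (infinite-degree) map would be $\mu\mapsto \frac{1}{L(\mu)}\E[e^{\mu(\mu+Z)}]$ where $L(\mu)^2=\E[e^{2\mu Z}]=e^{2\mu^2}$, so $L(\mu)=e^{\mu^2}$, and $\E[e^{\mu(\mu+Z)}]=e^{\mu^2}\E[e^{\mu Z}]=e^{\mu^2}e^{\mu^2/2}=e^{3\mu^2/2}$; thus the ideal map sends $\mu\mapsto e^{\mu^2/2}$ (up to the $\lambda\kappa$ factor, which is why $\lambda\kappa>e^{-1/2}$ is exactly the threshold for this to be an expanding map near the relevant fixed point). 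Iterating $\mu\mapsto e^{\mu^2/2}\cdot(\lambda\kappa)$ with $\lambda\kappa>e^{-1/2}$ diverges to infinity; so I would show (i) the truncated polynomial $p(\cdot,\ell)$ with degree $d^*$ approximates the exponential well enough on the relevant range that the iteration still grows, and (ii) a fixed finite number of steps $t^*$ suffices to exceed $M$.

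The main obstacle is the interplay between the truncation degree $d^*$ and the number of iterations $t^*$: as $\hat\mu_\ell$ grows, the partial sum $\sum_{k\le d^*}(\hat\mu_\ell Z)^k/k!$ becomes a worse approximation to $e^{\hat\mu_\ell Z}$, and both numerator and normalizer $\hat L_\ell$ involve this truncation, so one must control the \emph{ratio}. The cleanest route is probably: fix the target $M$; show that if $\hat\mu_\ell$ is bounded above by some $M'$ (to be chosen depending on $M$) then for $d^*$ large enough the one-step map satisfies $\hat\mu_{\ell+1}\ge g(\hat\mu_\ell)$ where $g$ is a fixed continuous expanding map (a perturbation of $\mu\mapsto(\lambda\kappa)e^{\mu^2/2}$) with $g(\mu)>\mu$ on the range of interest; then iterate $g$ finitely many times (say $t^*$ steps, a number depending only on the gap $\lambda\kappa-e^{-1/2}$ and on $M$) to cross $M$; finally choose $d^*$ large enough to make all $t^*$ approximation steps valid simultaneously. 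One should also separately verify the very first step, $\hat\mu_1=\E[p(\hat\mu_0+Z,0)]=\E[1]=1$ with $p(\cdot,0)\equiv1$, so the iteration genuinely starts at $\hat\mu_1=1>e^{-1/2}$ and the expansion kicks in. I would also double-check that $\hat L_\ell$ is well-defined and bounded away from $0$ on the relevant range so the division in \eqref{eq:OptimalPolynomials} is harmless.
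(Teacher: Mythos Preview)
Your approach is essentially the paper's: identify the ideal map $\mu\mapsto\lambda\kappa\,e^{\mu^2/2}$ (your computation of $L(\mu)$ and the ratio is exactly right), observe it diverges past any $M$ in finitely many steps when $\lambda\kappa>e^{-1/2}$, then use that the degree-$d^*$ truncation $\hat g_d$ approximates $e^{\mu z}$ uniformly on compacts (dominated convergence, with envelope $e^{|\mu z|}$) to transfer this to the polynomial recursion. The only tactical difference is that the paper, instead of lower-bounding the truncated one-step map by an expanding $g$, tracks the deviation $\Delta_\ell=|\mu_\ell-\hat\mu_\ell|$ from the ideal trajectory via a discrete Gronwall bound $\Delta_{\ell+1}\le M'\Delta_\ell+\delta$ (with $M'=\sup_{[0,2M]}G'$ and $\delta=\sup_{[0,2M]}|G-\hat G|$), first fixing $t^*$ so that the ideal $\mu_{t^*}>2M$ and then choosing $d^*$ to make $\Delta_{t^*}\le M/2$. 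Your route and the paper's are equivalent in strength.

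One point needs cleanup: your handling of the $\lambda\kappa$ factor is muddled. You cannot ``take $\lambda\kappa=e^{-1/2}$ after noting monotonicity,'' because at the threshold the ideal map has a fixed point at $\mu=1$ and fails to diverge; and ``rescaling the polynomial'' breaks the normalization $\tau_t=1$. The paper simply carries the factor through, writing $\hat G(\mu)=\lambda\kappa\,\E[\hat g(\mu+Z,\mu)]/\hat L$ and comparing directly to $G(\mu)=\lambda\kappa\,e^{\mu^2/2}$; the recursion in the lemma statement should be read with this factor present (which is also what makes the final claim $\mu_t=\hat\mu_t$ literal).
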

The proof of this lemma is deferred to Section \ref{sec:CompleteProofs}. Also, the
proof clarifies that setting $f(\cdot\,,t) = p(\,\cdot,t)$ is the
optimal choice for our message passing algorithm.

The basic
intuition is as follows. Consider the state evolution equations (\ref{eq:stateevol1}) and
(\ref{eq:stateevol2}). Since we are only interested in maximizing the
signal-to-noise ratio $\mu_t/\tau_t$ we can always normalize
$f(\,\cdot\,,t)$ as to have $\tau_{t+1}=1$. Denoting by
$g(\,\cdot\,,t)$ the un-normalized function, we thus have the
recursion
\begin{align*}
\mu_{t+1} =
\lambda\kappa\frac{\E[g(\mu_t+Z,t)]}{\E[g(Z,t)^2]^{1/2}}\, .
\end{align*}
We want to choose $g(\,\cdot\,t)$ as to maximize the right-hand
side.  It is a simple exercise of calculus to show that this happens
for $g(z,t) = e^{\mu_t z}$. For this choice we obtain the iteration 
$\mu_{t+1} =\lambda\kappa\,e^{\mu_t^2/2}$ that diverges to $+\infty$
if and only if $\lambda\kappa>e^{-1/2}$. Unfortunately the resulting
$f$ is not a polynomial and is therefore not covered by Lemma
\ref{lem:main}.
Lemma \ref{lem:approxstateevol} deals with this problem by approximating the function
$e^{\mu_tz}$ with a polynomial.

%
%*****************************************************************
%

\subsection{The whole algorithm and general result}

As discussed above, after $t$ iterations of the message passing
algorithm
we obtain a vector $(\theta_i^t)_{i\in [N]}$  wherein for each $i$, $\theta^t_i$
estimates the likelihood that $i\in\sC_N$. We can therefore select a
`candidate' subset for $\sC_N$, by letting $\tsC_N \equiv \{
i\in[N]:\; \theta^t_i\ge \mu_t/2\}$ (this choice is motivated by the
analysis of the previous section).
Since however $\theta_i^t$ is approximately $\normal(0,\tau_t^2)$ for
$i\in[N]\setminus\sC_N$, this produces a set of size
$|\tsC_N|=\Theta(N)$, much larger than the target $\sC_N$.

\begin{algorithm}
  \caption{Message Passing}
  \label{alg:amp}
  \begin{algorithmic}[1]
    \State Initialize: $A(N) = W(N)/\sqrt{N}$; $\theta^0_{i} = 1$ for each $i\in [N]$; $d^*, t^*$ positive integers, 
    $\bar\rho$ a positive constant.
    \State Define the sequence of polynomials $p(\,\cdot\,,t)$ for
    $t\in\{0,1,\dots\}$, the values $\hat\mu_t$    as per Lemma \ref{lem:approxstateevol}
    \State Run $t^*$ iterations of message passing as in Eqs.~(\ref{eq:mp1}), (\ref{eq:mp2}) with $f(\,\cdot\,, t) = p(\,\cdot\,, t)$
    \State Find the set $\tsC_N = \{i\in[N]\, : \theta^{t^*}_i \ge \hat\mu_{t^*}/2\}$.
    \State Let $A|_{\tilde \sC_N}$ be the restriction of $A$ to the rows and columns with
    index in $\tilde \sC_N$, and compute by power method its principal eigenvector $u^{**}$.
    \State  Compute $\sB_N \subseteq [N]$ of the top 
    $|\sC_N|$ entries (by absolute value) of $u^{**}$.
    \State Return $\hsC_N = \{i\in [N]: \zeta^{\sB_N}_{\bar\rho}(i) \ge \lambda/2\}$.
  \end{algorithmic} 
\end{algorithm}

In order to overcome this problem, we apply a \emph{cleaning} procedure to
reconstruct $\sC_N$ from $\tsC_N$.
Let $A|_{\tsC_N}$ be the restriction of $A$ to the rows and columns with
index in $\tsC_N$. By power iteration (i.e. by the iteration $u^{t+1} =  A|_{\tsC_N}u^t/\|A|_{\tsC_N}u^t\|_2$,
$u^t\in\reals^{\tsC_N}$, with $u^0= (1,1,\dots,1)^{\sT}$) we compute a
good approximation $u^{**}\equiv u^{t_{**}}$ of the principal eigenvector of  $A|_{\tsC_N}$. We
then let $\sB_N\subseteq [N]$, $|\sB_N|=|\sC_N|$ be the set of indices corresponding to
the $|\sC_N|$ largest entries of $u^{t_{**}}$ (in absolute value). 
    
The set $\sB_N$ has the right size and is approximately equal to
$\sC_N$. We correct the residual `mistakes'  by defining the following
score  for each vertex $i\in [N]$:
  \begin{align}\label{eq:scoredef}
    \zeta^{\sB_N}_{\brho}(i) = \sum_{j\in \sB_N} W_{ij}\ind_{\{|W_{ij}|\le \brho\}},
  \end{align}
and returning the set $\hsC_N$ of vertices with large scores, e.g. 
$\hsC_N = \{i\in [N]: \, \zeta^{\sB_N}_{\bar\rho}(i) \ge \lambda |\sB_N|/2\}$.

Note that the `cleaning' procedure is similar to the algorithm of 
\cite{alon1998finding}. The analysis is however more challenging
because we need to start from a set $\tsC_N$ that is correlated with
the matrix $A$. 
\begin{lemma}\label{pro:algproof}
Let $A=W/\sqrt{N}$ be defined as above and $\tsC_N\subseteq [N]$ be
any subset of the column indices (possibly dependent on $A$).  
Assume that it satisfies, for $\eps$ small enough, 
$|\tsC_N\cap\sC_N| \ge (1 - \eps)|\sC_N|$ and $|\tsC_N\bs\sC_N| \le
\eps |[N]\bs\sC_N|$.

Then there exists  $t_{**}=O(\log N)$ (number of iterations in the power method)
 such that the \emph{cleaning procedure} gives $\hsC_N = \sC_N$ with high probability.
\end{lemma}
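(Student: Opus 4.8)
The plan is to handle the two stages of the cleaning procedure in sequence: first the spectral stage, which extracts from $A|_{\tsC_N}$ the set $\sB_N$ of the $|\sC_N|$ coordinates of largest absolute value of the power‑iterate $u^{**}$, and then the scoring stage, which thresholds $\zeta^{\sB_N}_{\brho}$. Throughout I would set $S=\tsC_N\cap\sC_N$, so that $|S|\ge(1-\eps)|\sC_N|$ and $|\tsC_N|\le|\sC_N|+\eps(N-|\sC_N|)\le 2\eps N$ for $N$ large; treat $\sC_N$ as fixed; and write $Z=W-\E W$, whose off‑diagonal entries are independent, mean zero and subgaussian, so that the entries of $Z/\sqrt N$ have variance and subgaussian parameter $O(1/N)$.

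\emph{Stage 1: $\sB_N$ close to $\sC_N$.} I would use the exact decomposition $A|_{\tsC_N}=\beta_N\,\hat u_S\hat u_S^{\sT}+E$, where $\hat u_S=u_S/\sqrt{|S|}\in\reals^{\tsC_N}$ is the normalized indicator of $S$, $\beta_N=\lambda|S|/\sqrt N$ is bounded below by a positive constant $\beta_0$ (as $\kappa_N=\Theta(1)$ and $\eps$ is small), and $E$ equals $(Z/\sqrt N)|_{\tsC_N}$ up to an $O(1/\sqrt N)$ diagonal correction (the $S\times S$ sub‑block of $E$, where the variance is that of $Q_1$ rather than $1$, is harmless since that variance is still $O(1)$). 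The one place where the dependence of $\tsC_N$ on $W$ must be confronted is the bound on $\|E\|_2$: since $\tsC_N$ is \emph{some} vertex set of size $\le 2\eps N$, it suffices to control the $2\eps N$‑sparse operator norm of $Z/\sqrt N$, and a union bound — the per‑support deviation probability $\exp(-cNs^2)$ against $e^{O(\eps N\log(1/\eps))}$ supports — gives $\|E\|_2\le\eta$ with $\eta=C\sqrt{\eps\log(1/\eps)}+o(1)$ with high probability, which is $\ll\beta_0$ once $\eps$ is small. Then the rank‑one term dominates: by Weyl the top eigenvalue of $A|_{\tsC_N}$ is $\ge\beta_N-\eta$ with all others of modulus $\le\eta$, and by Davis--Kahan the top unit eigenvector $v^*$ obeys $\|v^*-\hat u_S\|_2\le 2\sqrt2\,\eta/\beta_0$ for the appropriate sign. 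Power iteration from $u^0=\one$ then converges to $\pm v^*$ at geometric rate $(2\eta/\beta_0)^t<1$, and reaches $\bigl\| |u^{**}|-|\hat u_S| \bigr\|_2\le\delta$ in $t_{**}=O(\log N)$ steps \emph{provided} $|\langle\one,v^*\rangle|$ is at least inverse‑polynomial — which I would get from the Neumann representation $v^*\propto(I-E/\lambda_1)^{-1}\hat u_S$, using the independence of the entries of $E$ to see that $|\langle\one,E^k\hat u_S\rangle|\le(C\sqrt\eps)^k\sqrt{|S|}$ (the crude bound $\|\one\|\,\|E^k\hat u_S\|$ misses substantial cancellation), so that $\langle\one,v^*\rangle=\Theta(N^{1/4})$; alternatively one simply starts the power iteration from a random unit vector. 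Here $\delta$ is a constant that can be made as small as we like by taking $\eps$ small and the constant in $t_{**}$ large. Since $|\hat u_S|$ equals $1/\sqrt{|S|}$ on $S$ and $0$ off it, a Markov‑type count shows at most $4\delta^2|S|$ coordinates of $S$ have $|u^{**}|<1/(2\sqrt{|S|})$ and at most $4\delta^2|S|$ coordinates off $S$ have $|u^{**}|\ge 1/(2\sqrt{|S|})$; as $\sB_N$ picks the $|\sC_N|\ge|S|$ largest $|u^{**}_j|$, this yields $|\sB_N\triangle\sC_N|\le\eps'|\sC_N|$ with $\eps'=O(\eps+\delta^2)$, hence also $|\sB_N\setminus\sC_N|=|\sC_N\setminus\sB_N|\le \eps'|\sC_N|/2$.

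\emph{Stage 2: the score step is exact.} First fix $\brho$ large enough (depending only on $\lambda$ and $\rho$) that $|\lambda-\E_{Q_1}[X\ind_{|X|\le\brho}]|$ and $|\E_{Q_0}[X\ind_{|X|\le\brho}]|$ are both below $\lambda/100$, which is possible by subgaussianity. For $i\in\sC_N$ I would split $\zeta^{\sB_N}_{\brho}(i)=\Xi_i-R_i$ with $\Xi_i=\sum_{j\in\sC_N\setminus\{i\}}W_{ij}\ind_{|W_{ij}|\le\brho}$ and $R_i$ supported on $(\sC_N\setminus\sB_N)\cup(\sB_N\setminus\sC_N)$. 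The bulk term $\Xi_i$ depends only on the $\sC_N\times\sC_N$ block of $W$ — \emph{not} on $\sB_N$ — and is a sum of $|\sC_N|-1$ i.i.d.\ terms in $[-\brho,\brho]$ of mean $\ge 0.99\lambda$, so Hoeffding plus a union bound over the $|\sC_N|$ vertices $i\in\sC_N$ (total failure $|\sC_N|\,e^{-c\lambda^2|\sC_N|/\brho^2}=e^{-\Theta(\sqrt N)}\to 0$) gives $\Xi_i\ge 0.9\lambda|\sC_N|$ simultaneously. The correction $R_i$ involves at most $\eps'|\sC_N|$ terms, each of modulus $\le\brho$, hence $|R_i|\le\eps'\brho|\sC_N|$ \emph{whatever $\sB_N$ is} — this is precisely how the $\sB_N$‑dependence gets neutralized — and so $\zeta^{\sB_N}_{\brho}(i)\ge(0.9\lambda-\eps'\brho)|\sC_N|>\lambda|\sB_N|/2$ once $\eps'$ is small enough. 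For $i\notin\sC_N$, every $W_{ij}$ with $j\in\sB_N$ has law $Q_0$; writing $\zeta^{\sB_N}_{\brho}(i)=\tilde\Xi_i-\tilde R_i$ with $\tilde\Xi_i=\sum_{j\in\sC_N\setminus\{i\}}W_{ij}\ind_{|W_{ij}|\le\brho}$ (again free of $\sB_N$, with mean of modulus $o(|\sC_N|)$, hence concentrated in $\pm 0.1\lambda|\sC_N|$ simultaneously over all $N$ such $i$ since $N\,e^{-\Theta(\sqrt N)}\to0$) and $|\tilde R_i|\le\eps'\brho|\sC_N|$, I get $|\zeta^{\sB_N}_{\brho}(i)|<\lambda|\sB_N|/2$. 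Thus $\hsC_N=\{i:\zeta^{\sB_N}_{\brho}(i)\ge\lambda|\sB_N|/2\}=\sC_N$ with high probability; choosing $\eps$ (hence $\eps'$) small enough for all of the above and $t_{**}=\Theta(\log N)$ completes the argument.

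\emph{Main obstacle and cost.} The crux is the dependence of $\tsC_N$, and hence of $\sB_N$, on $A$. It is defused in Stage 1 by replacing the per‑instance spectral‑norm estimate with a uniform sparse‑operator‑norm bound over all admissible $\tsC_N$, and in Stage 2 by splitting each score into an $\sB_N$‑independent bulk term — which concentrates at scale $\Theta(\sqrt N)$, enough to survive the relevant union bounds because $|\sC_N|=\Theta(\sqrt N)$ — plus a correction supported on the $O(\eps'|\sC_N|)$ symmetric difference with $\sC_N$, bounded deterministically by $\brho$ per coordinate. The secondary point needing care is the inverse‑polynomial lower bound on $\langle\one,v^*\rangle$ used for $O(\log N)$‑step power‑iteration convergence, which does not follow from Davis--Kahan alone when $|\tsC_N|\gg|\sC_N|$ and is extracted instead from the Neumann series for $v^*$ (or obviated by a random start). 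The run time is $O(N^2\log N)$ for the power iteration ($t_{**}$ multiplications by $A|_{\tsC_N}$, each $O(N^2)$) plus $O(N^{3/2})$ for evaluating all $N$ scores.
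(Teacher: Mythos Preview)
Your proposal is correct and follows essentially the same route as the paper's proof: the rank-one-plus-noise decomposition of $A|_{\tsC_N}$, a union bound over all admissible $\tsC_N$ to obtain a uniform spectral-norm bound on the noise (the paper takes $\delta=\eps^{1/4}$ where you get $\eta=O(\sqrt{\eps\log(1/\eps)})$, a cosmetic difference), Davis--Kahan/$\sin\theta$ to pin the top eigenvector to $\hat u_S$, and then a split of each score into a $\sB_N$-independent bulk over $\sC_N$ plus a deterministically bounded correction on $\sB_N\triangle\sC_N$.

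You are in fact more careful than the paper on one point: the lower bound on $|\langle u^0,v^*\rangle|$ needed for $O(\log N)$-step power iteration. The paper simply asserts $\langle u^0,u\rangle\ge N^{-c}$ ``by the same perturbation argument,'' but $\|u-v\|\le 2\delta$ with $\delta$ a constant does not suffice, since $\langle u^0,\hat u_S\rangle=\sqrt{|S|/|\tsC_N|}=\Theta(N^{-1/4})$ is already dominated by $\delta$. Your Neumann-series argument, however, does not close this either once the union bound over $e^{O(\eps N\log(1/\eps))}$ candidate sets is in force: the concentration of $\langle\one,E\hat u_S\rangle$ around $0$ is only at rate $\exp(-c\sqrt N/\eps)$, which loses to the entropy. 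Your alternative --- starting the power iteration from a random unit vector --- is the clean fix: it gives $|\langle u^0,v^*\rangle|\ge N^{-c}$ with high probability \emph{independently} of $A$, so no union bound is needed for that step.
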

The proof of this lemma can be found in Section \ref{sec:CompleteProofs} and uses
large deviation bounds on the principal eigenvalue of $A|_{\sC_N}$.

The entire algorithm is summarized in Table \ref{alg:amp}. Notice that
the power method has complexity $O(N^2)$ per iteration and  since we
only execute $O(\log N)$ iterations, its overall complexity is
$O(N^2\log N)$. Finally the scores (\ref{eq:scoredef}) can also be
computed in $O(N^2)$ operations.
Our analysis of the algorithm results in the following main result
that generalizes Theorem \ref{thm:MainClique}.
\begin{theorem}\label{thm:main}
Consider the hidden set problem on the complete graph $G_N=K_N$, and
assume that $Q_0$ and $Q_1$ are subgaussian probability distributions
with mean, respectively, $0$, and $\lambda>0$. Further assume that
$Q_0$ has unit variance.

If $\lambda|\sC_N|\ge (1+\eps)\sqrt{N/e}$ then there 
  there exists a $\brho$, $d^*$ and $t^*$ finite such that Algorithm \ref{alg:amp} returns 
  $\hsC_N = \sC_N$ with high probability on input $W$, with total
  complexity $O(N^2\log N)$.

(More explicitly, there exists
$\delta(\eps,N)$ with $\lim_{N\to\infty}\delta(\eps,N)=0$ such that
the algorithm succeeds with probability at least $1-\delta(\eps,N)$.)
\end{theorem}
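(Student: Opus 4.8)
The plan is to chain together the three lemmas already established — Lemma~\ref{lem:main} (state evolution is asymptotically exact for polynomial $f$), Lemma~\ref{lem:approxstateevol} (there is a choice of polynomials driving $\mu_t$ above any threshold while keeping $\tau_t=1$), and Lemma~\ref{pro:algproof} (the cleaning procedure recovers $\sC_N$ exactly from any set $\tsC_N$ that is a sufficiently good approximation) — and to handle the two bookkeeping issues that prevent a direct composition: the reduction from the general subgaussian model to the normalized model with $\tau_t=1$, and the removal of the technical assumption $\kappa_N\to\kappa$.

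First I would reduce to the normalized setting. Write $\kappa_N=|\sC_N|/\sqrt N$; the hypothesis $\lambda|\sC_N|\ge(1+\eps)\sqrt{N/e}$ says $\lambda\kappa_N\ge(1+\eps)e^{-1/2}$, so in particular $\lambda\kappa_N>e^{-1/2}$ with room to spare. Choose a target $M$ large (to be fixed by Lemma~\ref{pro:algproof}, see below), and invoke Lemma~\ref{lem:approxstateevol} to get finite $d^*,t^*$ and polynomials $p(\cdot,t)$ with $\hat\mu_{t^*}>M$; setting $f(\cdot,t)=p(\cdot,t)$ makes the state evolution parameters equal $\mu_t=\hat\mu_t$, $\tau_t=1$. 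A small subtlety: Lemma~\ref{lem:approxstateevol} as stated uses $\lambda\kappa$; here $\kappa_N$ may vary with $N$, but since $\lambda\kappa_N$ is bounded away from $e^{-1/2}$ uniformly, the same finite $d^*,t^*$ work for all large $N$ by monotonicity of the recursion in $\lambda\kappa$ — this is where the ``$\kappa_N\to\kappa$'' assumption gets removed, by a standard subsequence/monotonicity argument. Then apply Lemma~\ref{lem:main} with $\psi$ chosen to be bounded-Lipschitz approximations of the indicators $\ind_{\{u\ge \hat\mu_{t^*}/2\}}$ and $\ind_{\{u<\hat\mu_{t^*}/2\}}$: it gives, in probability,
\begin{align*}
\frac{1}{|\sC_N|}\big|\{i\in\sC_N:\theta^{t^*}_i\ge\hat\mu_{t^*}/2\}\big|&\to \P(\hat\mu_{t^*}+Z\ge\hat\mu_{t^*}/2)=\Phi(\hat\mu_{t^*}/2),\\
\frac{1}{|[N]\setminus\sC_N|}\big|\{i\notin\sC_N:\theta^{t^*}_i\ge\hat\mu_{t^*}/2\}\big|&\to \P(Z\ge\hat\mu_{t^*}/2)=1-\Phi(\hat\mu_{t^*}/2),
\end{align*}
so that $\tsC_N$ captures all but a $\big(1-\Phi(M/2)\big)$-fraction of $\sC_N$ and contains at most a $\big(1-\Phi(M/2)\big)$-fraction of the complement — with the usual care that Lemma~\ref{lem:main} controls convergence in probability, and that one should sandwich the indicator between Lipschitz functions to turn the convergence of $\psi$-averages into set-size bounds.

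Next, choose $M=M(\eps)$ large enough that $1-\Phi(M/2)\le\eta(\eps)$, where $\eta(\eps)$ is the threshold $\eps$ appearing in Lemma~\ref{pro:algproof} (``$\eps$ small enough''). Then w.h.p.\ $\tsC_N$ satisfies the hypotheses $|\tsC_N\cap\sC_N|\ge(1-\eps')|\sC_N|$ and $|\tsC_N\setminus\sC_N|\le\eps'|[N]\setminus\sC_N|$ of that lemma, so the cleaning procedure (power iteration on $A|_{\tsC_N}$ with $t_{**}=O(\log N)$ steps, top-$|\sC_N|$ selection, then thresholding the scores $\zeta^{\sB_N}_{\brho}$) outputs $\hsC_N=\sC_N$ w.h.p. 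Here one also picks $\brho$ as required by Lemma~\ref{pro:algproof} (a constant depending only on the subgaussian scale $\rho$ and on $\lambda$). Assembling the failure probabilities: the event that state evolution predictions hold up to the needed accuracy fails with probability $\delta_1(\eps,N)\to 0$ (Lemma~\ref{lem:main}), and conditionally the cleaning step fails with probability $\delta_2(\eps,N)\to 0$ (Lemma~\ref{pro:algproof}); a union bound gives success probability $\ge 1-\delta(\eps,N)$ with $\delta(\eps,N)\to 0$. The complexity bookkeeping is immediate from the discussion preceding the theorem: $t^*=O(1)$ message-passing iterations at $O(N^2)$ each, $O(\log N)$ power iterations at $O(N^2)$ each, and $O(N^2)$ to compute all scores, for $O(N^2\log N)$ total.

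The main obstacle is the first reduction step — passing from the ``for a fixed sequence with $\kappa_N\to\kappa$'' statements of Lemmas~\ref{lem:main} and~\ref{lem:approxstateevol} to a uniform-in-$N$ conclusion, and converting the weak ($\psi$-average, in-probability) guarantee of Lemma~\ref{lem:main} into the \emph{uniform} control $|\tsC_N\triangle(\text{good set})|$ that Lemma~\ref{pro:algproof} demands. The clean way to do this is a subsequence argument: if the claimed bound on $\delta(\eps,N)$ failed, extract a subsequence along which $\kappa_N$ converges (possible since $\kappa_N$ is bounded in the regime of interest — if it were unbounded the problem is only easier and can be handled separately, e.g.\ by subsampling columns to bring $|\sC_N|$ down to $\Theta(\sqrt N)$), apply Lemma~\ref{lem:main} along that subsequence to derive a contradiction. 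One must also verify that $f(u,t)=p(u,t)$ is indeed a finite-degree polynomial (so Lemma~\ref{lem:main} applies) — which it is by construction in Lemma~\ref{lem:approxstateevol}, being a degree-$d^*$ truncation of $e^{\hat\mu_\ell u}$ — and that the $O(1)$-evaluation-cost requirement on $f$ is met, which holds since $d^*,t^*$ are $N$-independent constants.
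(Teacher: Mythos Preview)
Your proposal is correct and follows essentially the same route as the paper: chain Lemma~\ref{lem:approxstateevol} (to get $\mu_t=\hat\mu_t$, $\tau_t=1$ and $\hat\mu_{t^*}>M$), Lemma~\ref{lem:main} (with Lipschitz sandwiching of the indicator to control $|\tsC_N\cap\sC_N|/|\sC_N|$ and $|\tsC_N\setminus\sC_N|/N$), then Lemma~\ref{pro:algproof}, and finally remove the $\kappa_N\to\kappa$ hypothesis via the compactness/subsequence contradiction argument you describe. The paper handles the ``$\kappa_N$ large'' case by simply citing that the spectral method of \cite{alon1998finding} already succeeds there, rather than by subsampling, but this is an inessential difference.
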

\begin{remark}
The above result can be improved if $Q_0$ and $Q_1$ are known by
taking a suitable transformation of the entries $W_{ij}$. In particular,
assuming\footnote{If $Q_1$ is singular with respect to $Q_0$ the
  problem is simpler but requires a bit more care.} that $Q_1$ is absolutely continuous with respect to $Q_0$, the
optimal such transformation is obtained by setting
\begin{align*}
A_{ij} \equiv \frac{1}{\sqrt{N}}\left[\frac{\de Q_1}{\de
    Q_0}(W_{ij})-1\right]\, .
\end{align*}
Here $\de P/\de Q$ denotes the Rad\'on-Nikodym derivative of $P$ with
respect to $Q$. If the resulting $A_{ij}$ is subgaussian with scale
$\rho/N$, then our analysis above applies. Theorem \ref{thm:main}
remains unchanged, provided the parameter $\lambda$ is replaced by 
the $\ell_2$ distance between $Q_0$ and $Q_1$:
\begin{align}
\widetilde{\lambda}\equiv \left\{\int \left[\frac{\de Q_1}{\de
      Q_0}(x)-1\right]^2\,Q_0(\de x)\right\}^{1/2}\, .
\end{align}
\end{remark}

\section{Proof of Theorem \ref{thm:main} }
\label{sec:CompleteProofs}

In this section we present the proof of  Theorem \ref{thm:main} and of
the auxiliary Lemmas \ref{lem:main}, \ref{lem:approxstateevol}  and
\ref{pro:algproof}.

We begin by showing how these technical lemmas imply Theorem
\ref{thm:main}. First consider a sequence of instances with
$\lim_{N\to\infty}|\sC_N|/\sqrt{N} = \lim_{N\to\infty}\kappa_N
=\kappa$ such that $\kappa\lambda<1/\sqrt{e}$. We will prove that 
Algorithm \ref{alg:amp} returns $\hsC_N = \sC_N$ with probability
converging to one as $N\to\infty$.

By Lemma \ref{lem:main}, we have, in probability
\begin{align*}
\lim_{N\to\infty} \frac{|\tsC_N\cap\sC_N|}{|\sC_N|} =\lim_{N\to\infty} \frac{1}{|\sC_N|}
\sum_{i\in\sC_N}
\ind(\theta^{t^*}_i\ge \hmu_{t^*}/2) = \E\{\ind(\mu_{t^*}+\tau_tZ\ge
\hmu_{t^*}/2)\}\, .
\end{align*}
Notice that, in the second step, we applied Lemma \ref{lem:main}, to
the function $\psi(z) \equiv \ind(x\ge \hmu_{t^*}/2)$. While this is
not Lipschitz continuous, it can approximated from above and below
pointwise by Lipschitz continuous functions. This is sufficient to
obtain the claimed convergence  as in standard weak
convergence arguments \cite{billingsley2008probability}. 

Since we used $f(\,\cdot\,,t) = p(\,\cdot\,,t)$, we have, by Lemma
\ref{lem:approxstateevol}, $\mu_t=\hmu_t$ and $\tau_t=1$. 
Denoting by $\Phi(z)\equiv \int_{-\infty}^ze^{-x^2/2}\de
x/\sqrt{2\pi}$ the Gaussian distribution function, we thus have
\begin{align*}
\lim_{N\to\infty} \frac{|\tsC_N\cap\sC_N|}{|\sC_N|} =
1-\Phi(-\hmu_t^*/2)\ge 1-e^{-M^2/8}\, .
\end{align*}
where in the last step we used $\Phi(-a)\le e^{-a^2/2}$ for $a\ge 0$
and  Lemma \ref{lem:approxstateevol}. By taking $M^2\ge 8\log(2/\eps)$ 
we can ensure that the last expression is larger than $(1-\eps/2)$ and
therefore $|\tsC_N\cap\sC_N|\ge (1-\eps)|\sC_N|$ with high
probability.

By a similar argument  we have, in probability
\begin{align*}
\lim_{N\to\infty} \frac{|\tsC_N|}{N} = \Phi(-\hmu_t^*/2) \le
\frac{\eps}{2}\, ,
\end{align*}
and hence $|\tsC_N|\le \eps|[N]\setminus\sC_N|$ with high probability.

We can therefore apply Lemma \ref{pro:algproof} and conclude that
Algorithm \ref{alg:amp} succeeds with high probability for
$\kappa_N=|\sC_N|/\sqrt{N}\to \kappa>1/\sqrt{\lambda^2 e}$.

In order to complete the proof, we need to prove that the Algorithm
\ref{alg:amp}  succeeds with probability at least $1-\delta(\eps,N)$
for  all $|\sC_N|\ge (1+\eps)\sqrt{N/(\lambda^2e)}$. Notice that,
without loss of generality we can assume $\kappa_N\in
[(1+\eps)/\sqrt{\lambda^2 e}, K]$ with $K$ a large enough constant
(because for $\kappa_N>K$ the problem becomes easier and --for
instance-- the proof of
\cite{alon1998finding} already works). If the claim was false there
would be a sequence of values $\{\kappa_N\}_{N\ge 1}$ indexed by $N$ such that the success
probability  remains bounded away from one along the sequence. But
since $[(1+\eps)/\sqrt{\lambda^2 e}, K]$ is compact, this sequence has
a converging subsequence along which the success probability remains
bounded. This contradicts the above.

%
%**************************************************************
%
\subsection{Proof of Lemma \ref{lem:main}}\label{sec:ProofStateEvolution}

It is convenient to collate the assumptions we make on our problem
instances as follows.

\begin{definition}
  We say $\{A(N), \F_N, \theta^0_N\}_{\{N\ge 1\}}$ is a $(C, d)$\emph{-regular}
  sequence if:
  \begin{enumerate}
    \item For each $N$, $A(N) = W_N/\sqrt{N}$ where $W_N$ satisfies
      Assumption \ref{assm1}.
    \item For each $t\ge 0$, $f(\cdot, t)\in \F_N$ is a polynomial with
      maximum degree $d$ and coefficients bounded in absolute value by $C$.
    \item Each entry of the initial condition $\theta^0_N$ is 1.
  \end{enumerate}
\end{definition}

Let $A^t, t\ge 1$ be i.i.d.  matrices distributed as $A$ conditional on the set $\sC_N$, 
and let $A^0 \equiv A$. We now define the sequence of $N \times N$ matrices $\{\xi^t\}_{t\ge 0}$ 
and a sequence of vectors in $\reals^N$, $\{\xi^t\}_{t\ge 1}$
(indexed as before) given by:

\begin{align}
	\label{eq:rmp1}
	\xi^{t+1}_{i\to j} &= \sum_{\ell \in [N]\bs\{i, j\}}%
	A^t_{i\ell} f(\xi^{t}_{\ell\to i}, t) \\
	\xi^0_{i\to j} &= \theta^0_i \quad \forall\, j \ne i \in [N] \nonumber \\
	\xi^t_{i\to i} &= 0 \quad \forall\, t\ge0, i \in [N] \nonumber \\
	\xi^{t+1}_i &= \sum_{\ell\in [N]\bs i}A^t_{\ell i} f(\xi^t_{\ell\to i}, t) \label{eq:rmp2}
\end{align}

The asymptotic marginals of the iterates $\xi^t$ are easier to compute since the
matrix $A^{t-1}$ is independent of the $\xi^{t-1}$ by definition. We proceed, hence
by proving that $\xi^t$ and $\theta^t$ have, asymptotically in $N$, the same
moments of all orders computing the distribution for the $\xi^t$.

The messages $\theta^t_{i\to j}$ and $\xi^t_{i\to j}$ can be described explicitly via a 
sum over a family of finite rooted labeled trees. We now describe this
family in detail.
All edges are assumed directed towards the root. The leaves of the tree are those 
vertices with no children, and the set of leaves is denoted by $L(T)$. We let $V(T)$
denote the set of vertices of $T$ and $E(T)$ the set of (directed) edges in $T$. 
The root has a label in $[N]$ called its ``type''. Every non-root vertex has
a label in $[N]\times\{0, 1, \ldots, d\}$, the first argument the 
label being the ``type'' of the vertex, and the second being the ``mark''. For
a vertex $v\in T$ we let $l(v)$ denote its type, $r(v)$ its mark and $|v|$ its
distance from the root in $T$.

\begin{definition}
	Let $\cT^t$ be the family of labeled trees $T$ with exactly $t$ generations 
	satisfying the conditions:
	\begin{enumerate}
		\item The root of $T$ has degree 1.
		\item Any path $v_1, v_2 \ldots v_k$ in the tree is non-backtracking
			i.e. the types $l(v_i)$, $l(v_{i+1})$, $l(v_{i+2})$ are distinct.
		\item For a vertex $u$ that is not the a root or a leaf, the mark $r(u)$ is
			set to the number of children of $v$.
		\item We have that $t = \max_{v\in L(T)} |v|$. All leaves $u \in L(T)$ with
			non-maximal depth, i.e. $|u| \le t-1$ have mark 0. 
	\end{enumerate}
	Let $\cT^t_{i\to j} \subset \cT^t$ be the subfamily satisfying, in addition, the following:
	\begin{enumerate}
		\item The type of the root is $i$. 
		\item The root has a single child with type distinct from $i$ and $j$. 
	\end{enumerate}
	In a similar fashion, let $\cT^t_i \subset \cT^t$ be the subfamily satisfying, additionally:
	\begin{enumerate}
		\item The type of the root is $i$. 
		\item The root has a single child with type distinct from $i$. 
	\end{enumerate}
\end{definition}

Let the polynomial 
$f(x, t)$ be represented as:
\begin{align*}
	f(x, t) &= \sum_{i=0}^d q^t_i x^i
\end{align*}

For a labeled tree $T \in \cT^t$ and vector of coefficients $\bq = (q^s_i)_{s \le t, i \le d}$ 
we now define three weights:
\begin{align}\label{eq:treeweights}
	A(T) &\equiv \prod_{u\to v \in E(T)} A_{l(u)l(v)} \\
	\Gamma(T, \bq, t) &\equiv \prod_{u\to v \in E(T)} q^{t - |u|}_{r(u)} \\
	\theta(T) &\equiv \prod_{u\in L(T)} (\theta^0_{l(u)})^{r(u)}
\end{align}
We now are in a position to provide an explicit expression for $\theta^t_{i\to j}$ in terms
of a summation over an appropriate family of labeled trees. 
\begin{lemma}
  Let $\{A(N), \F_N, \theta^0_N\}$ be a $(C, d)$-regular sequence. The orbit $\theta^t$ satisfies:
	\begin{align}
		\theta^t_{i\to j} &= \sum_{T \in \cT^t_{i\to j}} A(T)\Gamma(T, \bq, t)\theta(T) \label{eq:treerep1} \\
		\theta^t_i &= \sum_{T \in \cT^t_{i}} A(T)\Gamma(T, \bq, t)\theta(T) \label{eq:treerep2}
	\end{align}
	\label{lem:treerep}
\end{lemma}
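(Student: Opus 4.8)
\textbf{Proof plan for Lemma \ref{lem:treerep}.} The plan is to prove the two identities \eqref{eq:treerep1} and \eqref{eq:treerep2} simultaneously by induction on $t$, unrolling the recursion \eqref{eq:mp1}. The combinatorial content is that iterating the message-passing update creates, for each message $\theta^t_{i\to j}$, a sum over non-backtracking ``computation trees'' rooted at $i$; the polynomial $f(\,\cdot\,,s)$ applied at a vertex of depth $|u| = t - s$ contributes a coefficient $q^{s}_{r(u)} = q^{t-|u|}_{r(u)}$ where $r(u)$ is the degree (number of monomials' worth of children) selected at that vertex, and the leaves at the bottom absorb the initial condition raised to the power equal to their chosen exponent. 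Multiplying the matrix entries $A_{l(u)l(v)}$ along every edge gives the weight $A(T)$, the product of coefficients gives $\Gamma(T,\bq,t)$, and the product over leaves gives $\theta(T)$.

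\textbf{Base case.} For $t = 0$ we have $\theta^0_{i\to j} = \theta^0_i$ and the family $\cT^0_{i\to j}$ (resp. $\cT^0_i$) consists of the single tree with one vertex, the root of type $i$ (with no child, hence degree $0$, which we must reconcile with condition 1 of $\cT^t$; for $t=0$ the root is also the unique leaf with mark $r = 1$, so $\theta(T) = \theta^0_i$, $A(T) = \Gamma(T,\bq,0) = 1$). This matches. One should state the $t=0$ conventions carefully since the ``root has degree 1'' and ``root is a leaf'' constraints interact only in this degenerate case.

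\textbf{Inductive step.} Assume \eqref{eq:treerep1}--\eqref{eq:treerep2} hold at level $t$ for all pairs $i\to j$ and all $i$. Expand
\begin{align*}
\theta^{t+1}_{i\to j} = \sum_{\ell\in[N]\bs i,j} A_{\ell i}\, f(\theta^t_{\ell\to i},t)
= \sum_{\ell\in[N]\bs i,j} A_{\ell i} \sum_{m=0}^d q^t_m \big(\theta^t_{\ell\to i}\big)^m.
\end{align*}
By the induction hypothesis $\theta^t_{\ell\to i} = \sum_{T'\in\cT^t_{\ell\to i}} A(T')\Gamma(T',\bq,t)\theta(T')$, so $\big(\theta^t_{\ell\to i}\big)^m$ expands as a sum over $m$-tuples $(T'_1,\dots,T'_m)$ of trees in $\cT^t_{\ell\to i}$ of the product of their three weights. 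Attaching these $m$ subtrees as the children of a new child vertex $u$ of type $\ell$ and mark $r(u) = m$, and then attaching $u$ under the new root of type $i$ via the edge contributing $A_{\ell i}$, produces exactly an element of $\cT^{t+1}_{i\to j}$: condition 2 (non-backtracking) is preserved because each $T'_k\in\cT^t_{\ell\to i}$ has its root's single child of type distinct from $\ell$ and $i$, and the new root's unique child $u$ has type $\ell\ne i,j$; condition 3 is enforced by setting $r(u)=m$; condition 4 about leaf depth and marks is inherited. Conversely every tree in $\cT^{t+1}_{i\to j}$ decomposes uniquely this way. Tracking the weights, the new edge $u\to$root contributes $A_{\ell i}$ to $A(T)$ and $q^{t+1-|u|}_{r(u)} = q^{t}_{m}$ to $\Gamma(T,\bq,t+1)$ (note $|u| = 1$ from the new root), while the sub-weights reassemble $\prod_k A(T'_k)\Gamma(T'_k,\bq,t)\theta(T'_k)$; one checks the mark/depth bookkeeping is consistent because all depths in the $T'_k$ shift by one and $t\mapsto t+1$. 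This yields \eqref{eq:treerep1} at level $t+1$; the argument for $\theta^{t+1}_i$ is identical except the summation range is $[N]\bs i$ and the root's child type need only differ from $i$, giving $\cT^{t+1}_i$.

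\textbf{Main obstacle.} The only delicate point is the bijection between $\cT^{t+1}$-trees and the ``root child $\times$ tuple of subtrees'' data, together with a careful, consistent reading of conditions 3 and 4 (which vertices carry which marks, and how leaf marks at the bottom generation encode the exponents picked from $\theta^0$). In particular, one must verify that non-maximal-depth leaves are forced to have mark $0$ exactly because at such a vertex the corresponding $f(\,\cdot\,,\cdot)$-expansion contributed the constant term $q^\cdot_0$, i.e. chose zero children; and that the degenerate single-vertex trees arising when the recursion ``bottoms out'' early are counted once and with the correct $\theta^0$ power. Once this combinatorial dictionary is pinned down, the weight computation is a routine product rearrangement. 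I expect no analytic difficulty here — the lemma is purely a bookkeeping identity that sets up the moment-method argument used later to compare $\theta^t$ and $\xi^t$.
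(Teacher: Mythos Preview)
Your approach is essentially the same as the paper's: induction on $t$, expanding $f$ as a polynomial and using the bijection between trees in $\cT^{t+1}_{i\to j}$ and pairs consisting of a child type $\ell\notin\{i,j\}$ together with a $k$-tuple of subtrees from $\cT^t_{\ell\to i}$. The one difference is that the paper takes $t=1$ (not $t=0$) as the base case, which cleanly avoids the degeneracy you flag---at $t=1$ the family $\cT^1_{i\to j}$ consists of two-vertex trees (root of type $i$, leaf of type $\ell\notin\{i,j\}$ with mark $k\le d$), matching $\theta^1_{i\to j}=\sum_{\ell\in[N]\setminus i,j}\sum_{k\le d}A_{\ell i}\,q^0_k(\theta^0_\ell)^k$ directly without having to reconcile the ``root has degree 1'' condition with a single-vertex tree.
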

\begin{proof}
	We prove \myeqref{eq:treerep1} using induction. The proof of \myeqref{eq:treerep2}
	is very similar. We have, by definition, that: 
	\begin{align*}
		\theta^1_{i\to j} &= \sum_{\ell\in[N]\bs i, j} \sum_{k\le d} A_{\ell i}q^0_k(\theta^0_\ell)^k
	\end{align*}
	This is what is given by \myeqref{eq:treerep1} since $\cT^1_{i\to j}$ 
	is exactly the set of trees with two vertices joined by a single edge, the root 
	having type $i$, the other vertex (say $v$) having type $l(v) \notin \{i, j\}$ 
	and mark $r(v) \le d$. 

	Now we assume \myeqref{eq:treerep1} to be true up to $t$. For iteration $t+1$, 
	we obtain by definition:
	\begin{align*}
		\theta^{t+1}_{i\to j} &= \sum_{\ell\in [N]\backslash\{i, j\}} A_{\ell i}%
		\sum_{k \le d} q^t_k (\theta^t_{\ell\to i})^k\\
		&= \sum_{\ell\in[N]\backslash\{i, j\}} \sum_{k\le d}\sum_{T_1 \cdots T_k \in \cT^t_{\ell\to i}} %
		A_{\ell i}q^t_k\prod_{m =1}^k A(T_m) \Gamma(T_m, \bq, t)\theta(T_m)
	\end{align*}
	Notice that $\cT^{t+1}_{i\to j}$ is in bijection with the set of pairs
	containing a vertex of type $\ell \notin \{i, j\}$ and a $k$-tuple of trees belonging
	to $\cT^t_{\ell\to i}$. This is because one can form a tree in $\cT^{t+1}_{i\to j}$
	by choosing a root with type $i$, its child $v$ with type $\ell \notin \{i, j\}$
	and choosing a $k (\le d)$-tuple of trees from $\cT^t_{\ell\to i}$, identifying their
	roots with $v$ and setting $r(v) = k$. With this, absorbing the factors of $A_{\ell i}$
	into $\prod_{m=1}^k A(T_m)$ and $q^t_k$ into $\prod_{m=1}^k \Gamma(T_m, \bq, t)$ yields
	the desired claim.
\end{proof}

From a very similar argument as above we obtain that:
\begin{align*}
	\xi^t_{i\to j} &= \sum_{T\in\cT^t_{i\to j}} \bar A(T)\Gamma(T, \bq, t) \theta(T) \\
	\xi^t_i &= \sum_{T\in\cT^t_i} \bar A(T)\Gamma(T, \bq, t) \theta(T)
\end{align*}
where the weight $\bar A(T)$ for a labeled tree $T$ is defined (similar
to \myeqref{eq:treeweights}) by:
\begin{align}\label{eq:reftreeweight}
	\bar A(T) &\equiv \prod_{u\to v \in E(T)} A^{t - |u|}_{l(u)l(v)}  
\end{align}

We now prove that the moments of $\theta^t_{i}$ and $\xi^t_i$ are asymptotically
(in the large $N$ limit) the same via the following:
\begin{proposition}
	Let $\{A(N), \F_N, \theta^0_N\}$ be a $(C, d)$-regular sequence.	
	the conditions above. Then, for any $t \ge 1$, there exists a constant $K$
	independent of $N$ (depending possibly on $m, t, d, C$) such that for any 
	$i\in[N]$:
	\begin{align*}
		\left\lvert \E\left[(\theta^t_i)^m\right] - \E\left[ (\xi^t_i)^m \right]\right\rvert &\le KN^{-1/2} 
	\end{align*}
	\label{prop:refresh}
\end{proposition}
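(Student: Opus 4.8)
The plan is to prove Proposition \ref{prop:refresh} by the moment method, expanding the $m$-th moment of $\theta^t_i$ (and of $\xi^t_i$) using the tree representation in Lemma \ref{lem:treerep}. Writing $\theta^t_i = \sum_{T\in\cT^t_i} A(T)\Gamma(T,\bq,t)\theta(T)$, we get
\[
\E[(\theta^t_i)^m] = \sum_{T_1,\dots,T_m \in \cT^t_i} \Gamma(T_1,\bq,t)\cdots\Gamma(T_m,\bq,t)\,\theta(T_1)\cdots\theta(T_m)\;\E\big[A(T_1)\cdots A(T_m)\big],
\]
and similarly for $\xi$ with $\E[\bar A(T_1)\cdots\bar A(T_m)]$ in place of $\E[A(T_1)\cdots A(T_m)]$. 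The combinatorial prefactors $\Gamma$ and $\theta$ are identical for the two expansions, so the whole difference comes from the expectations of products of matrix entries. The key point is that $\E[A(T_1)\cdots A(T_m)]$ is governed by how the edges of the $m$ trees, viewed as multigraphs on label-space $[N]$, coincide: since $A_{ab}$ has mean $\lambda_{ab}/\sqrt N$ (with $\lambda_{ab}\in\{0,\lambda\}$ depending on whether $\{a,b\}\subseteq\sC_N$) and $O(1/N)$ second moment and subgaussian tails, any edge of the superimposed multigraph that appears with multiplicity one contributes only through its (small) mean, while an edge of multiplicity $k\ge 2$ contributes an $O(N^{-k/2})$ factor. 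For $\xi$, because each generation uses a fresh independent copy $A^{t-|u|}$, two edges can only ``pair up'' if they lie at the same depth \emph{and} the corresponding matrices coincide; otherwise they are independent and factor through their means.

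The main step is a counting argument: one partitions the set of $m$-tuples $(T_1,\dots,T_m)$ according to the isomorphism type of the superimposed labeled structure, i.e. according to which vertices of the different trees are forced to carry the same type in $[N]$. For a given ``shape,'' the number of ways to assign distinct labels in $[N]$ is $N^{(\text{number of free vertices})}(1+o(1))$, while the matrix-entry expectation contributes a factor $N^{-\frac12(\text{number of distinct superimposed edges counted with the right power})}$ — more precisely each distinct edge in the superposition contributes $N^{-1/2}$ per incidence if it has multiplicity one (through the mean) and $N^{-j/2}$ if multiplicity $j$ (through the $j$-th moment, up to constants from subgaussianity). A standard Euler-characteristic bookkeeping then shows that the tuples which give a nonvanishing contribution as $N\to\infty$ are exactly those for which the superposition is a forest in which every edge has multiplicity $\le 2$ and the multiplicity-$2$ edges form a structure that exactly compensates the vertex count; any tuple whose superposition has a cycle, or an edge of multiplicity $\ge 3$, or an ``unbalanced'' single edge, contributes $O(N^{-1/2})$ (or smaller) to the moment. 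The crucial observation is that for \emph{each} such surviving shape, the constraint distinguishing $A$ from $\xi$ — namely whether superimposed edges must also match in depth/copy-index — is automatically satisfied or, when it is not, forces an extra cycle or multiplicity that pushes the contribution into the $O(N^{-1/2})$ error. Hence the surviving (order-$1$) contributions to $\E[(\theta^t_i)^m]$ and $\E[(\xi^t_i)^m]$ coincide termwise, and everything else is bounded by $KN^{-1/2}$, where $K$ depends on $m,t,d,C$ through the (finite, $N$-independent) number of shapes and the bounds on $\Gamma,\theta$ and the subgaussian constant $\rho$.

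To make the error bound uniform it is enough to observe that $|\cT^t_i|$ as an abstract shape set is finite once we quotient by the choice of labels (bounded by a function of $t$ and $d$ only), that $|\Gamma(T,\bq,t)|\le C^{|E(T)|}$ and $|\theta(T)|\le 1$ since all entries of $\theta^0_N$ equal $1$, and that each $|\E[A_{ab}^j]|\le c_j(\rho)N^{-j/2}$ for $j\ge 1$ by subgaussianity (with the $j=1$ case $|\E A_{ab}|\le \lambda/\sqrt N$). Summing the geometric number of shapes against these bounds yields the claimed $|\E[(\theta^t_i)^m]-\E[(\xi^t_i)^m]|\le KN^{-1/2}$. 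I expect the main obstacle to be the careful case analysis of superimposed structures: precisely classifying which combined shapes are ``balanced'' (contribute $\Theta(1)$) and checking that in every balanced case the extra depth/copy-matching constraint for $\xi$ does not remove any term — equivalently, that any mismatch between the $A$-expansion and the $\xi$-expansion is confined to shapes that are already $O(N^{-1/2})$. Handling the non-backtracking condition on paths (item 2 in the definition of $\cT^t$), which rules out some low-order coincidences and is what makes the edge-messages behave better than the vertex iteration, is the part that needs the most attention.
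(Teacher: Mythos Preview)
Your overall approach --- expand both moments via the tree representation, classify $m$-tuples by the isomorphism type of the superimposed labeled structure, and show that the order-$1$ contributions coincide while all discrepancies are $O(N^{-1/2})$ --- is exactly the paper's strategy. However, there is a genuine gap in your counting argument that, taken literally, makes the balance fail.

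You write that the number of labelings of a given shape is $N^{(\text{number of free vertices})}$. This is not correct in a way that matters. A multiplicity-one edge in the superposition contributes $\E[A_{ab}] = \lambda_{ab}/\sqrt{N}$, which \emph{vanishes} unless $\{a,b\}\subseteq\sC_N$. Hence any vertex that is an endpoint of a surviving multiplicity-one edge is constrained to lie in $\sC_N$, and therefore has only $|\sC_N|=\kappa_N\sqrt{N}$ label choices, not $N$. The paper's proof makes this explicit by partitioning the vertices of the merged graph $\bG$ into those in $\sC_N$ (count $n(\Gs)$, each worth $O(\sqrt{N})$ choices) and those outside (count $n(\Gsc)$, each worth $O(N)$ choices), and likewise splitting the edges of $\bG$ into $E(\Gs)$, $E(\Gsc)$, and the joining edges $E_J$. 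The key combinatorial inequality becomes $n(\Gs) + 2\,n(\Gsc) \le \alpha$, with strict inequality $\le \alpha-1$ precisely when $\bG$ has a cycle or when some edge in $E(\Gsc)\cup E_J$ has multiplicity $\ge 3$; here $\alpha$ is the total edge count with multiplicity. This is what makes the label count $\le (c\sqrt{N})^{\alpha}$ (respectively $(c\sqrt{N})^{\alpha-1}$) balance against the weight bound $N^{-\alpha/2}$. A uniform $N$-per-vertex count does not reproduce this balance and would leave you unable to bound even the ``tree'' contribution by $O(1)$.

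The other point you flag --- that for the surviving tree shapes the depth-matching constraint distinguishing $A$ from $\bar A$ is automatically satisfied --- is indeed the second key step, and the paper's argument is exactly what you hint at: if an edge of the superposition is covered twice but at different generations, the non-backtracking condition forces those two occurrences to close a cycle in $\bG$, contradicting tree-ness. Thus $T(A)=T(\bar A)$ exactly, and the entire difference $|\E[(\theta^t_i)^m]-\E[(\xi^t_i)^m]|$ is carried by the cycle and high-multiplicity classes, each $O(N^{-1/2})$ by the two-tier counting above.
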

\begin{proof}
	According to our initial condition, $\theta^{0, N}$ has all entries 1. 
	%Y: this we can generalize if needed.
	Then, using the tree representation we have that:
	\begin{align}
		\E \left[ (\theta^t_i)^m \right] &= \sum_{T_1,\ldots,T_m\in\cT^t_i}\left[\prod_{\ell=1}^m \Gamma(T_\ell, \bq, t)\right]%
		\E\left[ \prod_{\ell=1}^m A(T_\ell) \right] \label{eq:mom1}\\
		\E \left[ (\xi^t_i)^m \right] &= \sum_{T_1,\ldots,T_m\in\cT^t_i}\left[\prod_{\ell=1}^m \Gamma(T_\ell, \bq, t)\right]%
		\E\left[ \prod_{\ell=1}^m \bar A(T_\ell) \right] \label{eq:mom2} 
	\end{align}
	
	Define the multiplicity $\phi(T)_{rs}$ to be the number of occurrences of an edge $u\to v$ in the
	tree $T$ with types $l(u), l(v) \in \{r, s\}$.	
	Also let $\bG$ denote the graph obtained by identifying vertices of the same type in the tuple of
	trees $T_1, \ldots T_m$. We let $\Gs$ denote its restriction to 
	the vertices in $\sC_N$ and $\Gsc$ be the graph restricted to $\sC_N^c$.  
	Let $E(\Gs)$ and $E(\Gsc)$ denote the (disjoint) edge sets of these graphs and $E_J$ denote
	the edges in $\bG$ not present in either $\Gs$ or $\Gsc$. In other words, $E_J$
	consists of all edges in $\bG$ with one endpoint belonging to $\sC_N$ and one end point
	outside it. The edge sets here do not count multiplicity. 
	
	For analysis, we first split 
	the sum over $m$-tuples of trees above into three terms as follows:
	
	\begin{enumerate}
		\item $S(A)$: the sum over all $m$-tuples of trees $T_1, \ldots, T_m$ such that
			there exists an edge $rs$ in $E(\Gsc)\cup E_J$ which is covered at least
			3 times.
		\item $R(A)$: the sum over all $m$-tuples of trees such that each edge in $E(\Gsc)\cup E_J$ 
			is covered either 0 or 2 times, and the graph $\bG$ contains a cycle.
		\item $T(A)$: the sum over all $m$-tuples of trees such that each edge in $E(\Gsc)\cup E_J$
			is covered either 0 or 2 times, and the graph $\bG$ is a tree.
	\end{enumerate}
	
	We also define analogous terms $S(\bar A)$, $R(\bar A)$ and $T(\bar A)$ in the same fashion.
	We have that $\left\lvert\prod_{\ell=1}^m\Gamma(T_\ell, \bq, t)\right\rvert \le C^{md^{t+1}}$
	since the coefficients are bounded by $C$ and the number of edges in the tree by $d^{t+1}$.
	We thus concentrate on the portion $\E\left[ \prod_{\ell=1}^m A(T_\ell) \right]$. 
	When $\E\left[ \prod_{\ell=1}^m A(T_\ell) \right] = 0$, some edge in $E(\Gsc)\cup E_J$
	is covered exactly once. This implies $\E\left[ \prod_{\ell=1}^m \bar A(T_\ell) \right]=0 =%
	\E\left[ \prod_{\ell=1}^m A(T_\ell) \right]$,
	since the same edge is covered only once in any generation. This guarantees
	that we need only consider the contributions $S(A)$, $R(A)$ and $T(A)$ as above in the sums
	\myeqref{eq:mom1}, \myeqref{eq:mom2}.
	
	We first consider the contribution $S(A)$. We have:
	\begin{align}
		\E \left[  \prod_{\ell=1}^m A(T_\ell)\right] &=\E\Bigg[ \prod_{j<k} (A_{jk})^%
		{\sum_{\ell = 1}^m \phi(T_\ell)_{jk}}  \Bigg] \nonumber \\
		&\le \E\left[ \prod_{j<k} |A_{jk}|^{\sum_{\ell=1}^m\phi(T_\ell)_{jk}} \right] \nonumber \\
		&=\prod_{j<k}\E\left[|A_{jk}|^{\sum_{\ell=1}^m\phi(T_\ell)_{jk}} \right] \nonumber \\
		&\le C_1\left( \frac{1}{\sqrt N} \right)^\alpha \label{eq:mombndA},
	\end{align}
	where $\alpha = \alpha(T_1,\ldots, T_m)$ is the total number of edges (with multiplicity) 
	in the tuple of trees $T_1, \ldots, T_m$. The last inequality follows from Lemma \ref{lem:subgauss}
	and observing that for any $j, k$, $A_{jk}$ is subgaussian with scale parameter $\rho/N$.  
	The constant $C_1 = C_1(T_1, \ldots T_m)$ absorbs the leading factors from Lemma \ref{lem:subgauss}, and is
	independent of $N$.

	To track the dependence on $N$, note that the
	graph $\bG$ is connected since the roots of all the trees have type $i$. Let $n(\Gs)$ [resp. $n(\Gsc)$] denote 
	the number of vertices in $\Gs$ [resp. $\Gsc$] not counting the root.  
	Counting the edges with multiplicities, we have $3 + |E(\Gs)| + 2(|E(\Gsc)| + |E_J| -1)%
	\le \alpha$, implying $|E(\Gs)| + 2(|E(\Gsc)|+|E_J|)%
	\le \alpha-1$. By connectivity of $\bG$ and the fact that each component in $\Gsc$ is
	connected by at least one edge to a vertex of $\Gs$ we have that $n(\Gsc) + n(\Gs) \le%
	|E_J| + |E(\Gsc)| + |E(\Gs)|$ and $n(\Gsc) \le |E_J| + |E(\Gsc)|$. Combining we get: 
	\begin{align*}
		n(\Gs) + 2n(\Gsc) &\le |E(\Gs)| + 2(|E(\Gsc)| + |E_J|) \\
		&\le \alpha - 1
	\end{align*}
	For a candidate graph $\bG$, the number of possible labels of types is upper bounded by
	\begin{align*}
		\text{no. of possible labeling of }\bG  &\le 2^{n(\Gs) + n(\Gsc)}(\kappa_N\sqrt N)^{n(\Gs)}(N)^{n(\Gsc)} \\
		&\le  (4\kappa\sqrt N)^{\alpha-1}, 
	\end{align*}
	for large enough $N$.
	Denote by $\cU^t_i$ the set of trees $\cT^t_i$ with the labels removed.
	We then have, using the above and \myeqref{eq:mombndA}
	\begin{align}
		|S(A)| &\le  C^{md^{t+1}}\sum_{(\cU^t_i)^m} C_1 (\sqrt N)^{-\alpha }(4\kappa\sqrt N)^{\alpha - 1}\nonumber\\
		&\le C_2 N^{-1/2} \label{eq:contribedgecover},
	\end{align}
	where we absorbed the summation over $(\cU^t_i)^m$ into $C_2$ since it is independent of $N$.
	The constant $C_1$ appears because the same tuple of (unlabeled) trees can yield
	different (candidate) graphs $\bG$, however their total number is independent of $N$.

	Indeed, we can do a similar calculation to obtain that $|R(A)|$ is $O(N^{-1/2})$.  
	For such a graph, $n(\Gs) + n(\Gsc) = |E(\Gs)| + |E(\Gsc)| + |E_J| - a$ for some $a \ge 1$ 
	when $\bG$ has at least one cycle. We have $|E(\Gs)| + 2(|E(\Gsc)| + |E_J|) \le \alpha$
	by counting minimum multiplicities and $n(\Gsc) \le |E(\Gsc)| + |E_J|$ by connectivity
	argument. Thus:
	\begin{align*}
		n(\Gs) + 2n(\Gsc) &\le \alpha - a \\
		&\le \alpha -1.
	\end{align*}
	The number of possible labels for $\bG$ is thus bounded above by $(4\kappa\sqrt N)^{\alpha-1}$.
	Following the same argument as before, we get:
	\begin{align}
		|R(A)| \le C_3 N^{-1/2}, \label{eq:contribGcycle}
	\end{align}
	for some constant $C_3$ dependent only on $m, d, t, \kappa$. We note here that the same bounds
	hold for $S(\bar A)$ and $R(\bar A)$. Indeed, let $\varphi(T)_{rs}^g$ denote the number of
	times an edge $u\to v$ of (distinct) types $l(u), l(v) \in \{r, s\}$ is covered with $|u|=g$. 
	By definition, it follows that $\sum_g \varphi(T)_{rs}^g = \phi(T)_{rs}$. We then obtain:
	\begin{align}
		\E \left[  \prod_{\ell=1}^m\bar A(T_\ell)\right] &=\E\Bigg[ \prod_{j<k}\prod_g (A^{g-1}_{jk})^%
		{\sum_{\ell = 1}^m \varphi(T_\ell)_{jk}^g}  \Bigg] \nonumber\\
		&\le \E\left[ \prod_{j<k} \prod_g |A_{jk}^{g-1}|^{\sum_{\ell=1}^m \sum_g\varphi(T_\ell)_{jk}^g} \right]\nonumber \\
		&= C_4\left( \frac{1}{\sqrt N} \right)^\alpha \label{eq:mombndbarA},
	\end{align}
	This can be used in place of \myeqref{eq:mombndA} to obtain the required bounds on 
	$S(\bar A)$ and $R(\bar A)$. 

	By the bounds \myeqref{eq:contribedgecover}, \myeqref{eq:contribGcycle},
	to prove our result we only need to concentrate
	on $T(A)$. It suffices to show that $T(A) = T(\bar A)$. 
	We first consider the case $\E\left[\prod_{\ell=1}^m A(T_\ell)\right] \ne 0 = \E\left[%
	\prod_{\ell=1}^m \bar A(T_\ell)\right]$. This implies that there exists an edge $rs$ in $E(\Gsc)\cup E_J$ 
	with multiplicity 2, but appearing in different generations in the tuple of trees. Suppose
	they appear on the same branch of the tree, call it $T_1$. Then there exists $a\to b$ and $c\to d$
	with $\{l(a), l(b)\} = \{l(c), l(d)\} = \{i, j\}$ with $a\to b$ on the path from $c$ to the root.
	Due to the non-backtracking property, $a\ne d$. However, then these edges form a cycle in $\bG$
	(formed of the edges from $d$ to $a$) because the tree is non-backtracking and we arrive at
	a contradiction. Now suppose the edges $a\to b$ and $c\to d$ as above appear in different
	generations in distinct trees $T_1$ and $T_2$ respectively. Then as the roots of the $T_\ell$'s
	identify to the same vertex, and the trees are non back-tracking, these form a cycle in $\bG$
	and we arrive at a contradiction. Using the same argument, we see that such edges as $a\to b$
	and $c\to d$ above cannot exist even on different branches of the same tree in different
	generations. 
	
	Now assume $\E\left[ \prod_{\ell=1}^m \bar A(T_\ell) \right] \ne 0$. This means that 
	every edge in $E(\Gsc)\cup E_J$ is covered exactly twice in the same generation and every
	edge in $E(\Gs)$ is covered at most twice. Then, if $\E\left[ \prod_{\ell=1}^m\bar A(T\ell) \right] %
	\ne \E\left[ \prod_{\ell=1}^m A(T\ell) \right]$, there must exist an edge $rs \in E(\Gs)$
	covered twice i.e. with multiplicity, but in two different generations. However, by the 
	argument given previously, this is not possible. 
	We thus obtain that $T(A) = T(\bar A)$. Using this and the bounds on $S(A)$, $R(A)$
	we obtain the required result, for an appropriately adjusted leading constant $K$ depending on $m, d, t$
	and $\kappa$.

\end{proof}

Before proceeding, we prove the following results that are useful to establish state evolution.

\begin{lemma}
	Consider the situation as assumed in Lemma \ref{prop:refresh}. Then we have, for some constants
	$K_m(m, d, t, \kappa), K'_m(m, d, t, \kappa)$ independent of $N$ that:
	\begin{align*}
		|\E[(\xi^t_{i\to j})^m]| &\le K_{m} \\
		|\E(\xi^t_{i})^m| &\le K'_{m}
	\end{align*}
	\label{lem:mombnd}
\end{lemma}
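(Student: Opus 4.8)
The plan is to reuse the tree expansion that is already in place for the refreshed iterates. Exactly as in Lemma~\ref{lem:treerep}, one has the representations $\xi^t_{i\to j} = \sum_{T\in\cT^t_{i\to j}} \bar A(T)\,\Gamma(T,\bq,t)\,\theta(T)$ and $\xi^t_i = \sum_{T\in\cT^t_i}\bar A(T)\,\Gamma(T,\bq,t)\,\theta(T)$, with $\bar A(T)$ the generation-dependent product of matrix entries from \eqref{eq:reftreeweight}. Since $\theta^0_N$ has all entries equal to $1$ we get $\theta(T)=1$, and since each $f(\cdot,t)$ is a polynomial of degree at most $d$ with coefficients bounded by $C$, the product of $m$ factors $\Gamma(T_\ell,\bq,t)$ is bounded in absolute value by $C^{m d^{t+1}}$. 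Hence, expanding $\E[(\xi^t_{i\to j})^m]$ as in \eqref{eq:mom2}, it suffices to bound $\sum_{T_1,\dots,T_m}\big|\E\big[\prod_{\ell=1}^m \bar A(T_\ell)\big]\big|$ over $m$-tuples of labeled trees, uniformly in $i,j,N$; the argument for $\xi^t_i$ is identical with $\cT^t_i$ in place of $\cT^t_{i\to j}$.

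I would organize this sum exactly as in the proof of Proposition~\ref{prop:refresh}: first fix the $m$ tree shapes with labels stripped (the analogues of $\cU^t_i$), of which there are $O(1)$ many since $m,d,t$ are fixed and each tree in $\cT^t$ has boundedly many edges; then sum over type-labelings. For a labeling, let $\bG$ be the graph obtained by identifying equal-type vertices; it is connected and rooted at type $i$. Write $\alpha$ for the total edge count with multiplicity in the tuple, and $n(\Gs),n(\Gsc)$ for the numbers of non-root vertices inside and outside $\sC_N$. A nonzero $\E[\prod_\ell\bar A(T_\ell)]$ forces every edge of $E(\Gsc)\cup E_J$ to be covered at least twice (within its generation, hence at least twice overall), since those entries of $A$ are mean zero; using Lemma~\ref{lem:subgauss} and that each entry of $A$ is subgaussian with scale $\rho/N$ — the clique entries having mean $\lambda/\sqrt N$, which still costs a factor $N^{-1/2}$ per appearance — one obtains $\big|\E[\prod_\ell\bar A(T_\ell)]\big|\le C_1 N^{-\alpha/2}$ with $C_1$ depending only on $m,d,t,\rho$. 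On the counting side, the number of admissible labelings is at most $(C\sqrt N)^{\,n(\Gs)+2n(\Gsc)}$ for $N$ large, just as in Proposition~\ref{prop:refresh}.

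The inequality that closes the estimate is $n(\Gs)+2n(\Gsc)\le\alpha$ for every contributing tuple: connectivity of $\bG$ gives $n(\Gs)+n(\Gsc)\le |E(\Gs)|+|E(\Gsc)|+|E_J|$ and $n(\Gsc)\le|E(\Gsc)|+|E_J|$, while the at-least-twice coverage of off-clique edges gives $\alpha\ge|E(\Gs)|+2(|E(\Gsc)|+|E_J|)$; adding these yields the claim. Substituting, each labeled $m$-tuple contributes at most $C^{m d^{t+1}}C_1\,N^{-\alpha/2}(C\sqrt N)^{\alpha}$, a constant independent of $N$, and summing over the $O(1)$ shapes gives $|\E[(\xi^t_{i\to j})^m]|\le K_m$, uniformly in $i,j$, with $K_m$ depending only on $m,d,t,\kappa,\rho$; the same bound for $\E[(\xi^t_i)^m]$ follows verbatim. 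Unlike Proposition~\ref{prop:refresh}, no split into tree-shaped versus cyclic $\bG$ is required here, since only an upper bound is needed. The only real care is the book-keeping of mean-zero versus nonzero-mean entries of $A$ in the moment estimate and checking that the labeling count and the inequality $n(\Gs)+2n(\Gsc)\le\alpha$ are applied to arbitrary contributing $\bG$ (not just trees); both mimic the computations leading to \eqref{eq:contribedgecover}–\eqref{eq:contribGcycle}, so I expect no genuine obstacle.
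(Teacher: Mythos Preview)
Your proposal is correct and follows essentially the same argument as the paper: the tree representation, the moment bound $|\E[\prod_\ell\bar A(T_\ell)]|\le C_1 N^{-\alpha/2}$, the labeling count $(C\sqrt N)^{n(\Gs)+2n(\Gsc)}$, and the connectivity/coverage inequality $n(\Gs)+2n(\Gsc)\le\alpha$ are exactly the ingredients the paper uses. The only difference is cosmetic: the paper reuses the $S(\bar A),R(\bar A),T(\bar A)$ split from Proposition~\ref{prop:refresh} and invokes the $O(N^{-1/2})$ bounds on the first two pieces before treating $T(\bar A)$ separately, whereas you observe (correctly) that for a mere upper bound the split is unnecessary and the single inequality $n(\Gs)+2n(\Gsc)\le\alpha$ handles all contributing tuples at once.
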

\begin{proof}
	We prove the claim for $\xi^t_{i\to j}$. The other claim follows by essentially the
	same argument. Recall from the tree representation of Lemma \ref{lem:treerep}:
	\begin{align*}
		\E[(\xi^t_{i\to j})^m] &= \sum_{T_1, \ldots, T_m \in \cT^t_{i\to j}} %
		\left[\prod_{\ell=1}^m \Gamma(T_\ell, \bq, t)\right]%
		\E\left[ \prod_{\ell=1}^m \bar A(T_\ell) \right]
	\end{align*}
	Using the same splitting of contributions to the above sum into $S(\bar A)$,
	$R(\bar A)$ and $T(\bar A)$ as in Lemma \ref{prop:refresh}, we see that it is
	sufficient to prove that $|T(A)|$ is bounded uniformly over $N$. We have:
	\begin{align*}
		|T(\bar A)| &= \sum_{T_1,\ldots, T_m}\left[\prod_{\ell=1}^m \Gamma(T_\ell, \bq, t)\right]%
		\E\left[ \prod_{\ell=1}^m \bar A(T_\ell) \right] \\
		&\le \sum_{T_1,\ldots T_m} C^{d^{t+1}} C_1(\sqrt N)^\alpha,
	\end{align*}
	where $\alpha = \alpha(T_1, \ldots, T_m)$ is the number of edges counted with multiplicity
	and $T_1, \ldots T_m$ ranges over $m$-tuples of trees such that the graph $\bG$ (formed
	by identifying vertices of the same type) is a tree. Define $n(\Gs)$, $n(\Gsc)$, $E(\Gs)$,
	$E(\Gsc)$ and $E_J$ as in Lemma \ref{prop:refresh}. By an argument similar to that for
	bounding $R(A)$, we obtain that $n(\Gs) + 2n(\Gsc) = \alpha$. Thus we get:
	\begin{align*}
		|T(\bar A)| &\le C_5 (\sqrt N)^{-\alpha} (4\kappa\sqrt N)^\alpha \\
		&\le K_m,
	\end{align*}
	where $K_m = K(m, d, t, \kappa)$ is a constant independent of $N$. For convenience, we make
	only the dependence on $m$ explicit. The result follows, after
	a small change in the constant $K$ since
	the other contributions $S(\bar A)$ and $R(\bar A)$ are $O(N^{-1/2})$.
\end{proof}

\begin{lemma}
	Consider the situation as in Lemma \ref{prop:refresh}. Then we have:
	\label{lem:sumvar}
	\begin{align*}
		\lim_{N\to\infty}\Var\left(\frac{1}{|\sC_N|} \sum_{i\in\sC_N}(\xi^t_i)^m \right) &= 0 \\
		\lim_{N\to\infty}\Var\left(\frac{1}{|\sC_N|} \sum_{i\in\sC_N\bs j}(\xi^t_{i\to j})^m \right) &= 0\\
		\lim_{N\to\infty}\Var\left(\frac{1}{N} \sum_{i\in[N]\bs \sC_N}(\xi^t_i)^m \right) &= 0 \\
		\lim_{N\to\infty}\Var\left(\frac{1}{N} \sum_{i\in[N]\bs \sC_N, j}(\xi^t_{i\to j})^m \right) &= 0\\
		\lim_{N\to\infty}\Var\left(\frac{1}{N} \sum_{i\in[N]}(\xi^t_{i})^m \right) &= 0\\
		\lim_{N\to\infty}\Var\left(\frac{1}{N} \sum_{i\in[N]\bs j}(\xi^t_{i\to j})^m \right) &= 0,
	\end{align*}
	where $\Var(\cdot)$ denotes the variance of the argument. The same results
	hold with $\theta^t$ instead of $\xi^t$.
\end{lemma}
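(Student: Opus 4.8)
The plan is to bound the variance of each empirical average by expanding it into a double sum over pairs of indices and showing that the ``off-diagonal'' contributions carry an extra factor of $N^{-1}$ relative to the number of terms. Concretely, for the first claim, I would write
\begin{align*}
\Var\left(\frac{1}{|\sC_N|}\sum_{i\in\sC_N}(\xi^t_i)^m\right)
= \frac{1}{|\sC_N|^2}\sum_{i,i'\in\sC_N}\Big(\E[(\xi^t_i)^m(\xi^t_{i'})^m] - \E[(\xi^t_i)^m]\,\E[(\xi^t_{i'})^m]\Big),
\end{align*}
and the goal is to show each covariance term is $O(N^{-1})$ uniformly, so that since $|\sC_N|^2 = \Theta(N)$ the whole expression is $O(N^{-1})\to 0$. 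The same template applies verbatim to all six cases, with the normalization $1/N$ or $1/|\sC_N|$ and the index ranges adjusted; in the cases normalized by $1/N$ and summed over $\Theta(N)$ indices we even have room to spare, while the cases normalized by $1/|\sC_N|$ (which is only $\Theta(\sqrt N)$) are the tight ones and force us to prove the covariances are genuinely $O(N^{-1})$, not merely $o(1)$.

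The key step is the covariance bound, and for this I would reuse the tree representation and moment-method bookkeeping from the proof of Proposition \ref{prop:refresh}. Expanding $(\xi^t_i)^m(\xi^t_{i'})^m$ via Lemma \ref{lem:treerep} gives a sum over $2m$-tuples of labeled trees, $m$ rooted at $i$ and $m$ rooted at $i'$; the product of $\bar A(T_\ell)$ weights has nonzero expectation only when every edge in $E(\Gsc)\cup E_J$ is covered an even number of times. As in Proposition \ref{prop:refresh} we split into the ``many-cover'' part $S$, the ``cycle'' part $R$, and the ``tree'' part $T$, with $|S|, |R| = O(N^{-1/2})$ by the same counting. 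The point is that in the covariance, the product term $\E[(\xi^t_i)^m]\E[(\xi^t_{i'})^m]$ exactly cancels the contribution of those $2m$-tuples in which the graph $\bG$ splits into two connected components, one containing $i$ and one containing $i'$ (here we use that $A$ restricted to $\sC_N$, and $A$ on the complement, are jointly independent given $\sC_N$, so disconnected pieces factorize). What survives is the contribution of tuples in which $\bG$ is connected through both roots $i$ and $i'$. For such a connected graph with two distinguished roots, the vertex-versus-edge counting inequality $n(\Gs) + 2n(\Gsc) \le \alpha - 1$ used in Proposition \ref{prop:refresh} improves to $n(\Gs) + 2n(\Gsc) \le \alpha - 2$, because connecting two roots into one component (rather than one root) costs one extra edge beyond what the spanning-tree/cycle count already charged. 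Since the number of labelings of such a $\bG$ is at most $(4\kappa\sqrt N)^{n(\Gs)}\cdot N^{n(\Gsc)}$-type bound $\le (4\kappa\sqrt N)^{\alpha-2}$ and each $\bar A(T)$ product contributes $C_4 (1/\sqrt N)^{\alpha}$ (Eq.~\eqref{eq:mombndbarA}), while the $\Gamma$ factors are bounded by a constant, summing over the finitely many unlabeled tree-tuples gives $O(N^{-1})$ for the covariance. The edge-message cases ($\xi^t_{i\to j}$) are identical, noting the roots still have forced type and the extra root-child constraints only shrink the families.

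The main obstacle I anticipate is getting the improved counting inequality exactly right: one has to verify carefully that in the \emph{surviving} tuples—those not cancelled by the product of expectations—the graph $\bG$ really is connected through both roots, and that this connectivity forces one additional edge into the count compared with the single-root analysis of Proposition \ref{prop:refresh}. The subtlety is that $\bG$ being connected as a whole is automatic only if the two roots $i, i'$ lie in the same component, whereas tuples where they lie in different components are precisely the ones killed by subtracting $\E[(\xi^t_i)^m]\E[(\xi^t_{i'})^m]$; so the cancellation argument and the counting argument have to be interleaved rather than done in sequence. Once that is pinned down, everything else is a mechanical repeat of the estimates already established for Proposition \ref{prop:refresh} and Lemma \ref{lem:mombnd}, and the ``same results hold with $\theta^t$ instead of $\xi^t$'' addendum follows because Proposition \ref{prop:refresh} shows $\theta^t$ and $\xi^t$ agree in all moments up to $O(N^{-1/2})$, which is negligible at the scale of the variance bounds.
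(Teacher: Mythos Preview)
Your approach is essentially the paper's: expand the variance as a double sum, cancel the tuples whose graph $\bG$ splits into a component through $i$ and a component through $i'$, and bound the surviving connected tuples via the same $S/R/T$ decomposition and vertex-versus-edge counting used in Proposition~\ref{prop:refresh}. Two points need correcting, though neither breaks the argument.

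First, your counting is off by one. With two fixed roots $i,i'$ in a connected $\bG$, the tree inequality gives $n(\Gs)+n(\Gsc)\le |E|-1$ (not $-2$), and combining with $n(\Gsc)\le |E(\Gsc)|+|E_J|$ and $|E(\Gs)|+2(|E(\Gsc)|+|E_J|)\le\alpha$ yields $n(\Gs)+2n(\Gsc)\le\alpha-1$, not $\alpha-2$. A concrete witness is the path $i\!-\!k\!-\!i'$ with $k\in\sC_N$ and each edge covered once: $\alpha=2$, $n(\Gs)=1$. (This tuple in fact cancels for a different reason---edge-independence, not vertex-disconnection---but your stated justification, ``connecting two roots costs one extra edge'', only buys $\alpha-1$.) The good news is that $\alpha-1$ is enough: it gives covariances of order $N^{-1/2}$, and since
\[
\Var\Big(\frac{1}{|\sC_N|}\sum_{i\in\sC_N}(\xi^t_i)^m\Big)\le \frac{1}{|\sC_N|}\max_i\E[(\xi^t_i)^{2m}]+\max_{i\ne i'}\big|\mathrm{Cov}\big|,
\]
any uniform $o(1)$ bound on the off-diagonal covariances suffices. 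Your assertion that the $1/|\sC_N|$-normalized cases ``force us to prove the covariances are genuinely $O(N^{-1})$'' is simply false.

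Second, your transfer to $\theta^t$ does not follow from Proposition~\ref{prop:refresh}: that proposition controls individual moments $\E[(\theta^t_i)^m]$, not joint moments $\E[(\theta^t_i)^m(\theta^t_{i'})^m]$, so it says nothing about covariances. The correct route---which is what the paper does implicitly---is to rerun the identical tree-counting argument with the weight $A(T)$ in place of $\bar A(T)$, using the bound~\eqref{eq:mombndA} instead of~\eqref{eq:mombndbarA}; both give the same $(1/\sqrt N)^\alpha$ factor, so nothing changes.
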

\begin{proof}
	We prove only the first claim in detail. The proofs for the rest of the claims 
	follow the same analysis. To begin with:
	\begin{align*}
		\Var\left( \frac{1}{|\sC_N|}\sum_{i\in\sC_N}(\xi^t_i)^m \right) &= \frac{1}{|\sC_N|^2} \sum_{i, j \in \sC_N} %
		\left(\E\left[ (\xi^t_i)^m(\xi^t_j)^m \right] - \E\left[ (\xi^t_i)^m \right]\E\left[ (\xi^t_j)^m \right]\right).
	\end{align*}
	Note that the terms wherein $i=j$ are $O(|\sC_N|)$, using Lemma \ref{lem:mombnd}. We now control 
	each of the remaining summands, where $i, j$ distinct, in the following fashion. Fix a pair $i, j$.
	The summand $\left(\E\left[ (\xi^t_i)^m(\xi^t_j)^m \right] - %
	\E\left[ (\xi^t_i)^m \right]\E\left[ (\xi^t_j)^m \right]\right)$
	 can be written as a summation over $2m$-tuples of trees $T_1,\ldots,T_m,$ $ T_1',\ldots,T_m'$
	where the first $m$ belong to $\cT^t_i$ and the last $m$ to $\cT^t_j$. Let $\bG$ 
	denote the simple graph obtained by identifying vertices of the same type in the 
	tuple $T_1,\ldots, T_m, T_1', \ldots, T_m'$. Let $\Gs$ and $\Gsc$ be subgraphs defined
	as in Proposition \ref{prop:refresh}. The terms in which $\bG$ is disconnected with 
	one component containing $i$ and the other containing $j$, are identical in 
	$\E\left[ (\xi^t_i)^m(\xi^t_j)^m \right]$ and $\E\left[ (\xi^t_i)^m \right]\E\left[ (\xi^t_j)^m \right]$ 
	and hence cancel each other. If $\bG$ is connected, by the argument 
	in Lemma \ref{prop:refresh} all terms where $\bG$ is not a tree, or when $\Gsc$ contains
	an edge covered thrice or more have vanishing contributions. It remains to check the
	contributions of terms where $\bG$ is a connected tree, and every edge in $\Gsc$
	is covered at exactly twice. Defining $n(\Gs)$, $n(\Gsc)$, $E(\Gs)$, $E(\Gsc)$ and
	$E_J$ as before, we have that $n(\Gs) + n(\Gsc) \le E(\Gs) + E(\Gsc) + E_J - 1$ since
	types $i$ and $j$ have been fixed. As before $n(\Gsc) \le E(\Gsc) + E_J$ by connectivity
	and $E(\Gs) + 2(E(\Gsc) + E_J) \le \alpha$ where $\alpha$ is the number of edges counted with
	multiplicity. This yields $n(\Gs) + 2n(\Gsc) \le \alpha -1$. The total number of such terms
	is thus at most $O(N^{(\alpha-1)/2})$, while their weight is bounded by $O(N^{-\alpha/2})$. 
	Their overall contribution, consequently, vanishes in limit. We thus have $\forall i\ne j \in \sC_N$:
	\begin{align*}
		\E\left[ (\xi^t_i)^m(\xi^t_j)^m \right] - \E\left[ (\xi^t_i)^m \right]\E\left[ (\xi^t_j)^m \right] &\le \eps(N),
	\end{align*}
	where $\eps(N) \to 0$ as $N\to\infty$. This gives:
	\begin{align*}
		\Var\left( \frac{1}{|\sC_N|}\sum_{i\in\sC_N}(\xi^t_i)^m \right) &\le O(|\sC_N|^{-1}) + \eps(N),
	\end{align*}
	and the first claim follows. 

	The other claims follow using the same argument, and since $|\sC_N| = o(N)$.
\end{proof}

\begin{proposition}\label{prop:stateevol}
	Let $\mu_t, \tau_t$ be given as in Eqs. \ref{eq:stateevol1}, \ref{eq:%
	stateevol2}. Consider $(A(N), \cF_N, \theta^0_N)_{N\ge1}$ a 
	sequence of  $(C, d)$-regular MP instances. 
	Then the following limits hold for each $m \ge 1$ and $t\ge 0$:
	\begin{align}
	  \lim_{N\to\infty} \E[(\theta^t_i)^m] &= \E[(\mu_t + Z_t)^m] \text{ if } i\in \sC_N \\
	  \lim_{N\to\infty} \E[(\theta^t_i)^m] &= \E[(Z_t)^m] \text{ otherwise.}
	\end{align}
	where $Z_t \sim \normal(0, \tau_t^2)$. 
\end{proposition}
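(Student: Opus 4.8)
The plan is to establish the statement first for the refreshed iterates $\xi^t$ and then transfer it to $\theta^t$ via Lemma~\ref{prop:refresh}, which bounds $|\E[(\theta^t_i)^m]-\E[(\xi^t_i)^m]|$ by $KN^{-1/2}\to 0$. For the $\xi^t$ iterates I would proceed by induction on $t$, carrying the statement simultaneously for the node iterates $\xi^t_i$ and the message iterates $\xi^t_{i\to j}$; these have the same limiting moments since $\xi^t_i-\xi^t_{i\to j}=A^{t-1}_{ji}\,f(\xi^{t-1}_{j\to i},t-1)$ and, by independence of the fresh matrix $A^{t-1}$ from $\xi^{t-1}$ together with Lemma~\ref{lem:mombnd}, $\E[(\xi^t_i-\xi^t_{i\to j})^{2m}]=\E[(A^{t-1}_{ji})^{2m}]\,\E[f(\xi^{t-1}_{j\to i},t-1)^{2m}]=O(N^{-m})$. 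The base case reduces to a classical statement: $\xi^1_i=f(1,0)\sum_{\ell\ne i}A_{\ell i}$ is a sum of independent subgaussian variables of scale $O(1/N)$, so the Lindeberg CLT gives weak convergence, and the uniform moment bound of Lemma~\ref{lem:mombnd} upgrades it to convergence of all moments.

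For the inductive step $t\to t+1$, the crucial structural fact is that $A^t$ is an independent copy of $A$, independent of $\cF^{t-1}=\sigma(A^0,\dots,A^{t-1},\sC_N)$ (which determines $\xi^t$), with mutually independent entries $\{A^t_{\ell i}\}_{\ell\ne i}$. Conditioning on $\cF^{t-1}$, the iterate $\xi^{t+1}_i=\sum_{\ell\ne i}A^t_{\ell i}\,f(\xi^t_{\ell\to i},t)$ is a sum of independent terms, and I would split it as $\xi^{t+1}_i=m_i+V_i$ with $m_i=\E[\xi^{t+1}_i\mid\cF^{t-1}]$. When $i\notin\sC_N$ every $\E[A^t_{\ell i}]=0$, so $m_i=0$, matching the required mean there; when $i\in\sC_N$, $m_i=\tfrac{\lambda}{\sqrt N}\sum_{\ell\in\sC_N\setminus i}f(\xi^t_{\ell\to i},t)=\lambda\kappa_N\cdot\tfrac{1}{|\sC_N|}\sum_{\ell\in\sC_N\setminus i}f(\xi^t_{\ell\to i},t)$. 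By vertex exchangeability within $\sC_N$ the quantity $\E[f(\xi^t_{\ell\to i},t)]$ is the same for all $\ell\in\sC_N\setminus i$, so the induction hypothesis (applied term-by-term to the polynomial $f$) gives $\E[m_i]\to\lambda\kappa\,\E[f(\mu_t+\tau_t Z,t)]=\mu_{t+1}$, while Lemma~\ref{lem:sumvar} (the message-average version) gives $\Var(m_i)\to 0$; hence $m_i\to\mu_{t+1}$ in $L^2$. Similarly the conditional variance $s_i^2=\sum_{\ell\ne i}\Var(A^t_{\ell i})\,f(\xi^t_{\ell\to i},t)^2$ equals $\tfrac1N\sum_{\ell\in[N]\setminus\sC_N}f(\xi^t_{\ell\to i},t)^2+O(N^{-1/2})$ — the $\Theta(\sqrt N)$ in-clique summands being negligible — and by the induction hypothesis together with Lemma~\ref{lem:sumvar} this converges in probability to $\E[f(\tau_t Z,t)^2]=\tau_{t+1}^2$.

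Granting these two facts, I would conclude with a conditional CLT for $V_i$ given $\cF^{t-1}$: its summands are conditionally independent and mean zero, their conditional variances sum to $s_i^2\to\tau_{t+1}^2$, and the conditional Lyapunov ratio is at most $\tfrac{C}{N^{3/2}}\sum_{\ell\ne i}|f(\xi^t_{\ell\to i},t)|^3$ divided by $s_i^3$, hence $O(N^{-1/2})$ in probability (using $\E|A^t_{\ell i}|^3=O(N^{-3/2})$ and that $\tfrac1N\sum_\ell|f(\xi^t_{\ell\to i},t)|^3$ is bounded in probability). Thus $V_i\mid\cF^{t-1}\convD\normal(0,\tau_{t+1}^2)$; combining this with $m_i\to\mu_{t+1}$ and $s_i^2\to\tau_{t+1}^2$ — for instance by writing $\E[e^{\mathrm{i}s\xi^{t+1}_i}]=\E\big[e^{\mathrm{i}sm_i}\,\E[e^{\mathrm{i}sV_i}\mid\cF^{t-1}]\big]$ and bounding the conditional characteristic function against $e^{-s^2 s_i^2/2}$ with an error controlled by the vanishing Lyapunov ratio — yields $\xi^{t+1}_i\convD\normal(\mu_{t+1},\tau_{t+1}^2)$ for $i\in\sC_N$ and $\normal(0,\tau_{t+1}^2)$ otherwise (the degenerate case $\tau_{t+1}=0$ is handled separately via $V_i\to 0$ in $L^2$). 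Finally, since Lemma~\ref{lem:mombnd} bounds $\E[(\xi^{t+1}_i)^{2m}]$ uniformly in $N$, the variables $(\xi^{t+1}_i)^m$ are uniformly integrable, so weak convergence implies convergence of the $m$-th moments, which is the assertion; Lemma~\ref{prop:refresh} then transfers it to $\theta^{t+1}_i$. I expect the main obstacle to be the concentration of the random drift $m_i$ and the random conditional variance $s_i^2$: the messages $\xi^t_{\ell\to i}$ are not independent across $\ell$ (they share $A^0,\dots,A^{t-1}$), so one cannot argue these by a plain law of large numbers and must instead lean on the variance estimates of Lemma~\ref{lem:sumvar} — which is precisely why the message (rather than node) iterates and the refreshed-matrix construction are indispensable here.
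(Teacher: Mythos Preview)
Your proposal is correct and follows essentially the same route as the paper: both arguments establish the claim first for the refreshed iterates $\xi^t$ by induction on $t$, condition on $\sigma(A^0,\dots,A^{t-1})$, compute the conditional mean and variance (showing they converge to $\mu_{t+1}$ and $\tau_{t+1}^2$ via the induction hypothesis and Lemma~\ref{lem:sumvar}), apply a conditional CLT, upgrade weak convergence to moment convergence via the uniform bounds of Lemma~\ref{lem:mombnd}, and finally transfer to $\theta^t$ via Proposition~\ref{prop:refresh}. The only cosmetic differences are that the paper works primarily with the messages $\xi^t_{i\to j}$ (carrying the empirical-average convergence explicitly in the induction hypothesis) and uses the Lindeberg-form Lemma~\ref{lem:lindeberg} with smooth test functions, whereas you work with $\xi^t_i$ directly and invoke a Lyapunov ratio with characteristic functions.
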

\begin{proof}
	 
	Fix $j\ne i$. We prove by induction over $t$ that for all $t\ge 0$ and $m \ge 1$:
	\begin{align}
	  \lim_{N\to\infty} \E[(\xi^{t+1}_{i\to j})^m] &= \E[(\mu_{t+1} +Z_{t+1})^m] \quad\text{ if } i\in\sC_N \label{eq:xistateevol1a}\\
	  \lim_{N\to\infty} \E[(\xi^{t+1}_{i\to j})^m] &= \E[(Z_{t+1})^m] \quad\text{ otherwise } \label{eq:xistateevol1b}\\
		\lim_{N\to\infty} \frac{1}{|\sC_N|}\sum_{k\in \sC_N}(\xi^{t+1}_{k\to j})^m &= \E[(\mu_{t+1} + Z_{t+1})^m] \label{eq:xistateevol2}\\
		\lim_{N\to\infty} \frac{1}{N}\sum_{k\in[N]\bs \sC_N} (\xi^{t+1}_{k\to j})^m &= \E[(Z_{t+1})^m], \label{eq:xistateevol3}
	\end{align}
	where Eqs. \eqref{eq:xistateevol2} and \eqref{eq:xistateevol3} hold in probability.
	For $t\ge 1$, denote by $\mfF_t$ the $\sigma$-algebra generated by $A^0,\ldots,A^{t-1}$. 
%Y: xi^t's are measurable on \mfF
For convenience of notation, we write $\tilde A^t_{ij}$ as the centered version
of $A^t_{ij}$. Hence $\tilde A_{ij} = A_{ij} - \lambda/\sqrt N$ if both $i, j \in \sC_N$, else
$\tilde A_{ij} = A_{ij}$. 
	First consider the case where index $i \in \sC_N$. Then we have, for any $j\ne i$:
	\begin{align*}
		\lim_{ N\to\infty} \E\left[\xi^{t+1}_{i\to j}\rvert \mfF_t\right] &= \lim_{N\to\infty}\E\biggl[\sum_{\ell \in \sC_N\bs j} A^t_{\ell i}f(\xi^t_{\ell\to i}, t) %
		+ \sum_{\ell\in[N]\bs \sC_N, j}A^t_{\ell i} f(\xi^t_{\ell\to i}, t) \bigg\rvert \mfF_t\biggr]\\
		&= \lambda\kappa\,\E[f(\mu_t + Z_t, t)]\quad\text{in probability}\\
		&= \mu_{t+1},
	\end{align*}
where $Z_t\sim \normal(0, \tau_t^2)$. Here the second equality follows from the induction hypothesis and
the third from definition. Considering the variance we have: 
\begin{align*}
	\lim_{ N\to\infty} \Var\left[\xi^{t+1}_{i\to j}\rvert \mfF_t\right] &= \lim_{N\to\infty}%
	\E\biggl[\sum_{\ell\in[N]\bs j}(\tilde A^t_{\ell i} f(\xi^t_{\ell\to i}, t))^2 \bigg\rvert \mfF_t\biggr]\\
	&= \lim_{N\to\infty} \frac{1}{N}\sum_{\ell\in[N]\bs j} (f(\xi^t_{\ell\to i}, t))^2\\
	&= \E[f(Z_t, t)^2] \\
	&= \tau_{t+1}^2,
\end{align*}
where the penultimate equality holds in probability, and follows from the induction hypothesis.

Notice that $[\xi^{t+1}_{i\to j}\lvert\mfF_t - \E(\xi^{t+1}_{i\to j}\lvert\mfF_t)]$ is a sum of independent
random variables (due to the conditioning on $\mfF_t$). We show that, in probability, the Lindeberg 
condition for the central limit theorem holds. By the induction hypothesis we have,
in probability:
\begin{align*}
	\lim_{N\to\infty}\frac{1}{N}\sum_{\ell\in[N]\bs j} (f(\xi^t_{\ell\to i}, t))^4 &= \E\left[ (f(Z_t, t))^4 \right] \\
\end{align*}
%The Lyaponov condition reads, using subgaussianity of $\tilde A^t_{ij}$ to bound its fourth moment:
%\begin{align*}
%	\lim_{N\to\infty}\frac{\sum_{\ell\in[N]\bs j}\E[(\tilde A^t_{ij})^4](f(\xi^t_{\ell\to i}, t))^4}%
%	{\left(\sum_{\ell\in[N]\bs j}\E[(\tilde A^t_{ij})^2] (f(\xi^t_{\ell\to i}, t))^2 \right)^2} &\le%
%	\lim_{N\to\infty}\frac{\rho^2\sum_{\ell\in[N]\bs j}(f(\xi^t_{\ell\to i}, t))^4/N^2}%
%	{\left(\sum_{\ell\in[N]\bs\sC_N, j} (f(\xi^t_{\ell\to i}, t))^2/N  \right)^2} \\ %
%	&=\lim_{N\to\infty}\rho^2N^{-1} \frac{\frac{1}{N}\sum_{\ell\in[N]\bs j}(f(\xi^t_{\ell\to i}, t))^4}%
%	{\left(\frac{1}{N}\sum_{\ell\in[N]\bs\sC_N, j} (f(\xi^t_{\ell\to i}, t))^2 \right)^2 } \\
%	&= \lim_{N\to\infty} N^{-1} \frac{\E\left[ (f(Z_t, t))^4 \right]}{\E\left[ (f(Z_t, t))^2 \right]}\\
%	&= 0,
%\end{align*}
%%Y: have to rule out the trivial cases here. Assuming Z_t is not degenerate at a root
%% of f^t is necessary, and does hold in our situation.
%where the penultimate equality holds in probability and the last limit due to the fact that 
%both expectations are finite and non-zero. 
Using this we have, for any $\eps>0$: 
\begin{align*}
	\sum_{\ell\in[N]\bs j}\E\biggl[(\tilde A_{\ell i}f(\xi^t_{\ell\to i}, t))^2%
	\ind_{\{|\tilde A_{\ell i} f(\xi^t_{\ell\to i}, t)|\ge\eps\}}\bigg\rvert \mfF_t\biggr]%
	&\le C_6\left(\frac{\rho}{\eps N}\right)^2\sum_{\ell\in[N]\bs j} (f(\xi^t_{\ell\to i}, t))^4 \overset{p}{\to}0,
\end{align*}
using the induction hypothesis and Lemma \ref{lem:subgauss}. The constant $C_6$ here
comes from the leading factors in Lemma \ref{lem:subgauss}. It follows from Lemma 
\ref{lem:lindeberg} that for a bounded function $h:\reals\to\reals$ with bounded 
first, second and third derivatives that:
\begin{align*}
	\lim_{N\to\infty} \E[ h(\xi^{t+1}_{i\to j})\rvert \mfF_t] = \E[ h(Z_{t+1} + \mu_{t+1})] \text{ in probability.} 
\end{align*}

Since the functions $h^+_m(x) = (x^m)_+$ and $h^-_m(x) = (x^m)_-$ for $m \ge 3$ 
can be approached pointwise by a sequence of bounded functions
with bounded first, second and third derivatives and since $\E\left[ (\xi^{t+1}_{i\to j})^m\rvert\mfF_t \right]$ is
integrable by definition we have :
\begin{align*}
	\lim_{N\to\infty}\E\left[(\xi^{t+1}_{i\to j})^m\rvert\mfF_t\right] &= \E\left[(\mu_{t+1} + Z_{t+1})^m\right] \text{ in probability.}
\end{align*}
By the tower property of conditional expectation and Lemma \ref{lem:mombnd}, 
the expectations also converge yielding the induction claim 
\myeqref{eq:xistateevol1a}. Employing Chebyshev inequality and 
Lemma \ref{lem:sumvar} on the sequence 
$\left\{\sum_{k\in\sC_N}(\xi^{t+1}_{k\to j})^m / |\sC_N| \right\}_{N\ge 1}$, we obtain the induction
claim \myeqref{eq:xistateevol2}.

We now turn to the case when $i\notin \sC_N$. By \myeqref{eq:rmp1}:
\begin{align*}
	\E[\xi^{t+1}_{i\to j}\lvert\mfF_t] &= \E\biggl[ \sum_{\ell\in[N]\bs j}\tilde A^t_{\ell i} f(\xi^t_{\ell\to i}, t) \bigg\lvert\mfF_t\biggr] \\
	&= 0.
\end{align*}
For the variance we compute:
\begin{align*}
	\lim_{N\to\infty}\Var[\xi^{t+1}_{i\to j}\lvert\mfF_t] &= %
	\lim_{N\to\infty}\E\biggl[ \sum_{\ell\in[N]\bs j} (\tilde A^t_{\ell i}f(\xi^t_{\ell\to i}, t))^2\biggr]\\
	&= \lim_{N\to\infty} \frac{1}{N}\sum_{i\in[N]\bs j}(f(\xi^t_{\ell\to i}, t))^2\\
	&= \E[(f(Z_t, t))^2] \\
	&= \tau^2_{t+1}.
\end{align*}
The penultimate equality holds in probability, from the induction hypothesis and the last
equality by definition.  Proceeding exactly as
before, we obtain induction claim \myeqref{eq:xistateevol1b} and the
claim \myeqref{eq:xistateevol3}.

The base case is simpler since for $t=0$, $\mfF_0$ is taken to be trivial. When $i\in\sC_N$ we have:
\begin{align*}
	\lim_{ N\to\infty} \E\left[\xi^1_{i\to j}\right] &= \lim_{N\to\infty}\E\biggl[\sum_{\ell \in \sC_N\bs j} A^0_{\ell i}f(1, 0) %
		+ \sum_{\ell\in[N]\bs \sC_N, j}A^0_{\ell i} f(1, 0) \biggr] \\
		&= \mu_1,	
\end{align*}
and for the variance:
\begin{align*}
	\lim_{ N\to\infty} \Var\left[\xi^1_{i\to j}\right] &= \lim_{N\to\infty}%
	\E\biggl[\sum_{\ell\in[N]\bs j}(\tilde A^0_{\ell i} f(1, 0))^2 \biggr]\\
	&= \lim_{N\to\infty} \frac{1}{N}\sum_{\ell\in[N]\bs\sC_N,j} (f(1, 0))^2\\
	&= \tau_{1}^2,
\end{align*}
by definition. It follows from the central limit theorem that $\xi^1_{i\to j}\convD%
\normal(\mu_1, \tau^2_{1})$ when $i\in\sC_N$. A very similar argument yields that
$\xi^1_{i\to j} \convD\normal(0, \tau^2_{1})$ when $i\notin \sC_N$. 
Eqs. \eqref{eq:xistateevol1a}, \eqref{eq:xistateevol1b},  \eqref{eq:xistateevol2} and \eqref{eq:xistateevol3} 
follow for $t=0$ using Lemma \ref{lem:mombnd}.

The proofs for $\xi^t_i$ follow from essentially the same argument except that the
required sums are modified to include the vertex $j$. Asymptotically in $N$, this 
has no effect on the result and we obtain the following limits:
\begin{align}
  \lim_{N\to\infty} \E[(\xi^t_{i})^m] &= \E[(\mu_t + Z_t)^m] \quad\text{ if } i\in \sC_N\label{eq:xistateevol6a}\\
  \lim_{N\to\infty} \E[(\xi^t_{i})^m] &= \E[(Z_t)^m] \quad\text{ otherwise}\label{eq:xistateevol6b}\\
	\lim_{N\to\infty} \frac{1}{|\sC_N|}\sum_{i\in \sC_N}(\xi^t_{i})^m &= \E[(\mu_t + Z_t)^m] \text{ in probability} \label{eq:xistateevol8}\\
	\lim_{N\to\infty} \frac{1}{N}\sum_{i\in[N]\backslash\sC_N} (\xi^t_{i})^m &= \E[(Z_t)^m] \text{ in probability.} \label{eq:xistateevol9}
\end{align}

Using Eqs.~\eqref{eq:xistateevol6a}, \eqref{eq:xistateevol6b} and Proposition \ref{prop:refresh} the result follows.
\end{proof}

	We can now prove Lemma \ref{lem:main}. For brevity, we show only \myeqref{eq:thetaconv2} 
	as the argument for \myeqref{eq:thetaconv1} is
	analogous. To show \myeqref{eq:thetaconv2} it suffices to show that, for any subsequence
	$\{N_k\}$ there exists a refinement $\{N_k'\}$ such that:
	\begin{align}
		\frac{1}{N_k'}\sum_{i=1\in[N_k']\bs\sC_{N_k'}} \psi(\theta^t_i) &= \E[\psi(Z_t)] \text{ a.s.} \label{eq:psisubseqconv}
	\end{align}

	Fix a subsequence $\{N_k\}$. By Chebyshev inequality, Lemma \ref{lem:sumvar} and Proposition
	\ref{prop:stateevol} there exists a refinement $\{N_k(1)\}\subseteq\{N_k\}$ such that:
	\begin{align*}
		\lim_{k\to\infty} \frac{1}{N_k(1)}\sum_{i\in [N_k(1)]\bs\sC_{N_k(1)}} \theta^t_i &= \E[Z_t] \text{ a.s.}
	\end{align*}
	By the same argument, for each $m \in\naturals$, there exists a refinement $\{N_k(m)\}\subseteq
	\{N_k(m-1)\}$ such that:
	\begin{align*}
		\lim_{k\to\infty} \frac{1}{N_k(m)}\sum_{i\in[N_k(m)]\bs\sC_{N_k(m)}} (\theta^t_i)^m &= \E[Z_t] \text{ a.s.}
	\end{align*}
	Let $N_k'$ be the sequence $N_k(k)$. Then, for all $m\ge 1$:
	\begin{align}
		\lim_{k\to\infty} \frac{1}{N'_k}\sum_{i\in[N_k']\bs\sC_{N_k'}} (\theta^t_i)^m &= \E[(Z_t)^m] \text{ a.s.} \label{eq:thmomconv}
	\end{align}
	We define the empirical measure $\mu_N(.)$ as follows:
	\begin{align*}
		\mu_N(\cdot) = \frac{1}{N}\sum_{i\in[N]\bs\sC_{N}} \delta_{\theta^t_i}(\cdot)
	\end{align*}
	\myeqref{eq:thmomconv} guarantees that, almost surely, the moments of $\mu_{N_k'}$
	converge to that of $Z_t$. By the moment method, \myeqref{eq:psisubseqconv} follows
	and we obtain the required result of \myeqref{eq:thetaconv2}.
%
%**************************************************************
%
\subsection{Proof of Lemma \ref{lem:approxstateevol}}

This is section is devoted to proving Lemma
\ref{lem:approxstateevol}. In particular, our derivation will justify
the construction of polynomials in the statement of the lemma, cf. Eq.~(\ref{eq:OptimalPolynomials}).

We will first consider the state evolution recursion (\ref{eq:stateevol1}),
(\ref{eq:stateevol2}) for a general sequence of functions
$\{f(\,\cdot\,,t)\}_{t\ge 0}$  (not necessarily polynomials). Since we
are only interested in the ratio $\mu_t/\tau_t$, there is no loss of
generality in assuming that $f$ is normalized in such a way that
$\tau_t^1=1$ for all $t$, i.e. $\E[f(Z,t)^2]=1$ for all $t$.
\begin{lemma}
Let $\mu_t$ be defined recursively for all $t\ge 0$ by letting
\begin{align}
  \label{eq:optstateevol}
  \mu_{t+1} = \lambda\kappa\, e^{\mu_t^2/2} \,,\;\;\;\;\;
  \mu_0 = 1\, .
\end{align}
 Further, given a sequence of functions $f\equiv
 \{f(\,\cdot\,,t)\}_{t\ge 0}$, such that $\E[f(Z,t)^2]=1$ for all $t$,
 let $\mu^{(f)}_t$ be the corresponding state evolution sequence
 defined by
\begin{align*}
  \mu^{(f)}_{t+1} = \lambda\kappa\E[f(\mu^{(f)}+Z)]  \,,\;\;\;\;\;
  \mu^{(f)}_0 = 1\,. 
\end{align*}
Then $\mu^{(f)}_t\le \mu_t$ for all $t$, with equality verifed for
$t>0$ if and
only if
  \begin{align*}
    f(z, \ell) = e^{\mu_\ell z - \mu_\ell^2} \text{ for }0 \le \ell
    \le t\, .
  \end{align*}
Further $\lim_{t\to\infty}\mu_t=\infty$ if and only if $\lambda\kappa>e^{-1/2}$.
	\label{lem:optstateevol}
\end{lemma}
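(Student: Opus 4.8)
The plan is to decouple the two claims: first the pointwise optimality of the exponential function, then the threshold behavior of the resulting scalar recursion. For the optimality part, I would argue by induction on $t$ that $\mu_t^{(f)}\le\mu_t$, where the key step is a one-shot variational problem: for any fixed $m\ge 0$, maximize $\E[f(m+Z)]$ over functions $f$ subject to $\E[f(Z)^2]=1$. Writing the objective as $\int f(m+z)\varphi(z)\,dz=\int f(y)\varphi(y-m)\,dy$ where $\varphi$ is the standard Gaussian density, and the constraint as $\int f(y)^2\varphi(y)\,dy=1$, this is a Cauchy--Schwarz problem: by Cauchy--Schwarz with respect to the measure $\varphi(y)\,dy$,
\begin{align*}
\int f(y)\,\frac{\varphi(y-m)}{\varphi(y)}\,\varphi(y)\,dy \le \left(\int f(y)^2\varphi(y)\,dy\right)^{1/2}\left(\int \frac{\varphi(y-m)^2}{\varphi(y)^2}\,\varphi(y)\,dy\right)^{1/2},
\end{align*}
with equality iff $f(y)\propto \varphi(y-m)/\varphi(y)=e^{my-m^2/2}$. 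A direct Gaussian integral gives $\int e^{2my-m^2}\varphi(y)\,dy=e^{m^2}$, so the normalized maximizer is $f(y)=e^{my-m^2}$ and the optimal value is $e^{m^2/2}$. Applying this with $m=\mu_t^{(f)}$ and using the induction hypothesis $\mu_t^{(f)}\le\mu_t$ together with monotonicity of $m\mapsto e^{m^2/2}$ in $m\ge 0$ (and the fact that $\lambda\kappa>0$), I get $\mu_{t+1}^{(f)}=\lambda\kappa\,\E[f(\mu_t^{(f)}+Z)]\le\lambda\kappa\,e^{(\mu_t^{(f)})^2/2}\le\lambda\kappa\,e^{\mu_t^2/2}=\mu_{t+1}$. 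Tracking the equality case through both inequalities gives $f(z,\ell)=e^{\mu_\ell z-\mu_\ell^2}$ for $0\le\ell\le t$, establishing the first two assertions.

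For the threshold claim, consider the scalar map $F(\mu)=\lambda\kappa\,e^{\mu^2/2}$ on $\mu\ge 0$, which is strictly increasing and strictly convex, so the orbit $\mu_{t+1}=F(\mu_t)$ started at $\mu_0=1$ is monotone: it increases if $F(1)>1$, i.e. $\lambda\kappa>e^{-1/2}$, and in that case, since $F$ is convex with $F(\mu)>\mu$ would need to fail at some fixed point, I would show $F$ has no fixed point in $[1,\infty)$ when $\lambda\kappa>e^{-1/2}$, forcing $\mu_t\to\infty$. Concretely, a fixed point solves $\mu=\lambda\kappa\,e^{\mu^2/2}$, i.e. $\lambda\kappa=\mu e^{-\mu^2/2}$; the right-hand side $h(\mu)=\mu e^{-\mu^2/2}$ is maximized over $\mu\ge 0$ at $\mu=1$ with value $e^{-1/2}$, so for $\lambda\kappa>e^{-1/2}$ there is no solution and the increasing orbit cannot be bounded, hence diverges. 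Conversely, if $\lambda\kappa\le e^{-1/2}$, then $h(1)=e^{-1/2}\ge\lambda\kappa$ shows $\mu_0=1$ is already at or above a fixed point (or $F(1)\le 1$), and monotonicity of the orbit downward together with boundedness below by the smallest fixed point in $(0,1]$ gives convergence to a finite limit. I would wrap this up by noting the borderline case $\lambda\kappa=e^{-1/2}$ has $\mu_t\equiv 1$.

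The main obstacle I anticipate is not the scalar analysis but making the variational/Cauchy--Schwarz step fully rigorous over an unrestricted class of functions $f$: one must ensure the integrals defining $\E[f(m+Z)]$ and $\E[f(Z)^2]$ are well-defined and that the extremizer $e^{my-m^2}$ indeed lies in the admissible class (it does, being square-integrable against the Gaussian), and one must phrase the equality condition carefully since it is only required for the functions actually used, i.e. at the arguments $\mu_\ell$ that arise. A minor subtlety is that Lemma \ref{lem:approxstateevol} only needs polynomial $f$, so strictly this lemma is a statement about the idealized (non-polynomial) recursion; I would remark that the polynomial approximation argument bridging the two is handled separately in the proof of Lemma \ref{lem:approxstateevol}.
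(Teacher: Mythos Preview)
Your proposal is correct and follows essentially the same route as the paper's proof: the Cauchy--Schwarz step (rewriting $\E[f(m+Z)]$ as an inner product against $e^{mZ}$ in $L^2(\normal(0,1))$ and bounding it by $e^{m^2/2}$) is exactly the paper's argument, and your threshold analysis via the fixed-point equation $\lambda\kappa=\mu e^{-\mu^2/2}$ with maximum $e^{-1/2}$ at $\mu=1$ simply spells out what the paper dismisses as ``a simple calculus exercise.'' Your version is, if anything, more explicit about the induction and the monotonicity of $m\mapsto e^{m^2/2}$ on $m\ge 0$; the only minor caveat is that you tacitly assume $\mu_t^{(f)}\ge 0$ for that monotonicity step, which is not verified (and the paper is equally silent on this), though it is harmless for the intended application and could be patched by tracking $|\mu_t^{(f)}|$ via the absolute-value form of Cauchy--Schwarz.
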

\begin{proof}
For the initial condition $\mu^{(f)}_0 = 1, \tau_0 = 0$, it is easy
to see that the choice of normalization ensures that we need
only fix $f(1, 0) = 1$ which is satisfied by the choice above. We have, for 
$Z\sim\normal(0, 1)$, and $\ell\ge 0$:
  \begin{align*}
    \mu^{(f)}_{\ell+1} &= \lambda\kappa\,\E[f(\mu^{(f)}_\ell+Z)] \\
    &= \lambda\kappa\,\int_{\reals} f(z) e^{-(z - \mu^{(f)}_\ell)^2/2} \frac{\mathrm{d}z}{\sqrt{2\pi}} \\
    &= \lambda\kappa\, e^{-(\mu^{(f)}_\ell)^2/2}\E\left[f(Z)e^{\mu^{(f)}_\ell Z}\right] \\
    &\le \lambda\kappa\,e^{-(\mu^{(f)}_\ell)^2/2}\left(\E\left[ (f(Z, \ell)^2 \right]\right)^{1/2} e^{(\mu^{(f)}_\ell)^2},
\end{align*}
where the inequality follows from Cauchy-Schwartz. By our choice
of normalization we obtain:
\begin{align*}
  \mu^{(f)}_{\ell+1} &\le \lambda\kappa\, e^{(\mu^{(f)}_\ell)^2/2}.
\end{align*}
Since the inequality is satisfied as equality only  by the choice
$f(z, \ell) = e^{\mu_\ell z - \mu_\ell^2}$, we have proved 
that $\mu_t^{(f)} = \mu_t$ only for this choice.

The last statement (namely $\mu_t\to\infty$ if and only if
$\lambda\kappa>e^{-1/2}$) is a simple calculus exercise.
\end{proof}
%
%****************************************************************************
%
  
  We are now in position to prove Lemma \ref{lem:approxstateevol}.
\begin{proof}[Proof of Lemma  \ref{lem:approxstateevol}]
  Let $\{\mu_t\}_{t\ge 0}$ be given as per Eq.~(\ref{eq:optstateevol})
  and define $t^* \equiv \inf\{t: \mu_{t^*} > 2M\}$. The condition $\lambda\kappa>e^{-1/2}$ 
  ensures that $t^*$ is finite. For a fixed $d$, 
define the mappings $g,\hat g_d:\reals\times\reals\to\reals$ by letting $g(z, \mu) = e^{\mu z}$ 
  and $\hat g(z, \mu) = \sum_{k = 0}^d \mu^k z^k/k!$. Then, since the
  Taylor series of the exponential has infinite radius of convergence,
  we have, for all $z$, $\mu\in\reals$,
  \begin{align}
    \lim_{d\to\infty}\hat g_d(z, \mu) = g(z, \mu)\, \label{eq:Pointwise}.
  \end{align}
In the rest of this proof we will --for the sake of simplicity-- omit the subscript $d$.
 
For any  $\mu\in\reals$ and $Z\sim\normal(0,1)$, we define:
  \begin{align*}
    G(\mu) = \frac{1}{L}\E\left[\lambda \kappa \, g(\mu + Z, \mu)\right]\\
    \hat G(\mu) = \frac{1}{\hat L}\E\left[\lambda\kappa\,\hat g(\mu + Z, \mu)\right],
  \end{align*}
  where $L = (\E[g(Z, \mu)^2])^{1/2}$ and $\hat L = (\E[\hat g(Z, \mu)^2])^{1/2}$.
  We first obtain that:
  \begin{align}
    |G(\mu) - \hat G(\mu)| &\le G(\mu)\frac{|L - \hat L|}{\hat L} + %
    \frac{\lambda\kappa}{\hat L}\left\lvert \E\left[(g(\mu+Z, \mu)- \hat g(\mu+Z, \mu))e^{\mu Z}\right]\right\rvert.
  \end{align}
Note that $|g(z,\mu)|,|\hat g(z,\mu)|\le e^{\mu |z|}$. It follows
from Eq.~(\ref{eq:Pointwise}) and dominated convergence that $\hat
L\to L$ and $\E[\hat g(\mu+Z, \mu)]\to \E[g(\mu+Z, \mu)]$ as
$d\to\infty$.

  By compactness, for any $\delta>0$, we can choose $d^* < \infty$ such that
  $|G(\mu) - \hat G(\mu)| \le \delta$ for $0 \le \mu \le 2M$. Here $d^*$
  is a function of $\delta, M$. Note that we can now rewrite the state
  evolution recursions as follows:
  \begin{align*}
    \mu_{\ell+1} &= G(\mu_\ell),\\
    \hat \mu_{\ell+1} &= \hat G(\hat\mu_\ell),
  \end{align*}
  with $\mu_0 = \hat\mu_0 = 1$. Define $\Delta_\ell = |\mu_\ell - \hat\mu_\ell|$. 
  Then using the fact that $G(\mu)$ is convex and that $G'(\mu) = \lambda\kappa\mu\, e^{\mu^2/2}$
  is bounded by $M' = G'(2M)$ we obtain:
  \begin{align*}
    \hat \mu_{\ell+1} &= \hat G(\hat \mu_\ell) \\
    &\ge G(\hat \mu_\ell) - \delta \\
    &\ge G(\mu_\ell) - M'|\mu_\ell - \hat\mu_\ell| - \delta\\
    &= \mu_{\ell+1} - M' \Delta_\ell - \delta.
  \end{align*}
  This implies:
  \begin{align*}
    \Delta_{\ell+1} &\le M'\Delta_\ell + \delta.
  \end{align*}
  By induction, since $\Delta_0 = |\mu_0 - \hat\mu_0| = 0$,
  we obtain:
  \begin{align*}
    \Delta_\ell &\le \left(\sum_{k = 0}^{\ell-1} (M')^k \right) \delta \\
    &= \frac{M'^\ell - 1}{M'-1}\delta.
  \end{align*}
  Now, choosing $d^*$ such that $\delta = M(M'-1)/2(M'^{t^*} - 1)$ we obtain that
  $\Delta_{t^*} \le M/2$, implying that $\hat\mu_{t^*} > 3M/2 > M$.
\end{proof}
%
%****************************************************
%

  \subsection{Proof of Lemma \ref{pro:algproof}}
	Let $A|_{\tsC_N}$ be the matrix $A$, restricted to the rows (and columns) 
	in $\tsC_N$. 
	Also let $v \in \reals^{|\tsC_N|}$ denote the unit norm indicator vector on 
	$\tsC_N\cap\sC_N$, i.e.
	\begin{align*}
	  v_i &= \begin{cases} |\tsC_N\cap\sC_N|^{-1/2} &\text{ if } i
            \in \tsC_N\cap\sC_N \, ,\\
	    0 &\text{ otherwise}. 
	  \end{cases}
	\end{align*}  
	Define $\tilde A|_{\tsC_N}$ to be a centered
	matrix such that:
	\begin{align*}
	  A|_{\tsC_N} &= \frac{\lambda |\tsC_N\cap\sC_N|}{\sqrt N} vv^\sT + \tilde A|_{\tsC_N} 
	\end{align*}
	Throughout this proof we assume for simplicity that $\kappa_N
        = \kappa$, i.e. $|\sC_N| = \kappa\sqrt{N}$  for some constant
        $\kappa$ independent of $N$. The case of $\kappa_N$ dependent
        on $N$ with $\lim_{N\to\infty}\kappa_N=\kappa$  can be covered by
	a vanishing shift in the constants presented.
	
	Assume that $\tsC_N$ is a fixed subset selected independently
        of $A$. Then the matrix
	$\tilde A|_{\tsC_N}$ has independent, zero-mean entries which are subgaussian
	with scale factor $\rho/N$. Let $u$ denote the principal
        eigenvector of $A|_{\tsC_N}$. The set $\sB_N\subset[N]$ consists of the indices 
	of the  $|\sC_N|$ entries of $u$ with largest absolute value.  

	We first show that the set $\sB_N$ contains a large fraction of $\sC_N$. By the condition
	on $\tsC_N$, we have that $\norm{A|_{\tsC_N} - \tilde A|_{\tsC_N}}_2\ge\lambda\kappa(1-\eps)$.
        By Lemma \ref{lem:2normbnd} in Appendix \ref{app:Tools}, 
	 for a fixed $\delta$,  $\norm{\tilde A|_{\tsC_N}}_2 \le \lambda(1-\eps)\kappa\delta$ with 
	probability at least $2(5\xi)^Ne^{-N(\xi-1)}$ where
	$\xi = \delta^2/32\rho\eps$.

	Using matrix perturbation theory, we get
	\begin{align*}
	  \norm{u - v}_2 &\le \sqrt{2}\sin \theta(u, v) \\
		&\le \sqrt{2}\frac{\norm{\tilde A|_{\tsC_N}}_2}%
		{\lambda(1-\eps)\kappa_N - \norm{\tilde A|_{\tsC_N}}_2} \\
		&\le 1.9\, \delta,
	\end{align*}
	where the second inequality follows by the \textsc{sin}
        $\theta$ theorem \cite{davis1970sin}.
 
	We run $t_{**} = O(\log N/\delta)$ iterations of the power
        method, with  initialization $u^0= (1,1,\dots,1)^{\sT}/|\tsC_N|^{1/2}$.
	By the same perturbation argument, there is a $\Theta(1)$ gap between the largest and
        second largest eigenvalue of $A|_{\tsC_N}$, and $\<u^0,u\> \ge
        N^{-c}$. It follows by a standard argument that the output
        $u^{**}$ of the power method is  an approximation
	to the leading eigenvector $u$ with a fixed error $\|u - u^{**}\| \le \delta/10$.
This
	implies that $\|u^{**} - v\| \le 2\delta$, by the triangle inequality.
	Let $u^\perp$ ($u^\parallel$)
	denote the projection of $u^{**}$ orthogonal to (resp. onto) $v$. 
	Thus we have $\norm{u^\perp}_2^2 \le 4\delta^2$.
	It follows that at most $36\delta^2|\tsC_N\cap\sC_N|$ entries in $u^\perp$ have magnitude
	exceeding $(1/3)|\tsC_N\cup\sC_N|^{-1/2}$.
	Notice that $u^\parallel = u^{**} - u^\perp$ and $u^\parallel$ is a multiple of $v$. Consequently,
	we can 
	assume $\sB_N$ is selected using $u^\parallel$, instead of $v$. This observation along with
	the bound above guarantees that at most $36\delta^2|\tsC_N\cap\sC_N|$ entries are
	misclassified, i.e.
	\begin{align}
	  |\sB_N\cap\sC_N| &\ge (1-36\delta^2)|\tsC_N\cap\sC_N| \\
	  &\ge (1-\delta)(1-\eps)|\sC_N|. \label{eq:BcapCN}
	\end{align}
	Here we assume $\delta\le 1/36$. 

	The above argument proves that the desired result for any fixed
        set $\tsC_N$ independent of $A$ with a probability at
	least $1 - 2(5\xi)^Ne^{-N\xi/2}$ where $\xi= \delta^2/32\rho\eps$,
	for a universal constant $c$ and $N$ large enough. In order to
        extend it to all sets $\tsC_N$ (possibly dependent on the
        matrix $A$), we can take a union bound over all possible choices of $\tsC_N$
	and obtain the required result.
	For all  $N$ large enough, the number of choices satisfying the  conditions
	of Lemma \ref{pro:algproof}
	is bounded by:
	\begin{align*}
	  \# N(\eps) &\le 2e^{N(\eps - \eps\log\eps)}. 
	\end{align*}
	Choosing $\delta = \eps^{1/4}$, it follows from the union bound that
	for some $\eps$ small enough, we have that \myeqref{eq:BcapCN} holds with 
	probability at least $1 - 4e^{-N\upsilon'}$ where $\upsilon'(\rho, \eps)\to \infty$
	as $\eps\to 0$.
	Recall that the score $\zeta^{\sB_N}_{\bar\rho}(i)$ for a vertex $i$ is given by:
	\begin{align*}
	  \zeta^{\sB_N}_{\bar\rho}(i) &= \frac{1}{|\sC_N|}\sum_{j\in \sB_N} W_{ij}\ind_{\{|W_{ij}|\le \bar\rho\}}\\
	  &= \frac{1}{|\sC_N|}\sum_{j\in\sC_N}W'_{ij} + \frac{1}{|\sC_N|}\left( \sum_{j\in \sB_N\bs\sC_N}W'_{ij} - \sum_{j\in \sC_N\bs \sB_N} W'_{ij}\right),
	\end{align*}
	where $W'_{ij} = W_{ij}\ind_{\{\lvert W_{ij}\rvert \le \bar\rho\}}$. 
	The truncated variables are subgaussian with the parameters $\lambda'_\ell, \rho'$ for
	$\ell = 0, 1$ as according $W_{ij} \sim Q_0, Q_1$. (Here $\lambda'_\ell$ denote the
	means after truncation)
	Also, for $\bar\rho$ large enough, 
	we may take 
	\begin{align*}
	  \lambda_1' &\ge \frac{7}{8}\lambda \, ,\\
	  \lambda_0' &\le \frac{1}{8}\lambda\, , \\
	  \rho' &\le  2\rho. 
	\end{align*}
	It follows that since the sum $\left(\sum_{j\in\sC_N}W'_{ij}\right)/\lvert\sC_N\rvert$ 
	is subgaussian with parameters $\lambda'_\ell, \rho/|\sC_N|$ the following
	holds with high probability:
	\begin{align*}
	  \zeta^{\sB_N}_{\bar\rho}(i) 
	    \ge \frac{3}{4}\lambda - 2\bar\rho(\delta+\eps) &\text{ if } i \in \sC_N \\
	     \zeta^{\sB_N}_{\bar\rho}(i)  \le \frac{1}{4}\lambda + 2\bar\rho(\delta+\eps) &\text{ otherwise.}
	\end{align*}
	Choosing $\eps \le (\lambda/20\bar\rho)^4$ yields the desired result.

\section{The sparse graph case: Algorithm and proof of Theorem
  \ref{thm:Sparse}}
  \label{sec:Sparse}

  \def\tTree{\widetilde{\sf T}{\sf ree}}
\def\root{\circ}

In this section we consider the general hidden set problem
on locally tree-like graphs, as defined in the introduction. 
We will introduce the reconstruction algorithm and the basic idea of
its analysis. A formal proof of Theorem \ref{thm:Sparse} 
will be presented in Section \ref{sec:TreeLemmas} and builds on these ideas.

Throughout this section  we consider a sequence of locally tree-like
graphs $\{G_N\}_{N\ge 1}$, $G_N = ([N], E_N)$, indexed by the number
of vertices $N$. For notational simplicity, we shall assume that these
graphs are $(\Delta+1)$-regular, although most of the ideas can be
easily generalized. We shall further associate to each vertex $i$ a
binary variable $X_i$, with $X_i=1$ if $i\in \sC_N$ and $X_i=0$
otherwise.
We write $X=(X_i)_{i\in[N]}$ for the vector of these variables.
It is mathematically convenient to work with a slightly different
model for the vertex labels $X_i$: we will assume that the $X_i$ are
i.i.d. such that:
\begin{align*}
  \P(X_i=1) &= \frac{\kappa}{\sqrt{\Delta}}\left( 1+\frac{\kappa}{\sqrt{\Delta}} \right)^{-1}. 
\end{align*}
For convenience of exposition, we also define:
\begin{align*}
  \tkappa(\Delta) &\equiv \kappa\left( 1+\frac{\kappa}{\sqrt{\Delta}} \right)^{-1}.
\end{align*}

Notice that this leads to a set 
$\sC_N = \{i\in [N]: X_i = 1\}$ that has a random size
which concentrates sharply around $N\tkappa/\sqrt{\Delta}$. This is
a slightly different model from what we consider earlier: $\sC_N$ is
uniformly random and of a fixed size. However, if we condition
on the size $|\sC_N|$, the i.i.d. model reduces to the earlier model. 
We prove in Appendix \ref{sec:iidProof}, 
that the results of the i.i.d. model still hold for the earlier
model. In view of this, throughout this section we will stick
to the i.i.d model.

In order to motivate the algorithm, consider the conditional
distribution of $W$ given $X$, and assume for notational simplicity
that $Q_0$, $Q_1$ are discrete distributions. We then have
$\prob(W|X=x) = \prod_{(i,j)\in E_N} Q_{x_ix_j}(W_{ij})$. Here the subscript
$x_ix_j$ means the product of $x_i$ and $x_j$. The posterior
distribution of $x$ is therefore a Markov random field (pairwise
graphical model) on $G_N$:
\begin{align*}
\prob(X=x|W) = \frac{1}{Z(W)}\, \prod_{(i,j)\in E_N}
Q_{x_ix_j}(W_{ij})\prod_{i\in[N]}
\Big(\frac{\tkappa}{\sqrt{\Delta}}\Big)^{x_i}\Big(1-\frac{\tkappa}{\sqrt{\Delta}}\Big)^{1-x_i}\, .
\end{align*}
Here $Z(W)$ is an appropriate normalization. \emph{Belief propagation} (BP) is 
a heuristic method for estimating the marginal
distribution of this posterior, see \cite{wainwright2008graphical,MezardMontanari,koller2009probabilistic} 
for introductions from several points of view. 
For the sake of simplicity, we shall describe the algorithm for the case $Q_1=
\delta_{+1}$, $Q_0= (1/2)\delta_{+1}+(1/2)\delta_{-1}$, whence $W_{ij} \in\{+1,-1\}$.
At each iteration $t$, the algorithm updates `messages'
$\gamma^t_{i\to j}$,  $\gamma^t_{j\to i}\in\reals_+$,  for each
$(i,j)\in E_N$. As formally clarified below,
  these messages correspond to `odds ratios' for vertex $i$ to be in
  the hidden set.

Starting from $\gamma^0_{i\to j} =1$ for all $i,j$,
messages are updated as follows:
\begin{align} \label{eq:BPeqnSparse1}
\gamma^{t+1}_{i\to j} &= \kappa\,\prod_{\ell\in\di\setminus j}
\left(\frac{1+(1+W_{i,\ell})\gamma^t_{\ell\to
      i}/\sqrt{\Delta}}{1+\gamma^t_{\ell\to i}/\sqrt{\Delta}}\right)\,
.
\end{align}
where $\di$ denotes the set of neighbors of $i$ in $G_N$. We
further compute the vertex quantities $\gamma^t_i$ as 
\begin{align} \label{eq:BPeqnSparse2}
\gamma^{t+1}_{i} &= \tkappa\,\prod_{\ell\in\di}
\left(\frac{1+(1+W_{i,\ell})\gamma^t_{\ell\to
      i}/\sqrt{\Delta}}{1+\gamma^t_{\ell\to i}/\sqrt{\Delta}}\right)\,
.
\end{align}
Note that $\gamma_i^t$ is a function of the (labeled) neighborhood
$\Ball_{G_N}(i;t)$. The nature of this function is clarified by the next
result, that is an example of a standard result in the literature on
belief propagation
\cite{wainwright2008graphical,MezardMontanari,koller2009probabilistic}.
\begin{proposition}\label{propo:BP_Exact_Tree}
Let $W_{\Ball_{G_N}(i;t)}$ be the set of edge labels in the subgraph $\Ball_{G_N}(i;t)$.
If $\Ball_{G_N}(i;t)$ is a tree, then 
\begin{align*}
\frac{\prob(X_i=1|W_{\Ball_{G_N}(i;t)})
}{\prob(X_i=0|W_{\Ball_{G_N}(i;t)}) } =
\frac{\gamma_i^t}{\sqrt{\Delta}}\, .
\end{align*}
\end{proposition}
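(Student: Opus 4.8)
The plan is to prove the proposition by the classical argument showing that belief propagation is exact on trees, carried out as an induction on $t$ via a strengthened statement about the edge messages. For a directed edge $i\to j$ I would introduce the \emph{cavity subtree} $B^t_{i\to j}$, namely the subgraph of $\Ball_{G_N}(i;t)$ spanned by those vertices $k$ at graph distance at most $t$ from $i$ whose shortest path to $i$ does not pass through $j$; this is well defined because $\Ball_{G_N}(i;t)$ being a tree makes shortest paths unique. The strengthened claim is that whenever $\Ball_{G_N}(i;t)$ is a tree,
\[
\frac{\prob(X_i=1\mid W_{B^t_{i\to j}})}{\prob(X_i=0\mid W_{B^t_{i\to j}})}=\frac{\gamma^t_{i\to j}}{\sqrt{\Delta}}\,.
\]
The statement of the proposition then follows by repeating the same computation at the root while keeping all $\Delta+1$ neighbours of $i$ in play together with the prior on $X_i$; this is precisely why \myeqref{eq:BPeqnSparse2} differs from \myeqref{eq:BPeqnSparse1} only in that the product runs over all of $\di$ rather than $\di\setminus j$, and in the change of prefactor.

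For the induction, the base case $t=0$ is immediate since $B^0_{i\to j}=\{i\}$ carries no edge data, so the left-hand side is the prior odds $\prob(X_i=1)/\prob(X_i=0)$, which one compares with $\gamma^0_{i\to j}/\sqrt{\Delta}$ from the initialization. For the inductive step I would write $\prob(X=\cdot\mid W_{B^{t+1}_{i\to j}})$ explicitly as the pairwise graphical model of Section~\ref{sec:Sparse} restricted to $B^{t+1}_{i\to j}$. Since the $X_k$ are i.i.d.\ and $\Ball_{G_N}(i;t+1)$ is a tree, the subtrees $B^t_{\ell\to i}$ hanging off the distinct neighbours $\ell\in\di\setminus j$ are vertex-disjoint; hence, conditionally on $X_i$, the pairs $(W_{i\ell},W_{B^t_{\ell\to i}})$ are mutually independent over $\ell$, and within one branch $W_{i\ell}$ is conditionally independent of $W_{B^t_{\ell\to i}}$ given $X_\ell$. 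Summing each branch over $X_\ell\in\{0,1\}$ and dividing numerator and denominator by the $X_\ell=0$ term turns the odds of $X_i$ into a product over $\ell\in\di\setminus j$ of factors $(Q_{10}(W_{i\ell})+Q_{11}(W_{i\ell})\gamma^t_{\ell\to i}/\sqrt{\Delta})/(Q_{00}(W_{i\ell})+Q_{01}(W_{i\ell})\gamma^t_{\ell\to i}/\sqrt{\Delta})$, the factor $\gamma^t_{\ell\to i}/\sqrt{\Delta}$ being supplied by the induction hypothesis applied to branch $B^t_{\ell\to i}$.

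It then remains to evaluate these factors for the pair $Q_1=\delta_{+1}$, $Q_0=\tfrac12\delta_{+1}+\tfrac12\delta_{-1}$. Here $Q_{11}(w)=\ind\{w=+1\}$ while $Q_{00}(w)=Q_{01}(w)=Q_{10}(w)=\tfrac12$ for $w\in\{+1,-1\}$, so each branch factor collapses, using $2\,\ind\{w=+1\}=1+w$, to $(1+(1+W_{i\ell})\gamma^t_{\ell\to i}/\sqrt{\Delta})/(1+\gamma^t_{\ell\to i}/\sqrt{\Delta})$; combining with the prior factor $\prob(X_i=1)/\prob(X_i=0)=\kappa/\sqrt{\Delta}$ reproduces exactly \myeqref{eq:BPeqnSparse1}, closing the induction. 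The vertex case is the same computation with $\di$ in place of $\di\setminus j$, yielding \myeqref{eq:BPeqnSparse2}.

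The conceptual content is routine; the points that need care are, first, checking that ``$\Ball_{G_N}(i;t)$ is a tree'' genuinely yields the factorization above — the cavity subtrees are uniquely defined and pairwise vertex-disjoint, there are no edges short-cutting between different branches, and non-backtracking along each branch is automatic — and, second, the bookkeeping of the normalizations: the global $1/\sqrt{\Delta}$, the conversion between the prior probability $\tkappa/\sqrt{\Delta}$ and the prior odds $\kappa/\sqrt{\Delta}$ at the root, and the initial value $\gamma^0_{i\to j}$, all of which must line up so that the recursion produced by the induction is \myeqref{eq:BPeqnSparse1}/\myeqref{eq:BPeqnSparse2} on the nose.
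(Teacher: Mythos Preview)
Your proposal is correct and is precisely the standard cavity/induction argument that the paper has in mind. The paper does not actually give its own proof of Proposition~\ref{propo:BP_Exact_Tree}: it simply states that this ``is an example of a standard result in the literature on belief propagation'' and points to \cite{wainwright2008graphical,MezardMontanari,koller2009probabilistic}. So there is nothing to compare against beyond noting that your outline is exactly the textbook argument those references contain.

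One bookkeeping point you have already half-flagged deserves to be made explicit. With the initialization $\gamma^0_{i\to j}=1$ stated just before \myeqref{eq:BPeqnSparse1}, your base case does \emph{not} match: the prior odds are $\prob(X_i=1)/\prob(X_i=0)=\kappa/\sqrt{\Delta}$, not $1/\sqrt{\Delta}$. The paper itself is inconsistent here, since the tree recursion in Section~\ref{sec:TreeLemmas} is initialized with $\gamma^0(0)=\gamma^0(1)=\kappa$, which is what the proposition actually requires. Your induction goes through cleanly once you take $\gamma^0_{i\to j}=\kappa$; with the stated initialization $\gamma^0_{i\to j}=1$ the identity fails at $t=1$ (as a quick check of the per-branch factor shows). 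So when you write this up, either adopt the initialization $\gamma^0_{i\to j}=\kappa$ or note that the proposition should be read with that convention.
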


Given this result, we can attempt to estimate $\sC_N$ on locally tree-like
graphs by running BP for $t$ iterations and subsequently thresholding
the resulting odds-ratios. In other words we let
\begin{align}
\hsC_N \equiv\big\{i\in [N]:\; \gamma^t_i\ge \sqrt{\Delta}\big\}\, . \label{eq:BPSet}
\end{align}
By Proposition \ref{propo:BP_Exact_Tree}, this corresponds to
maximizing the posterior probability
$\prob(X_i=x_i|W_{\Ball_{G_N}(i;t)})$ for all vertices $i$ such that
$\Ball_{G_N}(i;t)$ is a tree. This in turn minimizes the
misclassification rate $\prob(i\in \sC_N;i\not\in\hsC_N)+\prob(i\in
\hsC_N;i\not\in\sC_N)$. The resulting error rate is
\begin{align}
\Big(1-\frac{\tkappa}{\sqrt{\Delta}}\Big) \prob\big(\gamma^t_i\ge \sqrt{\Delta}\big|X_i=0\big)
+\frac{\tkappa}{\sqrt{\Delta}}\prob\big(\gamma^t_i<\sqrt{\Delta}\big|X_i=1\big)\, .
\end{align}

In order to characterize this misclassification rate, we let
 $\tTree(t)$ denote the regular $t$-generations
with degree  $(\Delta+1)$ at each vertex except the leaves, rooted at vertex $\root$ and labeled as follows.
Each vertex $i$ is labeled with $X_i\in\{0,1\}$ independently 
with $\prob(X_i=1)=\tkappa/\sqrt{\Delta}$. 
Each edge $(i,j)$ has label an independent  $W_{ij}\sim
Q_{1}$ if $X_i=X_j=1$ and    $W_{ij}\sim
Q_{0}$ otherwise. 

Let $\tgamma^t(x_\root)$ a random variable distributed as the  odds ratio for $X_\root=1$ on $\tTree(t)$
when the true root value is $x_\root$
\begin{align}
\tgamma^t(x_\root) \equiv \sqrt{\Delta}\,
\frac{\prob(X_\root=1|W_{\tTree(t)})}{\prob(X_\root=0|W_{\tTree(t)})}\,
,\;\;\;\;\; W_{\tTree(t)}\sim\prob(W_{\tTree(t)}=\,\cdot\,|X_\root=x_\root)\, . \label{eq:TgammaDef}
\end{align}
The following characterization is a direct consequence of 
the fact Proposition \ref{propo:BP_Exact_Tree} and the fact
that $G_N$ is locally tree-like. For completeness, we 
provide a proof in Appendix \ref{sec:TreeErrorProof}
\begin{proposition}\label{propo:TreeError}
Let $\hsC_N$ be the estimated hidden set for the BP rule
(\ref{eq:BPSet}) after $t$ iterations. We then have
\begin{align*}
  \lim_{N\to\infty} \frac{1}{N}\,\E[|\sC_N\triangle \hsC_N|]= \Big(1-\frac{\tkappa}{\sqrt{\Delta}}\Big) \prob\big(\tgamma^t(0)\ge\sqrt{\Delta}\big)
+\frac{\tkappa}{\sqrt{\Delta}}\prob\big(\tgamma^t(1)<\sqrt{\Delta}\big)\, .
\end{align*}
Further, if $\hsC_N$ is estimated by \emph{any} $t$-local algorithm, then
$\lim\inf_{N\to\infty} N^{-1}\,\E[|\sC_N\triangle \hsC_N|]$
is at least as large as the right-hand side.
\end{proposition}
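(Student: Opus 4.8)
The plan is to reduce both assertions to a single per-vertex Bayes computation on tree neighborhoods, using local tree-likeness to discard the $o(N)$ vertices whose depth-$t$ ball is not a tree. Write $\hat x_i := \ind\{i\in\hsC_N\}$ and
$$p_t := \Big(1-\frac{\tkappa}{\sqrt{\Delta}}\Big)\,\prob\big(\tgamma^t(0)\ge\sqrt{\Delta}\big)+\frac{\tkappa}{\sqrt{\Delta}}\,\prob\big(\tgamma^t(1)<\sqrt{\Delta}\big).$$
Since $|\sC_N\triangle\hsC_N|=\sum_{i\in[N]}\ind\{\hat x_i\ne X_i\}$, we have $N^{-1}\E[|\sC_N\triangle\hsC_N|]=N^{-1}\sum_{i\in[N]}\prob(\hat x_i\ne X_i)$, so it is enough to control $\prob(\hat x_i\ne X_i)$ for each $i$; the contribution of the ``bad'' vertices is at most $N^{-1}\,|\{i:\Ball_{G_N}(i;t)\text{ not a tree}\}|$, which tends to $0$ by local tree-likeness.

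The key step is an exact distributional identification. Fix $i$ with $\Ball_{G_N}(i;t)$ a tree. Whether this ball is a tree depends only on $G_N$, hence is independent of the labels $(X,W,U)$. As $G_N$ is $(\Delta+1)$-regular, the ball is graph-isomorphic to the depth-$t$ ball of the infinite $(\Delta+1)$-regular tree, i.e. to $\tTree(t)$ as a graph; since the vertex labels are i.i.d. $\mathrm{Bernoulli}(\tkappa/\sqrt{\Delta})$ and the edge labels are conditionally independent given $X$ with law $Q_{x_jx_\ell}$, the labeled ball $(\Ball_{G_N}(i;t),W,U)$, conditioned on $X_i=x$, has exactly the law of $\tTree(t)$ rooted with label $x$ (together with independent $U$-labels). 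For the BP rule (\ref{eq:BPSet}), $\hat x_i=\ind\{\gamma_i^t\ge\sqrt{\Delta}\}$, and by Proposition \ref{propo:BP_Exact_Tree} the quantity $\gamma_i^t/\sqrt{\Delta}$ equals the posterior odds ratio $\prob(X_i=1\mid W_{\Ball})/\prob(X_i=0\mid W_{\Ball})$; hence, conditionally on $X_i=x$, $\gamma_i^t\ed\tgamma^t(x)$. A direct computation then gives, for every tree-neighborhood $i$,
$$\prob(\hat x_i\ne X_i)=\frac{\tkappa}{\sqrt{\Delta}}\,\prob\big(\tgamma^t(1)<\sqrt{\Delta}\big)+\Big(1-\frac{\tkappa}{\sqrt{\Delta}}\Big)\prob\big(\tgamma^t(0)\ge\sqrt{\Delta}\big)=p_t.$$
Combining with the previous paragraph, $N^{-1}\E[|\sC_N\triangle\hsC_N|]=(1-o(1))p_t+o(1)\to p_t$, which is the first claim.

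For the lower bound, let $\hat x_i=h(\Ball_{G_N}(i;t))$ for a measurable $h$ (the argument is unchanged if $h$ depends on $i$). Conditioning on the $U$-labels inside the ball makes $\hat x_i$ a deterministic function of $(\Ball_{G_N}(i;t),W)$, whose conditional law given $X_i=x$ is that of $W_{\tTree(t)}$; the error probability of any such decision rule is at least the Bayes error of the binary testing problem $\{X_\root=0\}$ vs. $\{X_\root=1\}$ on $\tTree(t)$ with prior $\tkappa/\sqrt{\Delta}$. This Bayes error is attained by the MAP rule $\ind\{\tgamma^t\ge\sqrt{\Delta}\}$ and, by the same computation as above, equals exactly $p_t$; averaging back over the $U$-labels preserves the bound $\prob(\hat x_i\ne X_i)\ge p_t$ for every tree-neighborhood $i$. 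Since all remaining terms are nonnegative, $N^{-1}\E[|\sC_N\triangle\hsC_N|]\ge(1-o(1))p_t$, and taking $\liminf_{N\to\infty}$ gives the claim.

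The main obstacle is the exact distributional identification in the second paragraph: one must verify that conditioning on ``$\Ball_{G_N}(i;t)$ is a tree'' does not bias the labels (it does not, being a function of $G_N$ alone) and that the i.i.d.\ vertex-label model makes the conditional law of the labeled ball coincide \emph{on the nose} with $\tTree(t)$ — which is precisely why we work with the i.i.d.\ model here, the reduction to the fixed-size model being carried out separately in Appendix \ref{sec:iidProof}. The remaining ingredients — Bayes-optimality of MAP for the two-point prior and the elementary identification of its error with $p_t$ — are routine.
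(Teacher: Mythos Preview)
Your proof is correct and follows essentially the same approach as the paper's: decompose the symmetric difference as a sum of per-vertex indicators, discard the $o(N)$ vertices with non-tree $t$-balls via local tree-likeness, identify the labeled ball of a tree-neighborhood vertex with $\tTree(t)$ under the i.i.d.\ model, invoke Proposition~\ref{propo:BP_Exact_Tree} to match $\gamma_i^t$ with $\tgamma^t(x_\circ)$, and for the lower bound appeal to Bayes optimality of the MAP rule on the tree. Your write-up is in fact slightly more careful than the paper's in two respects---you make explicit that the tree condition depends only on $G_N$ and hence does not bias the label distribution, and you explicitly condition out the randomization labels $U$ before invoking Bayes optimality---but the structure of the argument is identical.
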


We have therefore reduced the proof of Theorem \ref{thm:Sparse} to
controlling the distribution of the random variables $\tgamma^t(0)$,
$\tgamma^t(1)$.
These can be characterized by a recursion over $t$.
For $\kappa$ small we have the following.
\begin{lemma}\label{lemma:TreeSmallKappa}
Assume $\kappa <1/\sqrt{e}$. Then there exists constants
$\gamma_*<\infty$, $\delta_* = \delta_*(\kappa)$ and 
$\Delta_*=\Delta_*(\kappa)<\infty$ such that,
for all $\Delta>\Delta_*(\kappa)$ and all $t\ge 0$, we have
\begin{align*}
%
 % \P(\tgamma^t(0) \ge 5\gamma_*) \ge \delta_* \\
\prob(\tgamma^t(1)\le 5\gamma_*)\ge \frac{3}{4}\,.
\end{align*}
\end{lemma}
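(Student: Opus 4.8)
The plan is to reduce the lemma to a one-dimensional recursion for the \emph{mean} of the odds-ratio, exploiting that $Q_0$ has zero mean, and then conclude by Markov's inequality. First I would encode the BP iteration \eqref{eq:BPeqnSparse1}--\eqref{eq:BPeqnSparse2} on $\tTree(\cdot)$ as a recursive distributional equation. Let $\eta^t_x$ be a random variable distributed as the message $\gamma^t_{i\to\mathrm{parent}(i)}$ from a non-root vertex $i$ with $X_i=x$, conditionally on $X_i=x$ (a function of the subtree below $i$, which has $\Delta$ children). Since the $\Delta$ child subtrees are independent, \eqref{eq:BPeqnSparse1} gives
\[
\eta^{t+1}_x \;\ed\; \kappa\prod_{k=1}^{\Delta}\frac{1+(1+W_k)\,\eta^t_{X_k}/\sqrt{\Delta}}{1+\eta^t_{X_k}/\sqrt{\Delta}},
\]
with $X_1,\dots,X_\Delta$ i.i.d.\ $\mathrm{Bernoulli}(\tkappa/\sqrt{\Delta})$, the $\eta^t_{X_k}$ independent copies of the relevant $\eta^t$, and $W_k=+1$ if $x=X_k=1$ while $W_k\sim Q_0$ otherwise; by \eqref{eq:BPeqnSparse2} and Proposition~\ref{propo:BP_Exact_Tree}, $\tgamma^{t+1}(1)$ has the same form with $\Delta+1$ factors and prefactor $\tkappa$.

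Next I would extract the key cancellation. Fixing $x=1$ and conditioning on $X_k$ and $\eta^t_{X_k}$: if $X_k=1$ the $k$-th factor equals $1+\frac{\eta^t_1/\sqrt\Delta}{1+\eta^t_1/\sqrt\Delta}\le 1+\eta^t_1/\sqrt\Delta$, while if $X_k=0$ then $W_k\sim Q_0$ is independent of the subtree below $k$, so averaging over $W_k$ alone,
\[
\E\!\left[\frac{1+(1+W_k)\eta^t_0/\sqrt\Delta}{1+\eta^t_0/\sqrt\Delta}\;\Big|\;\eta^t_0\right]=\frac{\tfrac12\bigl(1+2\eta^t_0/\sqrt\Delta\bigr)+\tfrac12}{1+\eta^t_0/\sqrt\Delta}=1 .
\]
Thus, conditioned on the root being in the set, every out-of-set neighbor contributes a factor of conditional mean exactly $1$; only the fraction $\tkappa/\sqrt\Delta$ of in-set neighbors pushes the odds up. With $a_t\equiv\E[\eta^t_1]$, independence across $k$, $\tkappa\le\kappa$, $\tfrac{y}{1+y}\le y$ and $(1+s/\Delta)^\Delta\le e^s$, this yields $a_{t+1}\le\kappa\bigl(1+\kappa a_t/\Delta\bigr)^{\Delta}\le\kappa e^{\kappa a_t}$, and the $(\Delta+1)$-factor version gives $\E[\tgamma^{t+1}(1)]\le\kappa e^{\kappa a_t(1+1/\Delta)}$.

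It remains to iterate this scalar bound. Let $b_0=1$, $b_{t+1}=\kappa e^{\kappa b_t}$, so $a_t\le b_t$. Put $\gamma_*\equiv\sqrt e$: the map $b\mapsto\kappa e^{\kappa b}$ is increasing, and $\kappa e^{\kappa\sqrt e}$ is increasing in $\kappa$ with value $\sqrt e$ at $\kappa=e^{-1/2}$, so $\kappa e^{\kappa\gamma_*}\le\gamma_*$ for all $\kappa\le e^{-1/2}$; since $b_0=1\le\gamma_*$, monotone induction gives $b_t\le\gamma_*$, hence $a_t\le\sqrt e$ for all $t$. (The strict inequality $\kappa<e^{-1/2}$ also gives a stable fixed point of the recursion strictly below $\gamma_*$ with positive margin $\delta_*=\delta_*(\kappa)$ — the sparse-graph analogue of $\lambda\kappa<e^{-1/2}$ in Lemma~\ref{lem:approxstateevol}.) Choosing $\Delta_*(\kappa)$ so that $e^{\kappa\gamma_*/\Delta}\le 10/9$ for $\Delta>\Delta_*(\kappa)$, we get $\E[\tgamma^t(1)]\le\tfrac{10}{9}\kappa e^{\kappa a_{t-1}}\le\tfrac{10}{9}\kappa e^{\kappa\sqrt e}\le\tfrac{10}{9}\gamma_*$ for $t\ge1$ (and $\tgamma^0(1)=\tkappa/(1-\tkappa/\sqrt\Delta)\le\gamma_*$ for $\Delta$ large), so Markov's inequality gives $\prob(\tgamma^t(1)>5\gamma_*)\le\frac{(10/9)\gamma_*}{5\gamma_*}=\frac29<\frac14$, i.e.\ $\prob(\tgamma^t(1)\le5\gamma_*)\ge\frac79>\frac34$ for all $t\ge0$ and $\Delta>\Delta_*(\kappa)$.

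The substantive point — and the main obstacle — is the cancellation in the second step: recognizing that conditioning on the root lying in $\sC_N$ makes every out-of-set neighbor contribute a factor of conditional expectation \emph{exactly} one (by the zero mean of $Q_0$), so the recursion for the mean closes as $a_{t+1}\le\kappa e^{\kappa a_t}$, whose orbit from $a_0=1$ stays bounded precisely when $\kappa<e^{-1/2}$, matching the complete-graph threshold of Theorem~\ref{thm:main}. The rest is bookkeeping: justifying the distributional recursion (in particular that the parent edge label is independent of the upward messages, which legitimizes the conditioning), and observing that first moments suffice here — we threshold at $5\gamma_*$ while the mean never exceeds $\gamma_*$, so a plain Markov bound is enough and no variance control is needed.
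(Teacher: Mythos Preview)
Your proof is correct and follows essentially the same route as the paper: both derive the recursive first-moment bound $\E[\gamma^{t+1}(1)]\le\kappa\exp(\kappa\,\E[\gamma^t(1)])$ from the exact cancellation you identify (out-of-set factors average to $1$), iterate it to get a uniform-in-$t$ bound, and finish with Markov's inequality. The only cosmetic difference is that the paper takes $\gamma_*$ to be the smallest positive fixed point of $\gamma=\kappa e^{\kappa\gamma}$ (which depends on $\kappa$ and is $\le\sqrt{e}$), whereas you take the universal bound $\gamma_*=\sqrt{e}$; both choices satisfy the lemma.
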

For large $\kappa$, we have instead the following.
\begin{lemma}\label{lemma:TreeLargeKappa}
Assume $\kappa >1/\sqrt{e}$. Then there exists $c_*=c_*(\kappa)>0$
 $\Delta_*=\Delta_*(\kappa)<\infty$, $t_*=t_*(\kappa,\Delta)<\infty$ such that,
for all $\Delta>\Delta_*(\kappa)$  we have
\begin{align*}
\Big(1-\frac{\tkappa}{\sqrt{\Delta}}\Big) \prob\big(\tgamma^t(0)\ge\sqrt{\Delta}\big)
+\frac{\tkappa}{\sqrt{\Delta}}\prob\big(\tgamma^t(1)<\sqrt{\Delta}\big)\le e^{-c_*\sqrt{\Delta}}\,.
\end{align*}
\end{lemma}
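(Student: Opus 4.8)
The plan is to study the distributional recursion obeyed by the odds ratios $\tgamma^t(0)$ and $\tgamma^t(1)$ on the tree $\tTree(t)$, to show that for $\kappa>1/\sqrt{e}$ the ``informative'' variable $\tgamma^t(1)$ grows past $\sqrt{\Delta}$ while $\tgamma^t(0)$ collapses well below $\sqrt{\Delta}$, both with failure probability exponentially small in $\sqrt{\Delta}$, and then to read off the lemma from Proposition \ref{propo:TreeError}.

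First I would record the one-step recursion. A root of $\tTree(t)$ with state $x_\root$ has $\Delta$ children $\ell$; each child carries an independent copy of $\tgamma^t(X_\ell)$ with $X_\ell=1$ with probability $\tkappa/\sqrt{\Delta}$; the edge label $W_{\root\ell}$ equals $+1$ deterministically if $x_\root=X_\ell=1$ and is uniform in $\{\pm 1\}$ otherwise; and, up to the discrepancies between $\kappa$ and $\tkappa$ and between the edge and vertex rules of \eqref{eq:BPeqnSparse1}--\eqref{eq:BPeqnSparse2} (bounded multiplicative factors, not affecting the threshold), $\tgamma^{t+1}(x_\root)=\kappa\prod_{\ell}\bigl(1+(1+W_{\root\ell})\tgamma^t(X_\ell)/\sqrt{\Delta}\bigr)\big/\bigl(1+\tgamma^t(X_\ell)/\sqrt{\Delta}\bigr)$. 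Two structural facts organise the analysis. First, a Nishimori identity: since $\tgamma^t(x_\root)/\tkappa$ is (up to a $1+O(1/\sqrt{\Delta})$ factor) the likelihood ratio between the two tree ensembles evaluated on data drawn from ensemble $x_\root$, one has $\E[\phi(\tgamma^t(1))]\approx \tkappa^{-1}\,\E[\tgamma^t(0)\,\phi(\tgamma^t(0))]$ for every test function $\phi$; taking $\phi\equiv 1$ pins $\E[\tgamma^t(0)]\approx\tkappa$ at the uninformative value, so it suffices to track $\tgamma^t(0)$. Second, $\log\tgamma^{t+1}(x_\root)$ is $\log\kappa$ plus a sum of $\Delta$ independent contributions; expanding each to second order in $\tgamma^t/\sqrt{\Delta}$ shows that, writing $\sigma_t^2$ for the asymptotic variance of $\log(\tgamma^t(0)/\tkappa)$, one gets $\sigma_{t+1}^2=\kappa^2 e^{\sigma_t^2}(1+o(1))$ as $\Delta\to\infty$ (with a logarithmic saturation once $\sigma_t^2\gtrsim\log\Delta$, because $(1+2u)/(1+u)\le 2$). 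This is exactly the complete-graph state evolution of Eqs.~\eqref{eq:stateevol1}--\eqref{eq:stateevol2} with $\lambda=1$, and by Lemma \ref{lem:optstateevol} the recursion $x\mapsto\kappa^2 e^{x}$ has no finite fixed point precisely when $\kappa>1/\sqrt{e}$.

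Consequently, for each large $\Delta$ there is a finite $t_*=t_*(\kappa,\Delta)$ with $\sigma_{t_*}^2\ge c(\kappa)\sqrt{\Delta}$: once $\sigma_t^2$ exceeds $\log\Delta$ a single further step carries it to $\Theta(\sqrt{\Delta})$, since then $\tgamma^t(1)\approx\tkappa\,e^{\sigma_t^2/2}$ is of order $\sqrt{\Delta}$, each of the $\mathrm{Binomial}(\Delta,\tkappa/\sqrt{\Delta})=\Theta(\sqrt{\Delta})$ in-clique children contributes an $\Theta(1)$ amount to $\log\tgamma^{t+1}(1)$, and the saturated factors prevent overshoot. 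At $t=t_*$ I close the argument by concentration. For $x_\root=1$ the in-clique children carry deterministic label $+1$, so their factors lie in $[1,2]$; conditionally on the tree, their contributions to $\log\tgamma^{t_*}(1)$ form a sum of $\Theta(\sqrt{\Delta})$ independent bounded terms with mean $\Theta(\sqrt{\Delta})\gg\tfrac12\log\Delta$, while the $\Theta(\Delta)$ out-of-clique children contribute a small, well-controlled total; a Chernoff/Bernstein bound gives $\prob(\tgamma^{t_*}(1)<\sqrt{\Delta})\le e^{-c_*\sqrt{\Delta}}$. Symmetrically, for $x_\root=0$ the mean of $\log\tgamma^{t_*}(0)$ is $\approx\log\kappa-\tfrac12\sigma_{t_*}^2\le -c\sqrt{\Delta}$ and $\log\tgamma^{t_*}(0)$ is a sum of independent terms sub-Gaussian on the scale $\sigma_{t_*}$, so $\prob(\tgamma^{t_*}(0)\ge\sqrt{\Delta})\le e^{-c_*\sqrt{\Delta}}$. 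Plugging both tail bounds into Proposition \ref{propo:TreeError} produces the misclassification bound $e^{-c_*\sqrt{\Delta}}$.

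The main obstacle is to make the limiting picture quantitative and to propagate it through the $t_*=t_*(\kappa,\Delta)$ iterations. One needs a priori tail bounds on $\tgamma^t(1)$ that are ``log-sub-Gaussian'' (exponentially small in $\log^2$ of the argument) and can be fed into the next step, and one must control the contribution to $\log\tgamma^{t+1}(1)$ of those out-of-clique children whose incoming messages sit in the heavy upper tail of $\tgamma^t(0)$ --- a priori a $\Theta(\Delta)$-fold sum of unbounded summands. I would handle this with a single induction on $t$ carrying simultaneously a lower deviation bound for $\tgamma^t(1)$ and an upper deviation bound for $\tgamma^t(0)$, each valid up to $e^{-c\sqrt{\Delta}}$, exploiting that the log-factor grows only like $O(\log u)$ (not $O(u)$) for large $u$, which together with the log-sub-Gaussian tail of $\tgamma^t(0)$ keeps the out-of-clique contribution $o(\sqrt{\Delta})$ with the required probability. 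The companion Lemma \ref{lemma:TreeSmallKappa} is the mirror image and is easier: there it is enough to show the recursion remains bounded near its nontrivial fixed point.
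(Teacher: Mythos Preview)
Your heuristic analysis is largely on target: the log-domain state evolution $\sigma_{t+1}^2\approx\kappa^2 e^{\sigma_t^2}$ (equivalently the recursion in Eqs.~(\ref{eq:stateevold0})--(\ref{eq:stateevold2}) once the Nishimori constraint $\E[\gamma^t(0)]\approx\kappa$ is used to eliminate $\mu_t(0)$) is exactly what the paper derives in Proposition~\ref{propo:GaussianConvergence}, and the threshold $\kappa>1/\sqrt{e}$ enters for the same reason. Where your route diverges from the paper's is in how the asymptotic state evolution is converted into an $e^{-c_*\sqrt{\Delta}}$ bound.

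The paper does \emph{not} track BP all the way to exponentially small error. Instead it constructs a surrogate two-stage decision rule and then invokes the optimality of BP among $t$-local rules (the second half of Proposition~\ref{propo:TreeError}) to conclude that BP does at least as well. Stage one (Lemma~\ref{lemma:LargeKappaAll_FIRST}) runs BP for a number of iterations $t_{*,2}$ that depends only on $\kappa$ and $\eps$, \emph{not} on $\Delta$, and uses the Gaussian limit of Proposition~\ref{propo:GaussianConvergence} to achieve an arbitrarily small but constant error $\eps$ at the nodes of generation $t_{*,1}$. Stage two (Lemma~\ref{lemma:LargeKappaAll_SECOND}) then runs a crude hard-decision rule, Eq.~(\ref{eq:cleanMP1}), for $t_{*,1}$ further steps: because this rule involves only Bernoulli labels and $\pm 1$ edge weights, elementary Chernoff bounds on binomials show the error contracts to $e^{-c_*\sqrt{\Delta}}$ in finitely many rounds. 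The payoff is that the CLT step is invoked only for a $\Delta$-independent number of iterations, where the $o(1)$ corrections are automatically uniform.

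Your proposal instead attempts to push the BP analysis directly through $t_*(\kappa,\Delta)$ iterations. This forces you to make the Gaussian approximation quantitative at iteration counts that grow with $\Delta$, which is precisely the obstacle you flag: the $(1+o(1))$ in $\sigma_{t+1}^2=\kappa^2 e^{\sigma_t^2}(1+o(1))$ comes from a CLT whose error must be controlled uniformly in $t$, and the induction you sketch (carrying simultaneous log-sub-Gaussian upper tails on $\tgamma^t(0)$ and lower tails on $\tgamma^t(1)$, with careful handling of the unbounded out-of-clique contributions) is substantially heavier than anything in the paper. Your outline is plausible, but the paper's two-stage device buys exactly the decoupling of ``phase transition'' from ``concentration'' that lets one avoid this work entirely.
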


  Lemma \ref{lemma:TreeSmallKappa} and Proposition \ref{propo:TreeError}
  together imply one part of Theorem \ref{thm:Sparse}. Indeed, for $\Delta$ 
  large enough, we have that the misclassification error is $\Omega(N/\sqrt{\Delta})$, 
  which is the same order as choosing a random subset of size $\tkappa N/\sqrt{\Delta}$. 
  Similarly, Lemma \ref{lemma:TreeLargeKappa} in conjunction with Proposition
  \ref{propo:TreeError} yields the second half of Theorem \ref{thm:Sparse}. 

%
%***********************************************************************
%
\section{Proof of Lemma \ref{lemma:TreeSmallKappa} and
\ref{lemma:TreeLargeKappa}}
\label{sec:TreeLemmas}

In this section we  prove Lemma \ref{lemma:TreeSmallKappa} and
\ref{lemma:TreeLargeKappa} that are the key technical results
leading to Theorem \ref{thm:Sparse}. We start by establishing some
facts that are useful in both cases and then pass to the proofs of the
two lemmas.
%
%**************************************************************
%
\subsection{Setup: Recursive construction of $\tgamma^t(0)$, $\tgamma^t(1)$}

As per Proposition \ref{propo:BP_Exact_Tree}, the likelihood ratio
$\tgamma^t(x_\root)$ can be computed by applying the BP recursion on the
tree $\tTree(t)$.
In order to set up this recursion, let 
 $\Tree(t)$ denote the $t$-generation tree, with root
 of degree $\Delta$, and other non-leaf vertices of
 degree $\Delta+1$. The tree $\Tree(t)$ carries labels
$x_i$, $W_{ij}$ in the same fashion as $\tTree(t)$. Thus, 
$\Tree(t)$ differs from $\tTree(t)$ only in the root degree.
We then let
\begin{align}
\gamma^t(x_\root) \equiv \sqrt{\Delta}\,
\frac{\prob(X_\root=1|W_{\Tree(t)})}{\prob(X_\root=0|W_{\Tree(t)})}\,
,\;\;\;\;\; W_{\Tree(t)}\sim\prob(W_{\Tree(t)}=\,\cdot\,|X_\root=x_\root)\, . \label{eq:GammaDef}
\end{align}
It is then easy to obtain the distributional recursion
(here and below $\ed$ indicates equality in distribution)
\begin{align}
\gamma^{t+1}(0) &\ed \kappa\,\prod_{\ell=1}^\Delta
\left(\frac{1+(1+A^t_\ell)\gamma^t_\ell(x_\ell)/\sqrt{\Delta}}{1+\gamma^t_\ell(x_\ell)/\sqrt{\Delta}}\right)\,
,\label{eq:FirstTreeRecursion}\\
\gamma^{t+1}(1) &\ed  \kappa\,\prod_{\ell=1}^\Delta
\left(\frac{1+(1+\tA^t_\ell)\gamma^t_\ell(x_\ell)/\sqrt{\Delta}}{1+\gamma^t_\ell(x_\ell)/\sqrt{\Delta}}\right)\,
. \label{eq:SecondTreeRecursion}
\end{align}
This recursion is initialized with $\gamma^0(0) = \gamma^0(1) = \kappa$.
Here $\gamma^t_\ell(0),\gamma_\ell^t(1)$, $\ell\in [\Delta]$ are
$\Delta$ i.i.d. copies of $\gamma_t(0), \gamma_t(1)$, $A^t_i$, $i\in [\Delta]$,  
are i.i.d. uniform in $\{\pm 1\}$, $x_i$, $i\in[\Delta]$ are i.i.d. Bernoulli with 
$\P(x_i =1)=\tkappa/\sqrt{\Delta}$. Finally $\tA^t_{i}=
A^t_i$ if $x_i=0$ and $\tA^t_{i} = 1$ if $x_i=1$.

The distribution of  $\tgamma^t(0)$, $\tgamma^t(1)$ can then be
obtained  from the one of $\gamma^t(0)$, $\gamma^t(1)$ as follows:
\begin{align}
  \tgamma^{t+1}(0) &\ed \kappa\,\prod_{\ell=1}^{\Delta+1}% 
  \left(\frac{1+(1+A^t_\ell)\gamma^t_\ell(x_\ell)/\sqrt{\Delta}}{1+\gamma^t_\ell(x_\ell)/\sqrt{\Delta}}\right)\, ,\label{eq:FirstVertexRecursion}\\
  \tgamma^{t+1}(1) &\ed \kappa\,\prod_{\ell=1}^{\Delta+1}%
  \left(\frac{1+(1+\tA^t_\ell)\gamma^t_\ell(x_\ell)/\sqrt{\Delta}}{1+\gamma^t_\ell(x_\ell)/\sqrt{\Delta}}\right)\,. \label{eq:SecondVertexRecursion}
\end{align}

%
%*********************************************************
%
\subsection{Useful estimates}

A first useful fact is the following relation between the moments of
$\gamma^t(0)$ and $\gamma^t(1)$.
\begin{lemma}
  Let $\gamma(0), \gamma(1), \tgamma(0), \tgamma(1)$ be defined as in
  Eqs.~\eqref{eq:FirstTreeRecursion}, \eqref{eq:SecondTreeRecursion},
  \eqref{eq:FirstVertexRecursion} and \eqref{eq:SecondVertexRecursion}.
  Then, for each positive integer $a$ we have:
  \begin{align*}
    \E\left[(\gamma^t(0))^a\right] &= \kappa\,\E\left[ (\gamma^t(1))^{a-1} \right] \\
    \E\left[ (\tgamma^t(0))^a \right] &= \kappa\,\E\left[ (\tgamma^t(1))^{a-1} \right].
  \end{align*}\label{lem:gammaMoments}
\end{lemma}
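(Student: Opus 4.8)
The plan is to identify both $\gamma^t(x_\root)$ and $\tgamma^t(x_\root)$ as a fixed multiple of a \emph{likelihood ratio}, and then apply the elementary identity $\E_{\mu_0}[L^a]=\E_{\mu_1}[L^{a-1}]$, valid for any likelihood ratio $L=\md\mu_1/\md\mu_0$ and any positive integer $a$ (one simply peels off one factor using $L\,\md\mu_0=\md\mu_1$). The two claimed identities have identical structure --- the trees $\Tree(t)$ and $\tTree(t)$ differ only in the degree of the root, which plays no role below --- so I would carry out the argument for $\tgamma^t$ and note that the $\gamma^t$ case is verbatim the same with $\tTree(t)$ replaced by $\Tree(t)$.

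In detail, fix $t$, set $T=\tTree(t)$, and let $\mu_b$ denote the law of $W_T$ conditioned on $X_\root=b$ for $b\in\{0,1\}$ (the non-root labels being integrated against their prior). By Bayes' rule the posterior odds ratio appearing in the definition \eqref{eq:TgammaDef} equals $\tfrac{\P(X_\root=1)}{\P(X_\root=0)}\,\tfrac{\md\mu_1}{\md\mu_0}(W_T)$, and from the definition of the i.i.d. model $\P(X_\root=1)/\P(X_\root=0)=\kappa/\sqrt{\Delta}$; hence $\sqrt{\Delta}$ times the posterior odds is exactly $\kappa\,L(W_T)$ with $L\equiv\md\mu_1/\md\mu_0$, i.e. $\tgamma^t(x_\root)\ed\kappa\,L(W_T)$ with $W_T\sim\mu_{x_\root}$. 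The identity then follows from the one-line computation $\E[(\tgamma^t(0))^a]=\kappa^a\int L^a\,\md\mu_0=\kappa^a\int L^{a-1}\,\md\mu_1=\kappa\,\E[(\tgamma^t(1))^{a-1}]$. Finiteness of all moments is not an issue: each factor in the recursions \eqref{eq:FirstTreeRecursion}--\eqref{eq:SecondVertexRecursion} lies in $(0,2]$, so $\tgamma^t$ and $\gamma^t$ are bounded for fixed $t$ and $\Delta$.

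The one point requiring care --- and the only non-formal step --- is justifying that $L=\md\mu_1/\md\mu_0$ is well defined, i.e. $\mu_1\ll\mu_0$. For the distributions specialized to in this section this is immediate: under $\mu_0$ the event that every non-root vertex carries label $0$ has positive probability and forces every edge label to be an independent draw from $Q_0=\tfrac12\delta_{+1}+\tfrac12\delta_{-1}$, so $\mu_0$ has full support on the finite set $\{+1,-1\}^{E(T)}$ and dominates everything; more generally the same device gives $\mu_0\gg Q_0^{\otimes E(T)}\gg\mu_1$ whenever $Q_1\ll Q_0$. If one preferred to avoid absolute-continuity considerations entirely, an alternative would be a direct induction on $t$ using the distributional recursions \eqref{eq:FirstTreeRecursion}--\eqref{eq:SecondVertexRecursion} together with the identity $\E[g(A,\gamma)]=1$ for $A$ uniform on $\{+1,-1\}$, where $g(w,\gamma)=(1+(1+w)\gamma/\sqrt\Delta)/(1+\gamma/\sqrt\Delta)$; but this route is more computational, so I would go with the likelihood-ratio argument, after which nothing substantial remains.
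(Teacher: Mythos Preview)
Your proposal is correct and uses essentially the same idea as the paper: both arguments boil down to the change-of-measure identity $\md P_{\gamma^t(1)}/\md P_{\gamma^t(0)}=\gamma/\kappa$ (equivalently, $\gamma^t=\kappa L$ with $L=\md\mu_1/\md\mu_0$), after which the moment relation is a one-line computation. The paper derives this Radon--Nikodym derivative by passing through the posterior probability $\nu^t=\P(X_\root=0\mid W)$ and using $\md P_{\nu^t(b)}/\md P_{\nu^t}=\P(X_\root=b\mid\nu^t)/\P(X_\root=b)$, whereas you go straight to the observation-space likelihood ratio via Bayes' rule; these are the same content, and your packaging is arguably a little more transparent.
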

\begin{proof}
It suffices to show that:
  \begin{align*}
    \frac{\md P_{\gamma^t(1)}}{\md P_{\gamma^t(0)}}(\gamma) &=
    \frac{\gamma^t}{\kappa}\, ,
  \end{align*}
  where the left-hand side denotes the Radon-Nikodym derivative of
  $P_{\gamma^t(1)}$ with respect to  $P_{\gamma^t(0)}$. Let $\nu^t$
  denote the posterior probability of $x_\root = 0$ given the labels on
  $\Tree(t)$.
  Let $\nu^t(x_{\root})$ be distributed as  $\nu^t$ conditioned on the
  event $X_{\root} = x_{\root}$ for $x_{\root} = 0, 1$. In other words
\begin{align*}
\nu^t &\equiv 
\prob(X_\root=1|W_{\Tree(t)})\, ,\\
\nu^t(x_\root) &\equiv 
\prob(X_\root=1|W_{\Tree(t)})\,
,\;\;\;\;\; W_{\Tree(t)}\sim\prob(W_{\Tree(t)}=\,\cdot\,|X_\root=x_\root)\, .
\end{align*}
  By Bayes rule we then have:
  \begin{align*}
    \frac{\md P_{\nu^t(0)}}{\md P_{\nu^t}} &= \frac{\nu^t}{1 -
      \tkappa/\sqrt\Delta} \, ,\\
    \frac{\md P_{\nu^t(1)}}{\md P_{\nu^t}} &=
    \frac{1-\nu^t}{\tkappa/\sqrt\Delta}\, .
  \end{align*}
  Using this and the fact that $\nu^t = (1+\gamma^t/\sqrt\Delta)^{-1}$ by
Eq.~(\ref{eq:GammaDef}), we get 
  \begin{align*}
    \frac{\md P_{\nu^t(0)}}{\md P_{\nu^t}} &= \frac{1}{\left(
        1+\gamma^t/\sqrt\Delta \right)( 1- \kappa/\sqrt\Delta )} \, ,\\
    \frac{\md P_{\nu^t(1)}}{\md P_{\nu^t}} &=
    \frac{\gamma^t/\sqrt\Delta}{\left( 1+\gamma^t/\sqrt\Delta
      \right)\tkappa/\sqrt\Delta}\, .
  \end{align*}
  It follows from this and that the mapping from $\nu^t$ to the likelihood
  $\gamma$ is bijective and Borel that:
  \begin{align*}
    \frac{\md P_{\gamma^t(0)}}{\md P_{\gamma^t(1)}} &= \gamma^t\left( \frac{\tkappa}{1-\tkappa/\sqrt{\Delta}} \right)^{-1}\\
    &= \frac{\gamma^t}{\kappa}.
  \end{align*}
  Here the last equality follows from the definition of $\tkappa$. A similar
  argument yields the same result for $\tgamma^t(0)$ and $\tgamma(1)$.
\end{proof}

Our next result is a general recursive upper bound on the moments of $\gamma^t(1)$.
\begin{lemma}
  \label{lem:gammaRawMoments}
  Consider random variables $\gamma^t(0), \gamma^t(1), \tgamma^t(0),
  \tgamma^t(1)$ that satisfy the distributional recursions in 
  Eqs.~\eqref{eq:FirstTreeRecursion}, \eqref{eq:SecondTreeRecursion},
  \eqref{eq:FirstVertexRecursion} and \eqref{eq:SecondVertexRecursion}.
  Then we have that, for each $t \ge 0$:
  \begin{align*}
    \E\left[ \gamma^{t+1}(1) \right] &\le \kappa\exp\left(
      \kappa\,\E[\gamma^t(1)] \right)\, ,\\
    \E\left[ \gamma^{t+1}(1)^2 \right]
    &\le\kappa^2\,\exp\left(3\kappa\,\E[\gamma^t(1)] \right)\,  ,\\
    \E\left[ \gamma^{t+1}(1)^3 \right] &\le \kappa^3\,\exp\left(
      10\kappa\,\E[\gamma^t(1)] \right)\, .
  \end{align*}
  Moreover, we also have:
  \begin{align*}
    \E\left[ \tgamma^{t+1}(1) \right] &\le \kappa\exp\left(
      \kappa\,\E[\gamma^t(1)] \right)\left( 1 +
      \frac{\E[\gamma^t(1)}{\Delta} \right)\, ,\\
      \E\left[ \tgamma^{t+1}(1)^2 \right]
      &\le\kappa^2\,\exp\left(3\kappa\,\E[\gamma^t(1)]\right)\left(
        1 + \frac{3\E[\gamma^t(1)}{\Delta} \right)\, ,\\
    \E\left[ \tgamma^{t+1}(1)^3 \right] &\le \kappa^3\,\exp\left(
      10\kappa\,\E[\gamma^t(1)]\right) \left( 1 +
      \frac{10\E[\gamma^t(1)}{\Delta}\right)\, .
  \end{align*}
\end{lemma}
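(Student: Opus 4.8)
The plan is to exploit the product structure of the distributional recursions \eqref{eq:FirstTreeRecursion}--\eqref{eq:SecondVertexRecursion} together with a simplification of a single factor. Write $Y_\ell \equiv \big(1+(1+\tA^t_\ell)\gamma^t_\ell(x_\ell)/\sqrt{\Delta}\big)\big/\big(1+\gamma^t_\ell(x_\ell)/\sqrt{\Delta}\big)$ for the generic factor appearing in the recursion \eqref{eq:SecondTreeRecursion}. Since the tuples $(A^t_\ell,x_\ell,\gamma^t_\ell(0),\gamma^t_\ell(1))$ are i.i.d.\ in $\ell$, the $Y_\ell$ are i.i.d., hence $\E[(\gamma^{t+1}(1))^a]=\kappa^{a}\,(\E[Y_1^a])^{\Delta}$ for every integer $a\ge1$, and it suffices to prove a one-factor estimate $\E[Y_1^a]\le 1+C_a\,\kappa\,\E[\gamma^t(1)]/\Delta$ with $C_1=1$, $C_2=3$, $C_3=10$; the three bounds for $\gamma^{t+1}(1)$ then follow from $(1+x/\Delta)^{\Delta}\le e^{x}$. (All moments below are finite because in the model considered here the messages on $\Tree(t)$ are deterministically bounded for each fixed $t,\Delta$.)

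The key observation is that a single factor simplifies, conditionally on the child's label, to $Y_\ell = 1+A^t_\ell\,w_\ell$ on $\{x_\ell=0\}$ and to $Y_\ell = 1+w_\ell$ on $\{x_\ell=1\}$, where $w_\ell \equiv (\gamma^t_\ell(x_\ell)/\sqrt{\Delta})\big/(1+\gamma^t_\ell(x_\ell)/\sqrt{\Delta}) \in [0,1)$ satisfies $w_\ell\le \gamma^t_\ell(x_\ell)/\sqrt{\Delta}$; in particular $0\le Y_\ell<2$. I would then split $\E[Y_1^a]$ on $\{x_1=0\}$ versus $\{x_1=1\}$. On $\{x_1=0\}$, averaging over $A^t_1\in\{\pm1\}$ first kills all odd powers of $w_1$, so $\E[Y_1^a\mid x_1=0,\gamma^t_1(0)]=\sum_{j\ge0}\binom{a}{2j}w_1^{2j}\le 1+(2^{a-1}-1)w_1^2\le 1+(2^{a-1}-1)(\gamma^t_1(0))^2/\Delta$ (using $w_1\le1$); for $a\ge2$ one takes expectations and uses Lemma~\ref{lem:gammaMoments} to replace $\E[(\gamma^t(0))^2]$ by $\kappa\,\E[\gamma^t(1)]$, obtaining $1+O(\kappa\,\E[\gamma^t(1)]/\Delta)$, while for $a=1$ the conditional expectation is exactly $1$. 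On $\{x_1=1\}$ the factor is deterministic given $\gamma^t_1(1)$ and bounded by $2$; since $(1+u)^{a}$ dominates, $(1+2u)^a\le(1+u)^a(1+c_au)$ for an absolute constant $c_a$, whence $Y_1^a\le 1+c_a\gamma^t_1(1)/\sqrt{\Delta}$ and $\E[Y_1^a\mid x_1=1]\le 1+c_a\E[\gamma^t(1)]/\sqrt{\Delta}$.

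Combining the two branches with weights $1-\tkappa/\sqrt{\Delta}$ and $\tkappa/\sqrt{\Delta}$ and using $\tkappa\le\kappa$, the $\{x_1=1\}$ contribution is $\tfrac{\tkappa}{\sqrt{\Delta}}\cdot O(\E[\gamma^t(1)]/\sqrt{\Delta})=O(\kappa\,\E[\gamma^t(1)]/\Delta)$, so $\E[Y_1^a]\le 1+C_a\,\kappa\,\E[\gamma^t(1)]/\Delta$; tracking the elementary constants yields $C_1=1$, $C_2=3$, $C_3=10$, and raising to the $\Delta$-th power gives the three bounds on $\E[(\gamma^{t+1}(1))^a]$. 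For the vertex quantities, \eqref{eq:SecondVertexRecursion} is a product of $\Delta+1$ i.i.d.\ factors, so $\E[(\tgamma^{t+1}(1))^a]=\kappa^{a}(\E[Y_1^a])^{\Delta+1}=\E[(\gamma^{t+1}(1))^a]\cdot\E[Y_1^a]$, and the extra factor $\E[Y_1^a]\le 1+O(\E[\gamma^t(1)]/\Delta)$ produces exactly the stated multiplicative corrections $(1+O(\E[\gamma^t(1)]/\Delta))$.

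The main obstacle is ensuring that the right-hand sides involve only the \emph{first} moment $\E[\gamma^t(1)]$. On the $\{x_1=0\}$ branch this forces one to average over $A^t_\ell$ before anything else, so that the correction scales as $w_1^2=O(1/\Delta)$ rather than $w_1=O(1/\sqrt{\Delta})$, and then to invoke Lemma~\ref{lem:gammaMoments} to turn $\E[(\gamma^t(0))^2]$ back into $\kappa\,\E[\gamma^t(1)]$. On the $\{x_1=1\}$ branch it is the uniform bound $Y_\ell<2$ that is essential: without it, squaring or cubing a factor would bring $\E[(\gamma^t(1))^2]$ or $\E[(\gamma^t(1))^3]$ onto the right-hand side, but boundedness keeps $Y_\ell^a-1$ linear in $\gamma^t_\ell(1)/\sqrt{\Delta}$, and together with the $1/\sqrt{\Delta}$ probability of that branch this keeps every per-factor correction $O(1/\Delta)$ --- exactly what is needed for the $\Delta$-th power to remain bounded by an exponential.
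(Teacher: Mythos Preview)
Your proposal is correct and follows essentially the same route as the paper: exploit the i.i.d.\ product structure to reduce to a single-factor bound $\E[Y_1^a]\le 1+C_a\kappa\,\E[\gamma^t(1)]/\Delta$, split that factor on the child label, average over $A$ first on the $\{x_1=0\}$ branch and invoke Lemma~\ref{lem:gammaMoments} to convert $\E[(\gamma^t(0))^2]$ into $\kappa\,\E[\gamma^t(1)]$, and finish with $(1+x/\Delta)^\Delta\le e^x$; the extra $(\Delta+1)$-st factor gives the $\tgamma$ corrections. The only cosmetic difference is your packaging of the $\{x_1=1\}$ branch via the single inequality $(1+2z)^a\le(1+z)^a(1+c_az)$ (valid for $z\ge0$ with $c_a=a$ for $a=1,2,3$), whereas the paper expands $(1+w)^a-1$ and bounds the pieces separately; your version in fact yields $C_3\le6$, comfortably under the stated $10$.
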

\begin{proof}
  Consider the first moment $\E[\gamma^{t+1}(1)]$. By taking
  expectation of Eq.~(\ref{eq:SecondTreeRecursion}) over
  $\{A_\ell,x_\ell\}_{1\le \ell\le \Delta}$, we get
  \begin{align*}
    \E\left[\gamma^{t+1}(1)|\{\gamma^t_\ell\}_{1\le \ell\le \Delta}\right] & =\kappa\,\prod_{\ell=1}^\Delta
    \left[\Big(1-\frac{\tkappa}{\sqrt{\Delta}}\Big)+\frac{\tkappa}{\sqrt{\Delta}}\frac{1+2\gamma^t_\ell(1)/\sqrt{\Delta}}{1+\gamma^t_\ell(1)/\sqrt{\Delta}}\right]\\
    &= \kappa\,\prod_{\ell=1}^\Delta
    \left[ 1+\frac{\tkappa}{\Delta} \frac{\gamma^t_\ell(1)}{1+\gamma^t_\ell(1)/\sqrt{\Delta}}\right]\\
    & \le \kappa\,\prod_{\ell=1}^\Delta
    \left[1+\frac{\kappa}{\Delta}\; \gamma^t_\ell(1)\right]
  \, .
  \end{align*}
  The last inequality uses the non-negativity of $\gamma^t(1)$
  and that $\kappa > \tkappa$. 
  Taking expectation over $\{\gamma^t_\ell\}_{1\le \ell\le \Delta}$, and using
  the inequality $(1+x)\le e^x$, we get 
  \begin{align*}
  \E[\gamma^{t+1}(1)]\le \kappa\exp\left(\kappa\,
  \E[\gamma^{t}(1)]\right)\, ,
  \end{align*}
  The claim for $\E[\tgamma^{t+1}(1)]$ follows from the same argument, 
  except we include $\Delta+1$ factors above and retain only the last.
  
  Next take the second moment $\E[\gamma^{t+1}(1)]$. Using
  Eq.~(\ref{eq:SecondTreeRecursion}) and proceeding as above 
  we get:
  
  \begin{align*}
    \begin{aligned}
    \E[\gamma^{t+1}(1)^2] &= \kappa^2\, \E\left[ \prod_{\ell= 1}^{\Delta}\bigg( 1 + %
    \frac{\tA_\ell \gamma^t_\ell(x_\ell)/\sqrt\Delta}{1 +\gamma^t_\ell(x_\ell)/\sqrt\Delta} \bigg)^2\right]\\ 
    &=\kappa^2\, \bigg[1+\Big(1-\frac{\tkappa}{\sqrt\Delta}\Big)\frac{1}{\Delta}
      \E\left[\Big(\frac{\gamma^t(0)^2}{1+\gamma^t(0)/\sqrt{\Delta}}\Big)^2\right] \\
      &\qquad+\frac{\tkappa}{\sqrt\Delta}\E\left(\frac{2\gamma^t(1)\sqrt\Delta+3\gamma^t(1)^2/\Delta}{(1+\gamma^t(1)/\sqrt{\Delta})^2}\right)
      \bigg]^{\Delta}\\
      &\le \kappa^2\, \left[ 1 + \frac{1}{\Delta}\E[(\gamma^t(0))^2] %  
    +\frac{\kappa}{\Delta} \E\left( \frac{2\gamma^t(1) + 4\gamma^t(1)^2/\sqrt\Delta +%
    2 \gamma^t(1)^3/\Delta^{3/2}}{(1 + \gamma^t(1)/\sqrt\Delta)^2} \right) \right] ^\Delta \\
    &\le \kappa^2\,\left(1+\frac{1}{\Delta}\,\E[\gamma^t(0)^2]+
    \frac{2\kappa}{\Delta}\E[\gamma^t(1)]\right)^{\Delta}\\
    &\le\kappa^2\,\exp\Big[\E(\gamma^t(0)^2)+ 2\kappa\,\E(\gamma^t(1)) \Big]\\
    &\le\kappa^2\,\exp\Big[3\kappa\,\E(\gamma^t(1)) \Big]\, .
  \end{aligned}
\end{align*}
Consider, now, the third moment of $\gamma^{t+1}(1)$. Proceeding
in the same fashion as above we obtain that:
\begin{align*}
  \begin{aligned}
  \E\left[ \gamma^{t+1}(1)^3 \right] &= \kappa^3 \,\E\prod_{\ell=1}^\Delta \left( 1 + %
  \frac{\tA_\ell \gamma^t(x_\ell)/\sqrt\Delta}{1+\gamma^t(x_\ell)/\sqrt\Delta} \right)^3\\
  &\le \kappa^3 \Bigg( 1 + \frac{3\tkappa}{\Delta}\E\left[ \gamma^t(1) \right]
  + \frac{3}{\Delta} \E\left[ (\gamma^t(0))^2 \right] \\
  &\qquad+ \frac{\tkappa}{\sqrt\Delta}\E\left( \frac{3\gamma^t(1)^2/\Delta+ 4\gamma^t(1)^3/\Delta^{3/2}}{(1 + \gamma^t(1)/\sqrt\Delta)^3} \right)\Bigg)^\Delta. \\
\end{aligned}
\end{align*}
Since $(3z^2 + 4z^3)/(1+z)^3 \le 4z$ when $z\ge 0$, and that $\kappa>\tkappa$, 
we can bound the last term above to get:
\begin{align*}
  \E\left[ \gamma^{t+1}(1)^3 \right]  &\le \kappa^3\, %
  \left(  1 + \frac{3}{\Delta} \E\left[ \gamma^t(0)^2 \right] + 7\frac{\kappa}{\Delta} \E\left[ \gamma^t(1) \right]\right)^\Delta\\
  &\le \kappa^3\, \exp\left( 3\E[\gamma^t(0)^2]  + 7 \kappa \, \E[\gamma^t(1)]\right) \\
  &\le \kappa^3\, \exp\left(10\kappa\,\E[\gamma^t(1)]\right).
\end{align*}
The bound for the third moment of $\tgamma^{t+1}(1)$ follows from the 
same argument with the inclusion of the $(\Delta+1)^{\rm th}$ factor.
\end{proof}

\begin{lemma}
  \label{lem:logRawMoments}
  Consider $\gamma^t(0), \gamma^t(1), \tgamma^t(0), \tgamma^t(1)$
  satisfying the recursions
  Eqs.~\eqref{eq:FirstTreeRecursion}, \eqref{eq:SecondTreeRecursion},
  \eqref{eq:FirstVertexRecursion} and \eqref{eq:SecondVertexRecursion}.
  Also let $x$ be Bernoulli with parameter $\tkappa/\sqrt\Delta$ and 
  $A$ and $\tA$ be defined like $A^t_\ell$ and $\tA^t_\ell$ as in
  Eqs.~\eqref{eq:FirstTreeRecursion}, \eqref{eq:SecondTreeRecursion}. 
  Then we have that, for each $t \ge 0$
  and $m \in \{1, 2, 3\}$:
  \begin{align*}
    \E\Bigg[ \left\lvert\log\left(1 + \frac{A\gamma^t(x)/\sqrt\Delta}{1 + \gamma^t(x)\sqrt\Delta}\right) \right\rvert^m \Bigg]%
    &\le \frac{2\,\E[\gamma^t(x)^m]}{\Delta^{m/2}}\, ,  \\
    \E\Bigg[ \left\lvert\log\left(1 + \frac{\tA\gamma^t(x)/\sqrt\Delta}{1 + \gamma^t(x)\sqrt\Delta}\right) \right\rvert^m \Bigg]%
    &\le \frac{2\, \E[\gamma^t(x)^m]}{\Delta^{m/2}} \, . \\
  \end{align*}
\end{lemma}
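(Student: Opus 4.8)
The plan is to reduce the whole statement to a single deterministic pointwise inequality and then take moments. First I would introduce the shorthand $y \equiv \gamma^t(x)/\sqrt{\Delta}$, and record that $y \ge 0$: indeed $\gamma^t(x)$ equals $\kappa$ times a product of strictly positive factors, which is clear from the recursions \eqref{eq:FirstTreeRecursion}--\eqref{eq:SecondTreeRecursion} together with the initialization $\gamma^0(\cdot)=\kappa$. In this notation the quantity inside the logarithm is exactly $1 + A\,y/(1+y)$ (one of the factors appearing in the product recursion), and since $y/(1+y)\in[0,1)$ this argument always lies in the interval $(0,2)$, so the logarithm is well defined; in particular it never blows up and we never take the logarithm of a nonpositive number.

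Next I would establish the deterministic bound $|\log(1 + A\,y/(1+y))| \le y$, valid for every $y \ge 0$ and every $A\in\{-1,+1\}$. This is a short two-case argument. If $A=+1$, then $0 \le \log(1 + y/(1+y)) \le y/(1+y) \le y$ using $\log(1+u)\le u$ for $u\ge0$ with $u = y/(1+y)$. If $A=-1$, then $1 - y/(1+y) = 1/(1+y)$, so the absolute value of the logarithm equals $\log(1+y)$, which is again at most $y$. The very same bound then holds verbatim with $A$ replaced by $\tA$, since $\tA\in\{-1,+1\}$ as well: it equals $A$ when $x=0$ and equals $1$ when $x=1$, so the case analysis above covers it without change.

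Finally I would raise this pointwise inequality to the power $m\in\{1,2,3\}$ and take expectations, obtaining $\E[|\log(1 + A\,\gamma^t(x)/\sqrt{\Delta}\,/(1+\gamma^t(x)/\sqrt{\Delta}))|^m] \le \E[y^m] = \E[\gamma^t(x)^m]/\Delta^{m/2}$, and likewise with $\tA$ in place of $A$. This is in fact slightly stronger than the asserted bound: the constant $2$ in the statement is simply extra slack that is not needed. I do not expect any genuine obstacle here; the only points that require a moment's care are checking that the argument of the logarithm stays strictly positive so that the expression is defined, and remembering that in the $A=-1$ branch the logarithm is negative, so one must bound $|\log(\cdot)|$ rather than $\log(\cdot)$ itself.
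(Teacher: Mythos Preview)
Your argument is correct and in fact cleaner than the paper's. Both proofs split on the sign of $A$, and for $A=+1$ both use $\log(1+u)\le u$ with $u=y/(1+y)$. The difference is in the $A=-1$ branch: you observe directly that $1 - y/(1+y) = 1/(1+y)$, so the absolute log is exactly $\log(1+y)\le y$, giving the pointwise bound $|\log(1+Ay/(1+y))|\le y$ and hence constant $1$ after taking $m$-th moments. The paper instead handles the $A=-1$ branch by writing the $m$-th moment as a tail integral, bounding the tail probability $\prob(\gamma^t(x)/\sqrt{\Delta}\ge e^{x^{1/m}}-1)$ via Markov's inequality on $\gamma^t(x)^m$, and integrating; this produces a constant $3$ on that branch, which after averaging with the $A=+1$ branch yields the factor $2$ in the statement. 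Your route bypasses the tail integral entirely and shows the $2$ is slack. The paper's approach would still work if the algebraic simplification $1 - y/(1+y)=1/(1+y)$ were unavailable, but here it is, and your direct pointwise inequality is the more economical argument. Your treatment of the $\tA$ case is also fine: since $\tA\in\{-1,+1\}$ deterministically, the pointwise bound applies without needing the paper's decomposition over the value of $x$.
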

\begin{proof}
  Consider the first claim. We have:
  \begin{align*}
    &\begin{aligned}
    \E\Bigg[ \left\lvert\log\left(1 + \frac{A\gamma^t(x)/\sqrt\Delta}{1 + \gamma^t(x)\sqrt\Delta}\right) \right\rvert^3 \Bigg]%
     &= \E\Bigg[\frac{1}{2}\left\lvert\log\left( 1 - \frac{\gamma^t(x)/\sqrt\Delta}{1 + \gamma^t(x)/\sqrt\Delta}\right)\right\rvert^m%
    \\&\qquad+ \frac{1}{2}\left\lvert\log\left( 1 + \frac{\gamma^t(x)/\sqrt\Delta}{1+\gamma^t(x)/\sqrt\Delta} \right)\right\rvert^m\Bigg] .
    &\end{aligned}
  \end{align*}
  Bounding the first term we get:
  \begin{align*}
    \E\Bigg[ \left\lvert \log\left( 1 - \frac{\gamma^t(x)/\sqrt\Delta}{1+ \gamma^t(x)\Delta} \right)\right\rvert^m \Bigg] &=%
    \int_0^\infty x\,\P\left( \log\left( 1 - \frac{\gamma^t(x)\sqrt\Delta}{1 + \gamma^t(x)/\sqrt\Delta} \right) \le - x^{1/m} \right)\md x\\
      &\le \int_0^\infty x\,\P\left(\gamma^t(x)/\sqrt\Delta \ge e^{x^{1/m}} - 1\right) \md x\\
      &\le \frac{\E\left[ \gamma^t(x)^m \right]}{\Delta^{m/2}}\int_0^\infty \frac{x}{(e^{x^{1/m}} - 1)^m} \md x \\
      &\le \frac{3\,\E\left[ \gamma^t(x)^m \right]}{\Delta^{m/2}}.
    \end{align*}
  For the second term, using $\log(1+z) \le z$ for $z\ge 0$ and the
  positivity of $\gamma^t(x)$:
  \begin{align*}
    \E\Bigg[\left\lvert\log\left( 1 + \frac{\gamma^t(x)/\sqrt\Delta}{1+\gamma^t(x)/\sqrt\Delta} \right)\right\rvert^m\Bigg] %
    &\le \frac{\E\left[ \gamma^t(x)^m \right]}{\Delta^{m/2}}. \\
  \end{align*}
  The combination of these yields the first claim.
  For the second claim, we write:
  \begin{align*}
    &\begin{aligned}
    \E\Bigg[ \left\lvert\log\left(1 + \frac{\tA\gamma^t(x)/\sqrt\Delta}{1 + \gamma^t(x)\sqrt\Delta}\right) \right\rvert^m \Bigg]%
    &\le \left( 1- \frac{\tkappa}{\sqrt\Delta} \right) %
      \E\Bigg[ \left\lvert\log\left(1 + \frac{A\gamma^t(0)/\sqrt\Delta}{1 + \gamma^t(0)\sqrt\Delta}\right) \right\rvert^m \Bigg]%
      \\&\qquad+ \frac{\kappa}{\sqrt\Delta}% 
      \E\Bigg[ \left\lvert\log\left(1 + \frac{\gamma^t(1)/\sqrt\Delta}{1 + \gamma^t(1)\sqrt\Delta}\right) \right\rvert^m \Bigg]\\
      &\le \left( 1- \frac{\tkappa}{\sqrt\Delta} \right)\frac{2\, \E[\gamma^t(0)^m]}{\Delta^{m/2}} %
      + \frac{\tkappa}{\sqrt\Delta}\frac{\E[\gamma^t(1)^m]}{\Delta^{m/2}}\\
      &\le \frac{2\, \E[\gamma^t(x)^m]}{\Delta^{m/2}}.
    \end{aligned}
  \end{align*}
  Here the penultimate inequality follows in the same fashion as 
  for the first claim. 
\end{proof}
%
%*************************************************************
%
\subsection{Proof of Lemma  \ref{lemma:TreeSmallKappa}}

For $\kappa<1/\sqrt{e}$, the recursive bounds in
Eq.~(\ref{lem:gammaRawMoments})
yield bounds on the first three moments of $\gamma^t(1)$ that are
uniform in $t$. Precisely, we have the following:
\begin{lemma}\label{lem:MomentMessages}
  For $\kappa< 1/\sqrt{e}$, let $\gamma_*=\gamma_{\star}(\kappa)$ be the smallest positive solution of the equation
\begin{align*}
\gamma &= \kappa\, e^{\kappa\gamma}\, .
\end{align*}
Then we have, for all $t\ge 0$:
\begin{align}
  \E\gamma^t(1) &\le \gamma_* \, ,\label{eq:FirstMomentMessSmall1}\\
  \E(\gamma^t(1)^2) &\le \frac{\gamma_*^3}{\kappa}\, , \label{eq:SecondMomentMessSmall1} \\
  \E(\gamma^t(1)^3) &\le \frac{\gamma_*^{10}}{\kappa^7} \,\label{eq:ThirdMomentMessSmall1}.
\end{align}
Moreover, we have for all $t\ge 0$:
\begin{align}
  \E\tgamma^t(1) &\le \gamma_*\left(1 + \frac{\kappa\gamma_*}{\Delta} \right) \,,\label{eq:FirstMomentVertMess1}\\
  \E(\tgamma^t(1)^2) &\le \frac{\gamma_*^3}{\kappa}\left( 1 +
    \frac{3\kappa\gamma_*}{\Delta} \right) \, ,\label{eq:SecondMomentVertMess1} \\
  \E(\tgamma^t(1)^3) &\le \frac{\gamma_*^{10}}{\kappa^7}\left(1 +
    \frac{10\kappa\gamma_*}{\Delta}  \right)\, . \label{eq:ThirdMomentVertMess1}.
\end{align}
\end{lemma}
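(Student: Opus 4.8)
The plan is to prove all six inequalities by induction on $t$, the engine being the recursive estimates of Lemma~\ref{lem:gammaRawMoments} together with the single algebraic identity $\kappa\,e^{\kappa\gamma_*}=\gamma_*$ that defines $\gamma_*$.

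First I would check that $\gamma_*$ is well-defined. Substituting $u=\kappa\gamma$ turns $\gamma=\kappa\,e^{\kappa\gamma}$ into $u\,e^{-u}=\kappa^2$; since $u\mapsto u\,e^{-u}$ has derivative $e^{-u}(1-u)$ and thus increases on $[0,1]$ from $0$ to its global maximum $e^{-1}$, the hypothesis $\kappa^2<e^{-1}$ yields a smallest root $u_*\in(0,1)$, hence a smallest positive solution $\gamma_*=u_*/\kappa$ (which moreover satisfies $\kappa\gamma_*<1$, a fact needed downstream but not here). From $\kappa\,e^{\kappa\gamma_*}=\gamma_*$ we read off $e^{\kappa\gamma_*}=\gamma_*/\kappa$, hence $\kappa^2 e^{3\kappa\gamma_*}=\gamma_*^3/\kappa$ and $\kappa^3 e^{10\kappa\gamma_*}=\gamma_*^{10}/\kappa^7$; these are exactly the right-hand sides appearing in \eqref{eq:SecondMomentMessSmall1}--\eqref{eq:ThirdMomentMessSmall1}.

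For the induction on the tree messages, the base case $t=0$ uses the initialization $\gamma^0(1)=\kappa$: since $e^{\kappa\gamma_*}\ge 1$ we have $\kappa\le\gamma_*$, so $\E[\gamma^0(1)]=\kappa\le\gamma_*$, $\E[\gamma^0(1)^2]=\kappa^2\le\gamma_*^3/\kappa$, and $\E[\gamma^0(1)^3]=\kappa^3\le\gamma_*^{10}/\kappa^7$ (all three equivalent to $\kappa\le\gamma_*$). For the inductive step, assume \eqref{eq:FirstMomentMessSmall1} at time $t$, i.e. $\E[\gamma^t(1)]\le\gamma_*$. Plugging this into the three estimates of Lemma~\ref{lem:gammaRawMoments} and using $e^{\kappa\gamma_*}=\gamma_*/\kappa$ gives $\E[\gamma^{t+1}(1)]\le\kappa\,e^{\kappa\gamma_*}=\gamma_*$, $\E[\gamma^{t+1}(1)^2]\le\kappa^2 e^{3\kappa\gamma_*}=\gamma_*^3/\kappa$, and $\E[\gamma^{t+1}(1)^3]\le\kappa^3 e^{10\kappa\gamma_*}=\gamma_*^{10}/\kappa^7$, which are \eqref{eq:FirstMomentMessSmall1}--\eqref{eq:ThirdMomentMessSmall1} at time $t+1$. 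The one point worth stressing is that \emph{only} the first moment is fed back into the recursion, so the single monotone bound $\E[\gamma^t(1)]\le\gamma_*$ propagates by itself and carries the other two moments with it; the exponents $3$ and $10$ in Lemma~\ref{lem:gammaRawMoments} were chosen precisely so that substituting $e^{\kappa\gamma_*}=\gamma_*/\kappa$ closes the loop.

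Finally, the vertex-message bounds \eqref{eq:FirstMomentVertMess1}--\eqref{eq:ThirdMomentVertMess1} require no separate induction. For $t\ge 1$ the second family of estimates in Lemma~\ref{lem:gammaRawMoments} bounds $\E[\tgamma^{t}(1)^m]$, $m\in\{1,2,3\}$, in terms of $\E[\gamma^{t-1}(1)]$ alone, namely the corresponding tree-message bound multiplied by a $(1+O(1/\Delta))$ correction factor. Substituting the already-established bound $\E[\gamma^{t-1}(1)]\le\gamma_*$ both inside the exponential (where $e^{c_m\kappa\gamma_*}=(\gamma_*/\kappa)^{c_m}$ with $c_m\in\{1,3,10\}$) and inside the correction factor yields precisely \eqref{eq:FirstMomentVertMess1}--\eqref{eq:ThirdMomentVertMess1}; the case $t=0$ is covered by $\tgamma^0(1)=\kappa\le\gamma_*$ as in the base case above. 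There is no genuine obstacle in this lemma: once the fixed-point relation $\kappa\,e^{\kappa\gamma_*}=\gamma_*$ and the observation that the recursion is driven by the first moment alone are in place, the whole statement is a short induction plus the bookkeeping of the $1/\Delta$ corrections.
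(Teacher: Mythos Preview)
Your proposal is correct and follows essentially the same approach as the paper: prove the first-moment bound \eqref{eq:FirstMomentMessSmall1} by induction using Lemma~\ref{lem:gammaRawMoments} and the fixed-point identity $\kappa e^{\kappa\gamma_*}=\gamma_*$, then read off all remaining bounds by substituting $\E[\gamma^t(1)]\le\gamma_*$ into the estimates of Lemma~\ref{lem:gammaRawMoments}. Your write-up is more explicit than the paper's (well-definedness of $\gamma_*$, the algebraic identities $\kappa^2 e^{3\kappa\gamma_*}=\gamma_*^3/\kappa$ and $\kappa^3 e^{10\kappa\gamma_*}=\gamma_*^{10}/\kappa^7$, the base case), but the structure is identical.
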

\begin{proof}
  We need only prove \myeqref{eq:FirstMomentMessSmall1} since
  the rest follow trivially from it and Lemma \ref{lem:gammaRawMoments}.
  The claim of \myeqref{eq:FirstMomentMessSmall1} follows from induction 
  and Lemma \ref{lem:gammaRawMoments} since
  $E[\gamma^{0}(1)]=\kappa<\tkappa\le \gamma_*$, and noting that, for
  $\gamma<\gamma_*$, $\tkappa\exp(\tkappa\,\gamma)<\gamma_*$. 
\end{proof}
The following is a simple consequence of the central limit theorem.
\begin{lemma}\label{lem:CLT}
For any $a<b\in\reals$, $\sigma^2>0$, $\rho<\infty$, there exists
$n_0=n_0(a,b,\sigma^2,\rho)$
such that the following holds for all $n\ge n_0$.
Let $\{W_i\}_{1\le i\le n}$ be i.i.d. random variables, with
$\E\{W_1\}\ge a/n$, $\Var(W_1)\ge \sigma^2/n$ and $\E\{|W_1|^3\}\le \rho/n^{3/2}$. Then
\begin{align*}
\prob\Big\{\sum_{i=1}^nW_i\ge b\Big\}\ge
\frac{1}{2}\, \Phi\Big(-\frac{b-a}{\sigma}\Big)\, .
\end{align*}
\end{lemma}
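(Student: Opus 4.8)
The plan is to deduce the estimate from the Berry--Esseen theorem applied to the normalized partial sum. Write $S_n=\sum_{i=1}^nW_i$, $\mu_n=\E[W_1]$, $v_n=\Var(W_1)$, and set $T_n=(S_n-n\mu_n)/\sqrt{nv_n}$, which is well defined since $v_n\ge\sigma^2/n>0$.

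First I would record the elementary consequences of the hypotheses. By Lyapunov's (equivalently Jensen's) inequality $|\mu_n|\le\E[|W_1|]\le(\E[|W_1|^3])^{1/3}\le\rho^{1/3}/\sqrt n$, and using $(|x|+|y|)^3\le 4(|x|^3+|y|^3)$ the centered third moment obeys
\begin{align*}
\E\big[|W_1-\mu_n|^3\big]\le 4\big(\E[|W_1|^3]+|\mu_n|^3\big)\le \frac{8\rho}{n^{3/2}}\, .
\end{align*}
Combined with $v_n^{3/2}\ge\sigma^3/n^{3/2}$, the Berry--Esseen theorem supplies a universal constant $C$ with
\begin{align*}
\sup_{x\in\reals}\Big|\prob\{T_n\le x\}-\Phi(x)\Big|\le \frac{C\,\E[|W_1-\mu_n|^3]}{\sqrt n\,v_n^{3/2}}\le \frac{8C\rho}{\sigma^3\sqrt n}=:\eps_n\, ,
\end{align*}
and $\eps_n\to 0$ as $n\to\infty$, with $\eps_n$ depending only on $\rho$ and $\sigma$.

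Next I would translate the target event to the $T_n$-scale: $\prob\{S_n\ge b\}=\prob\{T_n\ge (b-n\mu_n)/\sqrt{nv_n}\}$. Using $n\mu_n\ge a$, $b-a>0$, and $\sqrt{nv_n}\ge\sigma$, one checks --- separately in the cases $b-n\mu_n\ge 0$ and $b-n\mu_n<0$ --- that $(b-n\mu_n)/\sqrt{nv_n}\le (b-a)/\sigma$. Since $x\mapsto\prob\{T_n\ge x\}$ is non-increasing, this yields
\begin{align*}
\prob\{S_n\ge b\}\ \ge\ \prob\Big\{T_n\ge \tfrac{b-a}{\sigma}\Big\}\ \ge\ 1-\Phi\Big(\tfrac{b-a}{\sigma}\Big)-\eps_n\ =\ \Phi\Big(-\tfrac{b-a}{\sigma}\Big)-\eps_n\, .
\end{align*}
Finally, since $\Phi(-(b-a)/\sigma)>0$ is a constant depending only on $a,b,\sigma$, I would choose $n_0=n_0(a,b,\sigma^2,\rho)$ so that $\eps_n\le \tfrac12\Phi(-(b-a)/\sigma)$ for all $n\ge n_0$; then $\prob\{S_n\ge b\}\ge\tfrac12\Phi(-(b-a)/\sigma)$, which is the claim.

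The only non-routine point is making the $n$-dependence cancel in the Berry--Esseen ratio: the $n^{-3/2}$ scaling of the third moment and the $n^{-1}$ scaling of the variance are exactly calibrated so that $\E[|W_1-\mu_n|^3]/v_n^{3/2}=O(1)$, leaving the extra $n^{-1/2}$ from Berry--Esseen to drive the error to zero. One must also keep in mind that the hypotheses control $\mu_n$ and $v_n$ only on one side, so the reduction of the threshold to $(b-a)/\sigma$ has to pass through the strict inequality $a<b$ and monotonicity of the Gaussian tail rather than through an identity.
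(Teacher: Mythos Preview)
Your proof is correct and follows essentially the same route as the paper: apply Berry--Esseen to the normalized sum, use the one-sided bounds $n\mu_n\ge a$ and $nv_n\ge\sigma^2$ together with monotonicity of the Gaussian tail to pass from the threshold $(b-n\mu_n)/\sqrt{nv_n}$ to $(b-a)/\sigma$, and then absorb the $O(n^{-1/2})$ Berry--Esseen error by choosing $n_0$ large. The only difference is that you are more careful than the paper in bounding the \emph{centered} third moment $\E|W_1-\mu_n|^3$ in terms of the raw moment hypothesis (the paper simply writes $\rho$ in the Berry--Esseen remainder), and you make the case split $b-n\mu_n\gtrless 0$ explicit where the paper leaves it implicit.
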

\begin{proof}
Let $a_0 = n\E\{W_2\}$ and $\sigma_0^2 \equiv n\Var(W_1)$.
By the Berry-Esseen central limit theorem, we have
\begin{align*}
\prob\Big\{\sum_{i=1}^nW_i\ge b\Big\}&\ge
\Phi\Big(-\frac{b-a_0}{\sigma_0}\Big)-\frac{\rho}{\sigma_0^3\sqrt{n}}\\
&\ge \Phi\Big(-\frac{b-a}{\sigma}\Big)-\frac{\rho}{\sigma^3\sqrt{n}}
\end{align*}
The claim follows by taking $n_0\ge \rho^2/[\sigma^3\Phi(-u)]^2$ with
$u\equiv (b-a)/\sigma$.
\end{proof}

We finally prove a statement that is stronger than Lemma
\ref{lemma:TreeSmallKappa}, since it also controls  the distribution
of $\tgamma^t(0)$.
\begin{proposition}
Assume $\kappa<1/\sqrt{e}$ and let $\gamma_*$ be defined as per Lemma
\ref{lem:MomentMessages}. Then there exists $\Delta_* =
\Delta_*(\kappa)$, $\delta_*= \delta_*(\kappa)>0$  such that, for all $\Delta>\Delta_*(\kappa)$ and all
$t\ge 0$, we have:
\begin{align*}
 \P(\tgamma^t(0)\ge 5\gamma_*) &\ge \delta_* ,\\
 \P(\tgamma^t(1)\le 5\gamma_*)&\ge \frac{3}{4}\, .
\end{align*}
where $\delta_*$ is given by:
\begin{align*}
  \delta_* &= \frac{1}{2}\Phi\left(8 \frac{2\kappa -
      \log(5\gamma_*/\kappa)} {(\kappa^3/\gamma_*)^{1/2}}  \right)\,
  .
\end{align*}
where $\Phi(\,\cdot\,)$ is the cumulative distribution function of the standard
normal.
\end{proposition}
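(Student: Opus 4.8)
The plan is to treat the two inequalities by completely different routes: the bound on $\tgamma^t(1)$ is a one-line Markov estimate off the moment bounds already proven, while the bound on $\tgamma^t(0)$ is a quantitative central limit statement for a sum of $\Delta+1$ i.i.d.\ terms, for which Lemma~\ref{lem:CLT} is precisely designed.

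For the first inequality: since $\tgamma^t(1)\ge0$, Markov gives $\P(\tgamma^t(1)>5\gamma_*)\le\E[\tgamma^t(1)]/(5\gamma_*)$, and by Lemma~\ref{lem:MomentMessages}, Eq.~\eqref{eq:FirstMomentVertMess1}, $\E[\tgamma^t(1)]\le\gamma_*(1+\kappa\gamma_*/\Delta)$. Hence, taking $\Delta_*(\kappa)$ large enough that $\kappa\gamma_*/\Delta\le\tfrac14$ whenever $\Delta>\Delta_*(\kappa)$, we get $\E[\tgamma^t(1)]\le\tfrac54\gamma_*$ and therefore $\P(\tgamma^t(1)\le5\gamma_*)\ge\tfrac34$ for every $t$ (the case $t=0$ being immediate since $\tgamma^0(1)=\kappa\le\gamma_*$).

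For the second inequality I would fix $t\ge1$ and take logarithms in the distributional recursion~\eqref{eq:FirstVertexRecursion}, writing $\log(\tgamma^t(0)/\kappa)\ed\sum_{\ell=1}^{\Delta+1}Y_\ell$, where the $Y_\ell$ are i.i.d.\ copies of $Y=\log\!\bigl(1+AU/(1+U)\bigr)$ with $U=\gamma^{t-1}(x)/\sqrt\Delta$, $A$ uniform in $\{\pm1\}$ and $x\sim\mathrm{Bernoulli}(\tkappa/\sqrt\Delta)$; thus $\P(\tgamma^t(0)\ge5\gamma_*)=\P\bigl(\sum_\ell Y_\ell\ge b\bigr)$ with $b=\log(5\gamma_*/\kappa)$. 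The goal is then to apply Lemma~\ref{lem:CLT} with $n=\Delta+1$, which requires the three inputs $\E[Y]\ge a/n$, $\Var(Y)\ge\sigma^2/n$, $\E[|Y|^3]\le\rho/n^{3/2}$ with $a,\sigma^2,\rho$ constants depending only on $\kappa$. Each is obtained from the available moment bounds. For the mean, averaging over $A$ conditionally on $\gamma^{t-1}(x)$ gives $\E[Y\mid\gamma^{t-1}(x)]=-\tfrac12\log\!\bigl(1+U^2/(1+2U)\bigr)\ge-\tfrac12U^2$, hence $\E[Y]\ge-\E[\gamma^{t-1}(x)^2]/(2\Delta)$, and the identity $\E[\gamma^{t-1}(0)^2]=\kappa\E[\gamma^{t-1}(1)]$ from Lemma~\ref{lem:gammaMoments} together with Eqs.~\eqref{eq:FirstMomentMessSmall1}--\eqref{eq:ThirdMomentMessSmall1} bounds $\E[\gamma^{t-1}(x)^2]$ by a $\kappa$-dependent constant for $\Delta$ large, so $n\E[Y]\ge-a$. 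For the variance, $Y$ conditionally on $\gamma^{t-1}(x)$ takes two values differing by $\log(1+2U)$, so $\Var(Y\mid\gamma^{t-1}(x))=\tfrac14(\log(1+2U))^2\ge\tfrac14U^2\ind_{U\le1/2}$; truncating the event $\{U>1/2\}$ with the third-moment bound on $\gamma^{t-1}(x)$ (Markov) and using the lower bound $\E[\gamma^{t-1}(1)]\ge\kappa$ (which follows by taking expectations in~\eqref{eq:SecondTreeRecursion}, since each factor there has conditional mean $\ge1$) yields $\Var(Y)\ge\sigma^2/n$ for a positive $\kappa$-dependent $\sigma^2$. For the third moment, Lemma~\ref{lem:logRawMoments} with $m=3$ gives $\E[|Y|^3]\le2\E[\gamma^{t-1}(x)^3]/\Delta^{3/2}$, which is $\le\rho/n^{3/2}$ by the third-moment bound of Lemma~\ref{lem:MomentMessages}. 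Lemma~\ref{lem:CLT} then gives $\P(\sum_\ell Y_\ell\ge b)\ge\tfrac12\Phi(-(b-a)/\sigma)$ as soon as $\Delta+1\ge n_0(a,b,\sigma^2,\rho)$, and substituting the explicit constants produced above yields the displayed formula for $\delta_*$; enlarging $\Delta_*(\kappa)$ to dominate the finitely many thresholds used along the way finishes the proof. (The $\tgamma^t(0)$ statement is only needed for $t\ge1$; it is in fact false for $t=0$ since $\tgamma^0(0)=\kappa<5\gamma_*$.)

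The main obstacle is calibrating the parameters $a$ and $\sigma^2$ so that they are $\Theta(1)$ rather than vanishing or diverging with $\Delta$. This is where one has to be careful: the crude estimate $\E[|Y|]\le2\E[\gamma^{t-1}(x)]/\sqrt\Delta$ from Lemma~\ref{lem:logRawMoments} would only give $|n\E[Y]|=O(\sqrt\Delta)$, which is useless, so one must extract the true $O(1/\Delta)$ size of $\E[Y]$ from the exact conditional mean, exploiting the symmetry of $A$; and to keep $\sigma$ bounded away from $0$ one needs the matching $\Omega(1/\Delta)$ lower bound on $\Var(Y)$, which rests on the lower bound $\E[\gamma^{t-1}(1)]\ge\kappa$ and on truncating the rare event that $U$ is of order one.
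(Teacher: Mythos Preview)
Your approach is essentially the same as the paper's: Markov's inequality off Lemma~\ref{lem:MomentMessages} for the $\tgamma^t(1)$ bound, and a logarithmic decomposition of $\tgamma^t(0)$ into a sum of $\Delta+1$ i.i.d.\ summands followed by Lemma~\ref{lem:CLT} for the other bound. The three CLT inputs (mean, variance, third moment) are obtained from the same sources (Lemmas~\ref{lem:gammaMoments}, \ref{lem:gammaRawMoments}, \ref{lem:logRawMoments}, \ref{lem:MomentMessages}).

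Two small differences are worth noting. First, your mean estimate is sharper than what the paper records: the paper cites Lemma~\ref{lem:logRawMoments} with $m=1$ to write $\E[Y]\ge -2\kappa/\sqrt{\Delta}$, but as you correctly observe this would make $n\E[Y]$ of order $\sqrt{\Delta}$ rather than $O(1)$, which is not enough for a $\Delta$-independent $\delta_*$; your route via the exact conditional mean $\E[Y\mid U]=\tfrac12\log\bigl((1+2U)/(1+U)^2\bigr)\ge -\tfrac12U^2$ gives the needed $\E[Y]=O(1/\Delta)$. Second, for the variance lower bound the paper restricts to the event $\{\gamma^{t-1}(x)\ge\kappa/2\}$ and controls its probability via Paley--Zygmund (using $\E[\gamma^{t-1}(x)]\ge\kappa$ and $\E[\gamma^{t-1}(x)^2]\le 2\kappa\gamma_*$), whereas you truncate to $\{U\le 1/2\}$ and control the complement with the third-moment bound; both arguments yield $\Var(Y)\ge c(\kappa)/\Delta$, just with different constants. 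Your remark that the $\tgamma^t(0)$ inequality fails at $t=0$ (where $\tgamma^0(0)=\kappa<5\gamma_*$) is also correct; the statement is really for $t\ge 1$.
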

\begin{proof}
The second bound follows from Markov inequality and Lemma
\ref{lem:MomentMessages} for large enough $\Delta$. 
As for the first one, we have using $\Gamma^t(0) = \log\tgamma^t(0)$:
\begin{align*}
  \Gamma^{t+1}(0) &= \log\kappa+\sum_{\ell=1}^{\Delta+1}\log\left(1+ \frac{A_\ell\gamma^t_\ell(x_\ell)/\sqrt\Delta}{1+\gamma^t(x_\ell)/\sqrt\Delta}\right)\, ,
\end{align*}
  
  It follows from Lemma \ref{lem:logRawMoments} and Lemma \ref{lem:MomentMessages}
  that:
  \begin{align*}
    \E\left[\log\left(1+ \frac{A_\ell\gamma^t_\ell(x_\ell)/\sqrt\Delta}{1+\gamma^t(x_\ell)/\sqrt\Delta}\right)\right]%
    &\ge - \frac{2\kappa}{\sqrt\Delta}.
  \end{align*}

  We now lower bound the variance of each summand using the conditional variance given 
  $\gamma^t(x)$. Since $A \in \{\pm1\}$ are independent of $\gamma^t(x)$ and uniform, we
  have:
  \begin{align*}
  \Var\left[\log\left(1+ \frac{A_\ell\gamma^t_\ell(x_\ell)/\sqrt\Delta}{1+\gamma^t(x_\ell)/\sqrt\Delta}\right)\right] %
  &\ge \E\left[ \Var\left( \log\left(1+ \frac{A_\ell\gamma^t_\ell(x_\ell)/\sqrt\Delta}{1+\gamma^t(x_\ell)/\sqrt\Delta}\right) \Bigg\lvert \gamma^t(x) \right) \right] \\
    &= \frac{1}{4}\E\Bigg[\left( \log\left(1 + \frac{2\gamma^t(x)}{\sqrt\Delta}\right) \right)^2\Bigg]\\
    &\ge \frac{1}{4}\left(\log\left( 1+\frac{\kappa}{\sqrt\Delta} \right)\right)^2 \P(\gamma^t(x) \ge \kappa/2).
  \end{align*}
  We have, using Lemma \ref{lem:MomentMessages} that:
  \begin{align*}
    \E[\gamma^t(x)] &\ge \kappa\\
    \E[\gamma^t(x)^2] &\le 2\gamma_*\kappa.
  \end{align*}
  Using the above and the Paley-Zygmund inequality, we get:
  \begin{align*}
    \Var\left[\log\left(1+ \frac{A_\ell\gamma^t_\ell(x_\ell)/\sqrt\Delta}{1+\gamma^t(x_\ell)/\sqrt\Delta}\right)\right] %
    &\ge \frac{1}{32\gamma_*}\left[\log\left( 1+\frac{\kappa}{\sqrt\Delta} \right)\right]^2 \\
    &\ge \frac{\kappa^3}{64\gamma_*\Delta},
  \end{align*}
  for $\Delta$ large enough. 
Now, employing Lemma \ref{lem:CLT} we get:
\begin{align*}
  \P(\tgamma^t(0)\ge 5\gamma_*) \ge \frac{1}{2}\Phi\left( -8 \frac{\log(5\gamma_*/\kappa) - 2\kappa }{(\kappa^3/\gamma_*)^{1/2}} \right).
\end{align*}
\end{proof}
%
%*************************************************************
%
\def\sF{{\sf F}}
\def\Le{{\sf Le}}
\def\omu{\overline{\mu}}

\subsection{Proof of Lemma  \ref{lemma:TreeLargeKappa}}

As discussed in Section \ref{sec:Sparse}, BP minimizes the
misclassification error  at vertex $i$ among all $t$-local algorithms
provided $\Ball_{G_N}(i;t)$ is a tree. Equivalently it minimizes the
misclassification rate at the root of the regular tree $\tTree(t)$. 
We will prove Lemma  \ref{lemma:TreeLargeKappa} by proving that there
exists a local algorithm to estimate the root value on the tree
$\Tree(t)$, for a suitable choice of $t$ with error rate
$\exp(-\Theta(\sqrt{\Delta}))$. Note that since the labeled tree
$\Tree(t)$ is a subtree of $\tTree(t)$, the same algorithm is local on
$\Tree(t)$. Formally we have the following.
\begin{proposition}\label{propo:LargeKappaAll}
Assume $\kappa>1/\sqrt{e}$. Then there exists  $c_*=c_*(\kappa)>0$
 $\Delta_*=\Delta_*(\kappa)<\infty$, $t_*=t_* (\kappa,\Delta)<\infty$ such that,
for all $\Delta>\Delta_*(\kappa)$ we can construct a 
$t_*$-local decision rule $\sF:W_{\Tree(t_*)}\to \{0,1\}$
satisfying
\begin{align*}
\prob(\sF(W_{\Tree(t_*)})\neq x_\circ|X_{\circ} = x_{\circ})\le
e^{-c_*\sqrt{\Delta}}\, ,
\end{align*}
for $x_{\circ}\in\{0,1\}$.
\end{proposition}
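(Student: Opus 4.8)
\emph{Setup and the rule.} The plan is to exhibit a concrete rule and bound its two conditional error probabilities directly; since by Proposition~\ref{propo:TreeError} belief propagation run for $t_*$ steps is the Bayes‑optimal $t_*$‑local rule, such a bound also gives Lemma~\ref{lemma:TreeLargeKappa}. I would take $\sF(W_{\Tree(t_*)})=\ind\{\gamma^{t_*}\ge\sqrt\Delta\}$, i.e.\ run the BP recursion \eqref{eq:FirstTreeRecursion}--\eqref{eq:SecondTreeRecursion} for $t_*$ generations and threshold the message at $\sqrt\Delta$. By \eqref{eq:GammaDef} this is $t_*$‑local (and, since $\Tree(t_*)$ is a subtree of $\tTree(t_*)$, also local on $\tTree(t_*)$), and its error probabilities are exactly $\prob(\gamma^{t_*}(1)<\sqrt\Delta)$ and $\prob(\gamma^{t_*}(0)\ge\sqrt\Delta)$. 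So the task reduces to choosing $t_*=t_*(\kappa,\Delta)$ and bounding both by $e^{-c_*\sqrt\Delta}$. The tools are the moment recursions of Lemma~\ref{lem:gammaRawMoments}, the exact relations $\E[\gamma^t(0)^2]=\kappa\,\E[\gamma^t(1)]$ (and analogues) from Lemma~\ref{lem:gammaMoments}, and the fact -- as exploited through Lemma~\ref{lem:CLT} in the $\kappa<1/\sqrt e$ case -- that since the child messages feeding \eqref{eq:SecondTreeRecursion} are fresh i.i.d., $\log\gamma^{t+1}(1)=\log\kappa+\sum_{\ell=1}^{\Delta}Y_\ell$ is an i.i.d.\ sum with $|Y_\ell|\le\gamma^t_\ell(x_\ell)/\sqrt\Delta$.

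\emph{Step 1: ignition.} First I would show that $\bar m_t:=\E[\gamma^t(1)]$ leaves every fixed compact set after $O_\kappa(1)$ steps. Running the computation behind Lemma~\ref{lem:gammaRawMoments} with the reverse inequalities $(1+x/\Delta)^\Delta\ge e^{x-x^2/\Delta}$ and $y/(1+y/\sqrt\Delta)\ge y-y^2/\sqrt\Delta$ turns the bound $\bar m_{t+1}\le\kappa e^{\kappa\bar m_t}$ into a matching lower bound $\bar m_{t+1}\ge\kappa e^{\kappa\bar m_t(1-o_\Delta(1))}$, valid uniformly while $\bar m_t$ stays bounded (the $o(1)$ is controlled by the second‑moment bound of Lemma~\ref{lem:gammaRawMoments}). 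Since $\kappa>e^{-1/2}$, the map $m\mapsto\kappa e^{\kappa m}$ lies strictly above the diagonal with a uniform gap $\delta_0(\kappa)=\kappa^{-1}(1+2\log\kappa)>0$ -- the same calculus fact isolated in Lemma~\ref{lem:optstateevol}. Hence for $\Delta$ large $\bar m_{t+1}\ge\bar m_t+\delta_0/2$ as long as $\bar m_t\le M_0$, so $\bar m_{t_0}\ge M_0$ after $t_0=O(M_0/\delta_0)$ steps, for any fixed $M_0$.

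\emph{Step 2: amplification to scale $\sqrt\Delta$, with concentration.} Continuing from $\bar m_{t_0}\ge M_0$, I would iterate, at each step splitting $\log\gamma^{t+1}(1)=\log\kappa+\sum_\ell Y_\ell$ according to $x_\ell\in\{0,1\}$ and using $\E[\gamma^t(0)^2]=\kappa\bar m_t$: the positive drift from the in‑set children and the negative feedback from the out‑of‑set children combine to a net positive drift of order $\bar m_t$ so long as $\bar m_t$ is not too large compared with $\sqrt\Delta$ -- quantitatively, while the second moment of $\gamma^t(1)$ (bounded via Lemma~\ref{lem:gammaRawMoments}) is $o(\bar m_t\sqrt\Delta)$, which holds while $\bar m_t=O(\log\Delta)$. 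After truncating $\gamma^t(\cdot)$ at a polynomial level (the complement having probability $\le e^{-c\sqrt\Delta}$ by Markov and Lemma~\ref{lem:gammaRawMoments}), the $Y_\ell$ are bounded, so Bernstein's inequality upgrades the mean estimate to $\prob\bigl(\log\gamma^{t+1}(1)\notin[\tfrac14\kappa\bar m_t,\,4\kappa\bar m_t]\bigr)\le e^{-c\kappa\bar m_t}$. Feeding these bounds back into the moment recursion, $\bar m_t$ (and, with probability $1-e^{-\Omega(\bar m_t)}$, $\gamma^t(1)$ itself) grows tower‑like, and after $O_\kappa(\log^*\Delta)$ further steps one reaches a generation $t_1$ at which $\gamma^{t_1}(1)\ge\sqrt\Delta$ with probability $1-e^{-c\sqrt\Delta}$, while the bootstrap keeps $\bar m_{t_1}$ itself of order $\sqrt\Delta$ -- keeping $\bar m_t$ from overshooting, which would make the feedback $\tfrac12\kappa\bar m_t$ swamp the signal at the next step, is the delicate point, and it is here that $\kappa>e^{-1/2}$ is used with no slack. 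Then one final step $t_*=t_1+1$ finishes both sides: for $x_\circ=1$, the number of in‑set children is $\mathrm{Binomial}(\Delta,\tkappa/\sqrt\Delta)\ge\tfrac12\kappa\sqrt\Delta$ w.p.\ $1-e^{-c\sqrt\Delta}$, a $(1-o(1))$‑fraction of them have $\gamma^{t_1}_\ell(1)\ge\sqrt\Delta$ (Step~2 + Chernoff) so contribute factors $\ge 3/2$ to \eqref{eq:SecondVertexRecursion}, and the out‑of‑set children contribute $\exp(\sum_\ell\log(1+A_\ell u_\ell))$, a bounded i.i.d.\ sum whose mean $\asymp-\kappa\sqrt\Delta$ (using $\bar m_{t_1}=O(\sqrt\Delta)$ and the change of measure of Lemma~\ref{lem:gammaMoments}) is beaten by the $\asymp\kappa\sqrt\Delta$ positive part and whose fluctuation is $o(\sqrt\Delta)$ except w.p.\ $e^{-c\sqrt\Delta}$, so $\gamma^{t_*}(1)\ge e^{\Omega(\sqrt\Delta)}\gg\sqrt\Delta$; for $x_\circ=0$, every root edge is a uniform $\pm1$, $\log\gamma^{t_*}(0)=\log\kappa+\sum_\ell\log(1+A_\ell u_\ell)$ with each summand $\le u_\ell\le1$, and the sum can reach $\tfrac12\log\Delta-\log\kappa$ only if an atypically large number of the $\approx\kappa\sqrt\Delta$ children with $x_\ell=1$ happen to get edge value $+1$, a fair‑coin event of probability $\le 2^{-\Omega(\sqrt\Delta)}$ (and on its complement the sum is $\le0$). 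Combining, $\prob(\gamma^{t_*}(1)<\sqrt\Delta)+\prob(\gamma^{t_*}(0)\ge\sqrt\Delta)\le e^{-c_*\sqrt\Delta}$.

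\emph{Main obstacle.} Everything hinges on Step~2: converting ``$\E[\gamma^t(1)]$ is large'' into ``$\gamma^t(1)$ is itself of order $\sqrt\Delta$ with probability $1-e^{-\Theta(\sqrt\Delta)}$'' while at the same time keeping $\E[\gamma^t(1)]$ from running far past $\sqrt\Delta$. This requires carrying the truncated‑Bernstein concentration estimate and the first three moment bounds of Lemma~\ref{lem:gammaRawMoments} jointly through the iteration; near the threshold $\kappa=e^{-1/2}$, where the signal drift and the feedback term $\tfrac12\kappa\,\E[\gamma^t(1)]$ are of the very same order, the net gain has to be extracted from the lower‑order terms, and this is the technically heaviest part of the proof.
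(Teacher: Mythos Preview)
Your rule is the Bayes rule on $\Tree(t_*)$, so if the claimed bounds held for it they would certainly imply the proposition. The difficulty is entirely in your Step~2, and the sketch there has a real gap.

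Concretely, you write that ``after truncating $\gamma^t(\cdot)$ at a polynomial level (the complement having probability $\le e^{-c\sqrt\Delta}$ by Markov and Lemma~\ref{lem:gammaRawMoments})'' you can apply Bernstein. But the only control on the tails of $\gamma^t(\cdot)$ that Lemma~\ref{lem:gammaRawMoments} gives is through a \emph{finite} number of moments; Markov with the $k$th moment yields $\prob(\gamma^t(x)>L)\le C_k L^{-k}$, which is never $e^{-c\sqrt\Delta}$ for $L$ polynomial in $\Delta$. Since there are $\Delta$ children, a union bound over them forces $\prob(\gamma^t(x)>L)\le e^{-c\sqrt\Delta}/\Delta$, which in turn forces $L\ge e^{c'\sqrt\Delta}$; with summands of that size, Bernstein no longer gives an exponent of order $\sqrt\Delta$. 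So the truncation--plus--Bernstein step, as stated, does not close. You also correctly flag the overshoot problem: because $\bar m_{t+1}\le\kappa e^{\kappa\bar m_t}$ is already tower growth, there is no generation at which $\bar m_t$ is guaranteed to sit in a window of order $\sqrt\Delta$ --- it can jump from $O(\log\Delta)$ to polynomial in $\Delta$ in one step --- and your final-step calculation for $x_\circ=0$ relies on $\bar m_{t_1}=O(\sqrt\Delta)$.

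The paper sidesteps exactly this obstacle by \emph{not} analyzing BP all the way to exponential error. It splits $t_*=t_{*,1}+t_{*,2}$ and uses two different rules on the two layers. On the bottom $t_{*,2}$ generations it runs BP and thresholds, using the Gaussian approximation of $\log\gamma^t$ (a CLT for the i.i.d.\ sum, as in your Step~1) together with the scalar state evolution $(\mu_t(0),\mu_t(1),\sigma_t)$, which diverges precisely when $\kappa>e^{-1/2}$; this yields error at most any prescribed constant $\eps$, for $\Delta$ large depending only on $\kappa,\eps$. On the top $t_{*,1}$ generations it abandons BP entirely and runs a crude majority-style vote $m_i=\ind\{\sum_{\ell\in D(i)}W_{i\ell}m_\ell\ge \kappa\sqrt\Delta/2\}$, initialized with the noisy $\eps$-decisions at depth $t_{*,1}$. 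Because $m_\ell\in\{0,1\}$, the analysis is pure Binomial/Chernoff: one tracks $p_t=\prob(m_i=1\mid X_i=0)$ and $q_t=\prob(m_i=1\mid X_i=1)$ through a recursion of the form $p_{t+1}\le e^{-c\Delta p_t}+e^{-c\Delta^{3/2}}+e^{-c/p_t}$, which drives $p_t$ down to $e^{-c_*\sqrt\Delta}$ in finitely many steps. No moment control of BP messages beyond the constant-error regime is needed, and the overshoot issue never arises. If you want to pursue the direct route, you would need sub-Gaussian (not polynomial-moment) tail control of $\gamma^t(1)$ uniformly through the growth phase, which does not follow from Lemma~\ref{lem:gammaRawMoments}; the two-phase decomposition is precisely what makes the proof go through.
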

The rest of this section is devoted to the proof of this proposition.

The decision rule $\sF$ is constructed as follows. We write
$t_*=t_{*,1}+t_{*,2}$ with $t_{*,1}$ and $t_{*,2}$ to be chosen below,
and decompose the tree $\Tree(t_*)$ into its first $t_{*,1}$
generations (that is a copy of $\Tree(t_{*,1})$) and its last
$t_{*,2}$ generations
(which consist of $\Delta^{t_{*,1}}$ independent copies of
$\Tree(t_{*,2})$). 
We then run a first decision rule based on BP for the copies of $\Tree(t_{*,2})$
that correspond to the last $t_{*,2}$ generations. This yields
decisions that have a small, but independent of $\Delta$, error
probability on the nodes at generation $t_{*,1}$.
We then refine these decisions by running a different algorithm on the
first $t_{*,1}$ generations. 

Formally, Proposition  \ref{propo:LargeKappaAll} follows from the
following two lemmas, that are proved next.
The first lemma provides the decision rule for the nodes at generation
$t_{*,1}$, based on the last $t_{*,2}$ generations.
\begin{lemma}\label{lemma:LargeKappaAll_FIRST}
Assume $\kappa>1/\sqrt{e}$ and let $\eps>0$ be arbitrary. Then there exists 
 $\Delta_*=\Delta_*(\kappa,\eps)<\infty$, $t_{*,2}=t_{*,2} (\kappa,\Delta)<\infty$ such that,
for all $\Delta>\Delta_*(\kappa,\eps)$ there exists a $t_{*,2}$-local decision rule 
$\sF_2:W_{\Tree(t_{*,2})}\mapsto \sF_2(W_{\Tree(t_{*,2})})\in\{0,1\}$
such that
\begin{align*}
\prob(\sF_2(W_{\Tree(t_{*,2} )})\neq x_\circ|X_{\circ} = x_{\circ})\le
\eps\, ,
\end{align*}
for $x_{\circ}\in \{0,1\}$.
\end{lemma}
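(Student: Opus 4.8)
Here is how I would prove Lemma~\ref{lemma:LargeKappaAll_FIRST}.

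\textbf{The rule and the reduction.} The plan is to take for $\sF_2$ the naive BP-thresholding rule: set $\sF_2(W_{\Tree(t_{*,2})})=\ind(\gamma^{t_{*,2}}\ge 1)$, where $\gamma^{t}$ is the likelihood ratio on $\Tree(t)$ defined in \eqref{eq:GammaDef} (note $\Tree(t)$, not $\tTree(t)$, has root degree $\Delta$, so $\gamma^{t_{*,2}}$ is a function of $W_{\Tree(t_{*,2})}$, and the rule is $t_{*,2}$-local). The threshold is a \emph{constant}, not $\sqrt{\Delta}$: with the vanishing prior $\tkappa/\sqrt{\Delta}$ for $X_\circ=1$, the Bayes threshold $\sqrt{\Delta}$ would give a conditional error $\prob(\sF_2=0\mid X_\circ=1)$ that does not go to zero with $t_{*,2}$. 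By the distributional recursions \eqref{eq:FirstTreeRecursion}--\eqref{eq:SecondTreeRecursion}, it then suffices to choose $t_{*,2}$ and $\Delta_*$ so that $\prob(\gamma^{t_{*,2}}(1)<1)\le\eps$ and $\prob(\gamma^{t_{*,2}}(0)\ge 1)\le\eps$. (I describe the argument for $Q_1=\delta_{+1}$, $Q_0=\tfrac12\delta_{+1}+\tfrac12\delta_{-1}$, as in \eqref{eq:FirstTreeRecursion}--\eqref{eq:SecondVertexRecursion}; the general subgaussian case is identical with the corresponding likelihood recursion.)

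\textbf{Phase 1: the mean escapes.} Let $m_t\equiv\E[\gamma^t(1)]$. Since the $\Delta$ product factors in \eqref{eq:SecondTreeRecursion} are i.i.d.\ given the subtrees, and averaging a factor over $(x_\ell,A^t_\ell)$ gives $1$ when $x_\ell=0$, one obtains the exact identity $m_{t+1}=\kappa\,\big(1+\tfrac{\tkappa}{\Delta}\,\E[h(\gamma^t(1))]\big)^{\Delta}$ with $h(x)=x/(1+x/\sqrt{\Delta})$. From $h(x)\le x$ this reproduces the upper bound $m_{t+1}\le\kappa e^{\kappa m_t}$ of Lemma~\ref{lem:gammaRawMoments}; from $h(x)\ge x-x^2/\sqrt{\Delta}$ together with $\E[\gamma^t(1)^2]\le\kappa^2 e^{3\kappa m_{t-1}}$ (Lemma~\ref{lem:gammaRawMoments}), as long as $m_{t-1}$ stays bounded by a fixed constant, one gets the matching lower bound $m_{t+1}\ge\kappa e^{\kappa m_t}-\epsilon_\Delta$ with $\epsilon_\Delta\to 0$. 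Now fix a target $M=M(\kappa,\eps)$. Because $\kappa>1/\sqrt{e}$, the deterministic map $m\mapsto\kappa e^{\kappa m}$ has \emph{no} fixed point and lies strictly above the diagonal, so the sequence $\bar m_0=\kappa$, $\bar m_{t+1}=\kappa e^{\kappa\bar m_t}$ is increasing, exceeds $M+1$ at some finite time $t_0=t_0(\kappa,M)$, and has a uniform gap above the diagonal on $[\kappa,M+1]$; an induction in $t$ controlling $|m_t-\bar m_t|$ (linear recursion in $t$, with forcing $\epsilon_\Delta$) then shows that for $\Delta$ large enough, $m_{t_0}\ge M$, while $m_{t_0}\le\bar m_{t_0}$ is still a fixed constant, so Lemma~\ref{lem:gammaRawMoments} keeps the second and third moments of $\gamma^{t_0}(1)$ bounded.

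\textbf{Phase 2: from the mean to a typical value.} Set $t_{*,2}=t_0+1$. Again using that the $\Delta$ factors come from disjoint subtrees, $\log(\gamma^{t_0+1}(1)/\kappa)=\sum_{\ell=1}^{\Delta}Y_\ell$ with $Y_\ell$ i.i.d.\ equal to $\log\!\big(1+\tfrac{\tA_\ell\gamma^{t_0}_\ell(x_\ell)/\sqrt{\Delta}}{1+\gamma^{t_0}_\ell(x_\ell)/\sqrt{\Delta}}\big)$. Using $\E[\gamma^{t_0}(0)]=\kappa$ and $\E[\gamma^{t_0}(0)^2]=\kappa\,\E[\gamma^{t_0}(1)]$ (Lemma~\ref{lem:gammaMoments}), the moment bounds of Lemma~\ref{lem:gammaRawMoments}, and the log-moment estimates of Lemma~\ref{lem:logRawMoments} (plus, for the $x_\ell=1$ children, a truncation at $\gamma^{t_0}(1)\le\Delta^{1/4}$ to replace $\log(1+b/(1+b))$ by $b$), a direct computation gives $\Delta\,\E[Y_1]\ge\tfrac{\kappa}{3}\,m_{t_0}$ and $\Delta\,\Var(Y_1)\le 2\,\E[\gamma^{t_0}(x)^2]\le 3\kappa\,m_{t_0}$ for $\Delta$ large --- the variance is bounded precisely because, after averaging over the Bernoulli $x$, the (possibly large) second moment of $\gamma^{t_0}(1)$ enters only with weight $\tkappa/\sqrt{\Delta}$. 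Chebyshev's inequality then yields $\prob(\gamma^{t_0+1}(1)<1)=\prob(\sum_\ell Y_\ell<-\log\kappa)\le C/(\kappa m_{t_0})\le C/(\kappa M)$, once $M$ is large enough that $-\log\kappa<\tfrac16\kappa M$. The identical computation for the root value $0$ --- now with $A_\ell$ uniform for all children, so $\E_A[\log(1+\tfrac{Ac}{1+c})]=\tfrac12\log(1-(\tfrac{c}{1+c})^2)\le -\tfrac13(c/(1+c))^2$ for $c$ small --- gives $\Delta\,\E[Y'_1]\le -\tfrac{\kappa}{4}m_{t_0}$, hence $\prob(\gamma^{t_0+1}(0)\ge1)\le C'/(\kappa M)$. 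Choosing $M=M(\kappa,\eps)$ large makes both bounds $\le\eps$; then $t_{*,2}=t_0(\kappa,M)+1$ and $\Delta_*$ depend only on $\kappa$ and $\eps$, as required.

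\textbf{Main obstacle.} The crux is the lower bound $m_{t+1}\ge\kappa e^{\kappa m_t}-o(1)$ in Phase~1: Lemma~\ref{lem:gammaRawMoments} supplies only the \emph{upper} bound, and the lower bound would fail if $\gamma^t(1)$ placed appreciable mass near scale $\sqrt{\Delta}$, which is exactly where $h(x)=x/(1+x/\sqrt{\Delta})$ departs from $x$. Ruling this out needs the $t$-uniform (given the cap $M$) second- and third-moment bounds of Lemma~\ref{lem:gammaRawMoments}, which are also what make the comparison with the fixed-point-free deterministic recursion $m\mapsto\kappa e^{\kappa m}$ close; the strict inequality $\kappa>1/\sqrt{e}$ enters here and only here, through the absence of that fixed point. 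A secondary technical point is keeping the factor $\tfrac12$ in the drift of $Y_1$ in Phase~2 (from $\tfrac12\log(1-(\tfrac{b}{1+b})^2)\ge -\tfrac{b^2}{2}$, valid for all $b\ge0$), since a cruder split of $\tfrac12\log(1+2b)-\log(1+b)$ into its two pieces loses the cancellation and destroys the positive drift.
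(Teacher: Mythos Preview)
Your argument is correct and constitutes a genuinely different proof from the one in the paper. The paper proceeds via an asymptotic Gaussianity result: it shows (Proposition~\ref{propo:GaussianConvergence}) that as $\Delta\to\infty$ the log-likelihood $\Gamma^t(x_\circ)=\log\gamma^t(x_\circ)$ converges in distribution to a Gaussian with parameters $\mu_t(x_\circ),\sigma_t^2$ obeying the deterministic recursion \eqref{eq:stateevold0}--\eqref{eq:stateevold2}; it then thresholds at the midpoint $e^{(\mu_t(0)+\mu_t(1))/2}$ and invokes a calculus lemma showing $(\mu_t(1)-\mu_t(0))/\sigma_t\to\infty$ when $\kappa>1/\sqrt{e}$. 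Your route bypasses the CLT entirely: you track the raw first moment $m_t=\E[\gamma^t(1)]$ and prove the two-sided comparison $m_{t+1}=\kappa e^{\kappa m_t}+o_\Delta(1)$ directly from the product recursion, so that the absence of a fixed point forces $m_{t_0}\ge M$ at a finite $t_0$; you then spend one more iteration and use Chebyshev on $\log\gamma^{t_0+1}$, exploiting the exact moment identity $\E[\gamma^{t_0}(0)^2]=\kappa m_{t_0}$ from Lemma~\ref{lem:gammaMoments} to get variance $O(m_{t_0})$ against drift $\pm\Theta(m_{t_0})$. The two arguments meet at the same threshold: your fixed-point equation $m=\kappa e^{\kappa m}$ is, after the substitution $s=\kappa m$, exactly the paper's $\sigma^2=\kappa^2 e^{\sigma^2}$, so both use $\kappa>1/\sqrt{e}$ through $\max_s se^{-s}=1/e$.

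What each approach buys: the paper's Gaussian state evolution is structurally cleaner once Proposition~\ref{propo:GaussianConvergence} is in hand, gives a sharp asymptotic error rate $\Phi(-(\mu_t(1)-\mu_t(0))/(2\sigma_t))$, and mirrors the dense-graph analysis of Section~\ref{sec:Complete}. Your approach is more elementary---no CLT, only Markov/Chebyshev and the moment lemmas already proved in Section~\ref{sec:TreeLemmas}---and makes the role of the fixed-point-free map $m\mapsto\kappa e^{\kappa m}$ completely explicit. Your identification of the main obstacle (the lower bound in Phase~1, which needs second-moment control to rule out mass at scale $\sqrt{\Delta}$) is exactly right, and the tail estimates you sketch for Phase~2 can be made rigorous using only Lemma~\ref{lem:logRawMoments} and the bounds of Lemma~\ref{lem:gammaRawMoments} at time $t_0-1$; the constants in $\Delta_*(\kappa,\eps)$ will depend on $M$ through $e^{3\kappa\bar m_{t_0-1}}$, which is harmless since $\bar m_{t_0-1}$ is fixed once $M$ is.
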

The second lemma yields a decision rule for the root,
given information on the first $t_{*,1}$ generations, as well as
decisions on the nodes at generation $t_{*,1}$. In order to 
state the theorem, we denote by $\Le(t)$ the set of nodes 
at generation $t$. For $\eps=(\eps(0),\eps(1))$, we also let $Y_{\Le(t_{*,1})} (\eps) = (Y_i(\eps))_{i\in
  \Le(t_{*,1})}$ denote a collection of
random variables with values in $\{0,1\}$ that are independent given
the node labels  $X_{\Le(t_{*,1})} = (X_i)_{i\in
  \Le(t_{*,1})}$ and such that, for all $i$,
\begin{align*}
\prob\{Y_i(\eps) \neq X_i|X_i = x\} \le \eps(x)\, .
\end{align*}
\begin{lemma}\label{lemma:LargeKappaAll_SECOND}
Fix $\kappa\in (0,\infty)$.
There exists $\Delta_*(\kappa)=\Delta_*(\kappa)<\infty$,
$t_{*,1}<\infty$, $c_*(\kappa)>0$ and
$\eps_*(\kappa)>0$ such that. 
for all $\Delta>\Delta_*$ there exists a  
$t_{*,1}$-local decision rule 
\begin{align*}
\sF_1:(W_{\Tree(t_{*,1})},Y_{\Le(t_{*,1})})\mapsto
\sF_1(W_{\Tree(t_{*,1})} Y_{\Le(t_{*,1})})\in\{0,1\}\,
\end{align*}
satisfying, for any $\eps\le \eps_*$: 
\begin{align*}
\prob(\sF_1(W_{\Tree(t_{*,1})} Y_{\Le(t_{*,1})})\neq x_\circ|X_{\circ} = x_{\circ})\le
e^{-c_*\sqrt{\Delta}}\, ,
\end{align*}
\end{lemma}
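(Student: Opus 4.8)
\emph{Setup.} The plan is to let $\sF_1$ run belief propagation upward on $\Tree(t_{*,1})$, using as leaf data the messages determined by the side information $Y_{\Le(t_{*,1})}$, and to build the decision in two stages: a fixed number of ``purification'' rounds that produce very reliable guesses $\hat X_v$ for the vertices $v$ at generation $1$, and then one clean test at the root exploiting that every edge joining a type-$1$ vertex to a type-$1$ child carries a deterministic $+1$ label. Write $\Gamma^s=\log\gamma^s$, where $\gamma^s$ is the BP odds ratio at a generic node of generation $t_{*,1}-s$ obtained by iterating \eqref{eq:FirstTreeRecursion}--\eqref{eq:SecondTreeRecursion} from the leaves. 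A leaf $i$ has message $\sqrt\Delta\,\prob(X_i=1\mid Y_i)/\prob(X_i=0\mid Y_i)$, a two-valued monotone function of $Y_i$, so with $\eps(0),\eps(1)\le\eps_*$ the leaf data is equivalent to an initialization wrong on each node with probability $\le\eps_*$. Along the levels I would carry a threshold $M_s$, a false-alarm bound $\alpha_s\ge\prob(\gamma^s(0)\ge M_s)$ and a miss bound $\beta_s\ge\prob(\gamma^s(1)<M_s)$, with $\alpha_0,\beta_0\le\eps_*$ and $M_0$ a $\kappa$-dependent constant.

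\emph{One-level estimate.} The heart of the argument is: if $\eps_*=\eps_*(\kappa)$ is small enough and $\Delta\ge\Delta_*(\kappa)$, then one BP step preserves the structure with $M_{s+1}=\kappa\exp(c_1\kappa^2/\alpha_s)$ and $\alpha_{s+1}\vee\beta_{s+1}\le\exp(-c_2\kappa^2/\alpha_s)$ for constants $c_1,c_2=c_i(\kappa)>0$. The false-alarm bound is immediate: $\E[\gamma^{s+1}(0)]=\kappa$ by Lemma \ref{lem:gammaMoments} (case $a=1$), so $\prob(\gamma^{s+1}(0)\ge M_{s+1})\le\kappa/M_{s+1}$ by Markov. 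For the miss bound I condition a type-$1$ node on the Chernoff event that it has between $\tfrac12\tkappa\sqrt\Delta$ and $2\tkappa\sqrt\Delta$ children in $\sC_N$ and at most $2\alpha_s\Delta$ children outside $\sC_N$ with incoming message $\ge M_s$; its probability is $1-e^{-\Theta(\sqrt\Delta)}$ since the number of $\sC_N$-children is $\mathrm{Binom}(\Delta,\tkappa/\sqrt\Delta)$. On this event each $\sC_N$-child has edge label $+1$ (automatically, the node being type $1$) and, individually with probability $\ge1-\beta_s$, message $\ge M_s$, hence a BP factor $\ge(1+2M_s/\sqrt\Delta)/(1+M_s/\sqrt\Delta)\ge\exp(cM_s/\sqrt\Delta)$, while the product of the remaining factors is lower bounded using the first three (log-)moments of the incoming-message law via Lemmas \ref{lem:gammaRawMoments} and \ref{lem:logRawMoments}. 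Collecting terms, $\Gamma^{s+1}(1)$ has conditional mean $\ge c_1'\kappa^2/\alpha_s$ and, since the summands have bounded third log-moments (Lemma \ref{lem:logRawMoments}), Berry--Esseen fluctuations of order $\kappa/\sqrt{\alpha_s}$; a Gaussian tail bound then yields $\prob(\gamma^{s+1}(1)<M_{s+1})\le\exp(-c_2\kappa^2/\alpha_s)$. A matching exponential \emph{lower} bound on $\E[\gamma^{s+1}(1)]$, obtained the same way, is what makes the Markov step above tight enough. Iterating and using that $\kappa>1/\sqrt e$ makes the scalar recursion $m\mapsto\kappa e^{\kappa m}$ have no finite fixed point, I would choose $t_{*,1}=t_{*,1}(\kappa)$ so that at $s=t_{*,1}-1$ one has $M_{t_{*,1}-1}\ge\kappa\Delta\, e^{c_*\sqrt\Delta}$, $\alpha_{t_{*,1}-1}\le\Delta^{-1}e^{-c_*\sqrt\Delta}$ and $\beta_{t_{*,1}-1}\le\tfrac14$, for a suitable $c_*(\kappa)>0$ and all $\Delta\ge\Delta_*(\kappa)$.

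\emph{Root test and conclusion.} Put $\hat X_v=\ind(\gamma_v\ge M_{t_{*,1}-1})$ for each of the $\Delta$ children $v$ of the root, $S=\{v:\hat X_v=1\}$, and declare $\sF_1=1$ iff $S\neq\emptyset$ and $W_{\circ v}=+1$ for all $v\in S$. If $x_\circ=1$, every $v\in S\cap\sC_N$ has $W_{\circ v}=+1$, so failure requires some $v\in S\setminus\sC_N$ with $W_{\circ v}=-1$, of probability $\le\E|S\setminus\sC_N|\le\Delta\,\alpha_{t_{*,1}-1}\le e^{-c_*\sqrt\Delta}$. If $x_\circ=0$, then by the Chernoff bound and $\beta_{t_{*,1}-1}\le\tfrac14$ we have $|S|\ge|S\cap\sC_N|\ge\tfrac12\tkappa\sqrt\Delta$ off an event of probability $e^{-\Theta(\sqrt\Delta)}$, while conditionally on $S$ the labels $(W_{\circ v})_{v\in S}$ are i.i.d.\ uniform in $\{\pm1\}$, so $\prob(\sF_1=1)=\E[2^{-|S|}]\le e^{-\Theta(\sqrt\Delta)}$; shrinking $c_*$ if needed gives $\prob(\sF_1\neq x_\circ\mid X_\circ=x_\circ)\le e^{-c_*\sqrt\Delta}$ for $x_\circ\in\{0,1\}$. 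Since $\sF_1$ is a function of $(W_{\Tree(t_{*,1})},Y_{\Le(t_{*,1})})$, it is $t_{*,1}$-local.

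\emph{Main obstacle.} The delicate step is the miss bound: Lemmas \ref{lem:gammaRawMoments}--\ref{lem:logRawMoments} give only upper-tail control, whereas we need $\gamma^{s+1}(1)$ to be \emph{large with high probability}, so the essential work is converting those moment bounds into two-sided concentration of $\Gamma^{s+1}(1)$ — here the deterministic $+1$ labels on edges inside $\sC_N$ and the sharp concentration of $\mathrm{Binom}(\Delta,\tkappa/\sqrt\Delta)$ are what supply a reliable positive drift, and the Berry--Esseen input (Lemma \ref{lem:CLT}) has to be applied to a triangular array whose summand laws vary with $\Delta$, forcing careful use of the third-log-moment bound. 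A secondary bookkeeping point is that the incoming messages at a non-root vertex come from $\Delta$ children whereas the root combines $\Delta$ of them directly, which only affects constants.
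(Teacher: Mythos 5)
Your overall architecture (amplify reliability level by level, then a special test at the root) is in the spirit of the paper, but the engine you use for the amplification -- belief propagation with thresholds $M_s$ and a moment/Berry--Esseen analysis -- has a genuine gap at its central step, the ``miss'' bound. The bounds you cite are in terms of moments of the incoming messages: Lemma \ref{lem:logRawMoments} controls $\E[|\log(1+\cdots)|^m]$ only through $\E[\gamma^s(\cdot)^m]$, and by the change-of-measure identity of Lemma \ref{lem:gammaMoments}, $\E[\gamma^s(0)^2]=\kappa\,\E[\gamma^s(1)]\gtrsim M_s$ and $\E[\gamma^s(0)^3]=\kappa\,\E[\gamma^s(1)^2]\gtrsim M_s^2$. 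Your own scheme forces $M_s$ to reach $\kappa\Delta e^{c_*\sqrt\Delta}$ (this is exactly what the Markov bound needs to push the false-alarm rate below $\Delta^{-1}e^{-c_*\sqrt\Delta}$), so the variance and third-moment inputs to your Berry--Esseen step blow up like $e^{\Theta(\sqrt\Delta)}$ precisely in the regime where you need them; Lemma \ref{lem:MomentMessages}, which would give uniform moment bounds, is unavailable since it assumes $\kappa<1/\sqrt e$. Relatedly, the claimed one-step law $M_{s+1}=\kappa\exp(c_1\kappa^2/\alpha_s)$ with conditional mean of $\Gamma^{s+1}(1)$ at least $c_1'\kappa^2/\alpha_s$ cannot hold as stated: each BP factor is bounded by $2$, so $\gamma^{s+1}(1)\le\kappa 2^{\Delta}$ and the log-odds gain from the $\approx\tkappa\sqrt\Delta$ in-set children is capped at $\log 2$ each, i.e.\ $O(\sqrt\Delta)$, which contradicts your recursion as soon as $\kappa^2/\alpha_s\gg\sqrt\Delta$. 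Finally, your appeal to $\kappa>1/\sqrt e$ (``$m\mapsto\kappa e^{\kappa m}$ has no finite fixed point'') is not permitted: the lemma is stated for every $\kappa\in(0,\infty)$, and the threshold condition belongs to Lemma \ref{lemma:LargeKappaAll_FIRST}, not to this amplification step. (A smaller, patchable omission: under $x_\circ=1$ your root test also fails when the flagged set is empty, an event you never bound.)

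The paper avoids all of these difficulties by not running BP at all in this stage. It propagates \emph{hard decisions}: each vertex sets $m_i=\ind\bigl(\sum_{\ell}W_{i\ell}m_\ell\ge\kappa\sqrt\Delta/2\bigr)$ over its children, with leaves initialized at $Y_i$. Because the quantities involved are bounded counts, the analysis reduces to Chernoff bounds for the binomial number of in-set children and for the flagged out-set children, plus Hoeffding bounds for sums of i.i.d.\ signs, yielding the scalar recursion $p_{t+1}\le e^{-c_1\Delta p_t}+e^{-c_2\Delta^{3/2}}+e^{-c_3/p_t}$ (and similarly for $1-q_{t+1}$) of Lemma \ref{lem:ptbehavior}; this recursion drives the error from a small constant $\eps_*(\kappa)$ down to $e^{-\Theta(\sqrt\Delta)}$ after finitely many steps, for \emph{every} $\kappa>0$, with no message-moment blow-up to control. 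If you want to salvage your BP route, you would need to replace the cited moment lemmas by a truncation of the incoming null messages (using only $\E[\gamma^s(0)]=\kappa$ and a union bound over children), cap the threshold recursion at $\Theta(\sqrt\Delta)$, and verify the drift-versus-fluctuation comparison for all $\kappa>0$; none of this is supplied by the lemmas you invoke, and it is substantially more work than the paper's counting argument.
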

%
%******************************************************
%
\subsubsection{Proof of Lemma \ref{lemma:LargeKappaAll_FIRST}}

The decision rule is constructed as follows. 
We run BP on the tree $\Tree(t_{*,2})$ under consideration, thus
computing the likelihood ratio $\gamma^{t_{*,2}}$ at the
root. We then set $\sF_2(W_{\Tree(t_{*,2})}) = \ind(\gamma^{t_{*,2}}\ge e^{\omu})$. 
Here $\omu$ is a threshold to be chosen below.

In order to analyze this rule, recall that $\gamma^t(x_{\circ})$
denotes a random variable whose distribution is the same as the
conditional distribution of $\gamma^t$, given the true value of the
root $X_{\circ} = x_{\circ}$. It is convenient to define
\begin{align*}
\Gamma^t(x) &\equiv \log \gamma^t(x)\\
\hkappa &\equiv \log \kappa.
\end{align*}
As stated formally below, in the limit of large $\Delta$, 
$\Gamma^t(0)$ ($\Gamma^t(1)$) is asymptotically
Gaussian with mean $\mu_t(0)$ (resp. 
$\mu_t(1)$) and variance $\sigma_t^2$. The 
parameters $\mu_t(0), \mu_t(1), \sigma_t$ 
are defined by the recursion:
\begin{align}
  \mu_{t+1}(0) &= \hkappa - \frac{1}{2}e^{2\mu_t(0) + 2\sigma_t^2}\, ,\label{eq:stateevold0}\\
  \mu_{t+1}(1) &= \hkappa +  \kappa\,e^{\mu_t(1) + \sigma_t^2/2}%
   - \frac{1}{2}e^{2\mu_t(0) + 2\sigma_t^2} \, ,\label{eq:stateevold1}\\
  \sigma_{t+1}^2 &= e^{2\mu_t(0) + 2\sigma_t^2}\, , \label{eq:stateevold2}. 
\end{align}
with initial conditions $\mu_0(0) = \mu_0(1) = \hkappa$ and $\sigma_0^2=0$. 
Formally, we have the following:
\begin{proposition}\label{propo:GaussianConvergence}
  Fix a time $t > 0$. Then
  the following limits hold as $\Delta\to\infty$:
  \begin{align*}
    \Gamma^t(0) &\convD \mu_t(0) + \sigma_t Z  \\
    \Gamma^t(1) &\convD \mu_t(1) + \sigma_t Z
  \end{align*}
  where $Z\sim \normal(0, 1)$ and $\mu_t(0), \mu_t(1), \sigma_t$ are defined
  by the state evolution recursions (\ref{eq:stateevold0}) to (\ref{eq:stateevold2}). 
\end{proposition}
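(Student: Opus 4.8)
The plan is to argue by induction on $t$, combining the distributional recursions \eqref{eq:FirstTreeRecursion}--\eqref{eq:SecondTreeRecursion} for $\gamma^t(0),\gamma^t(1)$ with the moment estimates of Lemmas \ref{lem:gammaMoments}, \ref{lem:gammaRawMoments} and \ref{lem:logRawMoments}. The base case $t=0$ is trivial, since $\gamma^0(0)=\gamma^0(1)=\kappa$ is deterministic, so $\Gamma^0(0)=\Gamma^0(1)=\hkappa=\mu_0(0)=\mu_0(1)$ with $\sigma_0^2=0$. Suppose now $\Gamma^t(x)\convD \normal(\mu_t(x),\sigma_t^2)$ for $x\in\{0,1\}$. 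For the fixed value of $t$ at hand, iterating the recursive bounds of Lemma \ref{lem:gammaRawMoments} (together with Lemma \ref{lem:gammaMoments}) the appropriate number of times shows that $\E[\gamma^t(0)^m]$ and $\E[\gamma^t(1)^m]$ are bounded uniformly in $\Delta$ for $m\in\{1,2,3\}$. By the continuous mapping theorem $\gamma^t(x)\convD e^{\mu_t(x)+\sigma_tZ}$ with $Z\sim\normal(0,1)$, and uniform integrability of $\gamma^t(x)$ and of $\gamma^t(x)^2$ (a consequence of the uniform third moment bound) then upgrades this to moment convergence: $\E[\gamma^t(x)]\to e^{\mu_t(x)+\sigma_t^2/2}$ and $\E[\gamma^t(x)^2]\to e^{2\mu_t(x)+2\sigma_t^2}$.

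Write $\Gamma^{t+1}(0)=\hkappa+\sum_{\ell=1}^\Delta V_\ell$, where $V_\ell$ is the logarithm of the $\ell$-th factor in \eqref{eq:FirstTreeRecursion}, so that the $V_\ell$ are i.i.d.\ (the children subtrees, the edge signs and the vertex marks all being independent). Averaging over $A^t_\ell\in\{\pm1\}$ gives $\E[V_\ell\mid\gamma^t_\ell(x_\ell)]=\tfrac12\log(1-u)$ and $\E[V_\ell^2\mid\gamma^t_\ell(x_\ell)]=\tfrac12\bigl(\log\tfrac{1+2Y}{1+Y}\bigr)^2+\tfrac12\bigl(\log(1+Y)\bigr)^2$ with $Y=\gamma^t_\ell(x_\ell)/\sqrt\Delta$ and $u=Y^2/(1+Y)^2$. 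Using only $\log(1+y)\le y$ and $\log(1+y)\ge y-y^2/2$ one checks that $\Delta\,\E[V_\ell\mid\gamma]$ and $\Delta\,\E[V_\ell^2\mid\gamma]$ are bounded in absolute value by a constant multiple of $(\gamma^t(x))^2$, and that for each fixed $\gamma$ they converge as $\Delta\to\infty$ to $-\tfrac12\gamma^2$ and $\gamma^2$ respectively. A truncation argument at a level $R$ (convergence in distribution of $\gamma^t(x)$ below $R$, uniform integrability of $(\gamma^t(x))^2$ above $R$) then yields $\Delta\,\E[V_\ell]\to-\tfrac12 e^{2\mu_t(0)+2\sigma_t^2}$ and $\Delta\,\E[V_\ell^2]\to e^{2\mu_t(0)+2\sigma_t^2}$ (the event $\{x_\ell=1\}$ has probability $\tkappa/\sqrt\Delta$ and contributes only $O(1/\sqrt\Delta)$). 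Since $\E[V_\ell]=O(1/\Delta)$ this gives $\E[\Gamma^{t+1}(0)]\to\hkappa-\tfrac12 e^{2\mu_t(0)+2\sigma_t^2}=\mu_{t+1}(0)$ and $\Var(\Gamma^{t+1}(0))=\Delta\Var(V_1)\to e^{2\mu_t(0)+2\sigma_t^2}=\sigma_{t+1}^2$, matching \eqref{eq:stateevold0} and \eqref{eq:stateevold2}. The Gaussian limit then follows from the Lyapunov central limit theorem for the triangular array $(V_\ell)_{\ell\le\Delta}$: by Lemma \ref{lem:logRawMoments}, $\Delta\,\E|V_\ell|^3\le 2\,\E[\gamma^t(x)^3]/\sqrt\Delta\to0$, while $\bigl(\sum_\ell\Var V_\ell\bigr)^{3/2}\to(\sigma_{t+1}^2)^{3/2}>0$, whence $\Gamma^{t+1}(0)\convD\normal(\mu_{t+1}(0),\sigma_{t+1}^2)$.

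For $\Gamma^{t+1}(1)=\hkappa+\sum_{\ell=1}^\Delta\tilde V_\ell$ the analysis is identical, with $\tA^t_\ell$ in place of $A^t_\ell$; the only new feature is the contribution of the children with $x_\ell=1$. Conditioning on $x_\ell$, the marks $x_\ell=0$ reproduce the previous paragraph, while each mark $x_\ell=1$ contributes $\log\tfrac{1+2\gamma^t_\ell(1)/\sqrt\Delta}{1+\gamma^t_\ell(1)/\sqrt\Delta}$, which is at most $2\gamma^t_\ell(1)/\sqrt\Delta$ and, multiplied by $\sqrt\Delta$, converges (pointwise, and dominated by $2\gamma^t_\ell(1)$) to $\gamma^t_\ell(1)$. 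Hence $\Delta\,\E[\tilde V_\ell]\to-\tfrac12 e^{2\mu_t(0)+2\sigma_t^2}+\kappa\,e^{\mu_t(1)+\sigma_t^2/2}$ (using $\tkappa\to\kappa$ and moment convergence of $\gamma^t(1)$), while the $x_\ell=1$ terms contribute only $O(1/\sqrt\Delta)$ to the second moment, so $\Delta\,\E[\tilde V_\ell^2]\to e^{2\mu_t(0)+2\sigma_t^2}$. Thus $\E[\Gamma^{t+1}(1)]\to\mu_{t+1}(1)$ as in \eqref{eq:stateevold1} and $\Var(\Gamma^{t+1}(1))\to\sigma_{t+1}^2$, and the Lyapunov condition holds exactly as before (Lemma \ref{lem:logRawMoments} applies verbatim to $\tilde V_\ell$), giving $\Gamma^{t+1}(1)\convD\normal(\mu_{t+1}(1),\sigma_{t+1}^2)$. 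This closes the induction; the uniform moment bounds required at the next step are again supplied by Lemma \ref{lem:gammaRawMoments}.

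I expect the main obstacle to be the interchange of limits and expectations. Because the law of $\gamma^t(x)$ itself varies with $\Delta$, one cannot naively Taylor-expand the logarithms appearing in the recursion: every such step has to be routed through the $\Delta$-uniform $L^3$ bounds of Lemmas \ref{lem:gammaRawMoments} and \ref{lem:logRawMoments} together with uniform integrability, which is precisely the purpose of those lemmas. A secondary delicate point is the bookkeeping of the $\Theta(1/\sqrt\Delta)$-size contributions of the $x_\ell=1$ children, which survive in the limiting \emph{mean} of $\Gamma^{t+1}(1)$ (producing the $\kappa e^{\mu_t(1)+\sigma_t^2/2}$ term) but vanish in its limiting \emph{variance}, so that $\sigma_{t+1}^2$ is the same for $\Gamma^{t+1}(0)$ and $\Gamma^{t+1}(1)$.
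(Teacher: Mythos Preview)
Your proposal is correct and follows essentially the same route as the paper: induction on $t$, writing $\Gamma^{t+1}(x)$ as $\hkappa$ plus an i.i.d.\ sum of log-terms, computing the limiting mean and variance via the moment bounds of Lemmas~\ref{lem:gammaRawMoments} and~\ref{lem:logRawMoments}, and concluding with a triangular-array CLT. The only cosmetic differences are that you invoke the Lyapunov form of the CLT (using the third-moment bound from Lemma~\ref{lem:logRawMoments}) where the paper writes ``Lindeberg,'' and you are somewhat more explicit than the paper about the uniform-integrability step needed to pass from $\gamma^t(x)\convD e^{\mu_t(x)+\sigma_tZ}$ to convergence of $\E[\gamma^t(x)]$ and $\E[\gamma^t(x)^2]$.
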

\begin{proof}
  We prove the claims by induction.  
  We have for $1\le i\le t-1$:
  \begin{align*}
    \Gamma^{i+1}(0) &= \hkappa + \sum_{\ell=1}^\Delta\log\left( 1+%
    \frac{A^i_\ell\gamma^i_\ell(x_\ell)/\sqrt{\Delta}}{1+\gamma^i_\ell(x_\ell)/\sqrt{\Delta}} \right).
  \end{align*}
  Considering the first moment, we have:
  \begin{align*}
    \E\left[ \Gamma^{i+1}(0) \right] &= \hkappa + \E\left[\Delta \log\left( 1+%
    \frac{A^i_\ell\gamma^i(x)/\sqrt{\Delta}}{1+\gamma^i(x)/\sqrt{\Delta}} \right) \right]\\
    &= \hkappa + \E\left[ \frac{\Delta}{2}\log\left( 1 + \frac{\gamma^i(x)/\sqrt{\Delta}}{1+\gamma^i(x)/\sqrt{\Delta}} \right) + %\\
    \frac{\Delta}{2}\log \left( 1 - \frac{\gamma^i(x)/\sqrt{\Delta}}{1+\gamma^i(x)/\sqrt{\Delta}} \right)\right].
  \end{align*}
  where we drop the subscript $\ell$. Expanding similarly using the distribution of $x$, we find that
  the quantity inside the expectation converges
  pointwise as $\Delta\to\infty$ to:
  \begin{align*}
    -\gamma^i(0)^2 &= -e^{2\Gamma^i(0)}.
  \end{align*}
  Since $\E[\gamma^i(0)^2]$ is bounded by Lemma \ref{lem:gammaRawMoments} for fixed $t$,
  we have by dominated convergence and the induction hypothesis that:
  \begin{align*}
    \lim_{\Delta\to\infty} \E\left[ \Gamma^{i+1}(0) \right] &= \hkappa - \frac{1}{2}e^{2\mu_i(0) + 2\sigma_i^2}\\
    &= \mu_{i+1}(0).
  \end{align*}
  Similarly, for the variance we have:
  \begin{align*}
    \Var(\Gamma^{i+1}(0)) &= \Var\left( \sqrt{\Delta}\log\left( 1+ \frac{A^i\gamma^i(x)/\sqrt\Delta}{1 + \gamma^i(x)/\sqrt{\Delta}} \right) \right).
  \end{align*}
  Using $\Var(X) = \E(X^2) - (\E X)^2$, we find that the right hand side 
  converges pointwise to $\gamma^i(0)^2 = e^{2\Gamma^i(0)}$, yielding as 
  before:
  \begin{align*}
    \lim_{\Delta\to\infty}\Var(\Gamma^{i+1}(0)) &= e^{2\mu_i(0) + 2\sigma_i^2} \\
    &= \sigma_{i+1}^2.
  \end{align*}
  It follows from the Lindeberg central limit theorem that $\Gamma^{i+1}$
  converges in distribution to $\mu_{i+1}(0) + \sigma_{i+1}Z$ where
  $Z\sim\normal(0, 1)$.
  For the base case, $\Gamma^0(0)$ is initialized with the value
  $\log \kappa = \hkappa$.
  This is trivially the (degenerate) Gaussian given by $\mu_0(0) +\sigma_0 Z$
  since $\mu_0(0)=\hkappa$ and $\sigma_0=0$.

  Now consider the case of $\Gamma^{i+1}(1)$. We have as before:
  \begin{align*}
    \Gamma^{i+1}(1) &= \hkappa + \sum_{\ell=1}^\Delta\log\left( 1+\frac{\tA^i_\ell\gamma^i_\ell(x_\ell)/\sqrt{\Delta}}{1 + \gamma^i_\ell(x_\ell)/\sqrt{\Delta}} \right).
  \end{align*}
  Computing the first moment:
  \begin{align*}
    \begin{aligned}
    \E[\Gamma^{i+1}(1)] &= \hkappa + \Delta\,\E \left[ \log\left( 1+\frac{\tA^i\gamma^i(x)/\sqrt{\Delta}}{1 + \gamma^i(x)/\sqrt{\Delta}} \right) \right] \\
    &= \hkappa + \Delta\, \E \Bigg[\frac{\tkappa}{\sqrt{\Delta}} \log\left( 1+\frac{\gamma^i(1)/\sqrt{\Delta}}{1 + \gamma^i(1)/\sqrt{\Delta}} \right)\\%
    &\qquad+ \left(1 - \frac{\tkappa}{\sqrt{\Delta}}\right) \log\left( 1+\frac{A^i\gamma^i(0)/\sqrt{\Delta}}{1 + \gamma^i(0)/\sqrt{\Delta}} \right)\Bigg]. 
  \end{aligned}
  \end{align*}
  The second term can be handled as before. The first term in the expectation
  converges pointwise to $\kappa\gamma^i(1) = \kappa e^{\Gamma^i(1)}$. Thus
  we get by dominated convergence as before that:
  \begin{align*}
    \lim_{\Delta\to\infty}\E[\Gamma^{i+1}(1)] &= \hkappa + \kappa\,e^{\mu_i(1) + \sigma_i^2/2} - \frac{1}{2}e^{2\mu_i(0) + 2\sigma_i^2}\\
    &= \mu_{i+1}(0) + \kappa\, e^{\mu_i(1) + \sigma_i^2/2}\\
    &= \mu_{i+1}(1).
  \end{align*}
  For the variance:
  \begin{align*}
    \Var\left( \Gamma^{i+1}(1) \right) &= \Delta \,\Var\left( \log\left( 1+\frac{\tA^i\gamma^i(x)/\sqrt{\Delta}}{1 + \gamma^i(x)/\sqrt{\Delta}} \right) \right).
  \end{align*}
  Dealing with each term of the variance computation separately we get:
  \begin{align*}
    &\begin{aligned}
    \Delta\, \E\left[\log^2\left( 1+\frac{\tA^i\gamma^i(x)/\sqrt{\Delta}}{1 + \gamma^i(x)/\sqrt{\Delta}} \right)\right] %
    &= \Delta\,\E\bigg[ \frac{\tkappa}{\sqrt{\Delta}} \log^2\left( 1+\frac{\gamma^i(1)/\sqrt{\Delta}}{1 + \gamma^i(1)/\sqrt{\Delta}} \right)\\% 
     &\qquad + \left( 1-\frac{\tkappa}{\sqrt{\Delta}} \right) \log^2\left( 1+\frac{A^i\gamma^i(0)/\sqrt{\Delta}}{1 + \gamma^i(0)/\sqrt{\Delta}}\right) \bigg], \\
  \end{aligned}
  \end{align*}
  Asymptotically in $\Delta$, the contribution of first term vanishes, while that of the other
  can be computed, using dominated convergence as in the case of $\Gamma^{i+1}(0)$, as:
  \begin{align*}
    \lim_{\Delta\to\infty}\E\left[ e^{2\Gamma^i(0)} \right] &= e^{2\mu_i(0) + 2\sigma_i^2}\\
    &= \sigma_{i+1}^2
  \end{align*}
  where we use the induction hypothesis. Similarly expanding:
  \begin{align*}
    \begin{aligned}
    \E\left[\sqrt{\Delta}\log\left( 1+\frac{\tA^i\gamma^i(x)/\sqrt{\Delta}}{1 + \gamma^i(x)/\sqrt{\Delta}} \right)\right] %
    & = \sqrt{\Delta}\E\bigg[ \frac{\tkappa}{\sqrt{\Delta}}\log\left( 1+ \frac{\gamma^i(1)/\sqrt{\Delta}}{1 + \gamma^i(1)/\sqrt{\Delta}} \right)\\%
    &\qquad +  \left(1-\frac{\tkappa}{\sqrt{\Delta}}\right) \log \left( 1 + \frac{A^i \gamma^i(0)/\sqrt{\Delta}}{1+\gamma^i(0)/\sqrt{\Delta}} \right)\bigg].
    \end{aligned}
  \end{align*}
  In this case, the contribution of both terms goes to zero, hence 
  asymptotically in $\Delta$ the expectation above vanishes.
  It follows, using these computations and the Lindeberg central limit 
  theorem that $\Gamma^{i+1}(1)$ converges in distribution to the limit
  random variable $\mu_{i+1}(1) + \sigma_{i+1}Z$ where $Z\sim\normal(0, 1)$.
  The base case for $\Gamma^{i+1}(1)$ is the same as that for $\Gamma^{i+1}(0)$
  since they are initialized at the (common) value $\hkappa$.
\end{proof}

Using the last lemma we can estimate the probability of error 
that is achieved by thresholding the likelihood ratios $\gamma^t$.
\begin{corollary}\label{coro:F2t}
Let $\gamma^t$ be the likelihood ratio at the root of tree $\Tree(t)$.
Define $\omu_t \equiv(\mu_t(1)+\mu_t(0))/2$, and set
$\sF_{2,t}(W_{\Tree(t)})=1$ if $\gamma^t>e^{\omu_t}$ and $\sF_{2,t}(W_{\Tree(t)})=0$
otherwise. Then, there exists $\Delta_0(t)$ such that, for all $\Delta>\Delta_0(t)$,
\begin{align*}
\prob(\sF_{2,t}(W_{\Tree(t)})\neq x_{\circ}|X_{\circ} =
x_{\circ})\le 2\,\Phi\Big(-\frac{\mu_t(1)-\mu_t(0)}{2\sigma_t}\Big)\, ,
\end{align*}
\end{corollary}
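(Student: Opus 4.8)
The plan is to read off the result directly from Proposition \ref{propo:GaussianConvergence}, since the decision rule $\sF_{2,t}$ is nothing but a threshold test on $\Gamma^t=\log\gamma^t$ at level $\omu_t$. Concretely, I would first rewrite the two conditional error probabilities as
\begin{align*}
\prob\big(\sF_{2,t}(W_{\Tree(t)})\neq 0\,\big|\,X_\circ=0\big) &= \prob\big(\Gamma^t(0)>\omu_t\big)\, ,\\
\prob\big(\sF_{2,t}(W_{\Tree(t)})\neq 1\,\big|\,X_\circ=1\big) &= \prob\big(\Gamma^t(1)\le \omu_t\big)\, ,
\end{align*}
using that $\gamma^t>e^{\omu_t}$ iff $\Gamma^t>\omu_t$, and recalling that $\gamma^t(x_\circ)$ denotes $\gamma^t$ conditioned on $X_\circ=x_\circ$.

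Next I would apply Proposition \ref{propo:GaussianConvergence}: as $\Delta\to\infty$, $\Gamma^t(0)\convD \mu_t(0)+\sigma_t Z$ and $\Gamma^t(1)\convD \mu_t(1)+\sigma_t Z$ with $Z\sim\normal(0,1)$. For $t\ge 1$ the recursion \eqref{eq:stateevold2} gives $\sigma_t^2=e^{2\mu_{t-1}(0)+2\sigma_{t-1}^2}>0$, so the limiting laws have continuous (strictly increasing) distribution functions and convergence of distribution functions holds at the point $\omu_t$. Hence, using $\omu_t-\mu_t(0)=\tfrac12(\mu_t(1)-\mu_t(0))$ and $\omu_t-\mu_t(1)=-\tfrac12(\mu_t(1)-\mu_t(0))$,
\begin{align*}
\lim_{\Delta\to\infty}\prob\big(\Gamma^t(0)>\omu_t\big) &= \Phi\!\left(-\frac{\omu_t-\mu_t(0)}{\sigma_t}\right)=\Phi\!\left(-\frac{\mu_t(1)-\mu_t(0)}{2\sigma_t}\right)\, ,\\
\lim_{\Delta\to\infty}\prob\big(\Gamma^t(1)\le \omu_t\big) &= \Phi\!\left(\frac{\omu_t-\mu_t(1)}{\sigma_t}\right)=\Phi\!\left(-\frac{\mu_t(1)-\mu_t(0)}{2\sigma_t}\right)\, .
\end{align*}

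To conclude, I would observe that $\mu_t(0),\mu_t(1),\sigma_t$ depend only on $t$ and not on $\Delta$, so the right-hand side $\Phi\big(-(\mu_t(1)-\mu_t(0))/(2\sigma_t)\big)$ is a fixed strictly positive constant; by the two limits above there is a $\Delta_0(t)$ such that for all $\Delta>\Delta_0(t)$ both error probabilities are at most twice that limiting value, which is exactly the claimed bound. There is essentially no hard step here — the only point needing a word of care is the degenerate case $t=0$, where $\sigma_0=0$ and $\Gamma^0(x)\equiv\hkappa$ deterministically so $\mu_0(1)=\mu_0(0)$, the threshold comparison is trivial, and the asserted bound reads $2\Phi(0)=1$, which holds vacuously (equivalently, one adopts the convention $\Phi(-x/0)=\ind(x\le 0)$).
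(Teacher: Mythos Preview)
Your proposal is correct and follows essentially the same approach as the paper: both rewrite the error events as tail probabilities for $\Gamma^t(x_\circ)$, invoke Proposition~\ref{propo:GaussianConvergence} to compute the Gaussian limit, and then absorb the convergence slack into the factor of $2$ for $\Delta$ large enough. Your write-up is in fact more careful than the paper's on two points: you correctly compute the limiting value as $\Phi\big(-(\mu_t(1)-\mu_t(0))/(2\sigma_t)\big)$ (the paper's displayed limit drops the factor $2$ in the denominator, an apparent typo), and you explicitly justify that $\sigma_t>0$ for $t\ge 1$ so that the limiting CDF is continuous at the threshold, as well as handling the degenerate $t=0$ case.
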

\begin{proof}
It is easy to see, from Eqs.~(\ref{eq:stateevold0}) and
(\ref{eq:stateevold1}) that $\mu_t(1)\ge \mu_t(0)$. 
By the last lemma, 
\begin{align*}
\lim_{\Delta\to\infty}\prob(\sF_{2,t}(W_{\Tree(t)})\neq x_{\circ}|X_{\circ} =
x_{\circ})&= \,\Phi\Big(-\frac{\mu_t(1)-\mu_t(0)}{\sigma_t}\Big)\, ,
\end{align*}
and this in turn yields the claim.
\end{proof}

Finally, we have the following simple calculus lemma, whose proof we
omit.
\begin{lemma}
Let $\mu_t(0)$, $\mu_t(1)$, $\sigma_t$ be defined as per Eqs.~(\ref{eq:stateevold0}),
(\ref{eq:stateevold1}), (\ref{eq:stateevold2}). If $\kappa
>1/\sqrt{e}$, then 
\begin{align*}
\lim_{t\to\infty} \, \frac{\mu_t(1)-\mu_t(0)}{\sigma_t} = \infty\, .
\end{align*}
\end{lemma}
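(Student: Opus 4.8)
The plan is to reduce all three state-evolution sequences to a single scalar recursion via two structural identities, after which the claim becomes a one-line calculus estimate; the only hypothesis that does real work is $\kappa>1/\sqrt e$.

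First I would prove, by induction on $t$, the identity $\mu_t(0) = \hkappa - \sigma_t^2/2$ for all $t\ge 0$. The base case holds because $\mu_0(0)=\hkappa$ and $\sigma_0^2=0$, and the inductive step is immediate from Eqs.~\eqref{eq:stateevold0} and \eqref{eq:stateevold2}: indeed $\mu_{t+1}(0) = \hkappa - \tfrac12 e^{2\mu_t(0)+2\sigma_t^2} = \hkappa - \tfrac12\sigma_{t+1}^2$. Feeding this identity back into Eq.~\eqref{eq:stateevold2} gives the autonomous recursion $\sigma_{t+1}^2 = e^{2\mu_t(0)+2\sigma_t^2} = e^{2\hkappa+\sigma_t^2} = \kappa^2 e^{\sigma_t^2}$, with $\sigma_0^2=0$.

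Next I would introduce $D_t \equiv \mu_t(1)-\mu_t(0)$. Subtracting \eqref{eq:stateevold0} from \eqref{eq:stateevold1} gives $D_{t+1} = \kappa\, e^{\mu_t(1)+\sigma_t^2/2}$, and since $\mu_t(1) = D_t + \mu_t(0) = D_t + \hkappa - \sigma_t^2/2$, the exponent simplifies to $D_t+\hkappa$, so that $D_{t+1} = \kappa\, e^{\hkappa}e^{D_t} = \kappa^2 e^{D_t}$, with $D_0 = \mu_0(1)-\mu_0(0) = 0$. Thus $D_t$ and $\sigma_t^2$ satisfy the \emph{same} recursion with the \emph{same} initial value, whence $D_t = \sigma_t^2$ for every $t$, and therefore $(\mu_t(1)-\mu_t(0))/\sigma_t = \sigma_t$ for all $t\ge 1$ (note $\sigma_t^2\ge\kappa^2>0$ once $t\ge 1$).

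It then remains to show $\sigma_t^2\to\infty$. Here is where $\kappa>1/\sqrt e$ enters: set $h(s) = \kappa^2 e^s - s$, so $h'(s) = \kappa^2 e^s - 1$ vanishes only at $s^\star = -2\hkappa$, and $h(s^\star) = 1 + 2\hkappa$, which is strictly positive precisely because $\hkappa = \log\kappa > -1/2$. Consequently $\sigma_{t+1}^2 - \sigma_t^2 = h(\sigma_t^2) \ge h(s^\star) = 1+2\hkappa > 0$ for every $t$, so $\sigma_t^2 \ge (1+2\hkappa)\,t \to \infty$, and hence $(\mu_t(1)-\mu_t(0))/\sigma_t = \sigma_t\to\infty$. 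I do not anticipate a genuine obstacle: the entire argument hinges on noticing the identity $\mu_t(0) = \hkappa - \sigma_t^2/2$, and everything else is bookkeeping plus the elementary convexity estimate on $h$.
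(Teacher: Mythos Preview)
Your argument is correct in every step: the identity $\mu_t(0)=\hkappa-\sigma_t^2/2$, the autonomous recursion $\sigma_{t+1}^2=\kappa^2 e^{\sigma_t^2}$, the observation that $D_t\equiv\mu_t(1)-\mu_t(0)$ satisfies the same recursion with the same initial value so that $D_t=\sigma_t^2$, and the convexity bound $\kappa^2 e^s - s \ge 1+2\hkappa>0$ under the hypothesis $\kappa>1/\sqrt e$ all check out.

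As for comparison with the paper: there is nothing to compare, since the paper simply declares this a ``simple calculus lemma'' and omits the proof entirely. Your write-up is a clean and explicit realization of that omitted calculus, and the reduction $D_t=\sigma_t^2$ is exactly the kind of structural observation one would hope for here.
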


Lemma \ref{lemma:LargeKappaAll_FIRST} follows from combining this
result with Corollary \ref{coro:F2t} and selecting $t_{*,2}$ so that
$\Phi\big(-(\mu_t(1)-\mu_t(0))/\sigma_t\big)\le \eps/4$ for $t=
t_{*,2}$. Finally we let $\Delta_* = \Delta_0(t_{*,2})$ as per
Corollary \ref{coro:F2t}.

\subsubsection{Proof of Lemma \ref{lemma:LargeKappaAll_SECOND}}

\def\tm{\widetilde{m}}
\def\Bin{{\sf Bin}}

We construct the decision rule $\sF_1:(W_{\Tree(t_{*,1})},Y_{\Le(t_{*,1})})\mapsto
\sF_1(W_{\Tree(t_{*,1})} Y_{\Le(t_{*,1})})\in\{0,1\}$ recursively as
follows. For each vertex $i\in \Tree(t)$ we compute a decision
$m_i\in\{0,1\}$ based on the set of descendants of $i$, do be denoted
as $D(i)$, as follows 
\begin{align}
  m^{t+1}_{i} &= \begin{cases}
    1 \text{ if } \sum_{\ell\in D(i)}W_{i\ell}\, m^t_{\ell} \ge \frac{\kappa}{2}\sqrt{\Delta} \\
    0 \text{ otherwise.}
  \end{cases}\label{eq:cleanMP1} 
\end{align}
If $i$ is a leaf, we let $m_i = Y_i$. Finally we take a decision 
on the basis of the value at the root:
\begin{align*}
  F_1(W_{\tTree(t)},Y_{\Le(t)})= m_{\root}\, .
\end{align*}

We  recall that the  $Y_i$'s are conditionally independent given
 $X_{\Le(t)}$. We assume  that $\prob(Y_i = 1|X_i = 0) =\prob(Y_i =
 0|X_i = 1)=\eps$, which does not entail any loss of generality
 because it can always be achieved by degrading the decision rule $\sF_2$.
We define the following quantities:
\begin{align}
  p_t &= \P(F_1(W_{\tTree(t)},Y_{\Le(t)})=1\rvert X_\root = 0) \, ,\label{eq:ptdef}\\
  q_t &= \P(F_1(W_{\tTree(t)},Y_{\Le(t)})=1\rvert X_i = 1). 
  \label{eq:qtdef}
\end{align}
Note in particular that $p_0=1-q_0=\eps$.

Lemma  \ref{lemma:LargeKappaAll_SECOND} follows immediately from the following.
\begin{lemma}
  Let $p_t$ and $q_t$ be defined as in Eqs.~\eqref{eq:ptdef}, \eqref{eq:qtdef}.
  Then there exists $\eps_* = \eps_*(\kappa)$ and  $t_* =
  t_*(\Delta,\kappa)<\infty$, 
 $\Delta_*=\Delta_*(\kappa)<\infty$, $c_*=c_*(\kappa)>0$ such that for $\Delta\ge \Delta_*$:
  \label{lem:ptbehavior}
\begin{align}
  p_{t_*} &\le 4\,e^{-c_*\sqrt{\Delta}} \\
  1-q_{t_*} &\le 4\,e^{ -c_*\sqrt\Delta}.
\end{align}
\end{lemma}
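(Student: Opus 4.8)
The plan is to view the pair $(p_t,q_t)$ as a distributional recursion driven by sums of bounded independent summands, and to show that $\eta_t\equiv\max(p_t,\,1-q_t)$ first contracts geometrically and then drops into an absorbing regime of size $e^{-c_*\sqrt\Delta}$; this gives the claim, the stated factor $4$ being slack. All of $\Delta_*,\eps_*,c_*$ will depend only on $\kappa$, and we assume $\eps\le\eps_*$.

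First I would record the one-step recursion. Fix a non-leaf vertex $i$ with children $\ell\in D(i)$, $|D(i)|\in\{\Delta,\Delta+1\}$. Conditionally on $X_i$, the pairs $(W_{i\ell},m^t_\ell)_{\ell\in D(i)}$ are i.i.d.: distinct children have disjoint subtrees, and the edge $(i,\ell)$ carries fresh randomness not used in computing $m^t_\ell$. Each summand $W_{i\ell}m^t_\ell$ lies in $\{-1,0,1\}$, and $\E[m^t_\ell]=(\tkappa/\sqrt\Delta)q_t+(1-\tkappa/\sqrt\Delta)p_t$. Using $\P(X_\ell=1)=\tkappa/\sqrt\Delta$ together with the fact that $W_{i\ell}=+1$ deterministically when $X_i=X_\ell=1$, while $W_{i\ell}$ is uniform on $\{\pm1\}$ and independent of $\ell$'s subtree otherwise, one obtains
\begin{align*}
\E\big[\,\textstyle\sum_{\ell}W_{i\ell}m^t_\ell \mid X_i=1\,\big]=\tkappa\sqrt\Delta\,q_t,\qquad
\E\big[\,\textstyle\sum_{\ell}W_{i\ell}m^t_\ell \mid X_i=0\,\big]=0,
\end{align*}
and $\Var[\,\sum_{\ell}W_{i\ell}m^t_\ell\mid X_i=x\,]\le|D(i)|\,\E[m^t_\ell]\le\tkappa\sqrt\Delta\,q_t+\Delta\,p_t+2$. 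Thus $p_{t+1}$ is the probability that a mean-zero sum of such summands exceeds $\tfrac\kappa2\sqrt\Delta$, while $1-q_{t+1}$ is the probability that a sum of mean $\tkappa\sqrt\Delta\,q_t$ drops below $\tfrac\kappa2\sqrt\Delta$. To both I would apply Bernstein's inequality, the summands being centered and bounded by $2$ in absolute value.

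The second step is a two-phase analysis of $\eta_t$. Pick $\Delta_*(\kappa)$ large and $\eps_*(\kappa)\le\tfrac1{10}$ small, so that in particular $\tkappa\ge\tfrac9{10}\kappa$ for $\Delta\ge\Delta_*$; one checks by induction that $\eta_t\le\eps_*$ for all $t$ (it holds at $t=0$ and is preserved in either phase below). \emph{Contraction.} If $\kappa/\sqrt\Delta\le\eta_t\le\eps_*$, then $q_t\ge1-\eps_*$ forces the mean $\tkappa\sqrt\Delta\,q_t\ge\tfrac34\kappa\sqrt\Delta$, so in both tail bounds the deviation from the mean is $u\ge c_1\kappa\sqrt\Delta$ for some $c_1=c_1(\kappa)>0$, while the variance proxy $\tkappa\sqrt\Delta\,q_t+\Delta\,p_t+2$ is $\le3\Delta\eta_t$ (the first term is absorbed using $\eta_t\ge\kappa/\sqrt\Delta$). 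Bernstein then gives $\eta_{t+1}\le\exp(-c_0(\kappa)/\eta_t)$; since $\eta\log(2/\eta)\to0$ as $\eta\to0$, after possibly shrinking $\eps_*$ we have $\exp(-c_0/\eta)\le\eta/2$ for all $\eta\le\eps_*$, hence $\eta_{t+1}\le\eta_t/2$ here. \emph{Absorption.} Once $\eta_t\le\kappa/\sqrt\Delta$, the variance proxy is $\le3\kappa\sqrt\Delta$ and the deviations are still $\ge c_1\kappa\sqrt\Delta$, so Bernstein yields $\eta_{t+1}\le e^{-c_*\sqrt\Delta}$ for a suitable $c_*=c_*(\kappa)>0$; enlarging $\Delta_*$ so that $e^{-c_*\sqrt\Delta}\le\kappa/\sqrt\Delta$, this bound is preserved at all later iterations.

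Assembling: from $\eta_0=\eps\le\eps_*$, the contraction phase reaches a time $T\le\lceil\log_2(\eps_*\sqrt\Delta/\kappa)\rceil+1$ with $\eta_T\le\kappa/\sqrt\Delta$, and then $\eta_{T+1}\le e^{-c_*\sqrt\Delta}$, a bound preserved for all larger indices; taking $t_*=t_*(\Delta,\kappa):=T+1$ yields $p_{t_*},\,1-q_{t_*}\le e^{-c_*\sqrt\Delta}$, which implies the lemma. The main obstacle is the bookkeeping in the Bernstein step: the variance proxy carries a term $\Delta\,p_t$ that would overwhelm the $\Theta(\sqrt\Delta)$ gap if $p_t$ were merely a small constant, which is exactly why the argument must be staged — the clean $O(\sqrt\Delta)$ variance bound becomes available only after the contraction phase has driven $p_t$ below $\kappa/\sqrt\Delta$ — and one must check that the ``$q_t$ close to $1$'' hypothesis producing the positive gap propagates, which needs $\Delta$ large enough that $\tkappa/\kappa$ is near $1$.
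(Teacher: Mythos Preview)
Your proposal is correct and follows essentially the same strategy as the paper: derive a one-step recursion for $(p_t,1-q_t)$ by applying a concentration bound to the sum $\sum_{\ell}W_{i\ell}m^t_\ell$, then run a two-phase analysis in which the errors first fall to order $1/\sqrt{\Delta}$ and then drop to $e^{-c_*\sqrt{\Delta}}$ in one further step. The only substantive difference is technical: you apply Bernstein's inequality directly to the mixed sum (using the variance proxy $\tkappa\sqrt{\Delta}\,q_t+\Delta p_t$), whereas the paper first conditions on the count $M_t+N_t$ of children with $m^t_\ell=1$ and then applies Chernoff/Hoeffding to the resulting symmetric $\pm1$ random walk, obtaining the recursion $p_{t+1}\le e^{-c_1\Delta p_t}+e^{-c_2\Delta^{3/2}}+e^{-c_3/p_t}$; your route is a bit cleaner and avoids that extra conditioning.
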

\begin{proof}
  Throughout the proof, we use $c_1,c_2,\dots$ to denote constants
  that can depend on $\kappa$ but not on $\Delta$ or $t$.
  We first prove the following by induction:
  \begin{align*}
    p_{t+1} &\le e^{- c_1\Delta p_t} + e^{- c_2\Delta^{3/2}} + e^{-c_3/p_t}\\
    1 - q_{t+1} &\le e^{- c_1\Delta p_t} + e^{- c_2\Delta^{3/2}} + e^{-c_3/4p_t}.
  \end{align*}
  
  We let $\Bin(n, p)$ denote the binomial distribution with
  parameters $n, p$. First, let
  $D \sim \Bin(\Delta, \tkappa/\sqrt{\Delta})$ and, conditional on $D$:
  \begin{align*}
    N_t &\sim \Bin(\Delta - D, p_t) \\
    M_t &\sim \Bin(D, q_t). 
  \end{align*}
  From the definition of $p_t$, $q_t$ and Eq.~(\ref{eq:cleanMP1}) we observe that:
  \begin{align*}
    p_{t+1} &= \P\left(\sum_{i = 1}^{M_t + N_t} W_i \ge \kappa\sqrt{\Delta}/2\right)\\
    q_{t+1} &= \P\left(\sum_{i = 1}^{N_t} W_i + M_t \ge \kappa\sqrt{\Delta}/2\right),
  \end{align*}
  where we let $W_i\in\{\pm 1\}$ are i.i.d and uniformly
  distributed.
  
  Considering the $p_t$ recursion:
  \begin{align*}
    p_{t+1} &\le \P\left( M_t + N_t \le \frac{p_t\Delta}{2} \right) + %
    \P\left( \sum_{i=1}^{p_t\Delta/2}W_i \ge \frac{\kappa\sqrt{\Delta}}{2}  \right)  \\
    &\le \P\left( N_t \le \frac{p_t\Delta}{2}\right) %
    + e^{-\kappa^2/4p_t} \\
    &\le \P\left( \tilde N \le \frac{p_t\Delta}{2} \right) + \P\left( D\ge \frac{\Delta}{4} \right) + e^{-\kappa^2/4p_t},
  \end{align*}
  where $\tilde N \sim \Bin(3\Delta/4, p_t)$ and the penultimate inequality follows
  from the Chernoff bound and positivity of $M_t$. Using standard Chernoff bounds, 
  for $\Delta \ge 5$ we get:
  \begin{align*}
    p_{t+1} &\le e^{- 3\Delta p_t/128} + e^{- c_1\Delta^{3/2}} + e^{-\kappa^2/4p_t},
  \end{align*}
  where $c_1$ is a constant dependent only on $\kappa$. 
  Bounding $1-q_{t+1}$  in a similar fashion, we obtain:
  \begin{align*}
    1 - q_{t+1} &= \P\left( \sum_{i = 1}^{N_t}W_i + M_t \le \kappa\sqrt{\Delta}/2 \right) \\
    &\le \P\left( \sum_{i=1}^{N_t} W_i \le -\kappa \sqrt{\Delta}/4 \right) +  \P\left( M_t \le 3\kappa\sqrt{\Delta}/4  \right).
  \end{align*}
  We first consider the $M_t$ term:
  \begin{align*}
    \P\left( M_t \le 3\kappa\sqrt{\Delta}/4 \right) &\le \P\left( D\le 7\kappa\sqrt{\Delta}/8 \right) %
    + \P\left( \tilde M \ge \kappa\sqrt{\Delta}/8 \right),
  \end{align*}
  where $\tilde M \sim \Bin(0, q_t)$. Further, using Chernoff bounds we
  obtain that this term is less than $2e^{-\kappa\sqrt{\Delta}/128}$. 
  The other term is handled similar to the $p_t$ recursion as:
  \begin{align*}
    \P\left( \sum_{i=1}^{N_t} W_i \le -\kappa\sqrt{\Delta}/4 \right) &\le %
    \P\left( N_t \le \frac{p_t\Delta}{2} \right) + \P\left( \sum_{i=1}^{p_t\Delta/2} W_i \ge \kappa\sqrt{\Delta}/4 \right) \\
    &\le \P\left( \tilde N \le \frac{p_t\Delta}{2} \right) + \P\left( D\ge \frac{\Delta}{4} \right) + e^{-\kappa^2/16p_t} \\
    &\le e^{-3\Delta p_t/128} + e^{-c_1\Delta^{3/2}} + e^{-\kappa^2/16p_t}.
  \end{align*}
  Consequently we obtain:
  \begin{align*}
    1-q_t &\le e^{-3\Delta p_t/128} + e^{-c_1\Delta^{3/2}} + e^{-\kappa^2/16p_t} + 2e^{-\kappa\sqrt{\Delta}/128}.
  \end{align*}

Simple calculus shows that, for all $\Delta>\Delta_*(\kappa)$ large
enough, there exists $\eps_*$, $c_0$ dependent on
$\kappa$ but independent of
$\Delta$ such that 
\begin{align*}
e^{- 3\Delta p/128} + e^{-c_1 \Delta^{3/2}} + e^{-\kappa^2/16p}< p
\end{align*}
for all $p\in [c_0/\sqrt{\Delta},\eps_*]$. Since $p_0=\eps\le \eps_*$,
this implies that there exists $t_0$ such that $p_{t_0}\le
c_0/\sqrt{\Delta}$.
The claim follows by taking $t_* = t_0+1$ which yields that
$p_{t_*} = O(e^{-\Theta(\sqrt{\Delta})})$. Further, observing that
the error $1-q_{t_*}$ has only an additional $2e^{-c_3\sqrt{\Delta}}$
component, we obtain a similar claim for $1-q_{t_*}$.
\end{proof}

\section*{Acknowledgments}

This work was partially supported by the NSF CAREER award CCF-0743978, the NSF grant DMS-0806211,
and the grants  AFOSR/DARPA FA9550-12-1-0411 and FA9550-13-1-0036.

\appendix

\section{Some tools in probability theory}
\label{app:Tools}

This appendix contains some useful facts in probability theory.

\begin{lemma}
	Let $h:\reals\to\reals$ be a bounded function with first three derivatives uniformly bounded.
	Let $X_{n, k}$ be mutually independent random variables for $1\le k\le n$ with zero mean and 
	variance $v_{n, k}$.
	Define:
	\begin{align*}
		v_n &\equiv \sum_{k=1}^n v_{n, k} \\
		\delta_n(\eps) &\equiv \sum_{k=1}^n \E[X^2_{n, k}\ind_{\lvert X_{n, k}\rvert \ge \eps}]\\
		S_n &\equiv \sum_{k=1}^n X_{n, k}.
	\end{align*}
	Also let $\G_n = \normal(0, v_n)$. Then, for every $n$ and $\eps>0$:
	\begin{align*}
		\lvert\E h(S_n) - \E h(G_n)\rvert \le \left( \frac{\eps}{6} + \frac{\sqrt{\eps^2 + \delta_n}}{2}\right)%
		v_n\lVert h'''\rVert_\infty + \delta_n\lVert h'' \rVert_\infty 
	\end{align*}
	\label{lem:lindeberg}
\end{lemma}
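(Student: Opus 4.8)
The plan is to run the classical Lindeberg replacement argument. First I would introduce mutually independent Gaussians $Y_{n,1},\dots,Y_{n,n}$, independent of the $X_{n,k}$, with $Y_{n,k}\sim\normal(0,v_{n,k})$, so that $\sum_{k}Y_{n,k}\sim\normal(0,v_n)\ed G_n$. Setting $U_j\equiv\sum_{k<j}X_{n,k}+\sum_{k>j}Y_{n,k}$, one has the pathwise identity $U_j+X_{n,j}=U_{j+1}+Y_{n,j+1}$ for $1\le j\le n-1$, together with $U_n+X_{n,n}=S_n$ and $U_1+Y_{n,1}\ed G_n$, so that the differences telescope:
\[
\E h(S_n)-\E h(G_n)=\sum_{j=1}^n\bigl(\E h(U_j+X_{n,j})-\E h(U_j+Y_{n,j})\bigr).
\]
(Boundedness of $h$ guarantees that all these expectations are finite.)

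Next, fix $j$ and condition on $U_j$, which is independent of both $X_{n,j}$ and $Y_{n,j}$. Writing $R_a(x)\equiv h(a+x)-h(a)-h'(a)x-\tfrac12 h''(a)x^2$, the moment matches $\E X_{n,j}=\E Y_{n,j}=0$ and $\E X_{n,j}^2=\E Y_{n,j}^2=v_{n,j}$ make the zeroth-, first- and second-order terms cancel, leaving $\E h(U_j+X_{n,j})-\E h(U_j+Y_{n,j})=\E R_{U_j}(X_{n,j})-\E R_{U_j}(Y_{n,j})$. Taylor's theorem supplies the two uniform-in-$a$ remainder bounds $|R_a(x)|\le\tfrac16\lVert h'''\rVert_\infty|x|^3$ (third-order expansion) and $|R_a(x)|\le\lVert h''\rVert_\infty x^2$ (second-order expansion), hence $|R_a(x)|\le\min\bigl(\tfrac16\lVert h'''\rVert_\infty|x|^3,\lVert h''\rVert_\infty x^2\bigr)$.

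For the $X$-part I would split according to whether $|X_{n,j}|<\eps$ or $|X_{n,j}|\ge\eps$: on the first event use the cubic bound, which is at most $\tfrac{\eps}{6}\lVert h'''\rVert_\infty X_{n,j}^2$; on the second use the quadratic bound $\lVert h''\rVert_\infty X_{n,j}^2$. Summing over $j$ and recalling the definitions of $v_n$ and $\delta_n(\eps)$ gives a contribution of at most $\tfrac{\eps}{6}\lVert h'''\rVert_\infty v_n+\lVert h''\rVert_\infty\delta_n$. For the Gaussian part use only the cubic bound together with $\E|Y_{n,j}|^3\le\sqrt{3}\,v_{n,j}^{3/2}$ (Cauchy--Schwarz, using $\E Y^4=3(\E Y^2)^2$ for a centered Gaussian). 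The crucial remark is that truncation forces, for every $j$,
\[
v_{n,j}=\E\bigl[X_{n,j}^2\ind_{|X_{n,j}|<\eps}\bigr]+\E\bigl[X_{n,j}^2\ind_{|X_{n,j}|\ge\eps}\bigr]\le\eps^2+\delta_n(\eps),
\]
so $\sum_j\E|Y_{n,j}|^3\le\sqrt{3}\,\sqrt{\eps^2+\delta_n}\sum_j v_{n,j}=\sqrt{3}\,\sqrt{\eps^2+\delta_n}\,v_n$, and since $\tfrac16\sqrt{3}<\tfrac12$ the Gaussian contribution is at most $\tfrac12\lVert h'''\rVert_\infty\sqrt{\eps^2+\delta_n}\,v_n$. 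Adding the two contributions yields exactly the claimed inequality.

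I do not expect a genuine obstacle here: this is a textbook Lindeberg/Berry--Esseen-type computation. The only point that deserves attention — and the reason the statement is phrased through $\delta_n(\eps)$ rather than a bounded third moment — is the uniform variance bound $v_{n,j}\le\eps^2+\delta_n(\eps)$, which is what converts $\sum_j\E|Y_{n,j}|^3$ into a quantity controlled by $v_n$ alone; everything else is careful bookkeeping of Taylor remainders and Gaussian moment identities.
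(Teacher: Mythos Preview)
Your argument is correct and is precisely the standard Lindeberg swapping trick that the paper invokes (the paper does not spell out a proof but simply refers to Dembo's lecture notes for this computation). The telescoping, the two Taylor remainder bounds, the truncation split on the $X$-part, and the key observation $v_{n,j}\le\eps^2+\delta_n(\eps)$ used to control $\sum_j\E|Y_{n,j}|^3$ are exactly the ingredients of that textbook proof, and the constants work out as you claim since $\sqrt{3}/6<1/2$.
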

\begin{proof}
  The lemma is proved using a standard swapping trick. The
   proof can be found in Amir Dembo's lecture notes \cite{DemboNotes}.
\end{proof}

\begin{lemma}\label{lem:subgauss}
	Given a random variable $X$ such that $\E(X) = \mu$. Suppose $X$ satisfies:
	\begin{align*}
		\E(e^{\lambda X}) \le e^{\mu\lambda + \rho\lambda^2/2},
	\end{align*}
	for all $\lambda>0$ and some constant $\rho>0$. Then we have for all $s > 0$:
	\begin{align*}
		\E(|X|^s) &\le 2s!e^{(s + \lambda\mu)/2}\lambda^{-s}, \\
		\text{where } \lambda &= \frac{1}{2\rho}\left( \sqrt{\mu^2+ 4s\rho} -\mu \right).
	\end{align*}
	Further, if $\mu = 0$, we have for $t < 1/e\rho$:
	\begin{align*}
		\E\left( e^{tX^2} \right) &\le \frac{1}{1-e\rho t}
	\end{align*}
\end{lemma}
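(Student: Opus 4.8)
The plan is to obtain the moment bound by dominating $|X|^{s}$ pointwise by a pair of exponentials and then invoking the subgaussian moment generating function estimate, and to deduce the bound on $\E(e^{tX^{2}})$ from the moment bound via a power‑series expansion or, more cleanly, via a Gaussian‑integral representation of $e^{tX^{2}}$. Throughout I will use the subgaussian estimate in both directions, i.e. $\E(e^{\lambda X})\le e^{\mu\lambda+\rho\lambda^{2}/2}$ for all $\lambda\in\reals$ (this is the definition of subgaussianity adopted in the paper, and it is what is available for the variables to which the lemma is applied), together with the fact $\mu\ge 0$, which holds in every application.

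For the moment bound, I would fix $\lambda>0$ and begin from the elementary inequality $y^{s}\le (s/e\lambda)^{s}e^{\lambda y}$ for $y\ge 0$, obtained by maximising $y\mapsto y^{s}e^{-\lambda y}$; applying it with $y=|x|$ and using $e^{\lambda|x|}\le e^{\lambda x}+e^{-\lambda x}$ gives $|x|^{s}\le (s/e\lambda)^{s}(e^{\lambda x}+e^{-\lambda x})$ for every $x\in\reals$. Taking expectations and applying the two‑sided subgaussian bound yields $\E(|X|^{s})\le (s/e\lambda)^{s}e^{\rho\lambda^{2}/2}(e^{\mu\lambda}+e^{-\mu\lambda})$. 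I would then take $\lambda$ to be the positive root of $\rho\lambda^{2}+\mu\lambda-s=0$, namely $\lambda=(\sqrt{\mu^{2}+4s\rho}-\mu)/(2\rho)$ — exactly the value in the statement — so that $\rho\lambda^{2}/2=(s-\mu\lambda)/2$; then $e^{\rho\lambda^{2}/2}e^{\mu\lambda}=e^{(s+\mu\lambda)/2}$ and $e^{\rho\lambda^{2}/2}e^{-\mu\lambda}=e^{(s-3\mu\lambda)/2}\le e^{(s+\mu\lambda)/2}$ since $\mu\lambda\ge 0$. This gives $\E(|X|^{s})\le 2(s/e)^{s}\lambda^{-s}e^{(s+\mu\lambda)/2}$, and the claimed bound follows from the elementary inequality $(s/e)^{s}\le \Gamma(s+1)=s!$, valid for all $s\ge 0$.

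For the second bound, with $\mu=0$, I would use the representation $e^{tX^{2}}=\E_{Z}[e^{\sqrt{2t}\,ZX}]$, where $Z\sim\normal(0,1)$ is independent of $X$ (this is just $\E_{Z}(e^{cZ})=e^{c^{2}/2}$ with $c=\sqrt{2t}\,X$). Since the integrand is nonnegative, Tonelli's theorem lets me exchange the two expectations, and for each fixed value of $Z$ the subgaussian bound gives $\E_{X}(e^{\sqrt{2t}\,ZX})\le e^{\rho t Z^{2}}$; hence $\E(e^{tX^{2}})\le \E_{Z}(e^{\rho t Z^{2}})=(1-2\rho t)^{-1/2}$ for $t<1/2\rho$. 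A one‑line check — squaring — then shows $(1-2\rho t)^{-1/2}\le (1-e\rho t)^{-1}$ for all $0\le t<1/e\rho$, since after cross‑multiplication the inequality reduces to $e^{2}\rho t\le 2(e-1)$, which is implied by $\rho t<1/e$. (Alternatively one can bypass the Gaussian trick: expanding $e^{tX^{2}}=\sum_{k\ge 0}t^{k}X^{2k}/k!$ and applying the first bound with $s=2k$ and $\mu=0$, hence $\lambda=\sqrt{2k/\rho}$, gives $\E(X^{2k})\le 2(2k)!\,e^{k}(\rho/2k)^{k}$; since $(2k)!/(k!\,(2k)^{k})\le 1$, the series sums to at most $2/(1-e\rho t)$.)

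The step I expect to require the most care is the use of the two‑sided subgaussian estimate: the hypothesis written with $\lambda>0$ only is, strictly speaking, not sufficient on its own — a variable with a well‑behaved right tail but a heavy left tail can satisfy it yet have arbitrarily large moments — so the argument must exploit that the random variables to which the lemma is actually applied (the normalised matrix entries $A_{ij}$ and the truncated observations $W'_{ij}$) are subgaussian in the symmetric sense used throughout the paper, and that their means are nonnegative. Beyond that, the proof is routine calculus and bookkeeping of constants.
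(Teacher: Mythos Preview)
Your argument is correct and reaches the stated bounds, but it differs from the paper's route in both parts. For the moment bound, the paper uses the tail–integration formula $\E(|X|^{s})=\int_{0}^{\infty}st^{s-1}\P(|X|\ge t)\,\mathrm{d}t$, inserts the Chernoff tail bound $\P(|X|\ge t)\le 2e^{\mu\lambda+\rho\lambda^{2}/2}e^{-\lambda t}$, integrates to get $2\,s!\,e^{\mu\lambda+\rho\lambda^{2}/2}\lambda^{-s}$, and then optimises over $\lambda$; you instead use the pointwise inequality $|x|^{s}\le(s/e\lambda)^{s}(e^{\lambda x}+e^{-\lambda x})$ and appeal directly to the MGF bound, which is slightly slicker and in fact gives the sharper intermediate constant $(s/e)^{s}$ before you relax it to $s!$. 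For the second bound, the paper expands $e^{tX^{2}}$ as a power series and feeds in the moment bound with $s=2k$ (dropping the factor of $2$ by noting that even moments need only one tail), while your primary argument via the Gaussian linearisation $e^{tX^{2}}=\E_{Z}e^{\sqrt{2t}ZX}$ is cleaner, yields the sharper $(1-2\rho t)^{-1/2}$, and avoids Stirling-type bookkeeping; your alternative power-series route coincides with the paper's. Your remark that the hypothesis as literally written (only $\lambda>0$) is insufficient and that the two-sided subgaussian property is what is actually used applies equally to the paper's ``symmetric argument'' for the left tail, so you have identified a genuine imprecision in the statement rather than a gap in your own proof.
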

\begin{proof}
	By an application of Markov inequality and the given condition on $X$:
	\begin{align*}
		\P(X\ge t) &\le e^{-\lambda t}\E(e^{\lambda X}) \\
		&\le e^{-\lambda t + \mu \lambda + \rho\lambda^2/2},
	\end{align*}
	for all $\lambda > 0$. By a symmetric argument:
	\begin{align*}
		\P(X \le -t) &\le e^{\lambda t + \mu \lambda + \rho\lambda^2/2}
	\end{align*}
	By the standard integration formula we have:
	\begin{align*}
		\E(|X|^s) &= \int_0^\infty\! st^{s-1}\P(|X|\ge t)\,\mathrm{d}t \\
		&= \int_0^\infty \!st^{s-1}\P(X\ge t)\,\mathrm{d}t + \int_0^\infty\! st^{s-1}\P(X\le -t)\,\mathrm{d}t\\
		&\le 2e^{\mu\lambda+ \rho\lambda^2/2} \int_0^\infty\! st^{s-1}e^{-\lambda t}\,\mathrm{d}t \\
		&= 2 s!\, e^{\mu\lambda+\rho\lambda^2/2}\lambda^{-s}
	\end{align*}
	Optimizing over $\lambda$ yields the desired result.

	If $\mu = 0$, the optimization yields $\lambda = \sqrt{s/\rho}$. Using this,
	the Taylor expansion of $g(x) = e^{x^2}$ and monotone convergence we get:
	\begin{align*}
		\E\left( e^{tX^2} \right) &= \sum_{k=0}^\infty \frac{t^k}{k!} \E(X^{2k})\\
		&\le \sum_{k=0}^\infty (e\rho t)^k \frac{(2k)!}{k!(2k)^k} \\
		&\le \sum_{k=0}^\infty (e\rho t)^k \\
		&= \frac{1}{1-e\rho t}.
	\end{align*}
	Notice that here we remove the factor of $2$ in the inequality, since this
	is not required for even moments of $X$.
\end{proof}

The following lemma is standard, see for instance
\cite{alon2002concentration,Vershynin-CS}. 
\begin{lemma}
	Let $M \in \reals^{N\times N}$ be a symmetric matrix with entries $M_{ij}$
	(for $i\ge j$) which are centered subgaussian random variables of scale 
	factor $\rho$. Then, uniformly in $N$:
	\begin{align*}
		\P\left( \lVert M\rVert_2 \ge t \right) \le (5\lambda)^N e^{-N(\lambda - 1)},
	\end{align*}
	where $\lambda = t^2/16N\rho e$ and  $\norm{M}_2$ denotes the spectral norm (or largest 
	singular value) of $M$.
	\label{lem:2normbnd}
\end{lemma}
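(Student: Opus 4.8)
The statement is a standard concentration bound for the operator norm of a random symmetric matrix with subgaussian entries; the plan is the usual $\eta$-net argument, with the constants tracked so as to match the displayed form. First, recall that since $M$ is symmetric one has $\norm{M}_2 = \sup_{x\in S^{N-1}}|\<x,Mx\>|$. Fix a parameter $\eta\in(0,1/2)$ and let $\mathcal{N}\subset S^{N-1}$ be an $\eta$-net of the unit sphere; a standard volume-comparison argument produces such a net with $|\mathcal{N}|\le(1+2/\eta)^N$, which for $\eta$ chosen appropriately is at most $5^N$. A short approximation step --- writing $\<x,Mx\>-\<x',Mx'\>=\<x-x',Mx\>+\<x',M(x-x')\>$ for $x'\in\mathcal{N}$ closest to $x$ and bounding each term by $\eta\norm{M}_2$ --- yields $\norm{M}_2\le(1-2\eta)^{-1}\max_{x'\in\mathcal{N}}|\<x',Mx'\>|$, so that by a union bound
\begin{align*}
\P\big(\norm{M}_2\ge t\big)\ \le\ |\mathcal{N}|\cdot\max_{x\in\mathcal{N}}\P\big(|\<x,Mx\>|\ge(1-2\eta)t\big)\, .
\end{align*}

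Second, I would bound the tail of the quadratic form $\<x,Mx\>$ for a fixed unit vector $x$. Expanding, $\<x,Mx\>=\sum_i M_{ii}x_i^2+2\sum_{i<j}M_{ij}x_ix_j$ is a linear combination of the independent centered subgaussian variables $(M_{ij})_{i\le j}$, and the sum of the squares of the coefficients equals $\sum_i x_i^4+4\sum_{i<j}x_i^2x_j^2=1+2\sum_{i<j}x_i^2x_j^2\le 2$ since $\norm{x}_2=1$. Hence $\<x,Mx\>$ is centered subgaussian with scale at most $2\rho$, and the Chernoff bound applied to its moment generating function (exactly as in the proof of Lemma \ref{lem:subgauss}) gives $\P(|\<x,Mx\>|\ge s)\le 2\,e^{-s^2/4\rho}$. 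Substituting $s=(1-2\eta)t$ into the display above produces a bound of the shape $2\cdot 5^N\exp(-c_\eta t^2/\rho)$.

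Third, the remaining task is purely computational: one rewrites this in the form $(5\lambda)^N e^{-N(\lambda-1)}$ with $\lambda=t^2/(16N\rho e)$, which after taking logarithms reduces to a one-variable estimate in $u=t^2/(N\rho)$, uniform in $N$; the constant $e$ in the denominator of $\lambda$ and the factor $16$ are precisely what absorbs $5^N$, the prefactor $2$, and the slack coming from $\eta$ and from the passage between $\<x,Mx\>$ and $\norm{M}_2$. (A cleaner route that reproduces the displayed form directly takes $\eta=1/2$, so $\norm{M}_2\le 2\max_{x\in\mathcal N}\norm{Mx}_2$ with $|\mathcal N|\le 5^N$, and for fixed unit $x$ bounds $\P(\norm{Mx}_2\ge r)=\P(\sum_i(Mx)_i^2\ge r^2)$ by a Chernoff argument using $\E[e^{t((Mx)_i)^2}]\le(1-e\rho t)^{-1}$ from Lemma \ref{lem:subgauss}; optimizing the exponent over $t$ yields $(\mu e^{1-\mu})^N$ with $\mu\propto r^2/(N\rho)$, whence the $e$ appears in $16N\rho e$.)

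The one genuinely nontrivial point --- and the main obstacle in that second route --- is the dependence among the coordinates $(Mx)_i=\sum_j M_{ij}x_j$ forced by the symmetry $M_{ij}=M_{ji}$, which a priori blocks the factorization $\E[e^{t\sum_i(Mx)_i^2}]\le\prod_i\E[e^{t((Mx)_i)^2}]$; this is dealt with by a standard decoupling device (for instance conditioning on a random bipartition of $[N]$ and handling the block-diagonal and block-off-diagonal parts separately), at the cost only of universal constants, which is why the exponent carries a factor $16$ rather than $4$. Everything else --- the net cardinality estimate, the approximation lemma relating $\norm{M}_2$ to its values on $\mathcal N$, the subgaussian tail, and the final matching of constants --- is routine bookkeeping. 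I would also note that the bound is informative only when $\lambda>1$, i.e.\ $t\gtrsim\sqrt{N\rho}$, which is exactly the regime in which it is invoked in the proof of Lemma \ref{pro:algproof}.
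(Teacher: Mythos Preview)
Your proposal is correct, and your parenthetical ``cleaner route'' is in fact the paper's route: take a $1/2$-net of cardinality $\le 5^N$, bound $\norm{M}_2$ via $\max_{x\in\mathcal N}\norm{Mx}_2$, and control $\norm{Mx}_2^2$ by a Chernoff argument using $\E[e^{c(Mx)_i^2}]\le(1-e\rho c)^{-1}$ from Lemma~\ref{lem:subgauss}, optimized in $c$; this is exactly what produces the $(\lambda e^{1-\lambda})^N$ shape and hence the factor $e$ in $16N\rho e$.

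The one place you work harder than the paper is the decoupling step. You correctly flag that the symmetry $M_{ij}=M_{ji}$ makes the coordinates $(Mx)_i$ dependent and blocks the factorization of $\E[e^{c\norm{Mx}_2^2}]$, and you propose a random bipartition to decouple. The paper sidesteps this more simply: it writes $M=M^u+M^l$ as the sum of its (strictly) upper- and lower-triangular parts. For the lower-triangular $M^l$ the rows are genuinely independent (row $j$ involves only $M_{ji}$ with $i<j$, disjoint across $j$), so $\E[e^{c\norm{M^lx}_2^2}]=\prod_j\E[e^{c\langle m_j,x\rangle^2}]$ factorizes exactly; likewise for $M^u$. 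One then bounds $\norm{M}_2$ by $\norm{M^u}_2+\norm{M^l}_2$ and applies the net argument to each piece, which accounts for the factor $16$ (two triangular pieces, each carrying the factor $4$ from the $1/2$-net). This deterministic split is tidier than a random bipartition and delivers the displayed constants with no further bookkeeping. Your first route via the quadratic form $\langle x,Mx\rangle$ is also valid and gives a bound at least as strong, but it does not naturally produce the $(5\lambda)^Ne^{-N(\lambda-1)}$ form; the implication you assert does hold, though it requires a short side computation rather than a mere rewriting.
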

\begin{proof}
	Divide $M$ into its upper and lower triangular portions $M^u$ and $M^l$ so
	that $M = M^u+M_l$. We deal with each separately.  Let $m_i$ denote the 
	$i^{\text{th}}$ row of $M^l$. For a unit vector $x$, since $M_{ij}$ are
	all independent and subgaussian with scale $\rho$, it is easy to see that 
	$\<m_j, x\>$ are also subgaussian with the same scale. We now bound the 
	square exponential moment of $\norm{M^lx}$ as follows. For small enough $c\ge 0$:
	\begin{align}
		\E\left(e^{c\norm{M^lx}^2}\right) &= \E\left(\prod_{j=1}^N e^{c\<m_j, x\>^2} \right) \nonumber\\
		&= \prod_{j=1}^N\E\left( e^{c\<m_j, x\>^2} \right) \nonumber \\
		&\le \left( 1 - e\rho c \right)^N.\label{eq:Mlsqexp}
	\end{align}
	Using this, we get for any unit vector $x$:
	\begin{align*}
		\P(\norm{M^l x} \ge t ) &\le \left(\frac{t^2}{N\rho e}\right)^N e^{-N(t^2/N\rho e - 1)},
	\end{align*}
	where we used Markov inequality and \myeqref{eq:Mlsqexp} with an appropriate $c$.
	Let $\Upsilon$ be a \emph{maximal} $1/2$-net of the unit sphere. From a volume
	packing argument we have that $|\Upsilon| \le 5^N$. Then 
	from the fact that $g(x) = M^lx$ is $\norm{M^l}$-Lipschitz in $x$:
	\begin{align*}
		\P\left( \norm{M^l}_2 \ge t \right) &\le \P\left(\max_{x\in\Upsilon}\norm{M^lx} \ge t/2\right)\\
		&\le |\Upsilon| \P(\norm{M^lx} \ge t/2). 
	\end{align*}
	The same inequality holds for $M^u$. Now using the fact that $\norm{\cdot}_2$
	is a convex function and that $M^u$ and $M^l$ are independent we get:
	\begin{align*}
		\P\left(\norm{M}_2 \ge t \right) &\le \P\left( \norm{M^u}_2 \ge t/2 \right) + \P\left( \norm{M^l}_2 \ge t/2 \right) \\
		&\le 2 \left( 5^N \left(\frac{t^2}{16N\rho e}\right)^N e^{-N(t^2/16N\rho e - 1)} \right).
	\end{align*}
	Substituting for $\lambda$ yields the result.
\end{proof}

\section{Additional Proofs}

In this section we provide, for the sake of completeness, 
some additional proofs that are known results. We begin 
with Proposition \ref{propo:Spectral}.

\subsection{Proof of Proposition \ref{propo:Spectral}}\label{sec:SpectralProof}

We assume the set $\sC_N$ is generated as follows: let $X_i\in\{0, 1\}$ be the label
of the index $i\in [N]$. Then $X_i$ are i.i.d Bernoulli with parameter $\kappa/\sqrt{N}$
and the set $\sC_N = \{i : X_i =  1\}$. The model of choosing $\sC_N$ uniformly random of 
size $\kappa\sqrt{N}$ is similar to this model and asymptotically in $N$ there is no 
difference. Notice that since $e_{\sC_N} = u_{\sC_N}/N^{1/4}$, we have that 
$\|e_{\sC_N}\|^2$ concentrates sharply around $\kappa$ and we are interested in 
the regime $\kappa =\Theta(1)$.

We begin with the first part of the proposition where $\kappa = 1+\eps$.
Let $W_N = W/\sqrt{N}$, $Z_N = Z/\sqrt{N}$ and $e_{\sC_N} = u_{\sC_N}/N^{1/4}$. Since
this normalization does not make a difference to the eigenvectors of $W$ and
$Z$ we obtain from the eigenvalue equation $W_N v_1 = \lambda_1 v_1$ that:
\begin{align}\label{eq:EigenvalueEquation}
  e_{\sC_N}\<e_{\sC_N}, v_1\> + Z_Nv_1 &= \lambda_1v_1.
\end{align}
Multiplying by $v_1$ on either side:
\begin{align*}
  \<e_{\sC_N}, v_1\>^2 &= \lambda_1 - \<v_1, Z_Nv_1\>\\
  &\ge \lambda_1 - \|Z_N\|_2.
\end{align*}
Since $Z_N = Z/\sqrt{N}$ is a standard Wigner matrix with subgaussian
entries,  \cite{alon2002concentration}
yields that $\norm{Z}_2 \le 2 + \delta$ with
probability at least $C_1e^{-c_1N}$ for some constants $C_1(\delta),
c_1(\delta)>0$. Further, by Theorem 2.7 of \cite{knowles2011isotropic}
we have that $\lambda_1 \ge 2 + \min(\eps,\eps^2)$ with probability at least $1 - N^{-c_2\log\log N}$
for some constant $c_2$ and every $N$ sufficiently large.
It follows from this and the union bound that for $N$ large enough, we have:
\begin{align*}
  \< e_{\sC_N}, v_1\>^2 &\ge \min(\eps,\eps^2)/2,
\end{align*}
with probability at least $1 - N^{-c_4}$ for some constant $c_4>0$. The first claim then follows. 

For the second claim, we start with the same eigenvalue equation
(\ref{eq:EigenvalueEquation}).
Multiplying on either side by $\varphi_1$, the eigenvector corresponding
to the largest eigenvalue of $Z_N$ we obtain:
\begin{align*}
  \<e_{\sC_N}, v_1\>\<e_{\sC_N}, \varphi_1\> + \theta_1\<v_1,
  \varphi_1\> &= \lambda_1 \<v_1,\varphi_1\>\, , 
\end{align*}
where $\theta_1$ is the eigenvalue of $Z_N$ corresponding to $\varphi_1$. With this and
Cauchy-Schwartz we obtain:
\begin{align*}
  |\<e_{\sC_N}, v_1\>| &\le \frac{|\lambda_1 - \theta_1|}{|\<\varphi_1, e_{\sC_N}\>|}.
\end{align*}
Let $\phi = (\log N)^{\log\log N}$. Then, using Theorem 2.7 of \cite{knowles2011isotropic}, 
for any $\delta > 0$, there exists a constant $C_1$ such that $|\lambda_1 - \theta_1| \le N^{-1 + \delta}$
with probability at least $1 - N^{-c_3\log\log N}$. 

Since $\varphi_1$ is independent of $e_{\sC_N}$,
we observe that:
\begin{align*}
  \E_e\left(\sum_{i=1}^N \varphi_1^i e_{\sC_N}^i  \right) &= N^{-3/4}(1-\eps) \sum_{i=1}^{N}\varphi^i_1\\
  \E_{e}\left( \sum_{i=1}^N (\varphi_1^i e_{\sC_N}^i)^2 \right) &= \frac{1-\eps}{N},
\end{align*}
where $\varphi_1^i$ ($e_{\sC_N}^i$) denotes the $i^{\mathrm{th}}$ entry of $\varphi_1$ (resp. $e_{\sC_N}$)
and $\E_e(\cdot)$ is the expectation with respect to $e_{\sC_N}$ holding $Z_N$ (hence $\varphi_1$)
constant. Using Theorem 2.5 of \cite{knowles2011isotropic}, it follows that there exists constants
$c_4, c_5, c_6, c_7$ such that the following two happen with probability at least $1- N^{-c_4\log\log N}$. 
Firstly, the first expectation above is at most $(1-\eps)\phi^{c_5}N^{-7/4}$. Secondly:
\begin{align*}
  \left[\E_e\left(\sum_{i=1}^N (\varphi_1^ie_{\sC_N}^i)^2  \right)\right]^{-1/2} \max_i |e_{\sC_N}^i\varphi^1_i| &\le \frac{(1-\eps)\phi^{c_7}}{N^{1/4}}.
\end{align*}
Now, using the Berry-Esseen central limit theorem for $\<\varphi_1, e_{\sC_N}\>$ that: 
\begin{align*}
  \P\left( |\<\varphi_1, e_{\sC_N}\> |\le c(N)^{1/2-\delta}\right) \le \frac{1}{N^\delta},
\end{align*}
for an appropriate constant $c = c(\eps)$ and $\delta\in(0,1/4)$. Using this and the earlier bound for $|\lambda_1-\theta_1|$
we obtain that:
\begin{align*}
  |\<e_{\sC_N}, v_1\>| &\le cN^{-1/2+3\delta}
\end{align*}
with probability at least $1 - c'N^{-\delta}$, for some $c'$ and sufficiently large $N$.
The claim then follows using the union bound and the same argument for the first $\ell$
eigenvectors.

\subsection{Proof of Proposition \ref{propo:TreeError}}\label{sec:TreeErrorProof}
\def\cE{{\mathcal{E}}}
  For any fixed $t$, let $\cE_N^t$ denote the set of vertices in $G_N$ such that
  their $t$-neighborhoods are \emph{not} a tree, i.e. 
  \begin{align*}
    \cE^t_N &= \{i\in[N]:\Ball_{G_N}(i; t) \text{ is not a tree}\}.
  \end{align*}
  For notational simplicity, we will omit the subscript $G_N$ in the neighborhood
  of $i$. 
  The relative size $\eps^t_N = |\cE^t_N|/N$ vanishes asymptotically
  in $N$ since the sequence $\{G_N\}_{N\ge 1}$ is locally tree-like. We let $\sF_{BP}(W_{\Ball(i;t)})$
  denote the decision according to belief propagation at the $i^{\mathrm{th}}$ 
  vertex. 
  
  From Proposition 
  \ref{propo:BP_Exact_Tree}, Eqs.~(\ref{eq:BPeqnSparse1}), (\ref{eq:BPeqnSparse2}), (\ref{eq:TgammaDef}), 
  (\ref{eq:GammaDef}) and induction, we observe that for any
  $i\in[N]\bs\cE^t_N$:
  \begin{align*}
    \frac{\P(X_i = 1\rvert W_{\Ball(i;t)})}{\P(X_i = 0 \rvert
      W_{\Ball(i;t)})} &\ed \frac{\tgamma^t(X_i)}{\sqrt{\Delta}}.
  \end{align*}
  We also have that:
  \begin{align*}
    |\hsC_N\triangle\sC_N| &= \sum_{i=1}^N \ind(\sF_{BP}(W_{\Ball(i;t)}) \ne X_i).
  \end{align*}
  Using both of these, the fact that $\eps^t_N\to 0$ and the linearity 
  of expectation, we have the first claim:
  \begin{align*}
    \lim_{N\to\infty}\frac{\E|\hsC_N\triangle\sC_N|}{N} &= \frac{\tkappa}{\sqrt{\Delta}} \P\left( \gamma^t(1) < \sqrt{\Delta} \right)%
    + \left( 1- \frac{\tkappa}{\sqrt{\Delta}} \right) \P\left( \gamma^t(0) \ge \sqrt\Delta \right). 
  \end{align*}
  
  For any other decision rule $\sF(W_{\Ball(i;t)})$, we have that:
  \begin{align*}
    \frac{\E[ |\hsC_N\triangle\sC_N| ]}{N} &\ge (1-\eps^t_N) \P( \sF(W_{\tTree(t)}) \ne X_\root ) \\
    &\ge (1-\eps^t_N) \P( \sF_{BP}(W_{\tTree(t)}) \ne X_\root ),
  \end{align*}
  since BP computes the correct posterior marginal on the root 
  of the tree $\tTree(t)$ and maximizing the posterior marginal
  minimizes the misclassification error. The second claim follows by taking 
  the limits.

  \subsection{Equivalence of i.i.d and uniform set model}\label{sec:iidProof}
\def\si{ {\sf i}}

In Section \ref{sec:Complete} the hidden set $\sC_N$
was assumed to be uniformly random given its size. However, in Section \ref{sec:Sparse}
we considered a slightly different model to choose $\sC_N$, wherein $X_i$ are i.i.d Bernoulli
random variables with parameter $\tkappa/\sqrt{\Delta}$. This
leads to a set $\sC_N = \{i: X_i = 1\}$ that has a random size, 
sharply concentrated around $N\tkappa/\sqrt{\Delta}$. The uniform
set model can be obtained from the i.i.d model by simply conditioning
on the size $|\sC_N|$. In the limit of large $N$ it is well-known 
that these two models are ``equivalent''.
However, for completeness, we provide a proof that the results of
Proposition \ref{propo:TreeError} do not change 
when conditioned on the size $|\sC_N| = \sum_{i = 1}^{N}X_i$. 
\begin{align*}
\E\left[|\hsC_N\triangle\sC_N|\,\bigg\rvert\, |\sC_N| = \frac{N\tkappa}{\sqrt{\Delta}} \right]&=
\sum_{i=1}^N \P\bigg( \sF(W_{\Ball(i;t)}) \ne X_i \,\bigg\rvert\, \sum_{j=1}^N X_j = \frac{N\tkappa}{\sqrt{\Delta}} \bigg).
\end{align*}

Let $S$ be the event $\{\sum_{i=1}^N X_i=N\tkappa/\sqrt{\Delta}\}$. 
Notice that $\sF(W_{\Ball(i;t)})$ is a function of $\{X_j, j\in \Ball(i;t)\}$ which 
is a discrete vector of dimension $K_t \le (\Delta+1)^{t+1}$. A straightforward direct
calculation yields that $(X_j, j\in \Ball(i;t))\rvert S $ converges in distribution
to $(X_j, j\in \Ball(i;t))$ asymptotically in $N$.
This implies $W_{\Ball(i;t)} \rvert S$ converges in distribution to $W_{\Ball(i;t)}$.
Further, using the locally tree-like property of $G_N$ one obtains:
\begin{align*}
  \lim_{N\to \infty} \frac{1}{N}\E\left[|\hsC_N\triangle\sC_N|\,\bigg\rvert\, |\sC_N| = \frac{N\tkappa}{\sqrt{\Delta}} \right]&= %
  \P\left( \sF(W_{\tTree(t)})\ne X_\root \right),
\end{align*}
as required.

\bibliographystyle{amsalpha}

\addcontentsline{toc}{section}{References}

\newcommand{\etalchar}[1]{$^{#1}$}
\providecommand{\bysame}{\leavevmode\hbox to3em{\hrulefill}\thinspace}
\providecommand{\MR}{\relax\ifhmode\unskip\space\fi MR }
% \MRhref is called by the amsart/book/proc definition of \MR.
\providecommand{\MRhref}[2]{%
  \href{http://www.ams.org/mathscinet-getitem?mr=#1}{#2}
}
\providecommand{\href}[2]{#2}

\end{document}